\documentclass[12pt]{amsart}
\usepackage{xcolor}
\definecolor{cadmiumgreen}{rgb}{0.0, 0.42, 0.24}
\usepackage[T1]{fontenc}     
\usepackage[
colorlinks, citecolor=cadmiumgreen,
pdfauthor={Junta Kamiya}, 
pdftitle={tropical singular intersection homology},
pdfstartview ={FitV},
]{hyperref}
\usepackage{xurl}
\usepackage[left=2.5cm,top=2.7cm,right=2.5cm,bottom=2.7cm]{geometry}
\usepackage{tikz-cd}
\usepackage{mathptmx} 
\usepackage[scaled]{helvet}
\usepackage{courier}
\usepackage{amssymb,amsthm, amsmath,mathtools,calligra,mathrsfs}	
\usepackage{dsfont}						
\usepackage{tikz}							
\usetikzlibrary[matrix, arrows, calc]
\usepackage{enumitem}
\usepackage{colonequals}
\usepackage{textcomp}
\usepackage{graphics}

\newcommand{\Hyper}{{\mathds H}}
\newcommand{\ZZ}{{\mathbb Z}}
\newcommand{\Z}{{\mathbb Z}}
\newcommand{\NN}{{\mathbb N}}
\newcommand{\CC}{{\mathcal C}}
\newcommand{\RR}{{\mathbb R}}
\newcommand{\R}{{\mathbb R}}
\newcommand{\QQ}{{\mathbb Q}}
\newcommand{\TT}{{\mathbb T}}
\newcommand{\T}{{\mathbb T}}
\newcommand{\D}{{\mathcal D}}

\renewcommand{\S}{{\mathcal S}}
\renewcommand{\SS}{{\mathfrak S}}

\renewcommand{\Im}{\text{Im}}
\newcommand{\Q}{{\mathcal Q}}

\newcommand{\Linear}{\mathbb{L}}
\newcommand{\DD}{\mathrm{D}}
\newcommand{\XX}{\mathfrak{X}}

\newcommand{\imap}{\iota_{\tau,\sigma}}

\newcommand{\perv}{\bar{\mathsf{p}}}
\newcommand{\perq}{\bar{\mathsf{q}}}

\newcommand{\per}{\bar{\mathsf{r}}}
\newcommand{\perz}{\bar{0}}
\newcommand{\perd}{D\perv}

\DeclareMathOperator{\Homs}{\mathscr{H}\textnormal{\kern -3pt {\calligra\large om}}\,}
\DeclareMathOperator{\Aff}{Aff}
\DeclareMathOperator{\relint}{relint}
\DeclareMathOperator{\sed}{sed}
\DeclareMathOperator{\Hom}{Hom}
\DeclareMathOperator{\St}{St}

\DeclareMathOperator{\Ker}{Ker}
\DeclareMathOperator{\Coker}{Coker}

\DeclareMathOperator{\id}{id}
\DeclareMathOperator{\BM}{BM}
\DeclareMathOperator{\GM}{GM}

\DeclareMathOperator{\HH}{H}
\DeclareMathOperator{\IH}{I^{\perv}H}
\DeclareMathOperator{\IHq}{I^{\perq}H}

\DeclareMathOperator{\IHr}{I^{\per}H}

\DeclareMathOperator{\IHd}{I^{\perd}H}
\DeclareMathOperator{\IC}{I^{\perv}C}

\DeclareMathOperator{\HC}{C}
\DeclareMathOperator{\codim}{codim}
\DeclareMathOperator{\Idelta}{I^{\perv}\Delta}

\DeclareMathOperator{\Ideltad}{I^{\perd}\Delta}

\DeclareMathOperator{\Ext}{Ext}

\DeclareSymbolFont{cmletters}{OML}{cmm}{m}{it}
\DeclareMathSymbol{\jmat}{\mathalpha}{cmletters}{"7C}

\newtheorem{thm}{Theorem}[section]
\newtheorem{prop}[thm]{Proposition}
\newtheorem{proposition}[thm]{Proposition}
\newtheorem{lemma}[thm]{Lemma}

\newtheorem{corollary}[thm]{Corollary}

\theoremstyle{definition}
\newtheorem{defn}[thm]{Definition}
\newtheorem{definition}[thm]{Definition}
\newtheorem{rem}[thm]{Remark}

\newtheorem{example}[thm]{Example}

\numberwithin{equation}{section}

\begin{document}
\address{Department of Mathematics, Graduate School of Science, Kyoto University, Kyoto 606-8502, Japan}
\email{kamiya.junta.math@gmail.com}
\title{tropical singular intersection homology}
\author{Junta Kamiya}
\date{}

\begin{abstract}
    We introduce tropical singular intersection homologies (non-$\GM$ and $\GM$) with the tropical coefficients on rational polyhedral spaces using their filtrations. 
    We investigate their fundamental properties. 
    In the non-$\GM$ case, we give a Poincar\'e duality and two bilinear pairings analogous to the cup and cap products under some assumptions. 
    We compute the homologies in some cases. 
\end{abstract}

\maketitle

\tableofcontents

\section{Introduction}\label{sec:intro}
\renewcommand{\thethm}{\Alph{thm}}
\subsection{Tropical singular intersection homology}
In tropical geometry, the main objects of study are tropical manifolds, and more generally, rational polyhedral spaces. 
Roughly speaking, a rational polyhedral space is a locally finite collection of rational polyhedra, and we assume 
that it is equipped with an open face structure $\CC$ (see Definitions \ref{def:polyhedralspace} and \ref{def:facestructure}). 
Let $Q$ be a Dedekind domain such that $\ZZ \subset Q \subset \RR$. 
In this setting, we can define the tropical homology $\HH_{p,q}(X,Q)$ as in \cite{MR3961331}, \cite{MR3894860}. 
The tropical homology  differs from the usual singular homology in that $\HH_{p,q}(X,Q)$ uses coefficients ${\bf F}^Q_{p}(\sigma)$ 
for $\sigma\in \CC$, which is a $Q$-module called the $p$-th multi-tangent of $\CC$ at $\sigma\in\CC$ 
(see Definition \ref{def:multitangent} for details). 

On the other hand, an $n$-dimensional rational polyhedral space has the natural filtration $\XX_X^{\text{trop}}$ obtained as follows: 
the top stratum consists of points that have an $n$-dimensional affine neighborhood; 
the $(n-1)$-dimensional stratum consists of points that have an $(n-1)$-dimensional affine neighborhood in the remaining space; repeating this process yields the filtration:
\begin{equation}\label{eq:introfilter}
     \XX_X^{\text{trop}} : X = X^{n} \supset X^{n-1} \supset \cdots \supset X^0 \supset X^{-1} = \emptyset.
\end{equation}
The set of strata $\SS$ is the set consisting of the connected components of $X^i\smallsetminus X^{i-1}$ for all $i$.  
Moreover, we fix a perversity $\perv$, which is just a map $\perv\colon\SS\to\Z$ that takes zero on strata in $X^{n}\smallsetminus X^{n-1}$. 
By combining the coefficients ${\bf F}^Q_{p}(\sigma)$ above and the classical singular intersection theory as in \cite{Friedman_2020}, we define the tropical singular intersection homology groups 
\begin{equation}\label{eq:introhomdef}
    \IH_{p,q}(X,\XX_X^{\text{trop}},Q)\qquad\text{ and }\qquad{}_{\GM}\IH_{p,q}(X,\XX_X^{\text{trop}},Q),
\end{equation}
which we call the non-$\GM$ tropical intersection homology and the $\GM$ tropical intersection homology, respectively (see Definitions \ref{def:tropchainhomo} and \ref{def:tropchaincohomo}). 
The $\GM$ here stands for "Goresky-MacPherson". 
If $p=0$, the non-$\GM$ and $\GM$ tropical singular intersection homologies $\IH_{0,q}(X,\XX_X^{\text{trop}},Q)$ and ${}_{\GM}\IH_{0,q}(X,\XX_X^{\text{trop}},Q)$  
coincide respectively with the non-$\GM$ and $\GM$ intersection homologies $\IH_{q}(X,\XX_X^{\text{trop}},Q)$ and ${}_{\GM}\IH_{q}(X,\XX_X^{\text{trop}},Q)$ in \cite{Friedman_2020}. 
Thus, the non-$\GM$ and $\GM$ tropical singular intersection homologies in \eqref{eq:introhomdef} are natural generalizations of the non-$\GM$ and $\GM$ intersection homologies in a tropical setting.  

The tropical singular intersection homologies differ from the usual tropical homology in that the chain complexes are composed only of chains that intersect the strata of $\XX_X^{\text{trop}}$ in a controlled way, where the perversity $\perv$ serves 
as a parameter governing how the chains are allowed to meet the strata. 
The $\GM$ tropical homology is close to the classical definition of the intersection homology, which generalizes the usual tropical homology (see Remark \ref{rem:perv}). 
On the other hand, the non-$\GM$ tropical homology enjoys good homological-algebraic properties such as 
the Poincar\'e duality (see \textsection\ref{subsec:introPoin}). 
As usual, similar constructions can be made in relative (co)homology, cohomology, Borel--Moore homology, and compactly supported cohomology. 
Let \[\XX\colon X=X^{n}\supset X^{n-1}\supset \cdots \supset X^0 \supset X^{-1} = \emptyset\] 
be a filtration by closed sets on $X$, not necessarily equal to $\XX_X^{\text{trop}}$. 
We can still define the tropical singular intersection homology groups $\IH_{p,q}(X,\XX,Q)$ and 
$_{\GM}\IH_{p,q}(X,\XX,Q)$ (see Definitions \ref{def:tropchainhomo} and \ref{def:tropchaincohomo}). Note that these homology groups depend on the choice of a filtration. If it is clear from the context, we often omit $\XX$ and simply write $\IH_{p,q}(X,Q)$ and $_{\GM}\IH_{p,q}(X,Q)$.
In the following, the notation $(\GM)$ stands for both $\GM$ and non-$\GM$. 

\subsection{Basic properties}
Let $\XX$ be a filtration on $X$.  
The  homology $_{(\GM)}\IH_{p,q}(X,\XX,Q)$ satisfies properties that hold for the tropical homology and the singular intersection homology as follows:
\begin{itemize}
    \item Excision (see Proposition \ref{prop:oriexcision});
    \item Mayer-Vietoris sequences (see Proposition \ref{prop:Mayer});
    \item Cone formulas (see Proposition \ref{prop:coneGM} for $\GM$ and Proposition \ref{prop:conenon} for non-$\GM$). 
\end{itemize} 

The tropical intersection homologies also satisfy the following properties. 
\begin{proposition}[$=$Proposition \ref{prop:indep}]\label{prop:introindep}
    The homology $_{(\GM)}\IH_{p,q}(X,\XX, Q)$ is independent of the choice of an open face structure on $X$. 
\end{proposition}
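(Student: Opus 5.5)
The strategy is to reduce independence of the open face structure to comparison with a common refinement. The chain complex $_{(\GM)}\IC_{p,\bullet}(X,\XX,Q)$ is built from singular simplices $\sigma\colon\Delta^q\to X$ that are $\perv$-allowable with respect to $\XX$. Each such simplex is decorated with an element of ${\bf F}^Q_p$, evaluated at the face of $\CC$ containing the image of the interior (or the relevant face of the simplex). Neither $\XX$ nor $\perv$ depends on $\CC$, so the only dependence is through which simplices are admissible (typically those compatible with $\CC$, e.g.\ each open face of $\Delta^q$ mapping into a single open face of $\CC$) and through the coefficient modules ${\bf F}^Q_p(\sigma)$.

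I will carry out the argument in three steps.

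\emph{Step 1: coefficients are intrinsic.} For a point $x$ in an open face $\sigma\in\CC$, ${\bf F}^Q_p(\sigma)$ is generated by $p$-fold wedges of integral tangent vectors of the faces $\tau\succeq\sigma$, i.e.\ of the polyhedra through $x$. This depends only on the germ of $X$ at $x$, which is the local fan/star of $X$ at $x$, and not on $\CC$. In particular, if $\CC'$ refines $\CC$ and $\sigma'\subset\sigma$, then the natural comparison map ${\bf F}^Q_p(\sigma')\to{\bf F}^Q_p(\sigma)$ is an isomorphism. The same holds for the restriction maps ${\bf F}^Q_p(\tau)\to{\bf F}^Q_p(\sigma)$ for $\sigma\prec\tau$, which correspond under these identifications.

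\emph{Step 2: reduce to a refinement.} Any two open face structures on $X$ admit a common refinement, obtained by intersecting faces and refining locally finitely. So it suffices to show that for a refinement $\CC'$ of $\CC$ the induced chain map $_{(\GM)}\IC^{\CC'}_{p,\bullet}\to{}_{(\GM)}\IC^{\CC}_{p,\bullet}$ is a quasi-isomorphism. Every $\CC'$-compatible simplex is $\CC$-compatible, and allowability is unchanged.

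\emph{Step 3: quasi-isomorphism via subdivision, the main step.} Here I would use an iterated barycentric-type subdivision argument adapted to $\CC'$. This is the same mechanism that underlies the small-chains theorem behind Proposition \ref{prop:oriexcision} (excision) and Proposition \ref{prop:Mayer}. One constructs a subdivision operator together with a chain homotopy to the identity, pushing $\CC$-compatible chains to $\CC'$-compatible ones. Checking that the operator preserves $\perv$-allowability is standard, as in \cite{Friedman_2020}: subdivision preserves allowability, and in the $\GM$ case one must also check that the boundary condition is preserved. The tropical point is that the coefficients transport correctly, and this follows from Step 1.

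If directly subdividing to $\CC'$-compatibility is awkward, I would instead argue locally. Both complexes satisfy Mayer--Vietoris (Proposition \ref{prop:Mayer}), so a standard Mayer--Vietoris/bootstrap argument over a basis of open stars reduces the claim to open stars of points. There both sides are computed by the cone formulas (Propositions \ref{prop:coneGM} and \ref{prop:conenon}) in terms of the link and the local multi-tangent data. These are intrinsic by Step 1, and induction on depth closes the argument.

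I expect the main obstacle to be Step 3: verifying that the comparison map is compatible with the cone-formula isomorphisms, or equivalently that the subdivision operator respects both allowability and the coefficient systems at once.
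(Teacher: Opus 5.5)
\paragraph{The subdivision route in Step 3 fails.} Your fallback in the last paragraph of Step 3 is essentially the paper's proof. Your primary route, a subdivision operator pushing $\CC$-stratified chains to $\CC'$-stratified ones, has a genuine gap and should be dropped. Being $\CC'$-stratified is not a smallness condition relative to an open cover, so no Lebesgue-number argument applies.

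Here is a concrete counterexample. Let $\tau\in\CC$ be a $2$-cell cut by a wall $\ell$ of $\CC'$. Let $\delta$ be a $\CC$-stratified $1$-simplex with $\delta((0,1))\subset\relint(\tau)$ and $\delta^{-1}(\ell)$ a Cantor set. Every finite subdivision of $[0,1]$ has an open subinterval meeting both $\delta^{-1}(\ell)$ and its complement. Hence no subdivision of $\delta$ is a sum of $\CC'$-stratified simplices.

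Barycentric subdivision preserves $\CC$-stratification, which is why Proposition \ref{prop:excision} works, but it can never create $\CC'$-stratification. You would need an actual homotopy, such as a simplicial-approximation-type deformation. Controlling allowability and the coefficient modules along such a homotopy is exactly the difficulty you flagged, and there is no easy fix.

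\paragraph{Your local route is the paper's proof, with two points to make precise.}

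First, $(X,\XX)$ need not be a CS set. The paper therefore runs Proposition \ref{prop:argument} on the CS set $(X,\XX_X^{\CC\cap\CC'})$ of Proposition \ref{prop:tropfacesCS}, with $F_q(U)=\IH_{p,q}(U,\CC\cap\CC'|_U)$ and $G_q(U)=\IH_{p,q}(U,\CC|_U)$, and likewise for $\CC'$. Condition \ref{itm:mv1} comes from Proposition \ref{prop:Mayer}.

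Second, the local models are $\R^i\times cL$, and the cone vertex lies in a stratum $S$ of $\XX$ that may have positive dimension. So Propositions \ref{prop:coneGM} and \ref{prop:conenon}, which are stated for fans whose vertex is a $0$-dimensional stratum, do not apply directly. The paper checks \ref{itm:mv3} in two steps:
\begin{enumerate}
\item Lemma \ref{lem:projiso} reduces from $\R^i\times cL$ to $cL$.
\item Lemma \ref{lem:prodgencone} then gives a cone formula whose threshold $\codim(S)-\perv(S)$ depends only on $\XX$ and $\perv$, so it is the same for both face structures.
\end{enumerate}
Below the threshold, both sides are the homology of the deleted neighborhood via inclusion, so the comparison map (itself an inclusion of chain complexes) commutes with these identifications and the hypothesis of \ref{itm:mv3} applies. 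Above the threshold both sides vanish. The exception is the degree-$0$ terms in the $\GM$ case, where your Step 1 (multi-tangents depend only on the germ of $X$) identifies the two sums of ${\bf F}^Q_p$.

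With these adjustments, your local argument is exactly the paper's proof.
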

Let $\mathcal{F}$ be a map assigning each rational polyhedral space $X$ a filtration $\XX$ on $X$ in  a functorial way (see Definition \ref{def:tropfilter}). 
We call such an $\mathcal{F}$ a tropical filtration.  
An example of a tropical filtration is $\mathcal{F}^{\text{trop}}\colon X \mapsto \XX^{\text{trop}}_X$ in \eqref{eq:introfilter}. 
\begin{prop}[$=$Proposition \ref{prop:tropstr}]\label{prop:introtropstr}
    Let $\mathcal{F}$ be a tropical filtration and let $X$ and $Y$ be rational polyhedral spaces. 
    Let $\mathcal{F}(X)$ and $\mathcal{F}(Y)$ be filtrations on $X$ and $Y$ determined by $\mathcal{F}$, respectively.   
    Let perversities $\perv$ and $\perq$ be perversities on $(X,\mathcal{F}(X))$ and $(Y,\mathcal{F}(Y))$, respectively. 
    We assume that $X$ has an $\mathcal{F}(X)$-stratified open face structure and that $Y$ has an $\mathcal{F}(Y)$-stratified open face structure. 
    If there exists an isomorphism $f:X\to Y$ between rational polyhedral spaces such that $\perv(S)=\perq(f(S))$ for all strata $S$ of $(X,\mathcal{F}(X))$, then the following hold. 
    \begin{align*}
        _{(\GM)}\IH_{p,q}(X,\mathcal{F}(X),Q)\cong {}_{(\GM)}\IHq_{p,q}(Y,\mathcal{F}(Y),Q),\\ {}_{(\GM)}\IH^{p,q}(X,\mathcal{F}(X),Q)\cong {}_{(\GM)}\IHq^{p,q}(Y,\mathcal{F}(Y),Q).
    \end{align*}
\end{prop}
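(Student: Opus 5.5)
The plan is to transport everything along the isomorphism $f$: it carries the filtration to the filtration, the open face structure to an open face structure, the multi-tangent coefficients to the multi-tangent coefficients, and the allowability conditions to the allowability conditions. Hence $f$ induces an isomorphism of tropical intersection chain complexes. Concretely, write $\mathcal{C}_X$ for the given $\mathcal{F}(X)$-stratified open face structure on $X$. Since $f$ is an isomorphism of rational polyhedral spaces, the image $f(\mathcal{C}_X)=\{f(\sigma)\}_{\sigma\in\mathcal{C}_X}$ is an open face structure on $Y$. By functoriality of the tropical filtration $\mathcal{F}$ (Definition \ref{def:tropfilter}), $f$ maps the filtration $\mathcal{F}(X)$ onto $\mathcal{F}(Y)$, i.e.\ $f(X^i)=Y^i$ for all $i$. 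So $f$ induces a bijection $S\mapsto f(S)$ of strata, and $f(\mathcal{C}_X)$ is again $\mathcal{F}(Y)$-stratified.

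By Proposition \ref{prop:indep}, the homology of $Y$ does not depend on the open face structure. I will therefore compute ${}_{(\GM)}\IHq_{p,q}(Y,\mathcal{F}(Y),Q)$ using $f(\mathcal{C}_X)$ rather than the given structure on $Y$. This is the point where the hypothesis that $Y$ carries an $\mathcal{F}(Y)$-stratified structure is used: it guarantees that the groups for $Y$ are defined in the setting where Proposition \ref{prop:indep} applies.

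Next I compare coefficients. The map $f$ is locally the restriction of an integral affine isomorphism, so its linear part $df$ gives, for each $\sigma\in\mathcal{C}_X$, an isomorphism $\wedge^p df\colon {\bf F}^Q_p(\sigma)\to {\bf F}^Q_p(f(\sigma))$. These isomorphisms are compatible with the inclusions ${\bf F}^Q_p(\tau)\hookrightarrow {\bf F}^Q_p(\sigma)$ for $\sigma\prec\tau$, because $df$ is globally defined on the integral tangent data.

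This gives a chain map
\[
f_\#\colon \sum_i \beta_i\otimes \phi_i \;\longmapsto\; \sum_i \wedge^p df(\beta_i)\otimes (f\circ\phi_i),
\]
where $\phi_i$ are singular simplices and $\beta_i$ are coefficients. Its inverse is the analogous map for $f^{-1}$. The map commutes with the boundary, since boundaries are faces composed with the coefficient restriction maps, and both are transported compatibly.

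The step requiring care is checking that $f_\#$ preserves allowability in both the non-$\GM$ and the $\GM$ senses.
\begin{itemize}
\item A simplex $\phi$ is $\perv$-allowable if $\phi^{-1}(S)$ lies in the appropriate skeleton of $\Delta^q$, with the bound determined by $\codim S$ and $\perv(S)$. We have $(f\circ\phi)^{-1}(f(S))=\phi^{-1}(S)$, $\codim f(S)=\codim S$ because $f$ is a homeomorphism preserving the filtration indices, and $\perq(f(S))=\perv(S)$. So $\phi$ is $\perv$-allowable if and only if $f\circ\phi$ is $\perq$-allowable.
\item For the $\GM$ version, allowability of a chain also involves the supports of the boundaries of its simplices. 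These supports are preserved because $f_\#$ commutes with $\partial$.
\item For the non-$\GM$ version, coefficients on simplices lying in the singular part are treated specially (the top-stratum/perversity conditions of Definition \ref{def:tropchainhomo}). This convention is preserved because $f$ maps $X\smallsetminus X^{n-1}$ onto $Y\smallsetminus Y^{n-1}$.
\end{itemize}
So $f_\#$ is an isomorphism of intersection chain complexes, which gives the homology isomorphism.

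The cohomology statement follows by applying $\Hom(-,Q)$, or by whichever dualization Definition \ref{def:tropchaincohomo} uses, to the chain isomorphism. Relative, Borel--Moore and compactly supported versions follow the same way.
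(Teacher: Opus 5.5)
Your proposal is correct and follows the paper's proof. Both arguments transport $\mathcal{C}_X$ along $f$ to an $\mathcal{F}(Y)$-stratified open face structure on $Y$, use that $f$ then induces an isomorphism of tropical intersection chain complexes, and pass to the given structure on $Y$ via Proposition \ref{prop:indep} (for the cohomology, via its corollary). Your version just spells out the chain-level details the paper leaves implicit: the coefficients via $\wedge^p$ of the linear part, preservation of allowability, and compatibility with $\hat{\partial}$ because $f$ maps the singular locus onto the singular locus.
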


\subsection{Poincar\'e duality in non-GM (co)homology}\label{subsec:introPoin}
In the non-$\GM$ case, we prove a Poincar\'e duality. 
Here we use a sheaf description of a chain complex in order to make use of the sheaf theory of intersection homology.  
Let $\Gamma_{c}$ denote the functor taking compactly supported global sections.  
For a singular intersection chain sheaf ${}_{(\GM)}\Idelta^{p,\bullet}$ (see \textsection\ref{subsec:sheaf}), we obtain the following statement. 
\begin{prop}[$=$Proposition \ref{prop:sheaf}]\label{prop:introsheaf}
    $\Gamma_{c}({}_{(\GM)}\Idelta^{p,-\bullet}) \cong {}_{(\GM)}\IC_{p, \bullet}(X, Q)$.
\end{prop}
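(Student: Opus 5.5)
We follow Friedman's treatment of the singular intersection chain sheaf, adapted to the tropical coefficients ${\bf F}^Q_p(\sigma)$. The sheaf ${}_{(\GM)}\Idelta^{p,-\bullet}$ is the sheafification of the presheaf
\[
U\mapsto {}_{(\GM)}\IC_{p,\bullet}(X,X\smallsetminus\overline{U},Q),
\]
with restriction maps induced by the quotients
\[
\IC_{p,\bullet}(X,X\smallsetminus\overline{V})\to \IC_{p,\bullet}(X,X\smallsetminus\overline{U})\qquad (U\subset V).
\]
We first check that this presheaf, which we call $\mathcal{P}$, satisfies the gluing axiom, and record what its separation defect is. We then identify compactly supported sections of the sheafification with the absolute chain group.

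\textbf{Step 1.} There is a natural map
\[
\Phi\colon {}_{(\GM)}\IC_{p,\bullet}(X,Q)\to \Gamma(X,{}_{(\GM)}\Idelta^{p,-\bullet}).
\]
A chain $\xi=\sum_i \sigma_i\otimes v_i$ goes to the family of its images in $\mathcal{P}(U)$. Since $\xi$ is a finite sum of singular simplices with coefficients $v_i\in{\bf F}^Q_p$ of the relevant face, its support $|\xi|$ is compact. A stalk of $\Phi(\xi)$ at $x$ vanishes exactly when $x$ has a neighbourhood $U$ with $\overline{U}$ disjoint from $|\xi|$. Hence the support of $\Phi(\xi)$ is contained in $|\xi|$, so $\Phi$ lands in $\Gamma_c$.

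The allowability conditions (GM or non-GM) and the tropical coefficient condition (the coefficient lies in ${\bf F}^Q_p$ of the face containing the simplex's image) are both local on simplices. Hence these conditions are preserved by the restriction and quotient maps. We also use that the boundary operator commutes with restriction; this comes from the definitions of the chain complexes via the maps $\iota_{\tau,\sigma}$ between multi-tangents.

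\textbf{Step 2 (injectivity).} Suppose $\Phi(\xi)=0$. Then every point $x$ has a neighbourhood $U_x$ such that $\xi$ vanishes in $\IC_{p,\bullet}(X,X\smallsetminus\overline{U_x})$. That is, every simplex of $\xi$ with nonzero coefficient misses $\overline{U_x}$. Covering $X$ by such $U_x$, each simplex of $\xi$ meets no point of $X$, so $\xi=0$.

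\textbf{Step 3 (surjectivity).} This is the main obstacle. We take the approach of Friedman's Proposition on $\Gamma_c$ of the intersection chain sheaf. First we show that $\mathcal{P}$ is conjunctive. This should follow from the Mayer--Vietoris and excision properties (Propositions \ref{prop:oriexcision} and \ref{prop:Mayer}) at the chain level, together with subdivision. The subdivision operator must be checked to preserve allowability and the ${\bf F}^Q_p$ coefficients; for the latter we use that a subsimplex lies in a face $\tau$ contained in the closure of the original face, and we push the coefficient along the map $\iota$.

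Conjunctivity gives $\Gamma(X,\mathcal{P}^{+})=\mathcal{P}(X)/\mathcal{P}_0(X)$, where $\mathcal{P}_0$ is the subpresheaf of locally zero sections. Now take a compactly supported section $s$ with support $K$. Choose a relatively compact open $U\supset K$. Lift $s$ over $U$ to a chain $\xi$ in $\IC_{p,\bullet}(X,X\smallsetminus\overline{U})$, modulo chains that are locally zero. Using compactness of $K$ and a finite cover, we lift again to an absolute chain $\xi\in\IC_{p,\bullet}(X)$, having discarded simplices disjoint from a neighbourhood of $K$. Discarding simplices keeps allowability, since allowability is a condition on each simplex separately; this is where the non-GM versus GM distinction needs care. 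For GM, boundary pieces of discarded simplices could become non-allowable. However, we work with sections, and the group-level statement is degreewise, so no boundary compatibility is needed in this step.

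Then $\Phi(\xi)=s$, because they agree on stalks. Together with Step 2 this gives the isomorphism. Finally, naturality in the degree $\bullet$ and compatibility with boundaries makes the degreewise isomorphism an isomorphism of chain complexes, as required.
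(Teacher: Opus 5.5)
Your Steps 1 and 2 are fine; Step 2 says precisely that $\mathcal{P}$ has no nonzero locally-zero global sections. The gap is in Step 3.

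\textbf{The finite-chain presheaf is not conjunctive.} The presheaf $\mathcal{P}\colon U\mapsto{}_{(\GM)}\IC_{p,\bullet}(X,X\smallsetminus\overline{U},Q)$ does not satisfy gluing, and the identity $\Gamma(X,\mathcal{P}^{+})=\mathcal{P}(X)/\mathcal{P}_0(X)$ you draw from it fails already for $X=\RR$. Use the trivial filtration and the cells $\{n\}$, $[n,n+1]$, and let $\sigma_n$ be the affine $1$-simplex onto $[n,n+1]$. The images of the locally finite chain $\sum_{n\in\ZZ}\sigma_n$ in the groups $\mathcal{P}((n-1,n+1))$ are finite chains, and they agree on overlaps. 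They glue in the sheaf to a global section with support $\RR$. No element of $\mathcal{P}(\RR)$ has this image, since every element of $\mathcal{P}(\RR)$ is a finite chain with compact support.

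For a proper open set, gluing fails even with two pieces. A simplex meeting $\overline{U_1}$ and $\overline{U_2}$ but not $\overline{U_1\cap U_2}$ can be given different coefficients over $U_1$ and $U_2$.

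Mayer--Vietoris, excision and subdivision cannot repair this. They control $\mathcal{P}$ only up to quasi-isomorphism, and subdivision changes the chain. So they never produce an element restricting \emph{exactly} to prescribed local sections.

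\textbf{What the paper does instead.} This is exactly why the paper does not work with $\mathcal{P}$ directly. It observes that ${}_{(\GM)}\Idelta^{p,\bullet}\to{}_{(\GM)}\Idelta^{p,\bullet}_{lf}$ is a stalkwise isomorphism. It then cites \cite[Lemma 3.3]{MR2276609} for the locally finite chain presheaf, which is the input \cite[I Theorem 6.2]{Bredon_1997} needs. That theorem identifies $\Gamma_c$ with locally finite chains of compact support, i.e.\ finite chains.

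\textbf{Repairing your route with finite chains.} If you want to keep finite chains, what you are missing is a locality/connectedness argument in place of conjunctivity.

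The germ at $y$ of a finite allowable chain is determined by the coefficients of the simplices passing through $y$. Hence, for $s\in\Gamma_c$ and a simplex $\sigma$, the coefficient of $\sigma$ in $s_x$ is locally constant in $x\in\sigma(\Delta_q)$. It is therefore constant on this connected set.

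This defines a locally finite chain $\xi$ all of whose simplices lie in $|s|$, so $\xi$ is finite. It is allowable, because the coefficient of a face $\tau$ in $\partial\xi$ (resp.\ $\hat{\partial}\xi$) can be read off from a local representative near any point of $\tau(\Delta_{q-1})$. It maps to $s$ stalkwise.

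\textbf{Your boundary remark is wrong.} You say no boundary compatibility is needed, but membership in ${}_{(\GM)}\IC_{p,q}(X,Q)$ requires the simplices of the boundary to be allowable, in both the GM and non-GM settings. Your trimming step is harmless for a different reason. The retained simplices meet a neighbourhood $W$ of $K$ on which the lift represents $s$, and $s$ vanishes on $W\smallsetminus K$. By connectedness the retained simplices lie in $K$, while the discarded ones miss $W$. So no boundary cancellation between the two groups is destroyed.
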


In the non-$\GM$ case, let $\XX$ be a filtration on $X$ and let $n=\dim(X)$. 
We assume that $X_{n}\colonequals X^{n}\smallsetminus X^{n-1}$ is an $n$-dimensional tropical manifold and dense in $X$. 
In this setting, we call $(X,X_{n})$ an $n$-dimensional tropical manifold with singularities. 
In order to use the generalized Deligne sheaf theory developed in \cite{Friedman_2010},  
we introduce a condition $(C)$ on a filtration $\XX$ (see Definition \ref{def:semilocgoodratpolyspace}). 
Under the condition $(C)$, we construct a tropical Deligne sheaf (see \textsection\ref{subsec:deligne}). 
As in the classical case,  a Poincar\'e duality for intersection (co)homology can be proved using the tropical Deligne sheaf. 
The next theorem is a main result of this paper. 

\begin{thm}[$=$Theorem \ref{thm:genpoin}, Poincar\'e duality]\label{thm:intropoin}
    Assume that $Q$ is a field. Let $(X,\XX)$ be an  $n$-dimensional  filtered rational polyhedral space with a face structure. 
    Suppose that $(X,X_n)$ is an $n$-dimensional tropical manifold with singularities and $\XX$ satisfies $(C)$.  
    Let $\perv$ be a perversity  on $(X,\XX)$. 
    Then there exists an isomorphism 
    \begin{equation*}
        \IH^{n-p,n-q}(X,\XX,Q)\cong \IHd_{p,q}^{\BM}(X,\XX,Q)
    \end{equation*}
    between the tropical intersection cohomology and the tropical intersection Borel--Moore homology. 
\end{thm}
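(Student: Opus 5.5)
We will prove the duality by translating both sides into hypercohomology of sheaves on $X$ and then using the tropical Deligne sheaf together with Verdier duality, following Friedman's generalized Deligne sheaf approach in \cite{Friedman_2010}. By Proposition \ref{prop:introsheaf} and its analogues for the other theories, the tropical intersection Borel--Moore homology $\IHd^{\BM}_{p,q}(X,\XX,Q)$ is the cohomology of $\Gamma(\Ideltad^{p,-\bullet})$, the full global sections of the singular intersection chain sheaf. The sheaf $\Ideltad^{p,\bullet}$ is homotopically fine, because subdivision and cap-off constructions still work with the coefficient systems ${\bf F}^Q_p(\sigma)$. Hence $\IHd^{\BM}_{p,q}\cong \Hyper^{-q}(X;\Ideltad^{p,\bullet})$. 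In the same way, the cochain version gives $\IH^{n-p,n-q}(X,\XX,Q)\cong \Hyper^{n-q}(X;\IC\text{-cochain sheaf})$.

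The central step is to identify the sheaf $\Ideltad^{p,\bullet}$, up to a shift by $n$, with a tropical Deligne sheaf $\mathcal{P}^\bullet_{\perd}$ built from the local system $\mathbf{F}_p$ on $X_n$. Since $X_n$ is a tropical manifold, $\mathbf{F}_p$ is locally constant there. The Deligne sheaf is constructed by iterated truncated pushforward
\[
\mathcal{P}^\bullet=\tau_{\le \perd(S)}Ri_{*}\cdots,
\]
taken stratum by stratum; we use condition $(C)$ here to guarantee that the constructions of \S\ref{subsec:deligne} apply. To identify the two sheaves, we verify Friedman's axioms stratum by stratum:
\begin{enumerate}
\item Over $X_n$, the chain sheaf restricts to the tropical homology sheaf. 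Tropical Poincar\'e duality for tropical manifolds, available from \cite{MR3961331} and \cite{MR3894860}, identifies this restriction with $\mathbf{F}_p$, suitably shifted.
\item The chain sheaf is constructible with respect to $\XX$, and it vanishes in the degrees that the truncation requires.
\item The attaching maps are quasi-isomorphisms in the allowed degrees.
\end{enumerate}
Items 2 and 3 come from local computations: at a point of a singular stratum, a distinguished neighborhood is of the form $\R^k\times cL$. We apply the non-$\GM$ cone formula (Proposition \ref{prop:conenon}), together with a K\"unneth or stabilization argument for the $\R^k$ factor, to read off the stalk cohomology. This stalk cohomology is $\IHd$ of the link, truncated at $\perd(S)$.

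By the same method, the cochain sheaf for $\perv$ with bidegree $(n-p,\cdot)$ is a Deligne sheaf for $\perv$ built on the dual local system $\mathbf{F}^{n-p}=\Hom(\mathbf{F}_{n-p},Q)$. Over the field $Q$ we have $\mathbf{F}_p\cong \Hom(\mathbf{F}_{n-p},Q)\otimes\mathcal{O}$, a fiberwise wedge-product duality on the smooth part. Friedman's theorem says that the Verdier dual $\DD\mathcal{P}^\bullet_{\perv,\mathcal{E}}[-n]$ is quasi-isomorphic to $\mathcal{P}^\bullet_{\perd,\mathcal{E}^\vee}$; this needs $Q$ to be a field and the complementary perversity $\perd$. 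Taking hypercohomology and applying $\Hyper^{-q}(X;\DD\mathcal{A})\cong\Hom(\Hyper_c^{q}(X;\mathcal{A}),Q)$ gives the isomorphism. An alternative route is to match the truncations directly to $\IH^{n-p,n-q}$ via the cochain sheaf, which would avoid the intermediate Hom-dual step.

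The main obstacle is the local stalk computation at singular strata. The coefficients ${\bf F}_p(\sigma)$ vary with the face, so the cone formula must be checked to give exactly the truncation, with the correct shift, expected by the Deligne axioms; this is the point where condition $(C)$ has to enter. A second, more subtle point is the identification on $X_n$ of the chain sheaf with the correctly dualized multi-tangent local system, including the bookkeeping of the two shifts $n-p$ and $n-q$.
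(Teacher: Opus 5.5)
Your overall architecture is the paper's: identify the chain sheaf with a Deligne-type sheaf through axioms, dualize that sheaf, then combine $\Hyper^{\bullet}(X;\DD\mathcal{A})\cong\Hom(\Hyper_c^{-\bullet}(X;\mathcal{A}),Q)$ with universal coefficients over the field. However, the base of your construction is wrong, and the duality theorem you quote does not apply.

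The claim that ${\bf F}_p$ is locally constant on the tropical manifold $X_n$ is false. $X_n$ is only locally of the form $\T^s\times B$ with $B$ a matroidal fan, and multi-tangents jump at cone points. Take the three-ray fan in $\R^2$, which is a tropical manifold, so the theorem applies with the trivial filtration. There ${\bf F}_1=Q$ on the rays but ${\bf F}_1(\{0\})=Q^2$. This has three consequences:
\begin{enumerate}
\item $X_n$ need not be a topological manifold.
\item Your ``fiberwise wedge-product duality'' ${\bf F}_p\cong\Hom({\bf F}_{n-p},Q)\otimes\mathcal{O}$ fails away from facets: at the vertex with $n=p=1$ you get $Q^2$ versus $Q$.
\item Your item 1 misidentifies the restriction of the chain sheaf. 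In fact $\Ideltad^{p,\bullet}|_{X_n}[-n]$ is $\Omega^{n-p}_{X_n}$, the sheaf of tropical $(n-p)$-forms. At finite points its stalks are the dual multi-tangents $\Hom({\bf F}_{n-p}(\sigma),Q)$, not ${\bf F}_p$. At the vertex of the three-ray fan the stalk is $Q$ for $p=1$ and $Q^2$ for $p=0$.
\end{enumerate}
Friedman's theorem $\DD\mathcal{P}^\bullet_{\perv,\mathcal{E}}[-n]\cong\mathcal{P}^\bullet_{\perd,\mathcal{E}^\vee}$ presupposes a local system on a manifold top stratum, so it cannot be invoked here. Nor can you refine the filtration to reach that situation, because $\Omega^{n-p}$ is not a truncated pushforward from the open facets. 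On the three-ray fan, $\tau_{\le m}Ri_*$ of the constant sheaf on the open rays has stalk $Q^3$ or $0$ at the vertex, while $\Omega^0$ and $\Omega^1$ have stalks $Q$ and $Q^2$.

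The missing ingredient is the sheaf-level tropical Poincar\'e--Verdier duality of \cite{Gross_2023} on $X_n$:
\begin{itemize}
\item $\Ideltad^{p,\bullet}_{X_n}[-n]\cong\Omega^{n-p}_{X_n}$, and
\item $(\DD_{X_n}\Omega^{n-p}_{X_n})[-n]\cong\Omega^{p}_{X_n}$.
\end{itemize}
This is where the face structure is used. The global duality of \cite{MR3894860}, applied to small neighbourhoods, only gives stalk values, not a sheaf isomorphism compatible with restrictions.

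With $\Omega^{n-p}_{X_n}$ as the base, you must re-prove the duality of Deligne sheaves rather than quote it. That means checking directly that $(\DD_X\Q^{\bullet}_{\perv,p})[-n]$ satisfies the axioms for $(\perd,n-p)$:
\begin{itemize}
\item On $X_n$, use the duality just mentioned.
\item Elsewhere, use $\Hyper^{\bullet}((\DD_X\Q^{\bullet}_{\perv,p})[-n]|_U)\cong\Hom(\IH_{p,\bullet}(U,Q),Q)$ for open $U$.
\item On a distinguished neighbourhood $N\cong\R^{n-k}\times cL$, use the non-$\BM$ cone formula: vanishing for $q>\perd(S)$, and $\IH_{p,q}(N)\cong\IH_{p,q}(N\smallsetminus S)$ for $q\le\perd(S)$. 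Condition $(C)$ is what ensures $\codim(S)=k$ here.
\item Use that these groups are constant as $N$ shrinks.
\end{itemize}
This is what Theorem \ref{thm:gencomppoin} does.

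Finally, your side claim that $\IH^{n-p,n-q}$ is the hypercohomology of an intersection cochain sheaf is neither established nor needed. Over a field, universal coefficients give $\IH^{n-p,n-q}(X,Q)\cong\Hom(\IH_{n-p,n-q}(X,Q),Q)$ directly.
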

Using Theorem \ref{thm:intropoin}, we obtain bilinear pairings similar to the cup and cap products (see \eqref{eq:gencup} and \eqref{eq:gencap}).  

We note that, as in the classical $\GM$ intersection homology, the Poincar\'e duality does not hold for the $\GM$ tropical intersection (co)homology. 
For example, the Bergman fan induced from the matroid $U_3^1$ gives such an example (see \textsection\ref{subsec:counter}). 

We give some examples for the tropical intersection homologies. 
If $X$ is a $1$-dimensional polyhedral space, we can easily compute the tropical intersection homology: 
\begin{align*}
    \IH_{p,q}(X,\XX_X^{\text{trop}},Q)\cong
    \begin{cases}
        Q^{\oplus a} &\text{if } q=1,\text{ }p=0,1,\\
        Q^{\oplus b} &\text{if } q=0,\text{ }p=0,1,\\
        0 &\text{otherwise}, 
    \end{cases}
\end{align*}
where $a$ and $b$ are the number of edges of $X$ satisfying some conditions related to the perversity (see \textsection\ref{subsubsec:comp}). 

In the $\GM$ case, 
we have 
\[_{\GM}\IH_{p,q}(X,\XX_X^{\text{trop}},Q)\cong\HH_{p,q}(X\smallsetminus  V_{\perv},Q),\]
where $V_{\perv}$ is a set defined by the perversity and $\HH_{p,q}(X\smallsetminus  V_{\perv},Q)$ is the usual tropical homology (see \textsection\ref{subsubsec:comp}). 

We give another example. 
Let $X$  be a compact $1$-dimensional rational polyhedral space. 
Let $(X,U)$ be a tropical manifold with singularities. 
We define the filtration $\XX$ as follows:
\[\XX\colon X\supset X\smallsetminus U \supset \emptyset. \]
With a perversity $\perv$ such that there exists $m \in \ZZ$ satisfying $\perv(\{v\})=m$ for any $v\in X\smallsetminus U$, the homology is computed as 
\begin{align*}
    \IH_{p,q}(X,\XX,Q)\cong
    \begin{cases}
        \HH_{p,q}(U,Q) &\text{if } m<0,\\
        \HH_{p,q}^{\BM}(U,Q) &\text{if } m\geq 0 
    \end{cases}
\end{align*} 
(see \textsection\ref{subsubsec:tropmfd}). 
Further, if $Q$ is a field, then we see $\IH^{1-p,1-q}(X,\XX,Q)\cong \IHd_{p,q}^{\BM}(X,\XX,Q)$ by direct computation, which verifies Theorem \ref{thm:intropoin} in this case.

\subsection{Background and related works}
Intersection homology was first introduced by Goresky and MacPherson in \cite{GORESKY1980135} in order to generalize some salient features of manifold theory to spaces with singularities. 
They used the language of piecewise linear (PL) chain complexes and worked over PL pseudomanifolds without codimension-one strata. The intersection homology was soon reformulated in terms of sheaves 
in derived categories, as suggested by Deligne \cite{MR696691}, where the functorial machinery of derived categories could be applied. 
King \cite{KING1985149} introduced the singular chain intersection homology, and 
Friedman and McClure \cite{MR3046315} defined the singular chain intersection 
homology with possibly codimension-one strata. 
Friedman introduced two types of singular intersection homologies: the GM intersection homology and the non-GM intersection homology (see \cite{Friedman_2020}). 
The GM and non-GM intersection homology groups coincide under 
the classical assumption (that is, without codimension-one strata and with an adequate perversity), but 
they differ in general. Furthermore, Friedman and McClure \cite{MR3046315} proved the 
K\"unneth formula and the Poincar\'e duality for the non-GM intersection 
homology, while these formulas do not generally hold for the GM intersection homology. 
The textbook by Maxim \cite{Maxim_2019} covers the theory of intersection (co)homology from the view point of algebraic geometry. 
The textbook by Friedman \cite{Friedman_2020} treats the general theory of intersection (co)homology. 

Tropical geometry is a relatively new field at the intersection of
algebraic geometry and combinatorics. 
Itenberg, Mikhalkin, Katzarkov, and Zharkov \cite{MR3961331} defined the tropical
homology for certain polyhedral complexes in tropical projective space.
Jell, Shaw, and Smacka \cite{MR3903579} and Jell, Rau, and Shaw \cite{MR3894860} generalized
this definition to rational polyhedral spaces with face 
structures and proved the Poincar\'e duality. Gross and Shokrieh \cite{Gross_2023}
provided an alternative definition of the tropical homology using sheaves
and proved the Poincar\'e-Verdier duality among others. 

Our work is related to that of Mikami \cite{mikami2025} in that both papers focus on intersection (co)homology in tropical geometry. 
The (co)homology in \cite{mikami2025} is defined on tropical toric varieties, and  a tropical analogue of the Deligne sheaf of the middle perversity is considered. 
Furthermore, this (co)homology satisfies a Poincar\'e duality.  
On the other hand, our approach is simpler, and the definitions do not coincide in general cases. We define the (co)homology, more generally, on rational polyhedral spaces with filtrations and we allow various perversities. 
Properties of our (co)homology depend on perversities. 
If we are able to construct some mezzoperversity sheaf similar to \cite{MR3284392}, then we might be able to find a relation between our homology and homology in \cite{mikami2025}.

\subsection{Structure of the paper}
In \textsection\ref{sec:def}, we recall the theory of rational polyhedral spaces following \cite{MR3330789,MR3894860,Gross_2023} and the theory of singular intersection homology following \cite{Friedman_2020}. 
In \textsection\ref{sec:property}, we first show that the singular tropical intersection homology satisfies basic properties analogous to those of the ordinary singular intersection homology (cf. \cite{Friedman_2020}). 
We then introduce a sheaf-theoretic definition of the tropical intersection homology. 
In \textsection\ref{sec:non-GMmfd}, we show a Poincar\'e duality for the non-$\GM$ case by using the generalized Deligne sheaf theory from \cite{Friedman_2010}. 
In \textsection\ref{sec:ex}, we present several examples and computations. 
In \textsection\ref{subsec:counter}, we provide a counterexample to the Poincar\'e duality in the $\GM$ case similar to that in the ordinary singular intersection homology. 
In \textsection\ref{subsec:onedim}, we compute tropical intersection homology groups for the $1$-dimensional case. 
In \textsection\ref{subsec:filtsing}, we give examples of tropical intersection homology groups using various filtrations. 

\medskip
\noindent
{\bf{Acknowledgement.}}
The author would like to thank Shu Kawaguchi and Ryota Mikami for helpful advice. 

\section{Definitions}\label{sec:def}
\renewcommand{\thethm}{\thesection.\arabic{thm}}
\subsection{Preliminaries}\label{subsec:prelim}
\subsubsection{Tropical geometry}

We recall the notion of a rational polyhedral space. For details, we refer to \cite{MR3330789,MR3894860,Gross_2023}. We set $\T=[-\infty,\infty)$ and equip $\T$ with Euclidean topology. 
We equip the product $\T^r$ with the product topology. For $I \subset [r] \colonequals \{1,2,\cdots,r\}$, we set $\R^r_I\colonequals$ 
$\{x \in \T^r \mid x_i=-\infty \text{ if and only if }  i\in I\}$ and $T^r_I \colonequals \overline{\R^r_I}$. 
For $x\in \T^r$, we define the \emph{sedentarity} of $x$ by $\sed(x)\colonequals\{i\in[r]\mid x_i=-\infty\}$. 
In other words, we take a unique $\R^r_I$ such that $x\in \R^r_I$, and we define $\sed(x)\colonequals I$. 
If $U\subset \R^r_I$, then we similarly define $\sed(U)$ by $\sed(U)\colonequals I$. 

We define a \emph{rational polyhedron} in $\R^r$ to be $\{x\in \R^r\mid (w_1,x)\leq c_1,\cdots,(w_k,x)\leq c_k\}$ for some $w_i \in \Z^r$,  
$c_i \in \R^r$, and $k\geq 1$. A \emph{face} of a rational polyhedron $\sigma$ is obtained by turning some  inequalities defining $\sigma$ to equalities. 
If $\tau$ is a face of $\sigma$, then we write $\tau\prec\sigma$. 

We define a \emph{rational polyhedron} in $\T^r$ to be the closure of a rational polyhedron in $\R^r_I$ for some 
$I \subset [r]$. A face of a rational polyhedron $\sigma$ in $\T^r$ is the closure of the face of $\sigma\cap\R^r_J$ in $\T^r$ for $J\subset [r]$. 
Moreover, the relative interior of $\sigma$, denoted by $\relint(\sigma)$, is the set consisting of points in $\sigma$ which is not in a proper face. For rational polyhedra $\sigma,\tau$ in $\T^r$, if $\tau$ is a face of $\sigma$, then we also write $\tau\prec\sigma$.
A \emph{rational polyhedral complex} $\CC$ in $\T^r$ is a locally finite collection of polyhedra in $\T^r$ satisfying the following properties:
\begin{enumerate}
    \item If $\sigma \in \CC$ and $\tau \prec \sigma$, then $ \tau \in \CC$.
    \item If $\sigma, \sigma' \in \CC$ and $\sigma \cap \sigma'\neq \emptyset$, then $\sigma\cap\sigma'$ is a face of both $\sigma$ and $\sigma'$.
\end{enumerate}
The \emph{support} of a rational polyhedral complex $\CC$ in $\T^r$  is defined by $\bigcup_{\sigma\in\CC}\sigma$ and is denoted by $\vert \CC \vert$. 
If  $X = |\CC|$ for some rational polyhedral complex $\CC$, then $X$ is called a \emph{rational polyhedral subspace} of $\T^r$. 
For $\sigma\in\CC$, we define $d(\sigma)$ to be the dimension of the affine span of $\relint(\sigma)$. 
For a rational polyhedral subspace $X=|\CC|$, if there exists $n\in\NN$ such that $d(\sigma)\leq n$ for any $\sigma\in\CC$, we say that $X$ is \emph{at most $n$-dimensional}.  

Let $M\subset\T^m$ and $N\subset\T^n$ be any subsets. We say that a map $f \colon M \to N$ is an \emph{extended affine $\Z$-linear map} if it is continuous 
and $f(x) = Ax +b$ for some $A \in \text{Mat}(n \times m, \Z)$,
$b \in \R^n$ for all $x \in M$. Here we use the convention that $0\cdot (-\infty)=0$. 
If $N=\R$, we call an extended affine $\Z$-linear map $f \colon M \to \R$ an \emph{integral affine $\ZZ$-linear function} on a subset $X\subset \T^n$. 
For every subset $X\subset \T^n$, the integral affine $\ZZ$-linear functions
on open subsets of $X$ define a sheaf of abelian groups on $X$, denoted by $\Aff_X$. 

We use the notion of a \emph{matroidal fan} in \cite[Definition 2.22]{MR3032930}. 
A matroidal fan in $\R^r$ is a fan in $\R^r$ which is canonically constructed from a matroid. 
Since we do not use the construction in this paper, we omit the details and  refer the readers to \cite{MR3032930}. 
Here we only note that $\RR^n$ is the support of an $n$-dimensional matroidal fan and that star-shaped sets are the supports of $1$-dimensional matroidal fans. 
Here, a star-shaped set is the set of the form $\bigcup_{0\leq i \leq n}\R_{\geq 0} \mathbf{e}_i\subset \R^{n+1}/\R\mathbf{1}$ for $n>0$, 
where $\mathbf{1}$ is the vector whose coordinates are all $1$ and $\mathbf{e}_i$ is the $i$-th standard basis vector.

\begin{defn}[Rational polyhedral space {\cite[Definitions 2.1 and 5.1]{MR3894860}}]
\label{def:polyhedralspace}
A \emph{rational polyhedral space} $X$ is a paracompact, second countable Hausdorff topological space with an atlas of charts 
$(\varphi_{\alpha} \colon U_{\alpha} \rightarrow \Omega_{\alpha} \subset \T^{r_\alpha})_{{\alpha} \in A}$ such that:
\begin{itemize}
\item[\rm (1)]
The $U_{\alpha}$ are open subsets of $X$, the $\Omega_{\alpha}$ are open subsets of rational polyhedral subspaces  
$X_{\alpha}\subset\T^{r_{\alpha}}$, and the maps 
$\varphi_{\alpha} \colon U_{\alpha} \rightarrow \Omega_{\alpha}$ are homeomorphisms for all $\alpha$; 
\item[\rm (2)]
for all $\alpha, \beta \in A$, the transition maps
\begin{align*}
\varphi_{\alpha}\circ \varphi^{-1}_ \beta\colon \varphi_ \beta(U_{\alpha} \cap U_ \beta) \rightarrow {\varphi_\alpha(U_{\alpha} \cap U_ \beta)}
\end{align*}
are  extended affine $\Z$-linear maps;
\item [\rm (3)]
there exists $N\in\NN$ such that $X_{\alpha}$ is at most $N$-dimensional for any $\alpha\in A$.
\end{itemize} 
Furthermore, if $X_{\alpha}=\T^{s_{\alpha}}\times B_{\alpha}$ for the support $B_{\alpha}$ of some matroidal fan in $\R^{r_{\alpha}-s_{\alpha}}$ for any $\alpha\in A$ in the condition (1), we call $X$ a \emph{tropical manifold}. 
\end{defn} 
\begin{rem}
    A chart $(\varphi\colon U \rightarrow \Omega\subset \T^r)$ is \emph{compatible} with an atlas $(\varphi_{\alpha} \colon U_{\alpha} \rightarrow \Omega_{\alpha} \subset \T^{r_\alpha})_{{\alpha} \in A}$ if their union is again an atlas. 
    We may enlarge the given atlas whenever necessary so that it contains compatible charts. 
\end{rem}
Let $U$ be an open subset of a rational polyhedral space $X$. Then we may regard $U$ as a rational polyhedral space by the restriction of the atlas of $X$. 
For a rational polyhedral space $X$, we define the sheaf $\Aff_X$ by gluing $\Aff_{\Omega_\alpha}$. 
For a rational polyhedral space $X$, let $S^X$ be the set consisting of a point $x\in X$ such that $x$ have $n_x$-dimensional affine neighborhood. 
Then we define $d(X)$ to be the $\max_{x\in S^X} n_x$. 
If there exists $n\in\NN$ such that $n_x=n$ for any $x\in S^X$, we say that $X$ is \emph{pure $n$-dimensional}. 

A morphism between two rational polyhedral spaces $X$ and $Y$ is a continuous map 
$f:X\to Y$ such that if we take atlases $\{\varphi_\alpha^X\}_{\alpha \in A}$ and $\{\varphi_\beta^Y\}_{\beta \in B}$ of $X$ and $Y$, respectively,
then the restriction to each chart $\varphi_\beta^Y \circ f\circ \varphi^X_\alpha {}^{-1} :\varphi_\alpha^X(U^X_\alpha\cap f^{-1}(U^Y_\beta)) \to \varphi^Y_\beta (U^Y_\beta)$ is an extended affine $\Z$-linear map.

We define an \emph{open rational polyhedron} in $\T^r$ to be a connected open subset of a polyhedron in $\T^r$. 
Let $\sigma$ be an open rational polyhedron in $\T^r$. 
We take a rational polyhedron $P$ and an open subset $U$ in $\mathbb{T}^r$ such that $\sigma$ is a connected component of $P\cap U$. 
We say that a subset $\tau$ of $\sigma$ is a \emph{face} of $\sigma$ if there exists a face $Q$ of $P$ such that $\tau$ is a connected component of $Q\cap U$. 
In this case, we write $\tau\prec \sigma$. 
One can show whether $\tau\prec \sigma$ or not does not depend on the choice of $P$ and $U$ for which $\sigma$ is a connected component of $P\cap U$ (see Remark \ref{rem:openprec}). 
An \emph{open rational polyhedral complex} $\CC$ in $\T^r$ is a locally finite collection of open rational polyhedra in $\T^r$ satisfying the same conditions as in the definition of a rational polyhedral complex in $\T^r$.  
For an open rational polyhedron $\sigma$, $\relint(\sigma)$ is the set of points  that are not contained in any $\tau\prec\sigma$. 
We define $d(\sigma)$ to be the dimension of the affine span of $\relint(\sigma)$, and 
we define $\sed(\sigma)$ by $\sed(\sigma)\colonequals \sed(\relint(\sigma))$. 
We denote by $\CC_I$ the union of polyhedra $\sigma\in \CC$ such that $\relint(\sigma)\subset \R^r_I$. 
We set $\sigma_I\colonequals\sigma\cap\R^r_I$ for an open rational polyhedron $\sigma$.

\begin{defn}[(Open) face structure {\cite[Definition 2.2]{MR3894860}}]\label{def:facestructure}
Let $X$ be a rational polyhedral space.
A \emph{face structure} (resp. an \emph{open face structure})
on $X$ is a locally finite family $\CC$ of closed subsets of $X$ such that the following conditions hold:
\begin{itemize}
\item[\rm (1)]
$X = \bigcup_{\sigma \in \CC} \sigma$;
\item[\rm (2)]
for each $\sigma$, there exists a chart $\varphi_\sigma \colon U_\sigma \rightarrow \Omega_\sigma \subset \T^{r_\sigma}$
such that if we set $\overline{\St}(\sigma)\colonequals\{\tau\in \CC \mid \text{there exists } \sigma'\in\CC \text{ such that }\tau\cap\sigma'\neq\emptyset \text{ and }\sigma\cap\sigma'\neq\emptyset \} \subset U_\sigma$, 
then $\{\varphi_\sigma(\tau) \mid \tau \in \overline{\St}(\sigma)\}$ is a rational polyhedral complex (resp. an open rational polyhedral complex) 
in $\T^{r_\sigma}$.
\end{itemize}
\end{defn}

Let $X$ be a rational polyhedral space with an open face structure $\CC$. 
Any open subset $U\subset X$ is also a rational polyhedral space and has an open face structure $\CC|_U$ induced by $\CC$.  
Let $\CC$ and $\CC'$ be two open face structures on $X$. 
We denote by $\CC\cap\CC'$ the family of closed sets consisting of $\overline{(\sigma_{\sed(\sigma)}\cap \sigma'_{\sed(\sigma')})}$ for $\sigma\in\CC$ and $\sigma'\in\CC'$ with $\sigma_{\sed(\sigma)}\cap \sigma'_{\sed(\sigma')}\neq\emptyset$, 
where the closure is taken in $X$ above. 
Note that this family $\CC\cap \CC'$ may not be an open face structure. 
\begin{rem}\label{rem:openprec}
    Let $\sigma$ in $\T^r$ be an open rational polyhedron. For $i=1,2$, we take a rational polyhedron $P_i$ and an open subset $U_i$ in $\T^r$ such that $\sigma$ is a connected component of $P_i\cap U_i$. 
    Let $Q_1$ be a face of $P_1$ and let $\tau\subset \sigma$ be a connected component of $Q_1\cap U_1$. Then there exists a face $Q_2$ of $P_2$ such that $\tau$ is a connected component of $Q_2\cap U_2$. 
    For simplicity, we show the existence of such a face $Q_2$ in the case $\sed(Q_1)=\sed(P_1)$ and $\sigma=P_1\cap U_1=P_2\cap U_2$. 
    In this case, it suffices to show that there exists a face $Q_2$ such that $Q_1\cap U_1\cap \RR^r_{\sed(P_1)}=Q_2\cap U_2\cap \RR^r_{\sed(P_1)}$. 
    Recall that $(Q_1)_{\sed(P_1)}=Q_1\cap\RR^r_{\sed(P_1)}$ is obtained by turning some inequalities defining $(P_1)_{\sed(P_1)}$ to equalities. 
    Let $\{f\}$ denote the inequalities such that $(Q_1)_{\sed(P_1)}$ is obtained by turning these to equalities as above. 
    Since  $\sigma=P_1\cap U_1=P_2\cap U_2$, $\{f\}$ also can be seen as some inequalities defining $(P_2)_{\sed(P_1)}$. 
    By turning $\{f\}$ to equalities, we obtain a face $Q'_2$ of $(P_2)_{\sed(P_1)}$. 
    Let $Q_2$ be the closure of $Q'_2$ in $\T^r$. 
    This rational polyhedron $Q_2$ is a face of $P_2$ which satisfies $Q_1\cap U_1\cap \RR^r_{\sed(P_1)}=Q_2\cap U_2\cap \RR^r_{\sed(P_1)}$. 
\end{rem}

\subsubsection{Filtered space}
We introduce the basic terminology of filtered spaces. See \cite{Borel_1984} and \cite{Friedman_2020} for details. 
\begin{defn}[Filtered space {\cite[Definition 2.2.1]{Friedman_2020}}]\label{def:filteredspace}
    A \emph{filtered space} $(X,\XX)$ is a Hausdorff topological space $X$ with a filtration $\XX$ by closed sets: 
    \[
    \XX \colon X = X^{n} \supset X^{n-1} \supset \cdots \supset X^0 \supset X^{-1} = \emptyset. 
    \]
    We call $n$ in the above filtration the \emph{formal dimension} of $(X,\XX)$, and denote it by $\dim((X,\XX))$ or $\dim(X)$.    
\end{defn}
If $\XX$ is clear from the context, we omit $\XX$ and denote $(X,\XX)$ by $X$. 
Let $\SS$ denote the set of connected components of $X_i \colonequals X^i \smallsetminus X^{i-1}$ for all $i$. We call an element in $\SS$ a \emph{stratum} of $X$, and
call a stratum in $X_{\dim(X)}$ a \emph{regular stratum}. We define the \emph{singular locus} $\S_X$ of $X$ by  $\S_X \colonequals X^{\dim(X)-1}$. 
We call a stratum in $\S_X$ a \emph{singular stratum}. 
The \emph{formal dimension} of $S\in\SS$, denoted by $\dim(S)$, is the integer $i$ such that $S\subset X_i$.  
The \emph{formal codimension} of $S\in\SS$ is defined by $\codim(S)\colonequals \dim(X)-\dim(S)$. 
For a filtered space $(X,\XX)$, any open subset $U$ of $X$ has the natural filtration $\XX|_U$ obtained by the restriction of the filtration $\XX$ to $U$, and we say that $(U,\XX|_U)$ is a \emph{filtered open subspace} of $(X,\XX)$. 

\begin{defn}[Stratified map, stratified homeomorphism {\cite[Definition 2.3.2]{Friedman_2020}}]\label{def:stratifiedmap}
    Given filtered spaces $X$ and $Y$, and a continuous map $f \colon X \to Y$, we say that $f$ is a \emph{stratified map} if for each stratum $S \subset X$, 
    there is a stratum $T\subset Y$ satisfying $f(S)\subset T$. Note that if such $T$ exists, then $T$ is uniquely determined by $S$. 
    Furthermore, if $f$ is bijective, the inverse map $f^{-1}\colon Y\to X$ is a stratified map, and $f$ preserves the formal dimensions, 
    then we say that $f$ is a \emph{stratified homeomorphism}. 
    If $X$ and $Y$ are stratified-homeomorphic, then we write $X\cong Y$. 
\end{defn}
\begin{defn}[Cones]
    Let $X$ be a topological space. 
    An \emph{open cone} $cX$ for $X$ is defined to be $[0,1)\times X/(\{0\}\times X)$. 
    Let $v$ denote the point corresponding to $\{0\}\times X$ in $cX$. 
    We call $v$ the \emph{vertex} of $cX$. 
    Further, let $(X,\XX)$ be a filtered space. 
    Following the notation in Definition \ref{def:filteredspace}, we define the filtration of $cX$ such that $(cX)^i\colonequals c(X^{i-1})\subset cX$ for $i>0$ and $(cX)^0=\{v\}$. 
\end{defn}

Let $X$ be a topological space. 
A \emph{closed cone} $\bar{c}X$ for $X$ is defined to be $[0,1]\times X/(\{0\}\times X)$. 

\begin{defn}[CS set {\cite[Definition 2.3.1]{Friedman_2020}}]\label{def:CS}
    A \emph{CS set} is a filtered space $(X,\XX)$ such that:
    \begin{enumerate}[label=(CS\arabic*)]
        \item\label{itm:cs1} Each stratum in $X_i$ is an $i$-dimensional topological manifold.
        \item\label{itm:cs2} For each $i$ and each $x\in X_i$, there exist an open neighborhood $U$ of $x$ in $X_i$, 
        a neighborhood $N$ of $x$ in $X$, a compact filtered space $L$ (which may be empty), and a stratified homeomorphism 
        $h:U\times cL \to N$ such that $h(U\times c(L^k))=X^{i+k}\cap N$.
    \end{enumerate}
\end{defn}
Here a CS set stands for a \emph{locally conelike topological stratified set}. 
For a point $x$ in a CS set $X$, we call a neighborhood $N$ as described above a \emph{distinguished neighborhood}.

To define an intersection homology, we need the notion of a perversity. 

\begin{defn}\label{def:perversity}
    A \emph{perversity} on a filtered space $X$ is a map $\perv \colon \SS \rightarrow \Z$ which is zero on regular strata.
\end{defn}
For example, for any function $f \colon \Z_{\geq 0}\to \Z$ such that $f(0)=0$, we can define a perversity $\perv \colon \SS \to \ZZ$ by $\perv(S)=f(\codim(S))$ for any $S\in\SS$.  
We denote by $\perz$ the perversity such that $\perv(S)=0$ for any $S\in \SS$.

\subsubsection{Filtered space in tropical geometry}
We introduce filtrations that arise naturally in tropical geometry. 
\begin{example}[Filtration induced by a rational polyhedral space]
    Let $X$ be a rational polyhedral space.
    Then $X$ has the natural filtration $\XX_X^{\text{trop}}$ as follows:
    \begin{equation}\label{eq:canstrat}
        \XX_X^{\text{trop}} \colon X = X^{d(X)} \supset X^{d(X)-1} \supset \cdots \supset X^0 \supset X^{-1} = \emptyset,
    \end{equation}
    where $X^i$ is obtained from $X^{i+1}$ by removing points which have an $(i+1)$-dimensional affine neighborhood. 
\end{example}
A rational polyhedral space is pure $n$-dimensional if and only if $X_{d(X)}\colonequals X^{d(X)}\smallsetminus X^{d(X)-1}$ for the filtration $\XX_X^{\text{trop}}$ is dense in $X$. 

We define a \emph{filtered rational polyhedral space} to be the data $(X,\XX)$ of a rational polyhedral space $X$ and a filtration $\XX$. 
Let $(X,\XX)$ be a filtered rational polyhedral space and let $\CC$ be any open face structure on $X$. Then we say that  
$\CC$ is \emph{$\XX$-stratified} if for any $\sigma\in \CC$, there exists $S_{\sigma}\in\SS$ satisfying:
\begin{enumerate}
    \item $\relint(\sigma)\subset S_{\sigma}$ and $d(\sigma)\leq\dim(S_{\sigma})$;
    \item if $\tau,\sigma\in \CC$ satisfies $\tau\prec\sigma$, then $\dim(S_{\tau})\leq\dim(S_{\sigma})$. 
\end{enumerate} 
Note that for each $\sigma\in\CC$, $S_{\sigma}\in\SS$ is uniquely determined. 
Since for $\sigma \in\CC$, $\relint(\sigma)$ is an open subset of $d(\sigma)$-dimensional affine space, $\relint(\sigma)$ is contained in some $d(\sigma)$-dimensional stratum of $\XX^{\text{trop}}_X$. 
Therefore, any open face structure $\CC$ is $\XX_X^{\text{trop}}$-stratified. 
\begin{defn}[Filtered rational polyhedral space with an (open) face structure]\label{ex:CS}
    We define a \emph{filtered rational polyhedral space with a face structure (resp. an open face structure)} to be the data $(X,\XX,\CC)$ of a filtered rational polyhedral space $(X,\XX)$ and an $\XX$-stratified face structure $\CC$ (resp. open face structure). 
\end{defn}
For $(X,\XX,\CC)$ above, 
We often say that $(X,\XX)$ is a filtered rational polyhedral space with a face structure (resp. an open face structure) $\CC$. 
We define the dimension of a filtered rational polyhedral space with an (open) face structure $(X,\XX,\CC)$ to be the dimension $\dim((X,\XX))$ of the filtered space $(X,\XX)$. 
\begin{example}[Filtration induced by a open face structure]
    Let $X$ be a rational polyhedral space and let $\CC$ be an open face structure on $X$. 
    We define the filtration $\XX_X^{\CC}$ to be the filtration such that $X^i$ is the union of $\sigma\in\CC$ with $d(\sigma)\leq i$. 
    Note that $\CC$ is $\XX^{\CC}_X$-stratified. 
\end{example}
For another open face structure $\D$ on a rational polyhedral space $X$, we say that $\D$ is $\CC$-stratified 
if $\D$ is $\XX_X^{\CC}$-stratified. 
\begin{example}[Filtration induced by two open face structures]
    Let $X$ be a rational polyhedral space and let $\CC$ and $\CC'$ be open face structures on $X$. 
    We define the filtration $\XX_X^{\CC\cap \CC'}$ to be the filtration such that $X^i$ is the union of $\sigma\in \CC\cap \CC'$ with $d(\sigma)\leq i$. 
    Note that the identity map $X\to X$ is a stratified map from $(X,\XX_X^{\CC\cap \CC'})$ to $(X,\XX_X^{\CC})$ or $(X,\XX_X^{\CC'})$. 
\end{example}

Let $K$ be a locally finite simplicial complex. 
We denote by $|K|$ the topological space obtained by gluing simplices of $K$ along their face. 
The dimension of a simplex $\sigma$ in $K$ is denoted by $d(\sigma)$. 
A \emph{triangulation} $\mathcal{T}$ on a topological space $X$ is a pair $(K,h)$ of a locally finite simplicial complex $K$ and a homeomorphism 
$h\colon |K|\to X$.  
\begin{example}[Filtration induced by a triangulation]\label{ex:triangulation}
    Let $\mathcal{T}=(K,h)$ be a triangulation on a topological space $X$. 
    Suppose that $\max_{\sigma\in K}d(\sigma)<\infty$. 
    Then we define the $\max_{\sigma\in K}d(\sigma)$-dimensional filtration $\XX_X^{\mathcal{T}}$ to be the filtration such that $X^i$ is the union of $h(\sigma)$ with $\sigma \in K$ and $d(\sigma)\leq i$.   
    By \cite[Lemma 2.5.17]{Friedman_2020}, the filtered space $(X,\XX_X^{\mathcal{T}})$ is a CS set. 
\end{example}
We say that a triangulation $\mathcal{T}$ of the underlying space $X$ of a filtered space $(X,\XX)$ is \emph{$\XX$-stratified} if the identity map $X\to X$ is a stratified map from $(X,\XX_X^{\mathcal{T}})$ to $(X,\XX)$. 
Let $X$ be a rational polyhedral space with an open face structure $\CC$. 
If $\mathcal{T}=(K,h)$ is $\XX^{\CC}_X$-stratified, then $\max_{\sigma\in K}d(\sigma)<\infty$ is satisfied. 
Therefore, $(X,\XX_X^{\mathcal{T}})$ is a CS set.

From these filtrations, we obtain the following. 
\begin{proposition}\label{prop:tropfaceCS}
    Let $X$ be a rational polyhedral space with an open face structure $\CC$. 
    Then $X$  is a CS set with filtration $\XX_X^{\CC}$.
\end{proposition}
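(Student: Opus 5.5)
The plan is to reduce to Example \ref{ex:triangulation}: construct a triangulation $\mathcal{T}=(K,h)$ of $X$ that is compatible with $\CC$, and then show that the filtrations $\XX_X^{\mathcal{T}}$ and $\XX_X^{\CC}$ differ only in a way that does not affect the CS property. The cleaner route is to prove (CS1) and (CS2) for $\XX_X^{\CC}$ directly, using the local polyhedral structure. A triangulation subordinate to $\CC$ is used only to produce compact links.

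First I check the stratum structure. By definition, $X^i\smallsetminus X^{i-1}$ is the union of the sets $\relint(\sigma)$ with $d(\sigma)=i$. Local finiteness of $\CC$, together with condition (2) of Definition \ref{def:facestructure}, shows that these relative interiors are disjoint and relatively open in $X_i$. Each $\relint(\sigma)$ is homeomorphic, via $\varphi_\sigma$, to an open subset of an $i$-dimensional affine subspace of $\R^{r}_{\sed(\sigma)}$. Hence every connected component of $X_i$ is an $i$-manifold, which gives (CS1). One point needs care: two relative interiors of the same dimension cannot accumulate on each other inside $X_i$, because the closure of $\relint(\sigma)$ meets other cells only in faces of strictly smaller dimension.

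For (CS2), I fix $x\in\relint(\sigma)$ with $d(\sigma)=i$ and work in the chart $\varphi_\sigma$. There $\overline{\St}(\sigma)$ becomes an open rational polyhedral complex in $\T^{r}$, and every cell containing $x$ has $\sigma$ as a face. I translate within the stratum $\R^r_I$, where $I=\sed(\sigma)$. Near $x$, each cell containing $\sigma$ agrees locally with a polyhedral cone with apex along the affine span $A$ of $\sigma$. If sedentarity changes near $x$, I first apply the standard local model $\T^k\cong[-\infty,\infty)^k$, which identifies a neighbourhood of a point of $\R^r_I$ in $\T^r_{\emptyset}$ with a product of $\R^{r-|I|}$ and a neighbourhood of $(-\infty,\dots,-\infty)$ in $\T^{|I|}$. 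The latter is itself conelike: its link is a simplex, and the simplex is filtered compatibly by sedentarity. This gives a neighbourhood $N\cong U\times C$ of $x$, where $U\subset A$ is an open ball and $C$ is a (possibly extended) polyhedral fan in a complementary direction. Let $L$ be the intersection of $C$ with a small sphere, or with a small boundary of the box in the extended coordinates. Then $L$ is a compact polyhedral set. It is filtered by the cells it meets, so that $L^k$ is the union of the intersections of $L$ with cells of dimension at most $i+k+1$. Radial projection gives a stratified homeomorphism $C\cong cL$ sending $c(L^k)$ to the union of the cells of dimension at most $i+k+1$, and therefore $h(U\times c(L^k))=X^{i+k}\cap N$.

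The main obstacle is this conelike step at points of nonzero sedentarity, where the cells are closures of polyhedra in $\T^r$ and are not honest cones. I would handle it by triangulating $\overline{\St}(\sigma)$ so that $\CC$ is subdivided; such triangulations exist for rational polyhedral complexes in $\T^r$. With $\mathcal{T}$ chosen $\XX_X^{\CC}$-stratified, the star of $x$ in a subdivision of $\mathcal{T}$ gives the product-with-cone decomposition, as in \cite[Lemma 2.5.17]{Friedman_2020}. The filtration induced on the link is then coarsened from $\XX^{\mathcal{T}}$ to $\XX^{\CC}$, and coarsening by unions of strata preserves the conical form. Finally, these local statements are compatible with transition maps, since extended affine $\Z$-linear maps preserve the dimension of cells. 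This gives (CS2) globally and proves that $(X,\XX_X^{\CC})$ is a CS set.
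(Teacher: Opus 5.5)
At the step you single out as the real difficulty, you end up on the paper's route: a triangulation $\mathcal{T}$ subordinate to $\CC$, then \cite[Lemma 2.5.17]{Friedman_2020} applied to $\XX^{\mathcal{T}}_X$, then coarsening to $\XX^{\CC}_X$. Your radial-projection argument is fine at points where every cell through $x$ has the sedentarity of $\sigma$, but it becomes redundant once the triangulation is available.

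The coarsening step, as you state it, has a genuine gap: coarsening by unions of strata does not in general preserve the conical form. Suppose $x$ lies in the interior of a simplex $s$ of $\mathcal{T}$ with $\dim s=j<i=d(\sigma)$. This happens in particular if you subdivide so that $x$ becomes a vertex and use its star. Friedman's lemma then gives $N\cong\R^j\times c(\mathrm{lk}(s))$ with $\R^j\times\{v\}$ a stratum. After coarsening, this set is absorbed into the $i$-dimensional stratum $\relint(\sigma)$, so the decomposition is not of the form $U\times cL'$ with $U$ open in $X_i$. A triangulated $\R^2$ with the trivial filtration, at a vertex, already shows this.

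What is missing is the condition the paper imposes explicitly: the dimensions of the strata of $\XX^{\CC}_X$ and $\XX^{\mathcal{T}}$ containing $x$ must coincide. That is, $\mathcal{T}$ must be chosen, depending on $x$, so that $x$ lies in the interior of an $i$-simplex $s$. With that choice, each $X^j_{\CC}$ is a subcomplex, so it meets the open star $\mathring{s}\times c(\mathrm{lk}(s))$ of $s$ in $\mathring{s}\times c(L_j)$. Here $L_j$ is the link of $s$ in $X^j_{\CC}$, which can be strictly smaller than $X^j_{\CC}\cap\mathrm{lk}(s)$. Also $L_i=\emptyset$, since no simplex of $X^i_{\CC}$ properly contains $s$. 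This is exactly the paper's $L^k=\pi(h^{-1}(X^{i+k+1}_{\XX^{\CC}_X}\cap N)\smallsetminus\{v\})$.

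Second, you only assert that a $\CC$-subordinate triangulation exists near points of nonzero sedentarity, and your whole argument there rests on it. The paper proves it as follows:
\begin{enumerate}
\item It shrinks the chart to $B\cap\Omega$ for a small closed cube $B\subset\T^r$ around $x$, so that all cells are compact.
\item It uses that each compact cell satisfies $(\sigma,\relint(\sigma))\cong(D^{k+1},\mathring{D}^{k+1})$ by \cite[Example A.4]{Gross_2023}.
\item It triangulates skeleton by skeleton, coning off the already triangulated $\sigma\smallsetminus\relint(\sigma)$.
\end{enumerate}
This gives a topological rather than linear triangulation, which is all Friedman's lemma needs. The freedom in choosing these disc homeomorphisms is also what lets one put $x$ in the interior of a top-dimensional simplex of its cell, as required above. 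With these two points supplied, your argument coincides with the paper's.
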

\begin{proof}
    Since we can easily verify the condition \ref{itm:cs1} in Definition \ref{def:CS},  
    it suffices to show the condition \ref{itm:cs2} in Definition \ref{def:CS}. 
    Let $x$ be any point in $X$. 
    We choose a chart $\varphi\colon U\to \Omega \subset \T^r$ of $U$ containing $x$. 
    Then we choose a sufficiently small closed cube $B$ in $\T^r$ such that 
    $B\cap \Omega$ is a rational polyhedral subspace of $\T^r$ and $x\in \relint(B)$. 
    Set $U=\relint(B)\cap \Omega$. 
    Then there exists a triangulation $\mathcal{T}$ on $B\cap \Omega$ such that 
    $\XX_{B\cap \Omega}^{\mathcal{T}}|_U$ is $\XX^{\CC}_X|_U$-stratified and 
    the dimensions of the strata of $\XX_X^{\CC}$ and $\XX_{B\cap \Omega}^{\mathcal{T}}$ containing $x$ coincide. 
    We denote this dimension by $i$. 
    We can construct such a triangulation $\mathcal{T}$ by the induction as follows: 
    By the above construction, we have a rational polyhedral complex $\CC$ in $\T^r$ such that $|\CC|=B\cap \Omega$. 
    We consider the filtration in Example \ref{ex:CS} for $B\cap \Omega$ and $\CC$, 
    and denote by $(B\cap \Omega)^i$ the closed set in the filtration. 
    Since $(B\cap \Omega)^0$ is a discrete set, there exist the triangulation on $(B\cap \Omega)^0$ such that the points of $(B\cap \Omega)^0$ correspond to $0$-simplices. 

    Assume that we construct a triangulation on $(B\cap \Omega)^k$. 
    We will construct a triangulation on $(B\cap \Omega)^{k+1}$. 
    Let $\sigma\in \CC$ be a $(k+1)$-dimensional rational polyhedron. 
    Let $D^{k+1}$ and $\mathring{D}^{k+1}$ be the closed unit disc and the open unit disc in $\R^{k+1}$. 
    Since $\sigma$ is compact, by \cite[Example A.4]{Gross_2023}, the pair of topological space $(\sigma,\relint(\sigma))$ is homeomorphic to $(D^{k+1},\mathring{D}^{k+1})$. 
    Moreover, if $x$ is in $\sigma$, we may suppose that $x$ dees not correspond to the origin of $D^{k+1}$ by the homeomorphism by replacing the homeomorphism appropriately. 
    By the assumption of the induction, $\sigma \smallsetminus \relint(\sigma)$ is triangulated. 
    Since we may regard $\sigma$ as the closed cone of $\sigma \smallsetminus \relint(\sigma)$ by the above homeomorphism,  
    the triangulation on $\sigma \smallsetminus \relint(\sigma)$ induces a triangulation on $\sigma$ by considering the closed cone of each simplex of the triangulation on $\sigma \smallsetminus \relint(\sigma)$. 
    By triangulating each $(k+1)$-dimensional rational polyhedron $\sigma\in\CC$ in this way, we obtain a desired triangulation on $(B\cap \Omega)^{k+1}$. 
    As a result, we obtain a desired triangulation $\mathcal{T}$ on $B\cap \Omega$ by the induction. 

    By Example \ref{ex:triangulation}, the filtered space $(U,\XX_{B\cap \Omega}^{\mathcal{T}}|_U)$ is a CS set and therefore, there is an open neighborhood $N$ of $x$ that is stratified-homeomorphic to $\R^i\times cL$ via $h$ as in Definition \ref{def:CS}. 
    Let $v\in cL$ be the vertex of $cL$ and let $\pi\colon \R^i\times (cL\smallsetminus \{v\})\to L$ be the projection  from $\R^i\times (cL\smallsetminus\{v\})=(\R^i\times (0,1))\times L$ to $L$. 
    As $\XX_{B\cap\Omega}^{\mathcal{T}}|_U$ is $\XX_X^{\CC}|_U$-stratified, we can construct a new filtration on $L$ such that $L^k\colonequals \pi(h^{-1}(X^{i+k+1}_{\XX_X^{\CC}}\cap N)\smallsetminus\{v\})$ for $k\geq0$. 
    Then $\R^i\times cL$ with the filtration induced from this new filtration is stratified-homeomorphic to $N$ with the filtration induced by $\XX_X^{\CC}$. 
    Thus the condition \ref{itm:cs2} is shown. 
\end{proof}
In the same way as above, we obtain the following. 
\begin{proposition}\label{prop:tropfacesCS}
    Let $X$ be a rational polyhedral space and let  $\CC$ and $\CC'$ are open face structures on $X$. 
    Then $X$  is a CS set with filtration $\XX_X^{\CC\cap \CC'}$.
\end{proposition}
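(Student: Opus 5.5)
The argument runs parallel to the proof of Proposition \ref{prop:tropfaceCS}, with the open face structure $\CC$ replaced by the family $\CC\cap\CC'$. Condition \ref{itm:cs1} is again local. Each element of $\CC\cap\CC'$ is the closure of $\sigma_{\sed(\sigma)}\cap\sigma'_{\sed(\sigma')}$. In a chart around a point, $\relint(\sigma)$ and $\relint(\sigma')$ are relatively open pieces of affine subspaces in the same $\R^r_I$. Their intersection is therefore a relatively open piece of an affine space of dimension $d$, which is a $d$-manifold. Consequently $X_i$ is locally a disjoint union of such pieces, using the local finiteness of both $\CC$ and $\CC'$, and so it is an $i$-manifold. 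The work is in \ref{itm:cs2}.

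Fix $x\in X$. First choose a chart $\varphi\colon U\to\Omega\subset\T^r$ containing $x$ that is simultaneously adapted to both structures. Concretely, take a chart for $\CC$ at a face containing $x$ and one for $\CC'$. By the compatibility of transition maps, which are extended affine $\Z$-linear, we may enlarge the atlas and pass to a single chart on a smaller neighborhood where the images of the relevant faces of $\CC$ and of $\CC'$ are both open rational polyhedra. Then take a small closed cube $B$ with $x\in\relint(B)$. In $\T^r$, the finitely many polyhedra from $\CC$ and from $\CC'$ meeting $B$, cut by $B$, admit a common rational polyhedral complex refinement $\mathcal{E}$ with $|\mathcal{E}|=B\cap\Omega$. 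This is the standard fact that two polyhedral complexes with the same support have a common refinement by pairwise intersections of cells, which are again rational polyhedra. Every cell of $\CC\cap\CC'$ near $x$ is then a union of relative interiors of cells of $\mathcal{E}$.

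Now run the inductive coning construction from the proof of Proposition \ref{prop:tropfaceCS} on $\mathcal{E}$. This gives a triangulation $\mathcal{T}$ of $B\cap\Omega$ such that $\XX^{\mathcal{T}}|_U$ is $\XX^{\CC\cap\CC'}_X|_U$-stratified. As there, we arrange the cone points so that the stratum of $\mathcal{T}$ through $x$ has the same dimension $i$ as the stratum of $\XX_X^{\CC\cap\CC'}$ through $x$. For this we choose, as before, the cone point of the cell of $\mathcal{E}$ containing $x$ away from $x$, and note that the cell of $\mathcal{E}$ containing $x$ in its relative interior lies in the $(\CC\cap\CC')$-cell of the same dimension near $x$. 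By Example \ref{ex:triangulation}, $(U,\XX^{\mathcal{T}}|_U)$ is a CS set. This yields $h\colon\R^i\times cL\to N$. We then define the filtration $L^k\colonequals\pi(h^{-1}(X^{i+k+1}_{\XX_X^{\CC\cap\CC'}}\cap N)\smallsetminus\{v\})$ exactly as before, and conclude that $h$ is a stratified homeomorphism for $\XX_X^{\CC\cap\CC'}$.

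The main obstacle is the final step. For the coarser filtration to pull back to a product-with-cone filtration, the sets $X^{i+k}\cap N$ must be unions of strata of $\mathcal{T}$ that are invariant under the cone structure. With the dimension-matching at $x$, this holds because each $X^j$ is a subcomplex of $\mathcal{T}$. We also need the dimension $d$ of the $(\CC\cap\CC')$-cell containing $x$ to really be the local dimension at $x$. I would check this by noting that near $x$ that cell is an open subset of a $d$-dimensional affine piece, so no lower-dimensional cell passes through $x$ inside it. A secondary point is that $\CC\cap\CC'$ need not itself be an open face structure, so the common refinement $\mathcal{E}$ must be built in the chart rather than quoted.
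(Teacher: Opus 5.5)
Your proposal is correct and follows the paper's route: the paper's entire proof is that the argument for Proposition \ref{prop:tropfaceCS} carries over, and your common rational refinement $\mathcal{E}$ of $\CC$ and $\CC'$ inside a single chart, followed by the same coning triangulation and the same definition of $L^k$, is exactly what that amounts to. One small tightening, where the paper uses the same wording: choosing the cone point merely away from $x$ only keeps $x$ off the vertices of $\mathcal{T}$, so to get the dimension match you should choose it generically, so that $x$ lies in the relative interior of a top-dimensional simplex of the coned cell.
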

Finally, we introduce the notion of a tropical filtration. 
\begin{defn}[Tropical filtration]\label{def:tropfilter}
    Let $\mathcal{A}$ be the subcategory of rational polyhedral spaces whose morphisms are isomorphisms; in other words, let $\mathcal{A}$ be the core of the category of rational polyhedral spaces.  
    Moreover, let $\mathcal{B}$ be the category of filtered spaces whose morphisms are stratified maps preserving formal codimensions. 
    We define functors $p\colon \mathcal{A} \to \mathbf{Top}$ and $q\colon \mathcal{B} \to \mathbf{Top}$ to be the forgetful functors to the category of topological spaces. 
    A \emph{tropical filtration} is a functor $\mathcal{F}\colon \mathcal{A} \to\mathcal{B}$ satisfying $p=q\circ \mathcal{F}$. 
\end{defn} 
\begin{example}
The natural filtration \eqref{eq:canstrat} defines a tropical filtration $X\mapsto (X,\XX^{\text{trop}}_X)$. 
\end{example}
\begin{example}\label{ex:anothertropfil}
    Let $X$ be a rational polyhedral space, and 
    let $n=\dim((X,\XX^{\text{trop}}_X))$. 
    Let $U\subset X$ be the open subset of $X$ consisting of all the point that has a neighborhood which is an $n$-dimensional tropical manifold. 
    We define the $n$-dimensional filtration $\XX'_X$ on $X$ such that $X^n_{\XX'_X}\colonequals X$ and $X^k_{\XX'_X} \colonequals (X\smallsetminus U)\cap X^k_{\XX^{\text{trop}}_X}$. 
    Such a construction defines a tropical filtration $X\mapsto (X,\XX'_X)$. 
\end{example}

\subsubsection{Other preliminaries}

We introduce tools for CS sets. 
\begin{defn}\label{def:subcone}
    Let $L$ be a compact filtered space. 
    Let $cL=[0,1)\times L/ \{0\}\times L$ be the open cone over $L$ with vertex $v$. 
    For $I\subset [0,1)$, we define $L_I$  by the image under the quotient map $I\times L \to cL$. 
\end{defn}
For example, $L_{[0,1)}=cL$ and $L_{\{0\}}=v$. 

Let $X$ be a paracompact and separable CS set. 
In the following claim, we use the category $O_{X}$ whose objects are open subsets
    of $X$ and whose morphisms are inclusions, and the category $Q \text{-}\mathbf{Mod}_{*}$ which is the category of graded $Q$-modules for some fixed ring $Q$. 
\begin{prop}[Mayer-Vietoris argument {\cite[Proposition 13.2]{MR3807751}}]\label{prop:argument}
    Suppose that $X$ is a paracompact and separable CS set. 
    Let $F_{\bullet}$ and $\,G_{\bullet}\colon O_{X}\to Q \text{-}\mathbf{Mod}_{*}$
    be two functors and
    $\Phi\colon F_{\bullet}\to G_{\bullet}$ a natural transformation satisfying
    the conditions listed
    below.
    \begin{enumerate}[label=(MV\arabic*)]
    \item\label{itm:mv1} The functors $F_{\bullet}$ and $G_{\bullet}$ admit exact Mayer-Vietoris exact sequences, i.e. if $U,V$ are filtered open subspaces of $X$ (see the paragraph before Definition \ref{def:stratifiedmap}), 
    then there is an exact sequence
    \[
    \cdots\to F_i(U\cap V)\to F_i(U)\oplus F_i(V) \to F_i(U\cup V)\to F_{i-1}(U\cap V)\to\cdots,
    \]
    and similarly for $G_*$, such that the natural transformation $\Phi$ 
    induces a commutative diagram between these sequences.
    \item\label{itm:mv2} If $\{U_{\alpha}\}$ is a disjoint collection of open subsets of $X$  and $\Phi\colon F_{\bullet}(U_{\alpha})\to G_{\bullet}(U_{\alpha})$ is an isomorphism for each $\alpha$, then $\Phi\colon F_{\bullet}(\bigsqcup_{\alpha}U_{\alpha})\to G_{\bullet}(\bigsqcup_{\alpha}U_{\alpha})$  is an isomorphism.
    \item\label{itm:mv3} Any open subset $N\subset X$ which is stratified-homeomorphic to $\R^i\times cL$ for some compact filtered space $L$ satisfies the following condition:   
        \begin{itemize}
            \item For the vertex $v$ of $cL$,  if  $\Phi\colon F_{\bullet}(\R^i\times cL\smallsetminus \{v\})\to G_{\bullet}(\R^i\times cL\smallsetminus \{v\})$
                    is an isomorphism, then so is
                    $\Phi\colon F_{\bullet}(\R^i\times cL)\to G_{\bullet}(\R^i\times cL)$.   
        \end{itemize} 
    \item\label{itm:mv4} If $U$ is an open subset of $X$ contained within a single stratum and homeomorphic
    to a Euclidean space $\R^n$ or $\emptyset$, then $\Phi\colon F_{\bullet}(U)\to G_{\bullet}(U)$ is an isomorphism.
    \end{enumerate}
    Then $\Phi\colon F_{\bullet}(X)\to G_{\bullet}(X)$ is an isomorphism.
\end{prop}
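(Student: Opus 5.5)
This is the Mayer--Vietoris argument of Friedman, cited from \cite[Proposition 13.2]{MR3807751}. I would prove it by the standard bootstrapping over open subsets of $X$, in stages of increasing generality.

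\emph{Step 1: Euclidean opens inside a stratum.} These are handled by \ref{itm:mv4}. By \ref{itm:mv1} and the five lemma, the isomorphism property passes from $U$, $V$ and $U\cap V$ to $U\cup V$. Combining this with \ref{itm:mv2} and the usual induction over finite unions of convex sets, then over countable increasing unions handled as a union of two disjoint unions (the telescoping trick for locally finite covers), I would get that $\Phi$ is an isomorphism on every open subset of a single stratum. Each stratum is a manifold, paracompact and second countable since $X$ is.

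\emph{Step 2: distinguished neighborhoods.} I would induct on the depth of the strata. For a distinguished neighborhood $N\cong\R^i\times cL$, the complement $N\smallsetminus(\R^i\times\{v\})\cong \R^i\times(0,1)\times L$ has strictly smaller depth. Formally, I would induct on the number of strata of $cL$ meeting the neighborhood, or on formal dimension of the singular set. So $\Phi$ is an isomorphism there by the inductive hypothesis, applied to opens of lower depth. Condition \ref{itm:mv3} is stated with $\{v\}$ but means the cone-point stratum $\R^i\times\{v\}$. It then upgrades this to $N$ itself.

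\emph{Step 3: arbitrary opens.} An arbitrary open $W\subset X$ is covered by distinguished neighborhoods, and finite intersections of these are again opens of the same kind. I would make this precise by inducting on the maximal depth of points of $W$, and proving the statement for all opens contained in $X\smallsetminus X^{k}$ by descending $k$. For $W\subset X\smallsetminus X^{k-1}$, the set $W\cap X_k$ is a manifold. I would cover it by Euclidean balls $U_\alpha$ in $X_k$ with distinguished neighborhoods $N_\alpha\cong U_\alpha\times cL_\alpha$. Intersections $N_\alpha\cap N_\beta$ are of the form $(U_\alpha\cap U_\beta)\times cL$ union pieces away from $X_k$. Running the Step 1 manifold argument on $W\cap X_k$, with the functors $U\mapsto F(U\times cL)$, together with the already-known case $W\smallsetminus X_k$ and one more Mayer--Vietoris, would give $W$.

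The main obstacle is Step 3: organizing the cover so that the five-lemma and countable-union arguments apply. Intersections of distinguished neighborhoods are not distinguished, so the induction must be phrased via the auxiliary functors on the manifold $X_k$. Paracompactness and separability are what make the countable-union step work.
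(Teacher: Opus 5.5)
The paper gives no proof of its own. It cites \cite[Proposition 13.2]{MR3807751} and remarks that the same proof works for $O_X$. Your Steps 1 and 2 are the standard ingredients. Step 3, which you yourself flag as the main obstacle, has a genuine gap.

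\textbf{Where Step 3 fails.} The functor $U\mapsto F(U\times cL)$ on open subsets of the manifold $W\cap X_k$ does not exist.
\begin{itemize}
\item $U\times cL$ only makes sense for $U$ inside a single chart $U_\alpha$.
\item The links $L_\alpha$ may differ from chart to chart, and there is no global product (tubular) structure of $W$ along $W\cap X_k$.
\item Since $O_X$ has only inclusions as morphisms, you cannot transport isomorphisms between different product charts.
\end{itemize}
The natural global substitute is $V\mapsto F\bigl(V\cup(W\smallsetminus X_k)\bigr)$ for $V\subset W\cap X_k$ open. It does satisfy (MV1), because it commutes with unions and intersections, and it would work under an increasing-union axiom. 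But it does not send disjoint unions to disjoint unions: every such set contains $W\smallsetminus X_k$. So (MV2) says nothing about it, and the odd/even trick from your Step 1 cannot be run on $W\cap X_k$. As written, the countable step of Step 3 fails, and this is exactly where the disjoint-union hypothesis forces a different organisation.

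\textbf{How to close it.} Run the manifold argument only inside one chart, and globalise on $W$ rather than on $W\cap X_k$.

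\emph{Inside one chart.} Let $N=h(U\times cL)\subset W$ be a distinguished chart with $U\cong\R^k$. Let $O\subset N$ be open with $O\cap X_k=h(V\times\{v\})$. Using compactness of $L$ and a partition of unity, choose a continuous $\epsilon\colon V\to(0,1)$ with $E_V\subset O$, where for open $V'\subset V$ we set $E_{V'}:=h(\{(u,[t,\ell]) : u\in V',\ t<\epsilon(u)\})$.
\begin{itemize}
\item The assignment $V'\mapsto E_{V'}$ preserves unions, intersections and disjointness.
\item For $V'\cong\R^k$, the set $E_{V'}$ is an open subset of $X$ stratified homeomorphic to $\R^k\times cL$, with $E_{V'}\smallsetminus X_k\subset X\smallsetminus X^k$. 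So (MV3) together with your inductive hypothesis applies to it directly.
\item Your Step 1 argument, applied to the functor $V'\mapsto F(E_{V'})$, gives the isomorphism on $E_V$.
\item One Mayer--Vietoris sequence for $O=E_V\cup(O\smallsetminus X_k)$ then gives the isomorphism on $O$.
\end{itemize}

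\emph{Globalising on $W$.} Let $\mathcal G$ be the class of open $O\subset W$ such that $\Phi$ is an isomorphism on every open subset of $O$.
\begin{itemize}
\item $\mathcal G$ is closed under finite unions: apply (MV1) to $P=(P\cap O_1)\cup(P\cap O_2)$.
\item $\mathcal G$ is closed under disjoint unions, by (MV2).
\item $\mathcal G$ contains $W\smallsetminus X_k$ (by induction) and every distinguished chart (by the previous step).
\end{itemize}
Take an exhaustion $K_1\subset\mathrm{int}K_2\subset K_2\subset\cdots$ of $W$ by compact sets; this is where paracompactness and separability enter. Put $B_j=\mathrm{int}K_{j+1}\smallsetminus K_{j-1}$. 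Each $B_j$ has compact closure, so it lies in a finite union of members of $\mathcal G$ and hence belongs to $\mathcal G$. Moreover $B_i\cap B_j=\emptyset$ for $|i-j|\ge 2$. Therefore $W=\bigl(\bigsqcup_{j\ \mathrm{odd}}B_j\bigr)\cup\bigl(\bigsqcup_{j\ \mathrm{even}}B_j\bigr)$ belongs to $\mathcal G$.
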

\begin{rem}
    In \cite[Proposition 13.2]{MR3807751}, Proposition \ref{prop:argument} was stated for graded abelian groups and the category whose objects are (stratified-homeomorphic to) open subsets
    of a given paracompact and separable CS set $X$ and whose morphisms are  stratified homeomorphisms and inclusions. 
    However, Proposition \ref{prop:argument} holds with the same proof.
\end{rem}

For later arguments, we introduce the notion of a family of supports. 
Let $Q_X$ be the constant sheaf on $X$ associated to a ring $Q$. 
We call a sheaf of $Q$-modules on $X$ a $Q_X$-module. 
We denote the interior of a topological space $X$ by $\mathrm{int}(X)$. 
\begin{defn}[{\cite[I Definition 6.1]{Bredon_1997}}]\label{def:famofsupp}
    Let $X$ be a topological space. A \emph{family of supports} on $X$ is a family $\Phi$ of closed subsets of $X$ such that:
    \begin{enumerate}
        \item any closed subset of a member of $\Phi$ is a member of $\Phi$;
        \item $\Phi$ is closed under finite unions. 
    \end{enumerate}    
        A family $\Phi$ is said to be \emph{paracompactifying family of supports} if in addition:
    \begin{enumerate}
        \setcounter{enumi}{2}
        \item each element of $\Phi$ is paracompact;
        \item for each $K\in \Phi$, there exists $L\in \Phi$ such that $K\subset \mathrm{int}(L)$. 
    \end{enumerate}
\end{defn}
We denote by $c$ the family of supports consisting of all the compact subsets of $X$. 
Let $X$ be a rational polyhedral space. 
Then $c$ is a paracompactifying family of supports. 
The family of all the  closed subset of $X$ is also a paracompactifying family of supports. 
For a (pre)sheaf $\Q$ of $Q$-modules on $X$ and a family of supports $\Phi$ on $X$, we denote by $\Gamma_\Phi(\Q)$ the global section of $\Q$ whose support is in $\Phi$. 
If $\Phi$ is the family of supports consisting of all the closed subset of $X$, we omit $\Phi$ and simply denote $\Gamma_\Phi$ by $\Gamma$. 

We introduce basic notions for derived categories. For details on derived categories, we refer to \cite{MR1074006,MR2182076,MR932640}. 
Let $\DD(Q_X)$ denote the unbounded derived category of $Q_X$-modules, and 
$\DD(Q)$ denote the unbounded derived category of $Q$-modules. 
We write $\DD^b$ and $\DD^+$ for the bounded derived category and the bounded below derived category, respectively. 
For $\Gamma_\Phi$, we define the right derived functor $R\Gamma_\Phi \colon \DD(Q_X)\to\DD(Q)$. 
Then, for $\Q\in \DD(Q_X)$, we define the \emph{hypercohomology} by $$\Hyper_{\Phi}^q(\Q)\colonequals \HH^q(R\Gamma_\Phi(\Q)).$$ 
If $\Phi$ is the family of supports consisting of all the closed subset of $X$, we omit $\Phi$ and simply denote $\Hyper^q_{\Phi}(\Q)$ by $\Hyper^q(\Q)$. 
For a rational polyhedral space $X$ with an open face structure,  we can define the Verdier dualizing functor $\DD_X\colon \DD^b(Q_X)\to \DD^b(Q_X)$. 
\begin{rem}\label{rem:Verdier}
    We do not go into the details of the existence of the Verdier dual functor, but it can be verified roughly as follows: 

    We note that the cohomological dimensions $\dim_{\ZZ}(X)$ and $\dim_{Q}(X)$ satisfy $\dim_{\ZZ}(X)\leq \dim(X)$ and $\dim_{Q}(X)\leq \dim(X)$ and 
    that a Dedekind domain $Q$ has a finite global dimension (see \cite[Lemma 6.3.46]{Friedman_2020}, \cite[II Definitions 16.3 and 16.15]{Bredon_1997}, and \cite[Theorem C-5.128]{MR3677125}). 
Then the assumption in \cite[Definition 3.1.16]{MR1074006} to define the Verdier dualizing functor is satisfied. 
\end{rem}

\subsection{Tropical intersection (co)homology}\label{subsec:tropinthom}

In the following, we follow the notation in the post-published version of \cite{MR3894860}. 
Let  $\CC$ be  an open  rational polyhedral complex in $\TT^r$. 
Let $I\subset [r]$. 
For a face  $\sigma \in \CC_I$,  denote by $\mathbb{L} (\sigma) \subset \R^r_I$
the subspace generated by the vectors in $\R^r_I$ tangent to $\sigma$; in other words, $\mathbb{L} (\sigma)$ is the parallel transport to the origin of $\R^r_I$ of the affine span of $\sigma$. 
(For the definition of $\CC_I$, see the paragraph above Definition \ref{def:facestructure}.)
Set $\mathbb{L}_{\Z} (\sigma) = \mathbb{L} (\sigma) \cap \Z^r_I \subset \Z^r_I$.

\begin{defn}[Multi-tangent {\cite[Definition 2.3]{MR3894860}}]
\label{def:multitangent}
 Let $p$ be a non-negative integer.  For $\sigma \in \CC_I$,
the  $p$-th \emph{integral multi-tangent} 
    of $\CC$ at $\sigma$ is the $\Z$-module
\[{\bf F}^{\Z}_p(\sigma) =  \left(\sum \limits_{ \sigma' \in \CC_I: \sigma \prec \sigma'} \bigwedge^p \Linear(\sigma')\right) \cap \bigwedge^p \Z^r_I.\]
If  $\tau$ is a face of $\sigma$, then there are natural  maps 
\begin{align*}
\imap \colon {\bf F}^{\Z}_p(\sigma) \to {\bf F}^{\Z}_p(\tau). 
\end{align*}
\end{defn}
For  any Dedekind domain $Q$ with $\Z \subset Q \subset \R$, define the $Q$-module ${\bf F}^Q_{p}(\sigma)$ by 
${\bf F}^Q_{p}(\sigma) \colonequals { \bf F}^{\Z}_{p}(\sigma) \otimes Q$. 
Let $\imap^Q\colon {\bf F}^{Q}_p(\sigma) \to {\bf F}^{Q}_p(\tau)$ be the homomorphism of $Q$-modules induced by $\imap$.   

\begin{rem}
    For any $\sigma\in \CC$, ${\bf F}^Q_{0}(\sigma)=Q$.  
\end{rem}

In the following, we always consider a filtered rational polyhedral space $(X,\XX)$ with an open face structure $\CC$. (Here, we do not assume that $\XX=\XX_X^{\text{trop}}$.) 
In this case, the $Q$-module ${\bf F}^{Q}_p(\sigma)$ 
and the map $\imap^Q$ 
are well-defined for any $\tau,\sigma \in \CC$ with $\tau\prec \sigma$ and any chart. 
These are independent of the choice of a chart up to isomorphism. 

Let $q$ be a non-negative integer. We let $\Delta_q\subset \R^q$ denote an abstract $q$-dimensional simplex defined by the convex hull of the following points:
\[
v_0=(0,\cdots,0), v_1=(1,0,\cdots,0),\cdots,v_q=(0,\cdots,0,1)\in \R^q. 
\] 
We write $\Delta_q=[v_0,\cdots,v_q]$. The \emph{k-skeleton} of $\Delta_q$ is the union of faces of $\Delta_q$ of dimension at most $k$. 
By definition, if $k\geq q$, then the $k$-skeleton of $\Delta_q$ is equal to $\Delta_q$.

A \emph{$\CC$-stratified $q$-simplex} in $X$ is a continuous map $\delta \colon \Delta_q\to X$
such that
\begin{itemize}
\item 
for each face $\Delta' \subset \Delta_q$, there exists $\tau\in\CC$ such that $\delta(\relint(\Delta')) \subset \relint(\tau)$. 
\end{itemize} 

Let $\delta_j=\delta|_{[v_0,\ldots,\hat{v_j},\ldots,v_q ]}$  for $\Delta_q=[v_0,\ldots,v_q]$. 
For $\tau\in \CC$, let $_{\GM}\HC_q(\tau)$ denote the abelian group generated by $\CC$-stratified $q$-simplices 
$\delta \colon \Delta_q \to X$ such that $\delta(\relint(\Delta_q)) \subset \relint(\tau)$, with the ordinary boundary map $\partial\colon \HC_q(\tau)\to \HC_{q-1}(\tau)$ as follows:
\begin{equation*}
   \partial\delta=\sum_{0\leq j\leq q}(-1)^j\delta_j. 
\end{equation*}

Let $\S_X$ be the singular locus (see the paragraph after Definition \ref{def:filteredspace}). 
For $\tau \in \CC$, let $\HC_q(\tau)$ denote the abelian group generated by $\CC$-stratified $q$-simplices 
$\delta \colon \Delta_q \to X$ such that $\delta(\relint(\Delta_q)) \subset \relint(\tau)$ and $\delta(\relint(\Delta_q))\not\subset \S_X$. 
We define the boundary map $\hat{\partial}\colon \HC_q(\tau)\to \HC_{q-1}(\tau)$ as follows:
\begin{equation*}
    \hat{\partial}\delta=\sum_{\substack{\mathrm{Im}(\delta_j)\not\subset \S_X,\\ 0\leq j\leq q}}(-1)^j\delta_j. 
\end{equation*}
As in  \cite[Lemma 6.2.3]{Friedman_2020}, $\hat{\partial}\hat{\partial}=0$. 
Indeed, $\hat{\partial}\delta-\partial\delta$ is a linear combination of simplices in $\S_X$. 
Then $\hat{\partial} \hat{\partial}\delta-\partial \partial\delta$ is also a linear combination of simplices in $\S_X$. 
Since $\partial \partial\delta=0$ holds, $\hat{\partial} \hat{\partial}\delta$ is a linear combination of simplices in $\S_X$. 
By definition, each coefficient of such simplices is zero. 
Therefore, $\hat{\partial} \hat{\partial}=0$.

\begin{defn}\label{def:tropchain}
    Let $(X,\XX)$ be a filtered rational polyhedral space with an open face structure $\CC$.  

    The group of $\GM$ \emph{tropical} $(p,q)$-\emph{chains}  
    with respect to $\CC$ with $Q$-coefficients is defined by
    \begin{align*}
        _{\GM}\HC_{p,q}(X, Q) &\colonequals \bigoplus_{\tau \in \CC} {\bf F}_p^{Q}(\tau) \otimes_{\Z} (_{\GM}\HC_q(\tau)) . 
    \end{align*}    

    The group of \emph{tropical} $(p,q)$-\emph{chains} 
        with respect to $\CC$ with $Q$-coefficients is defined by 
    \begin{align*}
        \HC_{p,q}(X, Q) &\colonequals \bigoplus_{\tau \in \CC} {\bf F}_p^{Q}(\tau) \otimes_{\Z} \HC_q(\tau) . 
    \end{align*}
\end{defn}

Let $\perv\colon \SS \rightarrow \Z$ be a perversity (see Definition \ref{def:perversity}). We say that a $\CC$-stratified $q$-simplex $\sigma:\Delta_q \rightarrow X$ is \emph{$\perv$-allowable} if for any stratum $S$ of $X$,
\begin{equation}\label{eq:allowablecond}
    \sigma^{-1}(S)\subset (q-\codim (S) +\perv (S))\text{-skeleton of }\Delta_q. 
\end{equation}
Here $\codim (S)$ is the formal codimension.

To simplify the notation in the following, the notation $_{(\GM)}\HC_{p,q}(X, Q)$ 
stands for both $\HC_{p,q}(X, Q)$ and $_{\GM}\HC_{p,q}(X, Q)$. 
In other words, $(\GM)$ stands for both $\GM$ and non-$\GM$. 
We say that a $\CC$-stratified $q$-simplex $\sigma$ \emph{constitutes} a chain $\xi\in {}_{(\GM)}\HC_{p,q}(X, Q)$ if the coefficient of $\sigma$ in $\xi$ is not zero. 

A chain $\xi \in _{\GM}\HC_{p,q}(X, Q)$ (resp. $\HC_{p,q}(X, Q)$)  is $\perv$-allowable if all the $\CC$-stratified $q$-simplices constituting $\xi$ and all the $\CC$-stratified $(q-1)$-simplices constituting $\hat{\partial}\xi$ (resp. $\partial \xi$) 
are $\perv$-allowable. Let $_{(\GM)}\IC_{p,q}(X, Q)$ denote the abelian subgroup of $\perv$-allowable chains in $_{(\GM)}\HC_{p,q}(X, Q)$. 

In the following, by a \emph{$q$-simplex}, we mean a $\CC$-stratified $q$-simplex. By a simplex, we mean a $q$-simplex for some $q$. 
If the perversity $\perv$ is clear from the context, we simply say that a simplex $\sigma$ is \emph{allowable} (omitting $\perv$).  Similarly, we say that a chain $\xi\in {}_{(\GM)}\HC_{p,q}(X, Q)$ is \emph{allowable}. 
\begin{defn}\label{def:tropchainhomo}
    Let $(X,\XX)$ be a filtered rational polyhedral space with an open face structure $\CC$. 
    The complex of ($\GM$) tropical intersection $(p, \bullet)$-chains  
    with respect to $\CC$, with a perversity $\perv$ and with $Q$-coefficients is 
    \[(_{(\GM)}\IC_{p,\bullet}(X, Q), \hat{\partial}\text{ (resp. $\partial$)}). \]

    The  \emph{tropical intersection homology} groups with coefficients in $Q$ are defined to be  
    \begin{align*}
        _{(\GM)}\IH_{p,q}(X, Q) \colonequals \HH_q( _{(\GM)}\IC_{p,\bullet}(X, Q)).  
    \end{align*}
\end{defn}
\begin{defn}\label{def:tropchaincohomo}
    Let $(X,\XX)$ be a filtered rational polyhedral space with an open face structure $\CC$. 
    The complex of ($\GM$) tropical intersection $(p, \bullet)$-cochains
        with respect to $\CC$, with a perversity $\perv$ and with $Q$-coefficients is  
    \begin{align*}
        _{(\GM)}\IC^{p,\bullet}(X, Q) &\colonequals \Hom_Q(_{(\GM)}\IC_{p,\bullet}(X, Q), Q) , 
    \end{align*}
    where the boundary map $\hat{d}$ (resp. $d$) is dual to $\hat{\partial}$ (resp. $\partial$). 

    The  \emph{tropical intersection cohomology} groups with coefficients in $Q$ are defined to be  
    \begin{align*}
        _{(\GM)}\IH^{p,q}(X, Q) \colonequals \HH^q( _{(\GM)}\IC^{p,\bullet}(X, Q)). 
    \end{align*}
\end{defn}

We use the notation $_{(\GM)}\IH$ to indicate that the statement holds for both $\IH \text{ and } _{\GM}\IH$. 
To make the filtration $\XX$ and the open face structure $\CC$ explicit, we may also denote the (co)homology by $_{(\GM)}\IH_{p,q}(X,\XX,\CC, Q)$ and $_{(\GM)}\IH^{p,q}(X,\XX,\CC,Q)$, respectively. 
Furthermore, we sometimes omit obvious parts (the filtration or the open face structure) of the above notation such as $_{(\GM)}\IH_{p,q}(X,\XX, Q)$, $_{(\GM)}\IH_{p,q}(X,\CC, Q)$. 
We will show in Proposition \ref{prop:indep} that the homology $_{(\GM)}\IH_{p,q}(X,\XX,\CC, Q)$ does not depend on the choice of an open face structure $\CC$. 

For any filtered open subspace $U\subset X$, a perversity $\perv$ on $X$ naturally induces a perversity on $U$. 
Therefore, in this setting, we also denote the (co)homology of $U$ for the restriction of $\perv$ to $U$ by $_{(\GM)}\IH_{p,q}(U, Q)$ and $_{(\GM)}\IH^{p,q}(U,Q)$, respectively. 
\begin{rem}\label{rem:GMandTropical}
As in \cite[Definition 2.6]{MR3894860}, we define the \emph{tropical homology} by $$\HH_{p,q}(X, Q)\colonequals \HH_q(_{\GM}\HC_{p,\bullet}(X, Q)).$$ 
    Since ${}_{\GM}\IC_{p,q}(X,Q)$ is a sub-complex of ${}_{\GM}\HC_{p,q}(X,Q)$, the natural map ${}_{\GM}\IH_{p,q}(X,Q)\to \HH_{p,q}(X,Q)$ exists. 
\end{rem}

For an open subset $U\subset X$, we define a relative version of a chain complex ${}_{(\GM)}\IC_{p,\bullet}(X,U,Q)$ as in the classical intersection homology. 
To be precise, we define $${}_{(\GM)}\IC_{p,\bullet}(X,U,Q) \colonequals {}_{(\GM)}\IC_{p,\bullet}(X,Q)/{}_{(\GM)}\IC_{p,\bullet}(U,Q).$$ 
This is a chain complex with the boundary map induced by the boundary of ${}_{(\GM)}\IC_{p,\bullet}(X,Q)$. 
A relative version of a cochain complex ${}_{(\GM)}\IC^{p,\bullet}(X,U,Q)$ is defined to be the dual cochain of ${}_{(\GM)}\IC_{p,\bullet}(X,U,Q)$. 
\begin{defn}\label{Borel--Moore}
    The \emph{tropical Borel--Moore intersection chain complex}  is defined by 
    \[
    _{(\GM)}\IC_{p,\bullet}^{\BM}(X, Q)\colonequals \varprojlim_{K:compact}{}_{(\GM)}\IC_{p,\bullet}(X,X\smallsetminus K,Q),
    \] 
    where $K$ runs through all compact subsets of $X$. 
    The \emph{tropical Borel--Moore intersection homology} $_{(\GM)}\IH_{p,q}^{\BM}(X, Q)$ is defined as the homology of $_{(\GM)}\IC_{p,\bullet}^{\BM}(X, Q)$. 

    Similarly, the \emph{tropical compactly supported intersection cochain complex}  is defined by
    \[
    _{(\GM)}\IC^{p,\bullet}_c(X, Q)\colonequals \varinjlim_{K:compact}{}_{(\GM)}\IC^{p,\bullet}(X,X\smallsetminus K ,Q),
    \]
    where $K$ runs through all compact subsets of $X$. 
    The \emph{tropical compactly supported intersection cohomology} $_{(\GM)}\IH^{p,q}_c(X, Q)$ is defined as the cohomology of $_{(\GM)}\IC^{p,\bullet}_c(X, Q)$. 
\end{defn}
By the above definition, if $X$ is compact, we have ${}_{(\GM)}\IC_{p,\bullet}^{\BM}(X, Q)\cong{}_{(\GM)}\IC_{p,\bullet}(X, Q)$ and ${}_{(\GM)}\IC^{p,\bullet}_c(X, Q)\cong {}_{(\GM)}\IC^{p,\bullet}(X, Q)$. 

\begin{rem}\label{rem:perv}
    \leavevmode
    \begin{enumerate}
        \item   Since ${\bf F}^Q_{0}(\sigma)=Q$ for any $\sigma\in\CC$, the $p=0$ part of the tropical intersection (co)homology coincides with the classical intersection (co)homology for any perversity $\perv$. 
        \item Assume that $\perv$ satisfies $\perv(S) \geq \codim(S)$. Then any simplex is $\perv$-allowable. 
        Therefore, $_{\GM}\IH_{p,q}(X, Q) = \HH_{p,q}(X, Q)$ holds. 
        \item   If $\perv$ satisfies $\perv(S) \leq \codim(S)-2 $, then non-$\GM$ (co)homology and $\GM$ (co)homology coincide. 
                Indeed, with such a perversity $\perv$, for any $q$-simplex $\sigma$ and any stratum $S$ of $X$, we have $\sigma^{-1}(S)\subset (q-2)$-skeleton of $\Delta_q$. 
                Thus, any $\perv$-allowable simplices and their boundaries are never entirely contained in $\S_X$. 
                Therefore, any allowable chain in the $\GM$ case is also allowable in non-$\GM$ case, and  
                the boundaries of an allowable chain in the $\GM$ case coincides that in non-$\GM$ case.  
    \end{enumerate}
    
\end{rem}
Let ${}_{(\GM)}\HC^{lf}_{p,\bullet}(X,Q)$ be the $Q$-module obtained from the construction of ${}_{(\GM)}\HC_{p,\bullet}(X,Q)$ by allowing locally finite sums of simplices. 
Then we define ${}_{(\GM)}\IC_{p,\bullet}^{lf}(X,Q)$ to be the submodule of ${}_{(\GM)}\HC^{lf}_{p,\bullet}(X,Q)$ consisting of $\xi$ such that all the simplices constituting $\xi$ and the boundary of $\xi$ are $\perv$-allowable. 
\begin{lemma}\label{lem:lf}
    The two chain complexes ${}_{(\GM)}\IC_{p,\bullet}^{lf}(X,Q)$ and ${}_{(\GM)}\IC_{p,\bullet}^{\BM}(X,Q)$ are chain homotopy equivalent. 
    In particular, $\HH_q({}_{(\GM)}\IC_{p,\bullet}^{lf}(X,Q))\cong {}_{\GM}\IH_{p,q}^{\BM}(X,Q)$. 
\end{lemma}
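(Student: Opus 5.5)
We will follow the classical argument for ordinary singular chains (as in Friedman's treatment of locally finite intersection chains) and adapt it to the tropical coefficients ${\bf F}^Q_p(\tau)$.

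\textbf{Step 1: the chain map.} For each compact $K$, a locally finite chain $\xi\in {}_{(\GM)}\IC^{lf}_{p,\bullet}(X,Q)$ has only finitely many simplices meeting $K$. Discarding those simplices whose image lies in $X\smallsetminus K$ gives a finite chain, and hence a class in ${}_{(\GM)}\IC_{p,\bullet}(X,X\smallsetminus K,Q)$. We must check that this class is allowable modulo $X\smallsetminus K$. Each constituent simplex is allowable, and the boundary of the truncation differs from the truncation of $\partial\xi$ (resp. $\hat\partial\xi$) only by simplices lying in $X\smallsetminus K$, which vanish in the quotient. These maps are compatible with enlarging $K$, so they assemble to a chain map
\[\Theta\colon {}_{(\GM)}\IC^{lf}_{p,\bullet}(X,Q)\to {}_{(\GM)}\IC^{\BM}_{p,\bullet}(X,Q).\]
The coefficient modules ${\bf F}^Q_p(\tau)$ are carried along simplex by simplex and play no role in this step.

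\textbf{Step 2: an exhaustion.} Since $X$ is paracompact, second countable and locally compact, choose an exhaustion $K_1\subset \mathrm{int}(K_2)\subset K_2\subset\cdots$ by compact sets. Cofinality gives $\varprojlim_K=\varprojlim_i$. The transition maps
\[{}_{(\GM)}\IC_{p,\bullet}(X,X\smallsetminus K_{i+1},Q)\to {}_{(\GM)}\IC_{p,\bullet}(X,X\smallsetminus K_i,Q)\]
are surjective, since an allowable chain representative for $K_i$ can be reused for $K_{i+1}$. Each relative group is a direct sum indexed by simplices not contained in $X\smallsetminus K_i$, tensored with ${\bf F}^Q_p(\tau)$.

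We will check that $\Theta$ is injective: a nonzero locally finite chain has some simplex that meets some $K_i$.

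For surjectivity, we need to realize an element of the inverse limit by a locally finite sum. A compatible family $(\xi_i)$ of classes with $\xi_i\in \IC(X,X\smallsetminus K_i)$ is determined by the coefficients of the simplices meeting $K_i$. Compatibility forces these coefficients to agree as $i$ increases, so the family determines a formal sum $\xi$ over all simplices. The hard point is that $\xi$ need not be locally finite, because simplices with arbitrarily large image can still meet a fixed $K_1$. Hence $\Theta$ is not literally an isomorphism, and we aim only for a chain homotopy equivalence.

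\textbf{Step 3: subdivision.} To handle this we will use a subdivision operator. A (locally finite) iterated barycentric subdivision, chosen simplex by simplex with the order of subdivision increasing with $i$, replaces every chain by one whose simplices meeting $K_i$ lie in $\mathrm{int}(K_{i+1})$. This makes the formal sums locally finite. We need two facts:
\begin{itemize}
\item subdivision preserves $\perv$-allowability and $\CC$-stratification, because subdividing keeps the relative interiors of faces inside the relative interiors of the cells $\tau$;
\item subdivision is chain homotopic to the identity via the standard prism-type homotopy, and that homotopy is itself allowable and compatible with the coefficient maps ${\bf F}^Q_p$ and with $\hat\partial$.
\end{itemize}
These are the tropical analogues of the corresponding lemmas in \cite{Friedman_2020}. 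The stratified condition is what ensures that all pieces of a subdivided simplex lie in the same $\relint(\tau)$, so ${\bf F}^Q_p(\tau)$ is unchanged. Using them we will construct an inverse $\Psi\colon {}_{(\GM)}\IC^{\BM}\to{}_{(\GM)}\IC^{lf}$ with $\Theta\Psi\simeq\id$ and $\Psi\Theta\simeq\id$.

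\textbf{Alternative route.} An alternative that avoids an explicit $\Psi$ is to apply the Milnor $\varprojlim^1$ sequence to the surjective tower, combined with excision (Proposition \ref{prop:oriexcision}). This shows that $\Theta$ is a quasi-isomorphism. Upgrading to a homotopy equivalence then follows because both complexes are complexes of projective (indeed free-type) $Q$-modules bounded below, since $Q$ is Dedekind and the ${\bf F}^Q_p$ are torsion-free.

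\textbf{Main obstacle.} I expect the main difficulty to be the allowability of the subdivision homotopy in the non-$\GM$ case, where $\hat\partial$ drops faces lying in $\S_X$. I would first try to prove it by following Friedman's proof for ordinary non-$\GM$ chains verbatim with the coefficients attached.
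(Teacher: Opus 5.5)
\textbf{The gap is in Step 1: your chain map $\Theta$ is not well defined.} An element of ${}_{(\GM)}\IC_{p,\bullet}(X,X\smallsetminus K,Q)={}_{(\GM)}\IC_{p,\bullet}(X,Q)/{}_{(\GM)}\IC_{p,\bullet}(X\smallsetminus K,Q)$ must be represented by an allowable chain on $X$. Allowability requires \emph{every} simplex of $\partial$ (resp.\ $\hat\partial$) of the truncation to be allowable, including the new faces in $X\smallsetminus K$ created by cutting off the rest of $\xi$. That these faces die in the quotient is irrelevant, because the truncation is not in the numerator to begin with.

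Such faces are typically non-allowable, and passing to a larger finite subchain does not help. Take $X=\R^2$ with $X^1=X^0=\ZZ\times\{0\}$ and $\perv=\perz$. Let $\xi$ be the sum of the triangles of a triangulation of the strip $\R\times[-1,1]$ whose vertices all lie on $\ZZ\times\{0\}$ or on the lines $y=\pm1$. Every triangle is allowable and $\partial\xi$ lies on $y=\pm1$, so $\xi\in{}_{(\GM)}\IC^{lf}_{0,2}(X,Q)$. But every nonzero finite subchain has in its boundary an edge ending at a point of $\ZZ\times\{0\}$. Such an edge is not allowable, and since it is not contained in $\S_X$, $\hat\partial$ does not discard it either.

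The missing idea is to subdivide \emph{before} truncating. The paper applies the barycentric subdivision $b$ and then uses the proof of \cite[Lemma 2.12]{MR2276609} to find, for each compact $K$, a finite allowable subchain of $b(\zeta)$ containing all simplices that meet $K$. After subdivision the new vertices are non-singular, so one can close up around the singular points. This defines $g\colon {}_{(\GM)}\IC^{lf}_{p,\bullet}(X,Q)\to{}_{(\GM)}\IC^{\BM}_{p,\bullet}(X,Q)$ independently of choices. Your Step 3 invokes subdivision for a different purpose and does not supply this.

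\textbf{Your identified obstacle is in the wrong direction.} The direction you call hard is the easy one. Take a compatible family $(\xi_K)$. The coefficient of a simplex meeting $K$ is read off from a finite allowable representative of $\xi_K$; this is well defined, since no simplex of a chain in $X\smallsetminus K$ meets $K$. Hence only finitely many simplices meeting a given compact set have nonzero coefficient, and by local compactness the formal sum is locally finite. It is allowable with allowable boundary, because near $K$ it agrees with that representative.

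This is exactly the paper's map $f$. The composites $f\circ g$ and $g\circ f$ are subdivision operators, chain homotopic to the identity by \cite[Propositions 4.4.14 and 6.3.9]{Friedman_2020}. Here your two facts from Step 3 are precisely what is needed.

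Your alternative route does not avoid the problem. It still presupposes $\Theta$, and its final upgrade is false over $Q=\ZZ$: neither complex consists of projective modules in general. For noncompact $X$, ${}_{(\GM)}\IC^{lf}_{0,0}(X,\ZZ)$ contains a copy of $\ZZ^{\NN}$, which is not free, whereas submodules of projective $\ZZ$-modules are free.
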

\begin{proof}
    For $\xi \in {}_{(\GM)}\IC_{p,q}^{\BM}(X,Q)$, we construct $\zeta\in{}_{(\GM)}\IC_{p,q}^{lf}(X,Q)$. 
    Let $\sigma\colon \Delta_q\to X$ be a $\perv$-allowable $q$-simplex. 
    We take any compact set $K$ such that $\Im(\sigma)\subset K$. 
    By properties of a projective limit, there exists the projection map $\pi\colon {}_{(\GM)}\IC_{p,q}^{\BM}(X,Q)\to {}_{(\GM)}\IC_{p,q}(X,X\smallsetminus K,Q)$. 
    Let $\xi_\sigma\in Q$ be the coefficient of $\sigma$ in $\pi(\sigma)$. 
    Then $\xi_\sigma$ is independent of the choice of $K$. 
    Thus  $\zeta=\sum \xi_{\sigma}\sigma$ is the element of ${}_{(\GM)}\IC_{p,q}^{lf}(X,Q)$. 
    We define $f\colon {}_{(\GM)}\IC_{p,q}^{\BM}(X,Q)\to {}_{(\GM)}\IC_{p,q}^{lf}(X,Q)$ by $f(\xi)\colonequals \zeta$. 

    Next, for $\zeta \in {}_{(\GM)}\IC_{p,q}^{lf}(X,Q)$, we construct $\xi\in {}_{\GM}\IC_{p,q}^{\BM}(X,Q)$. 
    Let $b\colon {}_{(\GM)}\IC_{p,q}^{lf}(X,Q) \to {}_{(\GM)}\IC_{p,q}^{lf}(X,Q)$ be the barycentric subdivision operator. 
    In other words, $b$ is a map obtained by performing barycentric subdivision on each simplex. 
    Let $\zeta_{\sigma}\in Q$ be the coefficient of $\sigma$ in $b(\zeta)$. 
    We take any compact set $K$. We denote by $K_{\zeta}$ the set consisting a simplex constituting $b(\zeta)$ and intersecting $K$. 
    Then, since $K$ is compact, $K_{\zeta}$ is a finite set.  
    By the proof of \cite[Lemma 2.12]{MR2276609}, there exists a finite set $K'_{\zeta}$ of simplices constituting $b(\zeta)$ such that $\sum_{\sigma\in K'_{\zeta}}\zeta_{\sigma}\sigma\in {}_{(\GM)}\IC_{p,q}(X,Q)$ and $K_{\zeta}\subset K'_{\zeta}$. 
    Therefore, $\sum_{\sigma\in K'_{\zeta}}\zeta_{\sigma}\sigma$ defines the element in ${}_{(\GM)}\IC_{p,q}(X,X-K,Q)$. 
    Then $\zeta_{\sigma}$ is independent of the choice of $K$. 
    Since $K$ is an arbitrary compact subset of $X$, this defines the element in ${}_{\GM}\IC_{p,q}^{\BM}(X,Q)$. 
    We define $g\colon{}_{\GM}\IC_{p,q}^{lf}(X,Q)\to {}_{\GM}\IC_{p,q}^{\BM}(X,Q)$ by $g(\zeta)\colonequals \xi$. 

    Then $f\circ g$ and $g\circ f$ are maps performing barycentric subdivision on each simplex. 
    Such maps are homotopic to the identities by the argument in \cite[Propositions 4.4.14 and 6.3.9]{Friedman_2020}. 
    Therefore, we obtain the claim. 
\end{proof}

\section{Properties of tropical intersection (co)homology}\label{sec:property}

\subsection{Basic properties}\label{subsec:basic}
In this subsection, we show basic properties of tropical intersection homology, analogous to those of classical intersection homology. 
We show excision, Mayer-Vietoris sequence, and cone formulas. 
The proofs closely mirror the classical setting, and we refer to \cite{Friedman_2020} for further details. 
In this subsection, we omit the coefficient ring $Q$, which is a Dedekind domain with $\Z\subset Q \subset \R$.

Let $X$ be a filtered rational polyhedral space with an open face structure. 
We call a family of open subsets $\mathcal{U}=\{U\}$ with $\bigcup_{U\in \mathcal{U}}U=X$ an open cover of $X$. 
For an open cover $\mathcal{U}$ of $X$,  
we denote by $_{(\GM)}\IC_{p,q}^{\mathcal{U}}(X)$ the submodule $\sum_{U\in \mathcal{U}}{}_{(\GM)}\IC_{p,q}(U) \subset {}_{(\GM)}\IC_{p,q}(X)$ generated by $\perv$-allowable chains contained in some $U\in\mathcal{U}$. 
Then we define $_{(\GM)}\IH_{p,q}^{\mathcal{U}}(X)$ to be  the homology of $_{(\GM)}\IC_{p,\bullet}^{\mathcal{U}}(X)$. 
\begin{prop}\label{prop:excision}
    Let $X$ be a filtered rational polyhedral space with an open face structure. 
    Then $_{(\GM)}\IH_{p,q}(X)\cong {}_{(\GM)}\IH_{p,q}^{\mathcal{U}}(X)$. 
\end{prop}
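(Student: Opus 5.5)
The plan is to show that the inclusion $\iota\colon {}_{(\GM)}\IC_{p,\bullet}^{\mathcal{U}}(X)\hookrightarrow {}_{(\GM)}\IC_{p,\bullet}(X)$ is a chain homotopy equivalence. I will use iterated barycentric subdivision, following \cite[Section 4.4 and Proposition 6.3.9]{Friedman_2020}. The tropical coefficients ${\bf F}^Q_p(\tau)$ enter only through bookkeeping. Concretely, I will build a subdivision operator $b$ and a homotopy $T$ on ${}_{(\GM)}\HC_{p,\bullet}(X)$ with $\partial T+T\partial=b-\id$, replacing $\partial$ by $\hat\partial$ in the $\GM$ case. Both operators must preserve $\perv$-allowability and be compatible with the decomposition $\bigoplus_\tau {\bf F}^Q_p(\tau)\otimes \HC_q(\tau)$.

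\textbf{Step 1: subdivision respects the tropical structure.} Let $\delta$ be a $\CC$-stratified $q$-simplex with $\delta(\relint\Delta_q)\subset\relint(\tau)$. Each simplex $\delta'$ of its barycentric subdivision $b\delta$ is again $\CC$-stratified. The open simplex $\relint(\delta')$ lies in $\relint(\Delta_q)$ or in $\relint$ of some face $\Delta'$ of $\Delta_q$, so it maps into $\relint(\tau')$ for some $\tau'\in\CC$, and then $\tau'\prec\tau$. I therefore define $b(\alpha\otimes\delta)=\sum \pm\,\imap^Q(\alpha)\otimes\delta'$, where $\iota=\iota_{\tau',\tau}$ is the natural map ${\bf F}_p^Q(\tau)\to{\bf F}_p^Q(\tau')$. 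I do the same for the prism operator $T$, which uses simplices of $\Delta_q\times I$ composed with the projection to $\Delta_q$, so these simplices are again $\CC$-stratified. The identities $\partial b=b\partial$ and $\partial T+T\partial=b-\id$ then follow from the classical identities. This uses the functoriality $\iota_{\tau'',\tau'}\circ\iota_{\tau',\tau}=\iota_{\tau'',\tau}$: faces shared by two subsimplices carry the same coefficient, so the cancellations in the classical boundary computation survive. For $\hat\partial$, I repeat the argument of \cite[Lemma 6.2.3]{Friedman_2020}: the difference between the two boundaries consists of simplices in $\S_X$, and these are discarded consistently.

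\textbf{Step 2: allowability.} I will show that $b$ and $T$ carry $\perv$-allowable chains to $\perv$-allowable chains. The allowability condition \eqref{eq:allowablecond} depends only on the underlying singular simplices, not on coefficients. Hence Friedman's proof that subdivision and the subdivision homotopy preserve allowable chains in \cite[Propositions 4.4.14 and 6.3.9]{Friedman_2020} applies verbatim. The one point to watch is that applying $\imap^Q$ may send a coefficient to zero. This can only remove simplices from $b\xi$, $T\xi$ and their boundaries, and removing simplices preserves allowability. In the non-$\GM$ case the boundary terms must also be checked, but that is again the classical computation.

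\textbf{Step 3: conclude.} For a chain $\xi$, compactness of its support gives $k$ with $b^k\xi\in{}_{(\GM)}\IC^{\mathcal{U}}_{p,\bullet}(X)$, by the Lebesgue number lemma. The standard argument then shows that $\iota$ is a quasi-isomorphism. Surjectivity on homology: for a cycle $z$, the cycle $b^k z$ lies in the $\mathcal{U}$-complex and is homologous to $z$ via $\sum_{j<k}Tb^j z$. Injectivity: a $\mathcal{U}$-small cycle bounding some $w$ also bounds $b^k w$ up to $\mathcal{U}$-small corrections, because $T$ preserves $\mathcal{U}$-smallness. A single uniform $k$ is not available for all chains, so I will use Friedman's variable-depth operator to get an explicit homotopy inverse; for the homology statement the pointwise argument above is enough. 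I expect Step 2, allowability of $T$ together with its $\hat\partial$-boundary in the $\GM$ case, to be the main obstacle. My plan there is to cite the classical lemma and check that the coefficient maps $\imap^Q$ cause no interference.
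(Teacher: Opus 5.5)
Your route (barycentric subdivision plus the subdivision homotopy, with ${\bf F}^Q_p$ as bookkeeping) is the paper's route. The paper delegates the analytic work to Friedman's subdivision operator $T\colon{}_{(\GM)}\IC_{p,\bullet}(X)\to{}_{(\GM)}\IC^{\mathcal U}_{p,\bullet}(X)$ \cite[Lemma 6.5.4, Sublemma 6.5.5]{Friedman_2020}.

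\textbf{The gap is in Step 3.} It sits at the one point where intersection chains behave differently from ordinary chains. Membership in ${}_{(\GM)}\IC^{\mathcal U}_{p,\bullet}(X)=\sum_U{}_{(\GM)}\IC_{p,\bullet}(U)$ requires writing $b^k\xi$ as a sum of \emph{allowable chains}, each supported in a single $U$. The Lebesgue number lemma only gives that each \emph{simplex} of $b^k\xi$ lies in some $U$. If you sort the simplices accordingly, a non-allowable codimension-one face that cancels in $\partial(b^k\xi)$ reappears in the boundaries of the pieces whenever the simplices sharing it go to different $U$'s.

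For example, take $X=\RR^2$ with the quadrant face structure, $\XX\colon\RR^2\supset\{0\}\supset\{0\}\supset\emptyset$, and $\perv(\{0\})=0$. Then a $1$-simplex is allowable iff it misses $0$, and a $2$-simplex iff it meets $0$ only in vertices. Set $a_1=(1,0)$, $a_2=(0,1)$, $a_3=(-1,0)$, $a_4=(0,-1)$. The chain $\xi=\sum_{i=1}^4[0,a_i,a_{i+1}]$ (indices mod $4$, $p=0$) is allowable, because the edges $[0,a_i]$ cancel. For $U=\{y>-\varepsilon\}$ and $V=\{y<\varepsilon\}$, every simplex of $\xi$ lies in $U$ or $V$. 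Yet the piece $[0,a_1,a_2]+[0,a_2,a_3]$ has $[0,a_1]-[0,a_3]$ in its boundary, for $\partial$ and $\hat\partial$ alike, so it is not in ${}_{(\GM)}\IC_{0,2}(U)$. The same problem affects $b^kw$ in your injectivity argument.

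\textbf{How to close it.} The claim $b^k\xi\in{}_{(\GM)}\IC^{\mathcal U}_{p,\bullet}(X)$ for large $k$ is true, but it needs a coherent sorting rule. One way:
\begin{enumerate}
\item Use the cone-from-barycenter ordering. Then the top simplices of the last subdivision of an allowable $q$-simplex $\eta$ are $\eta\circ[b_{F_q},\dots,b_{F_0}]$ for full flags $F_0\subset\cdots\subset F_q=\Delta_q$.
\item Let $j=q-\codim(S)+\perv(S)$. The face opposite $b_{F_0}$ meets the $j$-skeleton of $\Delta_q$ only in $[b_{F_j},\dots,b_{F_1}]$. This lies in its own $(j-1)$-skeleton, so that face is automatically allowable.
\item Hence every non-allowable codimension-one face of a simplex of $b^k\xi$ contains that simplex's last vertex. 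The image $x$ of this vertex is determined by the face.
\item Let $\lambda$ be a Lebesgue number of $\mathcal U$ on the compact support. Take $k$ so that all simplices have diameter $<\lambda/3$, and send each simplex to a fixed $U_x\supset B(x,\lambda/3)$.
\item All simplices sharing a non-allowable face then land in the same piece, and their coefficients $\pm\iota^Q(\alpha)$ still sum to zero there.
\end{enumerate}
Alternatively, cite Friedman's operator as the paper does.

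With this added, your argument closes. Step 2, which you flagged as the main obstacle, is in fact routine.

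Two minor points. First, the top simplices of $b\delta$ and $T\delta$ have interiors in $\relint(\Delta_q)$, so their coefficients never change and $\iota^Q$ enters only through faces. Second, $\hat\partial$ belongs to the non-$\GM$ complex (the one whose generators exclude simplices in $\S_X$), not the $\GM$ one; the paper's own allowability definition swaps the two, but your argument is unaffected.
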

\begin{proof}
    By applying the same argument as in \cite[Lemma 6.5.4]{Friedman_2020}, there exists a subdivision operator $T\colon {}_{(\GM)}\IC_{p,q}(X)\to {}_{(\GM)}\IC^{\mathcal{U}}_{p,q}(X)$. 
    (This operator $T$ is constructed by the induction using a relative barycentric subdivision. 
    Roughly speaking, $T$ is obtained by subdividing the simplices finely enough so that each simplex is contained in some $U\in\mathcal{U}$. 
    For details, see \cite[Sublemma 6.5.5]{Friedman_2020}.)
    This morphism $T$ is a quasi-isomorphism by the argument \cite[Propositions 4.4.14 and 6.3.9]{Friedman_2020}. 
\end{proof}
This statement implies excision. 
\begin{prop}[Excision]\label{prop:oriexcision}
    Let $X$ be a filtered rational polyhedral space with an open face structure. 
    Suppose that $U$ and $V$ are open subsets of $X$ such that $U\cup V=X$. 
    Then there exists an isomorphism $_{(\GM)}\IH_{p,q}(X,U)\cong _{(\GM)}\IH_{p,q}(V,U\cap V)$. 
\end{prop}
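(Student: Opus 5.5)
Apply Proposition \ref{prop:excision} to the open cover $\mathcal{U}=\{U,V\}$ of $X$, and then use the standard algebraic identification of relative complexes, as in the classical proof \cite[Section 4.4 and 6.3]{Friedman_2020}. The key observation is an equality of subcomplexes of ${}_{(\GM)}\IC_{p,\bullet}(X)$:
\[
{}_{(\GM)}\IC^{\mathcal{U}}_{p,\bullet}(X)={}_{(\GM)}\IC_{p,\bullet}(U)+{}_{(\GM)}\IC_{p,\bullet}(V),\qquad
{}_{(\GM)}\IC_{p,\bullet}(U)\cap{}_{(\GM)}\IC_{p,\bullet}(V)={}_{(\GM)}\IC_{p,\bullet}(U\cap V).
\]
For the intersection, a chain lying in both submodules is a sum of simplices with images in $U\cap V$. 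Allowability of a simplex, and of the boundary of a chain, is local and depends only on the strata. The boundary operator ($\partial$ or $\hat\partial$) commutes with restriction to open subsets, because $\S_{U}=\S_X\cap U$. Finally, the coefficient systems ${\bf F}^Q_p(\tau)$ on $U\cap V$ are those induced from $\CC|_{U\cap V}$. Here I must check that a face $\tau\in\CC$ meeting $U\cap V$ gives the same summand ${\bf F}^Q_p\otimes \HC_q$ as the corresponding faces of the restricted face structure. I would verify this by noting that the multi-tangent only depends on the germ of the complex near $\relint(\tau)$.

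From these equalities, the second isomorphism theorem gives an isomorphism of chain complexes
\[
{}_{(\GM)}\IC_{p,\bullet}(V)/{}_{(\GM)}\IC_{p,\bullet}(U\cap V)\xrightarrow{\ \cong\ } {}_{(\GM)}\IC^{\mathcal{U}}_{p,\bullet}(X)/{}_{(\GM)}\IC_{p,\bullet}(U).
\]
Next, the inclusion of ${}_{(\GM)}\IC^{\mathcal{U}}_{p,\bullet}(X)/{}_{(\GM)}\IC_{p,\bullet}(U)$ into ${}_{(\GM)}\IC_{p,\bullet}(X)/{}_{(\GM)}\IC_{p,\bullet}(U)={}_{(\GM)}\IC_{p,\bullet}(X,U)$ is a quasi-isomorphism. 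To see this, compare the two short exact sequences $0\to {}_{(\GM)}\IC_{p,\bullet}(U)\to {}_{(\GM)}\IC^{\mathcal{U}}_{p,\bullet}(X)\to (\cdot)\to 0$ and $0\to {}_{(\GM)}\IC_{p,\bullet}(U)\to {}_{(\GM)}\IC_{p,\bullet}(X)\to(\cdot)\to0$. The middle map is a quasi-isomorphism by Proposition \ref{prop:excision}, and the first map is the identity, so the five lemma on the long exact sequences gives the claim. Composing yields ${}_{(\GM)}\IH_{p,q}(V,U\cap V)\cong{}_{(\GM)}\IH_{p,q}(X,U)$.

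The main obstacle is not the homological algebra but the compatibility of the chain groups under restriction to open subsets, i.e. identifying ${}_{(\GM)}\IC_{p,\bullet}(W)$ for $W$ open with the subcomplex of ${}_{(\GM)}\IC_{p,\bullet}(X)$ of chains supported in $W$. The delicate case is non-$\GM$ with $\hat\partial$, where one must note that discarding faces lying in $\S_X$ is the same as discarding faces lying in $\S_W$. One must also ensure that $\CC$-stratified simplices in $W$ are exactly $\CC|_W$-stratified simplices. Since faces of $\CC|_W$ are connected components of $\sigma\cap W$, and $\relint$ of a simplex face is connected, each $\delta(\relint\Delta')$ lies in a unique component. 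So this holds and the multi-tangents agree, which I would record as a short preliminary remark before running the argument above.
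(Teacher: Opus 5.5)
Your proposal is correct and follows the paper's proof. The paper likewise applies Proposition \ref{prop:excision} to the cover $\{U,V\}$ and compares the two short exact sequences with the five lemma, which gives ${}_{(\GM)}\IH_{p,q}(X,U)\cong \HH_q\bigl({}_{(\GM)}\IC_{p,\bullet}(U+V)/{}_{(\GM)}\IC_{p,\bullet}(U)\bigr)$; it then identifies this quotient with the homology of ${}_{(\GM)}\IC_{p,\bullet}(V,U\cap V)$, whereas you spell out the second-isomorphism-theorem step and the compatibility of chains, multi-tangents and $\hat\partial$ under restriction to open subsets, which the paper leaves implicit.
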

\begin{proof}
    Let ${}_{(\GM)}\IC_{p,\bullet}(U+V)$ denote ${}_{(\GM)}\IC_{p,\bullet}(U)+{}_{(\GM)}\IC_{p,\bullet}(V)$. 
    By Proposition \ref{prop:excision}, the chain group $_{(\GM)}\IC_{p,\bullet}(X)$ is quasi-isomorphic to ${}_{(\GM)}\IC_{p,\bullet}(U+V)$. 
    Using the following diagram:
    \[
    \begin{tikzcd}[column sep=1.0em]
        0 \arrow[r] & {}_{(\GM)}\IC_{p,\bullet}(U) \arrow[r] \arrow[d,"="] & {}_{(\GM)}\IC_{p,\bullet}(U+V) \arrow[r] \arrow[d,"\text{q.i.}"] & {}_{(\GM)}\IC_{p,\bullet}(U+V)/{}_{(\GM)}\IC_{p,\bullet}(U) \arrow[r] \arrow[d] & 0 \\
        0 \arrow[r] & {}_{(\GM)}\IC_{p,\bullet}(U) \arrow[r]  & {}_{(\GM)}\IC_{p,\bullet}(X) \arrow[r]  & {}_{(\GM)}\IC_{p,\bullet}(X,U) \arrow[r] & 0,
    \end{tikzcd}
    \]
    we obtain a map between the induced long exact sequences. 
    Applying the five lemma, we obtain $${}_{(\GM)}\IH_{p,q}(X,U)\cong \HH^q({}_{(\GM)}\IC_{p,\bullet}(U+V)/{}_{(\GM)}\IC_{p,\bullet}(U))\cong {}_{(\GM)}\IH_{p,q}(V,U\cap V),$$
    as desired. 
\end{proof}

\begin{prop}[Mayer-Vietoris sequences]\label{prop:Mayer}
    Let $X$ be a filtered rational polyhedral space with an open face structure. 
    Let $U,V \subset X$ be open subsets of $X$ such that $U\cup V=X$ and $A \subset U$, $B \subset V$ be open subsets. Then there exist the following exact sequences. 
    \begin{enumerate}
        \item \text{homology}
        \begin{align*} 
        \cdots \to
        {}_{(\GM)}\IH_{p,q}(U\cap V,A \cap B) \xrightarrow{}
        {}_{(\GM)}\IH_{p,q}(U,A) \oplus {}_{(\GM)}\IH_{p,q}(V,B) \\
        \xrightarrow{}
        {}_{(\GM)}\IH_{p,q}(X,A\cup B) 
        \xrightarrow{}
        {}_{(\GM)}\IH_{p,q-1}(U\cap V,A \cap B) \to \cdots
        \end{align*}

        \item \text{cohomology}
        \begin{align*}
        \cdots \to {}_{(\GM)}\IH^{p,q}(X,A\cup B) \xrightarrow{}
        {}_{(\GM)}\IH^{p,q}(U, A) \oplus {}_{(\GM)}\IH^{p,q}(V, B) \\
        \xrightarrow{}
        {}_{(\GM)}\IH^{p,q}(U \cap V,A \cap B) 
        \xrightarrow{}
        {}_{(\GM)}\IH^{p,q+1}(X, A\cup B) \to \cdots
        \end{align*}
        
        \item \text{BM homology}
        \begin{align*}
        \cdots \to {}_{(\GM)}\IH_{p,q}^{\BM}(X) \xrightarrow{}
        {}_{(\GM)}\IH_{p,q}^{\BM}(U) \oplus {}_{(\GM)}\IH_{p,q}^{\BM}(V) \xrightarrow{}
        {}_{(\GM)}\IH_{p,q}^{\BM}(U \cap V) \\
        \xrightarrow{}
        {}_{(\GM)}\IH_{p,q-1}^{\BM}(X) \to \cdots
        \end{align*}

        \item \text{compactly supported cohomology}
        \begin{align*}
        \cdots \to {}_{(\GM)}\IH_c^{p,q}(U\cap V) \xrightarrow{}
        {}_{(\GM)}\IH_c^{p,q}(U) \oplus {}_{(\GM)}\IH_c^{p,q}(V) \xrightarrow{}
        {}_{(\GM)}\IH_c^{p,q}(X) \\
        \xrightarrow{}
        {}_{(\GM)}\IH_c^{p,q+1}(U\cap V) \to \cdots  
        \end{align*}
    \end{enumerate}
\end{prop}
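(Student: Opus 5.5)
We will derive all four sequences from short exact sequences of chain (resp. cochain) complexes, with Proposition \ref{prop:excision} replacing the full chain complex of a union by the subcomplex of $\mathcal{U}$-small chains.

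\textbf{Homology.} For open $U,V$ we write ${}_{(\GM)}\IC_{p,\bullet}(U+V)={}_{(\GM)}\IC_{p,\bullet}(U)+{}_{(\GM)}\IC_{p,\bullet}(V)$. The first step is the short exact sequence
\[
0\to {}_{(\GM)}\IC_{p,\bullet}(U\cap V)\to {}_{(\GM)}\IC_{p,\bullet}(U)\oplus {}_{(\GM)}\IC_{p,\bullet}(V)\to {}_{(\GM)}\IC_{p,\bullet}(U+V)\to 0,
\]
with maps $\xi\mapsto(\xi,-\xi)$ and $(\xi,\eta)\mapsto \xi+\eta$. Exactness in the middle needs a check: if $\xi+\eta=0$, then every simplex constituting $\xi$ also constitutes $\eta$, so it lies in $U\cap V$. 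Since allowability of a chain is determined simplexwise, together with the boundary computed in the ambient space, the chain $\xi$ is allowable in $U\cap V$. The same sequence holds for $A,B$, giving a map of short exact sequences. Taking quotients, the $3\times 3$ lemma yields
\[
0\to {}_{(\GM)}\IC_{p,\bullet}(U\cap V,A\cap B)\to {}_{(\GM)}\IC_{p,\bullet}(U,A)\oplus {}_{(\GM)}\IC_{p,\bullet}(V,B)\to {}_{(\GM)}\IC_{p,\bullet}(U+V)/{}_{(\GM)}\IC_{p,\bullet}(A+B)\to 0 .
\]
Here we use that ${}_{(\GM)}\IC(A)\cap{}_{(\GM)}\IC(B)={}_{(\GM)}\IC(A\cap B)$, which holds by the same argument as above. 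Proposition \ref{prop:excision}, applied to the covers $\{U,V\}$ of $X$ and $\{A,B\}$ of $A\cup B$, shows that the inclusions ${}_{(\GM)}\IC(U+V)\to{}_{(\GM)}\IC(X)$ and ${}_{(\GM)}\IC(A+B)\to{}_{(\GM)}\IC(A\cup B)$ are quasi-isomorphisms. The five lemma on the induced long exact sequences then identifies the homology of the third term with ${}_{(\GM)}\IH_{p,q}(X,A\cup B)$, and the long exact sequence of the short exact sequence above gives (1).

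\textbf{Cohomology.} Every complex in the last short exact sequence consists of free $Q$-modules, since they are generated by simplices tensored with the free modules ${\bf F}^Q_p$ and are subquotients split by the simplex basis. The sequence therefore splits degreewise, and $\Hom_Q(-,Q)$ keeps it exact. Dualizing the quasi-isomorphism of free complexes ${}_{(\GM)}\IC(U+V)/{}_{(\GM)}\IC(A+B)\to {}_{(\GM)}\IC(X,A\cup B)$ gives a quasi-isomorphism of cochains, which yields (2).

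\textbf{BM homology and compactly supported cohomology.} For (3) we use Lemma \ref{lem:lf} and work with locally finite chains. Restriction ${}_{(\GM)}\IC^{lf}(X)\to{}_{(\GM)}\IC^{lf}(U)$ keeps only the simplices lying in $U$, and this is locally finite in $U$. We then form the sequence
\[
0\to {}_{(\GM)}\IC^{lf}(X)\to {}_{(\GM)}\IC^{lf}(U)\oplus{}_{(\GM)}\IC^{lf}(V)\to {}_{(\GM)}\IC^{lf}(U\cap V).
\]
This sequence is not exact on the nose, since neither injectivity on the left nor surjectivity on the right holds for chains not subordinate to the cover. We therefore replace each term by the subcomplex of $\{U,V\}$-small locally finite chains and use a locally finite version of the subdivision operator $T$ from Proposition \ref{prop:excision}, which is a quasi-isomorphism by the same argument. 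Surjectivity onto the small chains of $U\cap V$ then comes from a partition argument: we split a locally finite chain in $U\cap V$ into the part near $X\smallsetminus V$, which is closed in $U$ and locally finite there, and the complement. For (4) we argue in the same way with the direct limit over compact $K$ of the relative cochains, whose exactness follows from exactness of $\varinjlim$ applied to the relative version of (2) with $A=U\smallsetminus K$ and $B=V\smallsetminus K$.

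The main obstacle is step (3): checking that subdivided locally finite chains still form a chain-homotopy-equivalent complex, and that the splitting into $U$-parts and $V$-parts preserves allowability of boundaries. As in \cite[\S 6.3]{Friedman_2020}, we would handle this by splitting only along whole simplices and noting that allowability is a simplexwise condition.
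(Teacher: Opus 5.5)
Your arguments for (1) and (2) are the paper's: the short exact sequence $0\to\IC_{p,\bullet}(U\cap V,A\cap B)\to\IC_{p,\bullet}(U,A)\oplus\IC_{p,\bullet}(V,B)\to\IC_{p,\bullet}(U+V)/\IC_{p,\bullet}(A+B)\to0$, then Proposition \ref{prop:excision} with the five lemma, then dualization. The paper justifies the dualization by projectivity of the chain modules. Note that $\IC_{p,q}$ is not spanned by a subset of the simplex basis, since allowability also constrains the boundary.

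Step (3), however, has a genuine gap. Truncating a locally finite chain to the simplices lying in $U$ is not a chain map, so your sequence of locally finite complexes is not a sequence of complexes. Passing to $\{U,V\}$-small chains does not fix this. Take $X=\RR$, $U=(0,\infty)$, $V=(-\infty,1)$, and subdivide the locally finite cycle $\sum_{n\in\ZZ}[n,n+1]$ so that $[0,1]$ becomes $[0,\tfrac12]+[\tfrac12,1]$. The simplices lying in $U$ then form a chain with boundary $-[\tfrac12]\neq0$.

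With perversities it is worse. Faces that cancelled between a simplex inside $U$ and one outside reappear in the boundary of the truncation and need not be allowable, so the truncation need not lie in $\IC^{lf}_{p,\bullet}(U)$ at all. Your proposed remedy, that allowability is a simplexwise condition, is exactly what is false: allowability of a chain constrains its boundary, and splitting along whole simplices is what breaks it. The same objection applies to your partition argument for surjectivity onto $\IC^{lf}_{p,\bullet}(U\cap V)$. An honest chain-level restriction of locally finite chains needs infinitely iterated subdivision toward $X\smallsetminus U$, as in \cite{MR2276609}, or the sheaf $\Idelta^{p,\bullet}$.

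The paper avoids all of this. It applies (1) with both open sets equal to $X$ and $A=X\smallsetminus K_i$, $B=X\smallsetminus L_i$, where $(K_i)$ and $(L_i)$ are compact exhaustions of $U$ and $V$. This gives short exact sequences $0\to\IC_{p,\bullet}(X,X\smallsetminus(K_i\cup L_i))\to\IC_{p,\bullet}(X,X\smallsetminus K_i)\oplus\IC_{p,\bullet}(X,X\smallsetminus L_i)\to\IC_{p,\bullet}(X,(X\smallsetminus K_i)+(X\smallsetminus L_i))\to0$ whose transition maps are surjective quotient maps, so $\varprojlim$ stays exact. Excision, together with $(X\smallsetminus K_i)\cup(X\smallsetminus L_i)=X\smallsetminus(K_i\cap L_i)$, then identifies the three limits up to quasi-isomorphism with the Borel--Moore complexes of $X$, of $U$ and $V$, and of $U\cap V$.

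Step (4) also fails as written. With $A=U\smallsetminus K$ and $B=V\smallsetminus K$ for compact $K\subset X$, sequence (2) reads $\IH^{p,q}(X,X\smallsetminus K)\to\IH^{p,q}(U,U\smallsetminus K)\oplus\IH^{p,q}(V,V\smallsetminus K)\to\IH^{p,q}(U\cap V,(U\cap V)\smallsetminus K)\to\IH^{p,q+1}(X,X\smallsetminus K)$. This runs in the restriction direction, opposite to (4). Moreover, $\varinjlim_K\IH^{p,q}(U,U\smallsetminus K)$ is cohomology of $U$ with supports in the sets $K\cap U$, which are closed in $U$ but generally not compact, so it is not $\IH^{p,q}_c(U)$.

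The choice that works, used in the paper following \cite[Lemma 7.4.5]{Friedman_2020}, again takes both open sets equal to $X$ and lets the compacta vary independently: $A=X\smallsetminus K$ and $B=X\smallsetminus L$ with $K\subset U$ and $L\subset V$ compact. Then $A\cup B=X\smallsetminus(K\cap L)$ and $A\cap B=X\smallsetminus(K\cup L)$. The sets $K\cap L$ are cofinal among compact subsets of $U\cap V$, and the sets $K\cup L$ among those of $X$. Exactness of $\varinjlim$ and excision, $\IH^{p,q}(X,X\smallsetminus K)\cong\IH^{p,q}(U,U\smallsetminus K)$ for $K\subset U$, then turn (2) into $\IH^{p,q}_c(U\cap V)\to\IH^{p,q}_c(U)\oplus\IH^{p,q}_c(V)\to\IH^{p,q}_c(X)\to\IH^{p,q+1}_c(U\cap V)$.
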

\begin{proof}
    First, we prove (1). 
    Let ${}_{(\GM)}\IC_{p,\bullet}(U+V,A+B)$ denote the chain complex \\${}_{(\GM)}\IC_{p,\bullet}^{\{U,V\}}(X)/{}_{(\GM)}\IC_{p,\bullet}^{\{A,B\}}(A\cup B)$. 
    By the same argument as in the classical case (see \cite[p. 152]{MR1867354}), we have an exact sequence:
    \begin{align}\label{eq:Mayer}
        0\to {}_{(\GM)}\IC_{p,\bullet}(U\cap V,A\cap B) \to {}_{(\GM)}\IC_{p,\bullet}(U,A)\oplus {}_{(\GM)}\IC_{p,\bullet}(V,B) \\
        \to {}_{(\GM)}\IC_{p,\bullet}(U+V,A+B) \to 0. \notag
    \end{align}
    By Proposition \ref{prop:excision} and the diagram argument as in the proof of Proposition \ref{prop:oriexcision}, ${}_{(\GM)}\IC_{p,\bullet}(U+V,A+B)$ is quasi-isomorphic to ${}_{(\GM)}\IC_{p,\bullet}(X,A\cup B)$. 
    Hence the long exact sequence associated with the above short exact sequence \eqref{eq:Mayer} yields our claim. 
    
    Next, we prove (2). 
    Since our complexes consist of projective modules, the dual complexes of \eqref{eq:Mayer} induce the following dual exact sequence:
    \begin{align*}
        0\to \Hom ({}_{(\GM)}\IC_{p,\bullet}(U+V,A+B),Q) \to {}_{(\GM)}\IC^{p,\bullet}(U,A)\oplus {}_{(\GM)}\IC^{p,\bullet}(V,B) \\
        \to {}_{(\GM)}\IC^{p,\bullet}(U\cap V,A\cap B) \to 0.
    \end{align*}
    The corresponding long exact sequence gives (2). 

    For (3), we set $U=V=X$, $A=X\smallsetminus K$, and $B=X\smallsetminus L$ in (1), where $K\subset U$ and $L\subset V$ are compact subsets. 
    Since $X$ is a rational polyhedral space, 
    there exists a sequence $\{K_i\}$ of compact subsets $K_i\subset U$ for $i\in \ZZ_{\geq 0}$  
    such that $U=\cup_{i\in \ZZ_{\geq 0}}K_i$ and $K_i\subset \text{int}(K_{i+1})$. 
    Then $\varprojlim_i {}_{(\GM)}\IC_{p,q}(U,U\smallsetminus K_i)$ is isomorphic to $\varprojlim_K {}_{(\GM)}\IC_{p,q}(U,U\smallsetminus K)$. 
    Similarly, there exists a sequence $\{L_i\}$ of compact subsets $L_i\subset V$ for $i\in \ZZ_{\geq 0}$ 
    such that $V=\cup_{i\in \ZZ_{\geq 0}}L_i$ and $L_i\subset \text{int}(L_{i+1})$. 
    Then $\varprojlim_i {}_{(\GM)}\IC_{p,q}(V,V\smallsetminus L_i)$ is isomorphic to $\varprojlim_L {}_{(\GM)}\IC_{p,q}(V,V\smallsetminus L)$. 
    By Proposition \ref{prop:oriexcision}, we have the two quasi-isomorphisms 
    ${}_{(\GM)}\IC_{p,q}(U,U\smallsetminus K_i)\to{}_{(\GM)}\IC_{p,q}(X,X\smallsetminus K_i)$ and 
    ${}_{(\GM)}\IC_{p,q}(V,V\smallsetminus L_i)\to{}_{(\GM)}\IC_{p,q}(X,X\smallsetminus L_i)$. 
    By taking the projective limit, we obtain quasi-isomorphisms 
    $\varprojlim_i {}_{(\GM)}\IC_{p,q}(U,U\smallsetminus K_i)\to \varprojlim_i {}_{(\GM)}\IC_{p,q}(X,X\smallsetminus K_i)$ and 
    $\varprojlim_i {}_{(\GM)}\IC_{p,q}(V,V\smallsetminus L_i)\to \varprojlim_i {}_{(\GM)}\IC_{p,q}(X,X\smallsetminus L_i)$. 
    Using \eqref{eq:Mayer}, we have an exact sequence:
    \begin{align}\label{eq:Mayerprebm}
        0\to {}_{(\GM)}\IC_{p,\bullet}(X,X\smallsetminus (K_i\cup L_i))  \to {}_{(\GM)}\IC_{p,\bullet}(X,X\smallsetminus K_i)\oplus {}_{(\GM)}\IC_{p,\bullet}(X,X\smallsetminus L_i) \\
     \to {}_{(\GM)}\IC_{p,\bullet}(X,X\smallsetminus K_i+X\smallsetminus L_i) \to 0, \notag
    \end{align}
    for $i\in \ZZ_{\geq 0}$. 
    By \cite[Proposition 10.2]{MR242802}, the projective limit of \eqref{eq:Mayerprebm} is also exact as follows: 
    \begin{align}\label{eq:Mayerbm}
    0\to \varprojlim_{i} {}_{(\GM)}\IC_{p,\bullet}(X,X\smallsetminus (K_i\cup L_i))  \to \varprojlim_{i} ({}_{(\GM)}\IC_{p,\bullet}(X,X\smallsetminus K_i)\oplus {}_{(\GM)}\IC_{p,\bullet}(X,X\smallsetminus L_i)) \\
     \to \varprojlim_{i} {}_{(\GM)}\IC_{p,\bullet}(X,X\smallsetminus K_i+X\smallsetminus L_i) \to 0. \notag
    \end{align}
    We have $\HH_q(\varprojlim_{i}  {}_{(\GM)}\IC_{p,\bullet}(X,X\smallsetminus (K_i\cup L_i)))\cong \IH_{p,q}^{\BM}(U\cup V)$. 
    Since $X\smallsetminus K_i\cup X\smallsetminus L_i=X\smallsetminus (K_i\cap L_i)$, there exists 
    a quasi-isomorphism $\varprojlim_{i} {}_{(\GM)}\IC_{p,\bullet}(X,X\smallsetminus K_i+X\smallsetminus L_i)\to \varprojlim_{i} {}_{(\GM)}\IC_{p,\bullet}(X,X\smallsetminus (K_i\cap L_i))$. 
    Therefore, we have $\HH_q(\varprojlim_{i} {}_{(\GM)}\IC_{p,\bullet}(X,X\smallsetminus K_i+X\smallsetminus L_i))\cong\IH_{p,q}^{\BM}(U\cap V)$. 
    By the associated long exact sequence induced by \eqref{eq:Mayerbm}, we have (3). 

    For (4), we again take $U=V=X$, $A=X\smallsetminus K$, and $B=X\smallsetminus L$ in (2), where $K\subset U$ and $L\subset V$ are compact subsets. Taking the inductive limit for $K\subset A$ and $L\subset B$, we obtain (4) as in \cite[Lemma 7.4.5]{Friedman_2020}. 
\end{proof}

\begin{defn}\label{def:polyfan}
    Let $v\in \T^r$. 
    We define an \emph{$n$-dimensional rational polyhedral fan}  $F$ with vertex $v$ in $\T^r$ to be a rational polyhedral space $F$ in $\T^r$ equipped with a filtration $\XX_F$ and an $\XX_F$-stratified open face structure $\CC_F$ 
    that satisfies the following conditions:
    \begin{enumerate}
        \item $\{v\}$ is the smallest face of $\CC_F$ with respect to $\prec$ of $\CC_F$;
        \item The formal dimension of $(F,\XX_F)$ is $n$;
        \item The formal dimension of the stratum $\{v\}$ is $0$; 
        \item $(F,\XX^{\CC_F}_F)$ is stratified-homeomorphic to the cone $cL$ of some compact filtered space $L$. 
    \end{enumerate}
\end{defn}

Note that if (1) above holds, $F\subset \R^r$, and $\CC_F$ is a complex consisting of (non-open) rational polyhedra in $\R^r$, then the condition (4) holds.  

As in the case of the classical intersection homology, we obtain cone formulas. 
\begin{prop}[Cone formula for $\GM$]\label{prop:coneGM}
    Let $F=(F,\XX_F,\CC_F)$ be an $n$-dimensional rational polyhedral fan with vertex $v$ in $\T^r$. 
    Then the following holds:
    \begin{align*}
        _{\GM}\IH_{p,q}(F)\cong
        \begin{cases}
            0 &\text{if } q \geq n-\perv(\{v\})-1\text{ and }q \neq 0,\\
            {\bf F}^{Q}_p(\{v\}) &\text{if } q \geq n-\perv(\{v\})\text{ and }q=0,\\
            \sum \limits_{ \substack{{\sigma' \in \CC_F},\\ \perv(\sigma')\geq \codim(\sigma')}}
                {\bf F}^{Q}_p(\sigma') &\text{if } q =n-\perv(\{v\})-1\text{ and }q=0,\\
            _{\GM}\IH_{p,q}(F\smallsetminus \{v\}) &\text{if } q < n-\perv(\{v\})-1.
        \end{cases}
    \end{align*}
    In the above, for $\sigma \in \CC_F$, we define $\perv(\sigma)\colonequals\perv(S)$ and $\codim(\sigma)\colonequals\codim(S)$, where the relative interior of $\sigma$ is in $S\in \SS_F$. 
    The above sum is taken in ${\bf F}^{Q}_p(\{v\})$. 
\end{prop}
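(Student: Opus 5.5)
We follow the classical proof of the cone formula for $\GM$ intersection homology (cf. \cite[Theorem 4.2.1]{Friedman_2020}). The new ingredient is the coefficient system ${\bf F}^Q_p$. Write $\bar{\mathsf{p}}_v\colonequals\perv(\{v\})$. By condition (4) of Definition \ref{def:polyfan}, $F\cong cL$ with vertex $v$, and $v$ lies in the unique face $\{v\}$ of $\CC_F$, which has formal codimension $n$.

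\textbf{Allowability of simplices meeting $v$.} A $q$-simplex $\delta$ with $v\in\Im(\delta)$ is allowable only if $\delta^{-1}(v)$ lies in the $(q-n+\bar{\mathsf{p}}_v)$-skeleton. So for $q<n-\bar{\mathsf{p}}_v$ no allowable $q$-simplex touches $v$. For a $\GM$ chain $\xi$ of degree $q$, both $\xi$ and $\hat\partial\xi=\partial\xi$ must be allowable. Hence if $q\le n-\bar{\mathsf{p}}_v-1$, no simplex of $\partial\xi$ meets $v$. If moreover $q<n-\bar{\mathsf{p}}_v-1$, no simplex of $\xi$ meets $v$ either. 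Therefore in degrees $q<n-\bar{\mathsf{p}}_v-1$ the complexes $_{\GM}\IC_{p,\bullet}(F)$ and $_{\GM}\IC_{p,\bullet}(F\smallsetminus\{v\})$ agree. In degree $q=n-\bar{\mathsf{p}}_v-1$, every cycle of $F$ is a cycle of $F\smallsetminus\{v\}$. This gives the last case, after also checking that boundaries coincide in the critical degree, which follows from the next step.

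\textbf{Cone construction in high degrees.} For a chain $\xi=\sum f_i\otimes\delta_i$ with $\delta_i$ a $\CC_F$-stratified simplex in $\relint(\tau_i)$, define the cone $\bar c\xi=\sum \iota(f_i)\otimes \bar c\delta_i$. Here $\bar c\delta_i$ is the join with $v$, built via the cone structure; it may need subdividing or straightening so that it is $\CC_F$-stratified, and using straight-line cones in the chart is the natural choice. The simplex $\bar c\delta_i$ has interior in $\relint(\sigma)$ for the smallest face $\sigma\succ\tau_i$ spanned with $v$. This face satisfies $\sigma\prec\tau_i$ or $\sigma=\tau_i$ in the fan direction. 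We check that ${\bf F}_p(\tau_i)$ maps into ${\bf F}_p(\sigma)$: for cones through $v$ in a fan, the cone over $\relint(\tau)$ stays in $\relint(\tau)$ when $\tau\neq\{v\}$. So $\bar c\delta$ is supported in the same face and the coefficient is unchanged; only the vertex itself lands in $\{v\}$. Then $\partial\bar c\xi=\xi-\bar c\partial\xi$ for $q>0$, and $\partial\bar c\xi=\xi-\varepsilon(\xi)v$ for $q=0$, where $\varepsilon(\xi)$ is the sum of the coefficients $\iota_{\{v\},\tau_i}(f_i)\in{\bf F}^Q_p(\{v\})$. Next we compute allowability: $\bar c\delta$ meets $v$ only at the cone point, a $0$-face. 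This is allowed in degree $q+1$ iff $0\le q+1-n+\bar{\mathsf{p}}_v$. For the other strata $S$, $(\bar c\delta)^{-1}(S)$ is the cone over $\delta^{-1}(S)$, which raises the skeleton by one, consistent with degree $q+1$. Hence for $q\ge n-\bar{\mathsf{p}}_v-1$, every allowable cycle $\xi$ with $q\ge1$ bounds $\bar c\xi$, and the homology vanishes. This gives the first case.

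\textbf{Degree $0$.} For $q=0\ge n-\bar{\mathsf{p}}_v$, the point $v$ itself is an allowable $0$-simplex, and the augmentation $\varepsilon$ together with the cone gives $_{\GM}\IH_{p,0}(F)\cong {\bf F}^Q_p(\{v\})$, since every $f\in{\bf F}^Q_p(\{v\})$ is realized by $f\otimes v$. For $q=0=n-\bar{\mathsf{p}}_v-1$, cones over $0$-chains are allowable $1$-chains but $v$ is not allowable. Consequently $\xi$ is a boundary iff $\varepsilon(\xi)=0$, and the image of $\varepsilon$ is the submodule of ${\bf F}^Q_p(\{v\})$ generated by the $\iota(f)$ with $f\in{\bf F}^Q_p(\sigma')$ and $\sigma'$ containing an allowable point. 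A point of $\relint(\sigma')$ is an allowable $0$-simplex iff $0\le 0-\codim(\sigma')+\perv(\sigma')$, which yields the stated sum.

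\textbf{Main obstacle.} We expect the hardest step to be making the cone operator well defined on $\CC_F$-stratified simplices with the correct face-coefficient compatibility, including sedentarity faces in $\T^r$ where straight-line cones are not available. Our first attempt will be to transport everything through the stratified homeomorphism $F\cong cL$ with $\XX^{\CC_F}_F$. Then we will cone in the $[0,1)$-coordinate, which preserves the open faces of $\CC_F$ and therefore the ${\bf F}_p$-coefficients.
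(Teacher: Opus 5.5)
Your proposal is correct and follows essentially the paper's proof. It uses the cone in the $[0,1)$-coordinate of $F\cong cL$, which preserves the relative interiors of faces and hence the ${\bf F}^Q_p$-coefficients, to kill cycles for $q\geq n-\perv(\{v\})-1$, $q\neq 0$. In degree $0$ it uses the augmentation to ${\bf F}^Q_p(\{v\})$ together with $\partial\bar c\xi=\xi-\varepsilon(\xi)v$, and in the range $q<n-\perv(\{v\})-1$ the skeleton count shows that allowable $q$- and $(q+1)$-chains avoid $v$.

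One slip needs fixing: $\bar c\delta$ does not meet $v$ only at the cone point when $\delta$ itself passes through $v$, which is possible once $q\geq n-\perv(\{v\})$. In that case $(\bar c\delta)^{-1}(v)$ is the cone point together with the cone on $\delta^{-1}(v)$. This set still lies in the $(q+1-n+\perv(\{v\}))$-skeleton, so your allowability conclusion is unaffected.
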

\begin{proof}
    We follow the strategy in \cite[Theorem 4.2.1]{Friedman_2020}. 
    We first consider the case $q\geq n-\perv(\{v\})-1$ and $q \neq 0$. 
    We fix a stratified-homeomorphism $F\overset{\cong}{\to}cL$. 
    For a $q$-simplex $\sigma\colon \Delta_q\to F\cong cL$, we define $\bar{c}\sigma$ to be the $(q+1)$-simplex constructed as follows: 
    We write $\sigma(x)=(\sigma_I(x),\sigma_L(x))\in [0,1)\times L/ \{0\}\times L=cL$ as in \cite[Example 3.4.7]{Friedman_2020}. 
    Here $\sigma_I\colon \Delta_q\to [0,1)$ and $\sigma_L\colon \Delta_q \to L$ are maps. 
    (Note that $\sigma_L$ may not be unique or continuous. However, this causes no problem because the subsequent argument does not depend on the choice of $\sigma_L$.) 
    We regard $\Delta_{q+1}$ as the closed cone $\bar{c}\Delta_q=[0,1]\times \Delta_q / (\{0\}\times \Delta_q)$. 
    Then we obtain the function $\bar{c}\sigma\colon\Delta_{q+1}=[0,1]\times \Delta_q / (\{0\}\times \Delta_q)\ni (s,x)\mapsto (s\sigma_I(x),\sigma_L(x))\in cL\cong F$. 
    Each $q$-face of $\bar{c}\sigma$ is $\sigma$ or $\bar{c}\tau$ for some $(q-1)$-face $\tau$ of $\sigma$. 
    For a chain $\xi=\sum\sigma\in {}_{\GM}\IC_{p,q}(F)$, we define $\bar{c}\xi=\sum \bar{c}\sigma\in {}_{\GM}\HC_{p,q+1}(F)$. 
    By the argument in \cite[p. 131]{Friedman_2020}, if $q\geq n-\perv(\{v\})-1$ and $\sigma$ is an allowable $q$-simplex, 
    then $\bar{c}\sigma$ is also allowable. 
    It follows that  $\bar{c}\xi\in {}_{\GM}\IC_{p,q+1}(F)$. 
    If $\xi$ satisfies $\partial\xi=0$ with $q>0$, then $\partial \bar{c}\xi=-\bar{c}\partial\xi+\xi=\xi$. 
    Here we use the assumption $q\neq 0$. 
    Thus $\xi$ is a boundary, and the homology in such a case is trivial. 
    
    Since we consider the $\GM$ case, $v$ may be an allowable $0$-simplex. 
    Thus the case $q=0$ is more intricate  than the non-$\GM$ case. 

    Let us consider the case $q \geq n-\perv(\{v\})$ and $q=0$.  
    For a $0$-simplex $v'$ of $F$, let $\sigma'\in \CC_F$ denote the smallest polyhedron including $v'$. 
    We denote $\Im(\iota_{\{v\},\sigma'}\colon{\bf F} ^Q_p(\sigma')\to {\bf F} ^Q_p(\{v\}))$ by ${\bf F} ^Q_p(v')$. 
    Then $\{v\}\prec \sigma'$ holds and ${\bf F} ^Q_p(v')\subset {\bf F} ^Q_p(\{v\})$. 
    If $v'$ is an allowable 0-cycle, then $\bar c{v'}$ is also allowable. 
    We write a $0$-cycle $\xi\in {}_{\GM}\IC_{p,0}(F)$ as $\xi=\sum f_{v'}v'$ for some $f_{v'}\in {\bf F} ^Q_p(\sigma')$. 
    Then $\xi-\sum f_{v'}\partial\bar{c}{v'}\in {}_{\GM}\IC_{p,0}(F)$ is a $0$-cycle which can be written as $f_v v$ for some $f_v\in {\bf F} ^Q_p(\{v\})$ and the $0$-simplex $v$. 
    Therefore, any $0$-cycle $\xi$ is homologous to $f_{\xi} v$ for some $f_{\xi}\in {\bf F} ^Q_p(\{v\})$. 
    Thus we have a surjective map
    \begin{equation}\label{eq:coneGMzero}
        {\bf F} ^Q_p(\{v\})\ni f \mapsto fv \in {}_{\GM}\IH_{p,0}(F).
    \end{equation}
    We need to show that this map is also injective. 
    Let $\xi=\sum f_{\sigma}\sigma\in {}_{\GM}\IC_{p,1}(F)$ be a $1$-chain and let $v^{\sigma}_1$ and $v^{\sigma}_2$ be points such that $\partial\sigma=v^{\sigma}_1-v^{\sigma}_2$. 
    We define $\xi'$ to be $\sum f_{\sigma}(\bar{c}v^{\sigma}_1-\bar{c}v^{\sigma}_2)\in {}_{\GM}\IC_{p,1}(F)$. 
    Then $\partial\xi=\partial\xi'$ holds. 
    Moreover, $\partial\xi'\in {}_{\GM}\IC_{p,0}(F)$ can be written as $fv-\sum_{v'\neq v} f_{v'}v'$, where $f=\sum_{v'\neq v}\iota_{\{v\},\sigma'}(f_{v'})$. 
    Therefore, if $\partial\xi$ can be written as $fv$, then $\partial\xi=0$. 
    In other words, if $fv$ is $0$ in ${}_{\GM}\IH_{p,0}(F)$, then $f=0$.  
    It follows that the above map \eqref{eq:coneGMzero} is injective.  
    Hence we obtain the statement for the case $q \geq n-\perv(\{v\})$ and $q=0$. 
    This argument is valid for $\HH_{p,0}(X)$. Thus we have $\HH_{p,0}(X)\cong {\bf F} ^Q_p(\{v\})$. 

    For the case $q =n-\perv(\{v\})-1$ and $q=0$, the $0$-simplex $v$ is not allowable since the allowable condition for the stratum $\{v\}$ and $0$-simplex $v$ is $\Delta_0 \subset (0-n+\perv(\{v\}))$-skeleton of $\Delta_0$ and $n+\perv(\{v\})<0$. 
    On the other hand, for any allowable $0$-cycle $v'$, 
    $\bar{c}v'$ is allowable because $\bar{c}v'$ satisfies the allowable condition $0\leq 1-n+\perv(\{v\})$ for the stratum $\{v\}$. 
    We consider the map 
    \begin{equation}\label{eq:conegm0case}
        {}_{\GM}\IH_{p,0}(F)\ni \sum f_{v'}v' \to (\sum \iota_{\{v\},\sigma'}(f_{v'}))v \in \HH_{p,0}(F)\cong {\bf F} ^Q_p(\{v\})
    \end{equation}
    in Remark \ref{rem:GMandTropical}. 
    If $\sum \iota_{\{v\},\sigma'}(f_{v'})=0$,  then $\sum f_{v'}v'$ is the boundary of $\sum f_{v'}\bar{c}v'\in {}_{\GM}\IC_{p,1}(F)$. 
    It follows that the map \eqref{eq:conegm0case} is injective. 
    Thus the 0-degree homology is 
    given by the sum of ${\bf F}^Q_p(v')$ for allowable 0-cycles $v'$ in $\HH_{p,0}(F)\cong{\bf F}^Q_p(\{v\})$. 

    Finally, we consider the case $q < n-\perv(\{v\})-1$. 
    Let $\sigma$ be a $q+1$-simplex intersecting $v$. 
    Then the allowable condition for $\sigma$ and the stratum $\{v\}$ is $\sigma^{-1}(\{v\})\subset ((q+1)-n+\perv(\{v\}))$-skeleton of $\Delta_{q+1}$. 
    However,  $(q+1)-n+\perv(\{v\})<0$ holds here. 
    Thus $\sigma$ is not allowable. 
    For any $q$-simplex $\sigma$ intersecting $v$, $\sigma$ is not allowable similarly.  
    Hence, all the allowable chains in $_{\GM}\IC_{p,q}(F)$ and $_{\GM}\IC_{p,q+1}(F)$ are in $F\smallsetminus \{v\}$, and  the statement holds. 
\end{proof} 

\begin{prop}[Cone formula for non-$\GM$]\label{prop:conenon}
    Let $F$ be an $n$-dimensional rational polyhedral fan with vertex $v$ in $\T^r$. 
    Then, the following holds:
    \begin{align*}
        \IH_{p,q}(F)\cong
        \begin{cases}
            0 &\text{if } q \geq n-\perv(\{v\})-1,\\
            \IH_{p,q}(F\smallsetminus \{v\}) &\text{if } q < n-\perv(\{v\})-1.
        \end{cases}
    \end{align*}
\end{prop}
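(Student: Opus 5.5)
We follow the strategy of the proof of Proposition \ref{prop:coneGM}, itself modeled on \cite[Theorem 4.2.1]{Friedman_2020}, but now for the non-$\GM$ complex $\IC_{p,\bullet}(F)$ with the ordinary boundary $\partial$. We fix a stratified homeomorphism $F\cong cL$ and use the cone construction $\sigma\mapsto\bar{c}\sigma$ from that proof. On chains, the construction keeps the coefficient: a term $f\sigma$ with $f\in{\bf F}^Q_p(\tau)$ is sent to $f\bar{c}\sigma$ in the summand of the face carrying $\relint(\bar{c}\sigma)$. Since $\bar c\sigma$ sweeps from the vertex $v$ along rays through $\sigma$, this face is the same $\tau$ whenever $\tau$ is itself a cone face (as for a fan). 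The case $\tau\neq\{v\}$ is the only one that matters, because the non-$\GM$ simplices lie in $\HC_q(\tau)$ with $\relint$ not inside $\S_X$. The cone formula $\partial\bar{c}\xi=\xi-\bar{c}\partial\xi$ then holds on $\HC_{p,\bullet}(F)$, up to the sign convention, for $q>0$. For $q=0$ it holds with the extra term $-\iota_{\{v\},\sigma'}(f)\,v$ coming from the vertex face.

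\emph{Case $q\ge n-\perv(\{v\})-1$.} First, the argument of \cite[p.~131]{Friedman_2020} shows that $\bar{c}\sigma$ is allowable whenever $\sigma$ is allowable and $q\ge n-\perv(\{v\})-1$. For strata other than $\{v\}$, the preimage of $S$ in $\bar c\sigma$ is the cone on the preimage in $\sigma$, so the skeleton index rises by one on both sides. For the stratum $\{v\}$, the preimage is the cone vertex, which is allowable exactly when $0\le q+1-n+\perv(\{v\})$. Second, take an allowable cycle $\xi\in\IC_{p,q}(F)$. Then $\partial\bar{c}\xi=\xi$ for $q>0$, and $\bar c\xi$ is allowable because its boundary $\xi$ is. Hence $\IH_{p,q}(F)=0$ for $q>0$.

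The main obstacle is $q=0$, where the $\GM$ answer was nonzero. The point is that in the non-$\GM$ complex a $0$-simplex at $v$ never occurs. Indeed, it lies in $\S_X$, so it is excluded (in the $\GM$ case it could survive as an allowable simplex). Consequently, for an allowable $0$-chain $\xi=\sum f_{v'}v'$, the boundary $\partial\bar c\xi$ in the non-$\GM$ sense has its vertex term simply dropped, so $\partial\bar{c}\xi=\xi$. The $1$-chain $\bar c\xi$ is allowable: this requires $0\le 1-n+\perv(\{v\})$, which holds in this case, and its boundary $\xi$ is allowable. Hence every $0$-chain is a boundary and $\IH_{p,0}(F)=0$. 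I would double-check here that dropping the term of $\partial$ lying in $\S_X$ is consistent with how the boundary on $\IC$ is defined in Definition \ref{def:tropchainhomo}. The intended reading, following \cite{Friedman_2020}, is $\hat\partial$ for the non-$\GM$ complex, so I will use $\hat\partial$ throughout.

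\emph{Case $q<n-\perv(\{v\})-1$.} As in the last step of Proposition \ref{prop:coneGM}, any $q$- or $(q+1)$-simplex meeting $v$ violates \eqref{eq:allowablecond} for the stratum $\{v\}$, since $(q+1)-n+\perv(\{v\})<0$. This uses the fact that $\{v\}$ has formal codimension $n$, by Definition \ref{def:polyfan}(2),(3). Hence $\IC_{p,q}(F)=\IC_{p,q}(F\smallsetminus\{v\})$ and $\IC_{p,q+1}(F)=\IC_{p,q+1}(F\smallsetminus\{v\})$. The same holds in degree $q-1$, so the cycles and boundaries in degree $q$ coincide, and therefore $\IH_{p,q}(F)\cong\IH_{p,q}(F\smallsetminus\{v\})$.
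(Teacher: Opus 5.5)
Your proposal is correct and is essentially the paper's proof. The paper reruns the $\GM$ cone argument and notes that for $q=0$ the non-$\GM$ boundary gives $\hat\partial\,\bar c v'=v'$, because the vertex term lies in $\S_X$ and is dropped; hence every allowable $0$-cycle $\sum f_{v'}v'$ bounds the allowable $1$-chain $\sum f_{v'}\bar c v'$, exactly as you argue, and your remarks on using $\hat\partial$ and on carrying coefficients along the cone fill in details the paper leaves implicit. One caveat, shared with the paper: the claim that the vertex $0$-simplex lies in $\S_X$ needs $n\geq 1$ (for $n=0$ the fan is a point and $\IH_{0,0}\cong Q\neq 0$), which is why Lemma \ref{lem:gencone} explicitly assumes a nonempty singular locus in the non-$\GM$ case.
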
  
\begin{proof}
    The proof is essentially the same as in the $\GM$ case.  
    The only difference occurs in the case $q=0$, which corresponds to the second and third cases in the $\GM$ setting. 
    In such cases, under the non-$\GM$ setting, since the boundary of $\bar{c}{v'}$ is $v'$,
    $1$-chain $\bar{c}{v'}$ is allowable for any allowable $0$-chain $v'$. 
    Hence, in each case, any allowable $0$-cycle $\xi=\sum f_{v'} v'$ is the boundary of the allowable $1$-chain $\sum f_{v'}\bar{c}{v'}$. 
    Therefore, the homology is trivial in these cases.  
\end{proof}
\begin{lemma}\label{lem:inclusion}
    Let $F$ be an $n$-dimensional rational polyhedral fan with vertex $v$ in $\T^r$. 
    We identify $F$ with $cL$ in Definition \ref{def:polyfan}. 
    Recall that for $I\subset [0,1)$, $L_I$ is the image of $I\times L$ under the quotient map $I\times L \to cL$ (see Definition \ref{def:subcone}). 
    For any $1>r>r'\geq 0$, the inclusion $F\smallsetminus L_{[0,r]}\to F\smallsetminus L_{[0,r']}$ induces an isomorphism ${}_{(\GM)}\IH_{p,q}(F\smallsetminus L_{[0,r]})\cong{}_{(\GM)}\IH_{p,q}(F\smallsetminus L_{[0,{r'}]})$.  
\end{lemma}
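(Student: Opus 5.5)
The natural approach is to show that the inclusion $F\smallsetminus L_{[0,r]}\hookrightarrow F\smallsetminus L_{[0,r']}$ is a homotopy equivalence in a sense compatible with intersection chains. Both spaces are stratified-homeomorphic to $(r,1)\times L$ and $(r',1)\times L$ respectively. The model to follow is the classical argument in \cite{Friedman_2020} that intersection homology is invariant under stratum-preserving homotopy equivalences. The new difficulty is that our chains carry the coefficients ${\bf F}^Q_p(\tau)$ and must be $\CC_F$-stratified.

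The plan has four steps.

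\textbf{Step 1.} Construct a radial deformation. Define $H\colon (r',1)\times L\times[0,1]\to (r',1)\times L$ which pushes $(r',1)$ into $(r,1)$ by a family of increasing homeomorphisms $h_t$ of the radial coordinate. Take $h_0=\id$ and $h_1\colon (r',1)\to(r,1)$, all fixing the $L$-factor. Pick a retraction $g=h_1$ giving a map $F\smallsetminus L_{[0,r']}\to F\smallsetminus L_{[0,r]}$. Then $g\circ\iota$ and $\iota\circ g$ are connected to the identities by the homotopies $h_t$ (restricted appropriately).

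\textbf{Step 2.} Check compatibility with the face structure and the coefficients. The radial direction must move each relatively open cell of $\CC_F$ into itself. This should follow from $F$ being a fan with vertex $v$. Concretely, in a chart, scaling by a positive factor toward or away from $v$ preserves each face $\sigma\ni v$ with $\{v\}\prec\sigma$, and hence each $\relint(\sigma)$. To arrange this, I would choose the identification $F\cong cL$ of Definition \ref{def:polyfan}(4) so that the cone coordinate is realized by such affine scalings. With that choice, push-forward of a simplex $\delta$ with $\delta(\relint\Delta')\subset\relint(\tau)$ stays in $\relint(\tau)$. The coefficient in ${\bf F}^Q_p(\tau)$ can therefore be kept unchanged, and the push-forward commutes with the maps $\imap^Q$.

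\textbf{Step 3.} Check allowability. The maps preserve strata, since each stratum of $F\smallsetminus\{v\}$ is of the form $(0,1)\times S_L$. Hence $\delta^{-1}(S)$ is unchanged under composition with $h_t$, and allowability and the $\hat\partial$ versus $\partial$ distinction are preserved.

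\textbf{Step 4.} Build the chain homotopy. Use the standard prism operator $P$ applied to $\delta\times[0,1]\to H$, subdivided into simplices. The prism simplices over $\Delta_q\times I$ meet each stratum $S$ in the preimage of $\delta^{-1}(S)\times I$. The classical computation (\cite[Proposition 4.1.10]{Friedman_2020} style) shows this lies in the $(q+1-\codim S+\perv(S))$-skeleton, so $P$ sends allowable chains to allowable chains. The relation $\partial P+P\partial=g_*\iota_*-\id$ then holds on ${}_{(\GM)}\IC_{p,\bullet}$. In the $\GM$ case it holds with $\hat\partial$, because $P$ of a simplex in $\S_X$ stays in $\S_X$. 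The prism simplices must also be $\CC_F$-stratified; this holds because $H$ preserves each $\relint(\tau)$ and the prism faces map relative interiors into relative interiors of the same cell.

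\textbf{Main obstacle.} I expect the hardest part to be Step 2: making the cone structure literally compatible with $\CC_F$, including cells with nonempty sedentarity at infinity. My fallback is to avoid constructing a global radial map. Instead, I would use Proposition \ref{prop:argument} with $F_\bullet(U)={}_{(\GM)}\IH_{p,\bullet}(U\cap(F\smallsetminus L_{[0,r]}))$ and $G_\bullet(U)={}_{(\GM)}\IH_{p,\bullet}(U\cap(F\smallsetminus L_{[0,r']}))$ on the CS set $(r',1)\times L$. The Mayer--Vietoris sequences of Proposition \ref{prop:Mayer} would supply the needed exactness, reducing the claim to local cone neighborhoods $\R^i\times cL'$. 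There the cone formulas, Propositions \ref{prop:coneGM} and \ref{prop:conenon}, together with induction on depth, would finish the argument.
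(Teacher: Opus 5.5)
Your Steps 1--4 are essentially the paper's proof. The paper takes the affine reparametrization of the radial coordinate between $(r',1)$ and $(r,1)$, fixing the $L$-factor, as a homotopy inverse of the inclusion and then invokes the prism arguments of Friedman's Propositions 4.1.10 and 6.3.7; your $g=h_1$ even points in the correct direction, whereas the paper's displayed $j$ is written as a map $L_{(r,1)}\to L_{(r',1)}$. The obstacle you flag in Step 2 does not arise: $F\cong cL$ is a stratified homeomorphism for $\XX^{\CC_F}_F$, whose strata lie in relative interiors of cells of $\CC_F$, so any map $(s,\ell)\mapsto(\phi(s),\ell)$ automatically preserves each $\relint(\tau)$, and neither an affine realization of the cone coordinate nor the Mayer--Vietoris fallback is needed.
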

\begin{proof}
    We have $L_{(r,1)}=F\smallsetminus L_{[0,r]}$ and $L_{(r',1)}=F\smallsetminus L_{[0,r']}$. 
    Let $i\colon L_{(r,1)}\to L_{(r',1)}$ denote the inclusion. 
    For any a simplex $\sigma$ of $F\smallsetminus L_{(r,1)}$, we write $\sigma(x)=(\sigma_{(r,1)}(x),\sigma_L(x))\in (r,1)\times L=L_{(r,1)}$.  
    We define $j\colon  \IC_{p,q}(L_{(r,1)},Q)\to \IC_{p,q}(L_{(r',1)},Q)$ by $(\sigma_{(r,1)},\sigma_L)\mapsto (\frac{1-r'}{1-r}\sigma_{(r,1)}-\frac{r-r'}{1-r},\sigma_L)$. 
    By the argument in \cite[Propositions 4.1.10 and 6.3.7]{Friedman_2020}, $i\circ j$ and $j\circ i$ are chain homotopic to the identities. 
    Therefore, $i$ is a chain homotopy equivalence and we obtain the claim. 
\end{proof}

Following the strategy of \cite[Proposition 3.7]{Chataur_2019}, we prove the following proposition. 
\begin{prop}[Cone formula for non-$\GM$ $\BM$]\label{prop:conebm}
    Let $F$ be an $n$-dimensional rational polyhedral fan with vertex $v$ in $\T^r$. 
    Then the following holds:
    \begin{align*}
        \IH_{p,q}^{\BM}(F)\cong
        \begin{cases}
            \IH_{p,q-1}(F\smallsetminus \{v\})  &\text{if } q \geq n-\perv(\{v\}),\\
            0 &\text{if } q < n-\perv(\{v\}).
        \end{cases}
    \end{align*}
\end{prop}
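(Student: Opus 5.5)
Since $F\cong cL$ is a cone, the compact sets $L_{[0,r]}$ with $r\to 1$ are cofinal among compact subsets of $F$. So the plan is to rewrite $\IH^{\BM}_{p,q}(F)$ as a limit of relative groups and then evaluate those groups with the long exact sequence of a pair, the cone formula of Proposition \ref{prop:conenon}, and Lemma \ref{lem:inclusion}.

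First, I would write
\[
\IC^{\BM}_{p,\bullet}(F)\cong\varprojlim_{k}\IC_{p,\bullet}(F,F\smallsetminus L_{[0,r_k]})
\]
for a sequence $r_k\nearrow 1$. The transition maps are quotient maps of chain complexes, hence surjective, so the Mittag-Leffler condition holds at the chain level. This gives a Milnor sequence
\[
0\to \varprojlim{}^1_k\,\IH_{p,q+1}(F,F\smallsetminus L_{[0,r_k]})\to \IH^{\BM}_{p,q}(F)\to \varprojlim_k \IH_{p,q}(F,F\smallsetminus L_{[0,r_k]})\to 0 .
\]

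Second, I would compute each relative group from the long exact sequence of the pair $(F,F\smallsetminus L_{[0,r]})$. By Proposition \ref{prop:conenon}, $\IH_{p,q}(F)=0$ for $q\ge n-\perv(\{v\})-1$, and $\IH_{p,q}(F)\cong\IH_{p,q}(F\smallsetminus\{v\})$ via inclusion for $q<n-\perv(\{v\})-1$. By Lemma \ref{lem:inclusion} applied with $r'=0$, the inclusion $F\smallsetminus L_{[0,r]}\to F\smallsetminus\{v\}$ is an isomorphism on $\IH$.
\begin{itemize}
\item[(a)] For $q<n-\perv(\{v\})-1$ the map $\IH_{p,q}(F\smallsetminus L_{[0,r]})\to\IH_{p,q}(F)$ is an isomorphism.
\item[(b)] For $q\ge n-\perv(\{v\})-1$ the target of that map is zero.
\end{itemize}
Chasing the sequence then gives $\IH_{p,q}(F,F\smallsetminus L_{[0,r]})\cong \IH_{p,q-1}(F\smallsetminus\{v\})$ when $q-1\ge n-\perv(\{v\})-1$, i.e.\ $q\ge n-\perv(\{v\})$. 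In the opposite range, $q<n-\perv(\{v\})$, the relative group is zero. The borderline degree $q=n-\perv(\{v\})-1$ needs care: there the relative group is $\IH_{p,q}(F,\cdot)$, and because $\IH_{p,q}(F)=0$ it injects from the kernel of $\IH_{p,q-1}(F\smallsetminus L)\to\IH_{p,q-1}(F)$. That map is an isomorphism because $q-1<n-\perv(\{v\})-1$, so the kernel is zero. This is the step where the index bookkeeping is the main obstacle: I have to check the case boundaries carefully, including the identification of the map with the one induced by inclusion and naturality of the boundary map.

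Third, I would pass to the limit. Each transition map $\IH_{p,q}(F,F\smallsetminus L_{[0,r_{k+1}]})\to\IH_{p,q}(F,F\smallsetminus L_{[0,r_k]})$ is compatible, through the connecting maps, with the maps $\IH_{p,q-1}(F\smallsetminus L_{[0,r]})$ induced by the inclusions of Lemma \ref{lem:inclusion}. These are isomorphisms, so the inverse system is essentially constant. Hence $\varprojlim^1=0$ and $\varprojlim\cong\IH_{p,q-1}(F\smallsetminus\{v\})$ in the range $q\ge n-\perv(\{v\})$, and the limit is $0$ otherwise, which is the claimed formula. Alternatively, I could use Lemma \ref{lem:lf} and argue with locally finite chains following \cite[Proposition 3.7]{Chataur_2019}, but the limit argument above seems the most direct.
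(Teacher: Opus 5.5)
Your proposal is correct and uses the same ingredients as the paper: the cofinal tower $L_{[0,a_r]}$, Lemma~\ref{lem:inclusion} to make the tower of relative groups constant so that the $\varprojlim^1$ term vanishes (via \cite[Theorem 3.5.8]{Weibel_1994}), and the long exact sequence of the pair combined with Proposition~\ref{prop:conenon}. The only difference is the order. The paper first identifies $\IH^{\BM}_{p,q}(F)\cong\IH_{p,q}(F,F\smallsetminus\{v\})$ (taking $a_0=0$) and then runs a single long exact sequence, whereas you compute each finite-stage relative group first and then pass to the limit.
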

\begin{proof}
    First, we prove that $\IH_{p,q}^{\BM}(F)\cong \IH_{p,q}(F, F\smallsetminus \{v\})$. 
    Let $(a_r)_{r\in \ZZ_{\geq 0}}$ be a monotone increasing sequence taking values in $[0,1)$ with $a_0=0$ and converging to $1$.  
    We identify $F$ with $cL$ in Definition \ref{def:polyfan}. 
    For any $r,r'\in \Z_{\geq 0}$ with $r>r'$, the inclusion $F\smallsetminus L_{[0,a_r]}\to F\smallsetminus L_{[0,a_{r'}]}$ induces an isomorphism $\IH_{p,q}(F\smallsetminus L_{[0,a_r]})\cong\IH_{p,q}(F\smallsetminus L_{[0,a_{r'}]})$ by Lemma \ref{lem:inclusion}. 
    Therefore, by the similar argument using a diagram as in Proposition \ref{prop:oriexcision}, $\IH_{p,q}(F,F\smallsetminus L_{[0,a_r]})\cong\IH_{p,q}(F,F\smallsetminus L_{[0,a_{r'}]})$. 
    Since $\IH_{p,q}(F,F\smallsetminus L_{[0,a_r]})\cong\IH_{p,q}(F,F\smallsetminus L_{[0,a_{r'}]})$ holds for any $r,r'\in \Z_{\geq 0}$ with $r>r'$, 
    we have an isomorphism $\HH_q(\IC_{p,\bullet}(F,F\smallsetminus L_{[0,a_r]}))\to \varprojlim\nolimits_{r\in \ZZ_{\geq 0} } \IH_{p,q}(F,F\smallsetminus L_{[0,a_r]})$ by \cite[Theorem 3.5.8]{Weibel_1994}. 
    Since any compact subset $K\subset F$ is contained in $L_{[0,a_r]}$ for some $r\in \ZZ_{\geq 0}$, $\IC_{p,q}^{\BM}(F)$ and $\varprojlim_{r\in \ZZ_{\geq 0} } \IC_{p,q}(F,F\smallsetminus L_{[0,a_r]})$ are isomorphic. 
    Therefore, we have an isomorphism $\IH_{p,q}^{\BM}(F)\cong\varprojlim_{r\in \ZZ_{\geq 0} } \IH_{p,q}(F,F\smallsetminus L_{[0,a_r]})$. 
    Applying the isomorphism $\IH_{p,q}(F,F\smallsetminus L_{[0,a_r]})\cong\IH_{p,q}(F,F\smallsetminus L_{[0,a_{r'}]})$, 
    we obtain an isomorphism $\IH_{p,q}^{\BM}(F)\cong \IH_{p,q}(F, F\smallsetminus \{v\})$.

    Associating to a short exact sequence: 
    \[0\to \IC_{p,q}(F\smallsetminus \{v\})\to \IC_{p,q}(F) \to \IC_{p,q}(F,F\smallsetminus \{v\})\to 0, \]
    we obtain the long exact sequence:
    \[
    \begin{tikzcd}[column sep=1.0em, row sep=1.0em]
        \cdots \arrow[r]&
        \IH_{p,q}(F\smallsetminus \{v\}) \arrow[r,"i^q"]&
        \IH_{p,q}(F)\arrow[r]&
        \IH_{p,q}(F,F\smallsetminus \{v\}) \arrow[r]&
        \IH_{p,q-1}(F\smallsetminus \{v\}) \arrow[r,"i^{q-1}"]& \cdots. \\
        &&&\IH_{p,q}^{\BM}(F)\arrow[u,"\cong"]&
    \end{tikzcd}
    \]
    By the cone formula for the non-$\GM$ case (Proposition \ref{prop:conenon}), 
    the map $i^q$ is an isomorphism if $q < n-\perv(\{v\})-1$ and $\IH_{p,q}(F)=0$ if $q \geq n-\perv(\{v\})-1$. 
    Therefore, if $q \leq n-\perv(\{v\})-1$, $\Ker i^{q-1}=0$ and $\Coker i^q=0$. 
    Then, if $q < n-\perv(\{v\})-1$, we obtain an exact sequence from the above long exact sequence.  
    $$0\to \IH_{p,q}^{\BM}(F)\to 0.$$
    Thus $\IH_{p,q}^{\BM}(F)=0$ for $q < n-\perv(\{v\})$. 

    Since $\IH_{p,q}(F)=0$ holds in the case $q \geq n-\perv(\{v\})-1$, we have an exact sequence in the case $q \geq n-\perv(\{v\})$ as follows. 
    $$0\to \IH_{p,q}^{\BM}(F)\to \IH_{p,q-1}(F\smallsetminus \{v\})\to 0.$$
    Therefore, we have $\IH_{p,q}^{\BM}(F)\cong \IH_{p,q-1}(F\smallsetminus \{v\})$ for $q\geq n-\perv(\{v\})$. 
\end{proof}
\begin{corollary}[Another form of the cone formula for non-$\GM$ $\BM$]\label{cor:anotherconebm}
    Let $F$ be an $n$-dimensional rational polyhedral fan with vertex $v$ in $\T^r$. 
    Then the following holds:
    \begin{align*}
        \IH_{p,q}^{\BM}(F)\cong
        \begin{cases}
            \IH_{p,q}^{\BM}(F\smallsetminus \{v\})  &\text{if } q \geq n-\perv(\{v\}),\\
            0 &\text{if } q < n-\perv(\{v\}).
        \end{cases}
    \end{align*}
\end{corollary}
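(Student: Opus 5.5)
The plan is to deduce the corollary from Proposition \ref{prop:conebm}. The remaining input is the identification $\IH_{p,q}^{\BM}(F\smallsetminus \{v\})\cong \IH_{p,q-1}(F\smallsetminus\{v\})$ for $q\geq n-\perv(\{v\})$. The case $q<n-\perv(\{v\})$ needs no work: it is literally the second case of Proposition \ref{prop:conebm}.

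For $q\geq n-\perv(\{v\})$, identify $F$ with $cL$, so that $F\smallsetminus\{v\}=L_{(0,1)}\cong (0,1)\times L$ as filtered spaces. The open interval $(0,1)$ is the union of $(0,1/2+\epsilon)$ and $(1/2-\epsilon,1)$. I will compute the Borel--Moore homology of $(0,1)\times L$ via the Mayer--Vietoris sequence for BM homology, Proposition \ref{prop:Mayer}(3), or more directly by a compact-exhaustion argument like the one in the proof of Proposition \ref{prop:conebm}.

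\textbf{Exhaustion.} Take the compact sets $K_r=L_{[a_r',a_r]}$ with $a_r'\to 0$ and $a_r\to 1$. This gives
\[
\IH^{\BM}_{p,q}(L_{(0,1)})\cong \varprojlim_r \IH_{p,q}\bigl(L_{(0,1)},\,L_{(0,a_r')}\sqcup L_{(a_r,1)}\bigr).
\]
By a stretching argument as in Lemma \ref{lem:inclusion}, the inclusions of the two ends and of the middle piece are chain homotopy equivalences. Hence every piece has the homology of $\IH_{p,*}(L_{(0,1)})$. The long exact sequence of the pair then reads
\[
\cdots\to H\oplus H\to H\to \IH_{p,q}(L_{(0,1)},\text{ends})\to H\oplus H\to H\to\cdots,
\]
with $H=\IH_{p,*}(L_{(0,1)})$ and the map $H\oplus H\to H$ given by summation. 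Summation is surjective with kernel the antidiagonal $\cong H$. So the relative group in degree $q$ is isomorphic to $\IH_{p,q-1}(L_{(0,1)})$, compatibly in $r$, and the inverse system is constant. As in the proof of Proposition \ref{prop:conebm}, I will appeal to \cite[Theorem 3.5.8]{Weibel_1994}: the $\varprojlim^1$ term vanishes because the transition maps are isomorphisms. This gives $\IH^{\BM}_{p,q}(F\smallsetminus\{v\})\cong\IH_{p,q-1}(F\smallsetminus\{v\})$ in all degrees. Combined with Proposition \ref{prop:conebm}, this yields the corollary.

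\textbf{Main obstacle.} The hardest step is justifying the homotopy equivalences for the end pieces $L_{(0,a)}$, and the stratified homotopy invariance of $\IH_{p,*}$ under the product structure $(0,1)\times L$. The reparametrization in the $(0,1)$-coordinate must send $\CC$-stratified simplices to $\CC$-stratified simplices, respecting the coefficient systems ${\bf F}_p^Q$ and allowability. I would handle this exactly as in Lemma \ref{lem:inclusion}, by invoking \cite[Propositions 4.1.10 and 6.3.7]{Friedman_2020}. The point is that the maps are stratum preserving and move points only along the cone lines, so each open face of $\CC_F$ is mapped into itself. A fallback is to replace the end pieces by the subspaces $L_{(0,a)}$ treated via Proposition \ref{prop:excision}.
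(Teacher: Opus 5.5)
Your argument is correct, but it is organized differently from the paper's.

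\textbf{The paper's route.} The paper never computes $\IH^{\BM}_{p,*}(F\smallsetminus\{v\})$ on its own. It takes the long exact sequence of the triple $(F,\,F\smallsetminus L_{\{\frac12\}},\,F\smallsetminus L_{[0,\frac12]})$ and identifies its three terms with $\IH_{p,q}(F)$, $\IH^{\BM}_{p,q}(F)$ and $\IH^{\BM}_{p,q}(F\smallsetminus\{v\})$. The first identification uses excision to $L_{[0,\frac12)}$ plus stretching. It then kills $\IH_{p,q}(F)$ and $\IH_{p,q-1}(F)$ for $q\geq n-\perv(\{v\})$ using the non-$\GM$ cone formula, Proposition \ref{prop:conenon}. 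So Proposition \ref{prop:conebm} is needed only for the vanishing range.

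\textbf{Your route.} You prove a perversity-independent suspension formula $\IH^{\BM}_{p,q}(L_{(0,1)})\cong\IH_{p,q-1}(L_{(0,1)})$, valid in all degrees, via the two-ended exhaustion and the long exact sequence of the pair. You then compose it with the first case of Proposition \ref{prop:conebm}. The stretching input is the same in both approaches. The outer end is literally Lemma \ref{lem:inclusion} with $r'=0$. The inner end $L_{(0,a)}$ needs the radial rescaling of Lemma \ref{lem:constsystem}; the paper uses this implicitly when it replaces $\IH^{\BM}_{p,q}(F\smallsetminus\{v\})$ by $\IH_{p,q}(F\smallsetminus\{v\},(F\smallsetminus\{v\})\smallsetminus L_{\{\frac12\}})$.

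\textbf{What each buys.} Your route gives more information: the Borel--Moore homology of the punctured cone in every degree. The paper's route gives naturality. Its isomorphism is visibly the restriction map $\IH^{\BM}_{p,q}(F)\to\IH^{\BM}_{p,q}(F\smallsetminus\{v\})$. The paper relies on exactly this later, in Lemma \ref{lem:prodgenconeBM} and in verifying condition (AX3) in Proposition \ref{prop:axidelta}. Your isomorphism is a connecting map composed with the inverse of another connecting isomorphism. To identify it with restriction, you would need an extra naturality check comparing the pairs $(F,L_{(a,1)})$ and $(F,L_{[0,a')}\sqcup L_{(a,1)})$. That check is not needed for the corollary as stated.
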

\begin{proof}
    By Proposition \ref{prop:conebm}, it suffices to 
    consider the case $q \geq n-\perv(\{v\})$ and show that $\IH_{p,q}^{\BM}(F\smallsetminus \{v\})\cong\IH_{p,q}^{\BM}(F)$. 
    We identify $F$ with $cL$ in Definition \ref{def:polyfan}. 
    We have that 
    $\IH_{p,q}^{\BM}(F)$$\cong \IH_{p,q}(F,F\smallsetminus L_{[0,\frac{1}{2}]})$ and $\IH_{p,q}^{\BM}(F\smallsetminus \{v\})\cong \IH_{p,q}(F\smallsetminus \{v\}, (F\smallsetminus \{v\})\smallsetminus L_{\{\frac{1}{2}\}})\cong$$\IH_{p,q}(F,F\smallsetminus L_{\{\frac{1}{2}\}})$ by an argument similar to that in Proposition \ref{prop:conebm}. 
    Moreover, there exists an isomorphism: 
    $$\IH_{p,q}(F\smallsetminus L_{\{\frac{1}{2}\}},F\smallsetminus L_{[0,\frac{1}{2}]})\cong \IH_{p,q}(L_{[0,\frac{1}{2})} \sqcup F\smallsetminus L_{[0,\frac{1}{2}]},F\smallsetminus L_{[0,\frac{1}{2}]})\cong \IH_{p,q}(L_{[0,\frac{1}{2})})\cong \IH_{p,q}(F).$$ 
    Then we consider the relative homology long exact sequence:
    \begin{align*}
        \cdots\to\IH_{p,q}(F\smallsetminus L_{\{\frac{1}{2}\}},F\smallsetminus L_{[0,\frac{1}{2}]})\to\IH_{p,q}(F,F\smallsetminus L_{[0,\frac{1}{2}]})\to\IH_{p,q}(F,F\smallsetminus L_{\{\frac{1}{2}\}})\\
        \to\IH_{p,q-1}(F\smallsetminus L_{\{\frac{1}{2}\}},F\smallsetminus L_{[0,\frac{1}{2}]})\to\cdots.
    \end{align*}
    By the above isomorphism, this long exact sequence becomes the long exact sequence: 
    \begin{align*}
        \cdots\to\IH_{p,q}(F)\to\IH_{p,q}^{\BM}(F)\to\IH_{p,q}^{\BM}(F\smallsetminus \{v\})\to\IH_{p,q-1}(F)\to\cdots. 
    \end{align*}
    Since we assume that $q \geq n-\perv(\{v\})$, by Proposition \ref{prop:conenon}, $\IH_{p,q}(F)$ and $\IH_{p,q-1}(F)$ are zero. 
    Therefore, we obtain an isomorphism $\IH_{p,q}^{\BM}(F)\to\IH_{p,q}^{\BM}(F\smallsetminus \{v\})$. 
\end{proof}

\begin{prop}[Cone formula for non-$\GM$ cohomology]\label{prop:conecohom}
    Let $F$ be an $n$-dimensional rational polyhedral fan with vertex $v$ in $\T^r$. 
    Then the following holds:
    \begin{align*}
        \IH^{p,q}(F)\cong
        \begin{cases}
            0 &\text{if } q > n-\perv(\{v\})-1,\\
            \Ext^1(\IH_{p,q-1}(F\smallsetminus \{v\}), Q) &\text{if } q=n-\perv(\{v\})-1,\\
            \IH^{p,q}(F\smallsetminus \{v\}) &\text{if } q < n-\perv(\{v\})-1.
        \end{cases}
    \end{align*}
\end{prop}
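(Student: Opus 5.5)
The plan is to derive the cohomology cone formula from the homology cone formula (Proposition \ref{prop:conenon}) by the universal coefficient theorem. The chain complexes ${}\IC_{p,\bullet}(F,Q)$ and ${}\IC_{p,\bullet}(F\smallsetminus\{v\},Q)$ consist of free, hence projective, $Q$-modules. Indeed, each ${\bf F}^Q_p(\tau)$ is a finitely generated torsion-free module over the Dedekind domain $Q$, hence projective, and the chains are sums of ${\bf F}^Q_p(\tau)\otimes \HC_q(\tau)$ cut out by allowability conditions. Since $Q$ is a Dedekind domain, hence hereditary, submodules of projective modules are projective, so the allowable subcomplex is again degreewise projective.

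For any such complex $C_\bullet$, the universal coefficient theorem over the hereditary ring $Q$ gives a natural split short exact sequence
\[0\to \Ext^1_Q(\HH_{q-1}(C_\bullet),Q)\to \HH^q(\Hom_Q(C_\bullet,Q))\to \Hom_Q(\HH_q(C_\bullet),Q)\to 0.\]
I will apply this to $C_\bullet=\IC_{p,\bullet}(F)$ and to $C_\bullet=\IC_{p,\bullet}(F\smallsetminus\{v\})$, and use naturality with respect to the inclusion $F\smallsetminus\{v\}\hookrightarrow F$. Write $N=n-\perv(\{v\})-1$. By Proposition \ref{prop:conenon}, $\IH_{p,q}(F)=0$ for $q\ge N$, and the inclusion induces isomorphisms $\IH_{p,q}(F\smallsetminus\{v\})\cong\IH_{p,q}(F)$ for $q<N$.

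The three cases then go as follows.
\begin{itemize}
\item If $q>N$, both $\IH_{p,q}(F)$ and $\IH_{p,q-1}(F)$ vanish, since $q-1\ge N$. Hence both outer terms are zero and $\IH^{p,q}(F)=0$.
\item If $q=N$, the $\Hom$ term is $\Hom(\IH_{p,N}(F),Q)=0$. The $\Ext$ term is $\Ext^1(\IH_{p,N-1}(F),Q)$, and since $N-1<N$ this is $\Ext^1(\IH_{p,q-1}(F\smallsetminus\{v\}),Q)$.
\item If $q<N$, both $\IH_{p,q}$ and $\IH_{p,q-1}$ of $F$ and of $F\smallsetminus\{v\}$ are identified by the inclusion. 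Naturality of the universal coefficient sequences and the five lemma then show that restriction $\IH^{p,q}(F)\to\IH^{p,q}(F\smallsetminus\{v\})$ is an isomorphism.
\end{itemize}
When $q=0$ the $\Ext$ term involves $\IH_{p,-1}=0$, so the argument is uniform.

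The main point requiring care is the projectivity of the allowable chain modules, which the universal coefficient theorem needs. I will justify it as in the proof of Proposition \ref{prop:Mayer}(2), where the paper already uses that these complexes consist of projective modules. Concretely, $\IC_{p,q}$ is a submodule of the projective module $\HC_{p,q}$ over the hereditary ring $Q$, hence projective. A second point is that the isomorphism in the case $q<N$ must be induced by the inclusion, so that naturality applies. This follows from the proof of Proposition \ref{prop:conenon}: in that range, allowable chains in degrees $q$ and $q+1$ avoid $v$.
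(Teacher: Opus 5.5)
Your proposal is correct and is essentially the paper's proof. The paper likewise writes down the universal coefficient sequences for $F$ and $F\smallsetminus\{v\}$, uses Proposition \ref{prop:conenon} to kill or identify the outer terms in each of the three ranges, and applies the five lemma when $q<n-\perv(\{v\})-1$; your explicit checks that the allowable chain modules are projective over the hereditary ring $Q$ and that the cone-formula isomorphism is induced by the inclusion fill in points the paper leaves implicit.
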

\begin{proof}
By the universal coefficient theorem for a chain complex (see \cite[Theorem 3.6.5]{Weibel_1994}), we have the following diagram: 
\begin{equation}\label{eq:univcoef}
\begin{tikzcd}[column sep=1.0em]
0 \arrow[r] & \Ext^1(\IH_{p,q-1}(F), Q) \arrow[r] \arrow[d] & \IH^{p,q}(F) \arrow[r] \arrow[d] & \Hom(\IH_{p,q}(F),Q) \arrow[r] \arrow[d] & 0 \\
0 \arrow[r] & \Ext^1(\IH_{p,q-1}(F\smallsetminus \{v\}), Q) \arrow[r] & \IH^{p,q}(F\smallsetminus \{v\}) \arrow[r] & \Hom(\IH_{p,q}(F\smallsetminus \{v\}),Q) \arrow[r] & 0.
\end{tikzcd}
\end{equation}

If $q > n-\perv(\{v\})-1$, then $\IH_{p,q-1}(F)$ and $\IH_{p,q}(F)$ are zero by the cone formula (Proposition \ref{prop:conenon}). 
It follows that $\IH^{p,q}(F)=0$ by \eqref{eq:univcoef}. 

If $q=n-\perv(\{v\})-1$, then $\IH_{p,q-1}(F)\cong \IH_{p,q-1}(F\smallsetminus \{v\})$ and $\IH_{p,q}(F)=0$ by the cone formula  (Proposition \ref{prop:conenon}). 
It follows that $\IH^{p,q}(F)\cong \Ext^1(\IH_{p,q-1}(F\smallsetminus \{v\}), Q)$ by \eqref{eq:univcoef}. 

If $q < n-\perv(\{v\})-1$, then $\IH_{p,q-1}(F)\cong \IH_{p,q-1}(F\smallsetminus \{v\})$ and $\IH_{p,q}(F)\cong \IH_{p,q}(F\smallsetminus \{v\})$ by the cone formula  (Proposition \ref{prop:conenon}). 
Thus, the two outer vertical arrows in \eqref{eq:univcoef} are isomorphisms. 
By the five lemma, we have $\IH^{p,q}(F)\cong \IH^{p,q}(F\smallsetminus \{v\})$. 
\end{proof}

The next lemma is proved in the same way as in the proof of the cone formulas. We omit details. 
\begin{lemma}\label{lem:gencone}
    Let $(F,\XX_F)$ be an $n$-dimensional filtered rational polyhedral space with an open face structure $\CC_F$, and in the non-$\GM$ case, we assume that the singular locus of $(F,\XX_F)$ is non-empty.
    Further, 
    assume that there exists a $\CC_F$-stratified open face structure $\CC'_F$ such that $(F,\XX^{\CC'_F}_F,\CC'_F)$ is a rational polyhedral fan with vertex $v$ in $\T^r$. 
    Let $U=F\smallsetminus\{v\}$. Let $S$ denote the stratum of $\XX_F$ including $v$. Then the following holds:
    \begin{align*}
        &{}_{\GM}\IH_{p,q}(F,\XX_F,\CC_F)\cong
        \begin{cases}
            0 &\text{if } q \geq \codim(S)-\perv(S)-1\text{ and }q \neq 0,\\
            {\bf F}^{Q}_p(\{v\}) &\text{if } q \geq \codim(S)-\perv(S)\text{ and }q=0,\\
            \sum \limits_{ \substack{{\sigma' \in \CC'_F},\\ \perv(\sigma')\geq \codim(\sigma')}}
                {\bf F}^{Q}_p(\sigma') &\text{if } q =\codim(S)-\perv(S)-1\text{ and }q=0,\\
            _{\GM}\IH_{p,q}(U,\XX_F|_{U},\CC_F|_{U}) &\text{if } q < \codim(S)-\perv(S)-1.
        \end{cases}\\
        &\IH_{p,q}(F,\XX_F,\CC_F)\cong
        \begin{cases}
            0 &\text{if } q \geq \codim(S)-\perv(S)-1,\\
            \IH_{p,q}(U,\XX_F|_{U},\CC_F|_{U}) &\text{if } q < \codim(S)-\perv(S)-1.
        \end{cases}
    \end{align*}
\end{lemma}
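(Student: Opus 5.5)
The plan is to repeat the proofs of Propositions \ref{prop:coneGM} and \ref{prop:conenon} almost word for word. The only change is that the fan structure is used solely to build cones of simplices, while allowability is always measured with respect to the filtration $\XX_F$. So I fix the stratified homeomorphism $(F,\XX^{\CC'_F}_F)\cong cL$ from Definition \ref{def:polyfan}. For a simplex $\sigma$ I define the cone $\bar{c}\sigma$ exactly as in the proof of Proposition \ref{prop:coneGM}.

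The first point to check is that $\bar{c}\sigma$ is still a $\CC_F$-stratified simplex. The relative interior of each face of $\bar{c}\sigma$ is either a face of $\sigma$ or the radial sweep of a face of $\sigma$ toward $v$. Since $\CC'_F$ is $\CC_F$-stratified and is a fan, radial segments stay inside one relative interior of a cone of $\CC'_F$. Hence they stay in a single stratum of $\XX_F$. I also need to check compatibility with $\CC_F$ itself. If this fails for $\CC_F$, I would first replace $\CC_F$ by a common refinement with $\CC'_F$ in the spirit of $\CC\cap\CC'$, using the independence of the open face structure (Proposition \ref{prop:introindep}). The coefficient groups are handled as before: ${\bf F}^Q_p$ of the cone face containing the vertex receives the map $\iota$ from the coefficient group of $\sigma$.

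The second step is allowability. Here the key point is that the stratum $S\ni v$ plays the role of $\{v\}$, with $n=\dim F$ replaced by $\codim(S)$ and $\perv(\{v\})$ replaced by $\perv(S)$. Because the cone structure is compatible with $\XX_F$, every stratum $T\neq S$ meets $\bar{c}\sigma$ only along cone lines. So $(\bar{c}\sigma)^{-1}(T)$ is at most the cone of $\sigma^{-1}(T)$, which raises skeleton dimension by one while $q$ also rises by one, and the condition for $T$ is preserved. For $S$ itself, $S$ may be larger than $\{v\}$, but $S\cap(F\smallsetminus\{v\})$ is again a union of cone lines. Then $(\bar{c}\sigma)^{-1}(S)$ is contained in the cone on $\sigma^{-1}(S)$ together with the vertex. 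The vertex imposes $0\le q+1-\codim(S)+\perv(S)$, which gives exactly the threshold $q\ge\codim(S)-\perv(S)-1$.

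With these two facts, the rest of the argument carries over. If $q\ge\codim(S)-\perv(S)-1$ and $q\ne0$, the identity $\partial\bar{c}\xi=\xi-\bar{c}\partial\xi$ kills cycles. In the non-$\GM$ case this holds in every degree, including $q=0$, because $\partial$ is the full boundary. The $\GM$ degree-zero cases use the maps \eqref{eq:coneGMzero} and \eqref{eq:conegm0case} verbatim, with ${\bf F}^Q_p(\sigma')$ for $\sigma'\in\CC'_F$.

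For $q<\codim(S)-\perv(S)-1$, no allowable $q$- or $(q+1)$-simplex can meet $S$. Then every allowable chain lies in $F\smallsetminus S$. I would compare this with $U=F\smallsetminus\{v\}$ by observing that the same inequality also forbids meeting $S\smallsetminus\{v\}$, so chains of $U$ avoid $S$ as well. The two complexes therefore coincide.

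The assumption that the singular locus is nonempty in the non-$\GM$ case is needed so that $\hat\partial$ is the genuine boundary on the regular part. It also guarantees that $v$ does not lie in the regular stratum, since otherwise $\codim(S)=0$ and the statement degenerates.

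I expect the main obstacle to be the first step: verifying that coning respects $\CC_F$-stratification and the strata of $\XX_F$ when $S\neq\{v\}$. In particular, I must show that strata of $\XX_F$ are invariant under the radial structure of $cL$. This should follow from $\CC'_F$ being $\XX_F$-stratified, so that each relative interior of a cone lies in one stratum.
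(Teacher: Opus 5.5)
Your route is the one the paper intends: it says only that the lemma is proved ``in the same way as the cone formulas''. Most of your argument is correct.

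\begin{itemize}
\item Because $\CC'_F$ is $\CC_F$-stratified, each open radial segment lies in the relative interior of a single cell of $\CC'_F$, hence of a single cell of $\CC_F$. So $\bar c\sigma$ is $\CC_F$-stratified with the same coefficient cell as $\sigma$. Your fallback to a common refinement via Proposition~\ref{prop:indep} is unnecessary.
\item Every stratum $T\neq S$ is a union of open radial cones, and the cone vertex produces exactly the threshold $q\ge\codim(S)-\perv(S)-1$.
\item For $q<\codim(S)-\perv(S)-1$, the complexes of $F$ and $U$ agree in degrees $\le q+1$.
\end{itemize}

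The gap is in the non-$\GM$ degree-zero case. You say the cone argument works there ``because $\partial$ is the full boundary''. This is backwards. With the full boundary, $\partial\bigl(\bar c(fv')\bigr)=fv'-\iota(f)\,v$, which does not kill $v'$. That extra vertex term is exactly what produces the nonzero answers ${\bf F}^Q_p(\{v\})$ and the sum over allowable cells in the $\GM$ case. The non-$\GM$ complex uses $\hat\partial$, which discards faces lying in $\S_F$. The vanishing at $q=0$ holds only because $v\in\S_F$, so that $\hat\partial\,\bar c v'=v'$.

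This is the real role of the hypothesis $\S_F\neq\emptyset$. You misstate it (``so that $\hat\partial$ is the genuine boundary on the regular part'') and assert its consequence $v\notin$ regular stratum without proof. The missing argument is as follows:
\begin{itemize}
\item $\S_F$ is a union of singular strata of $\XX_F$. Away from $v$, each such stratum is a union of open radial cones, by the same stratification argument you already use.
\item $\S_F$ is closed. So if it contains a point $y$, it contains the open segment from $v$ to $y$, and hence $v$.
\end{itemize}
Without the hypothesis the statement is false at $q=0$. If $\S_F=\emptyset$, the non-$\GM$ complex is the ordinary tropical chain complex, so $\IH_{p,0}(F)\cong{\bf F}^Q_p(\{v\})$; for $p=0$ this is $Q$. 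Yet the lemma would predict $0$, since $\codim(S)-\perv(S)-1=-1$.

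The same fact, that $\S_F$ is a closed cone containing $v$, is also what you need for $q\ge 1$ in the non-$\GM$ case:
\begin{itemize}
\item It gives $\Im(\bar c\sigma_j)\subset\S_F$ if and only if $\Im(\sigma_j)\subset\S_F$, hence $\hat\partial\,\bar c\sigma=\sigma-\bar c\,\hat\partial\sigma$.
\item It shows that $\bar c\sigma$ is again a non-$\GM$ generator, i.e.\ its relative interior is not contained in $\S_F$.
\end{itemize}
With this correction, the rest of your proposal goes through.
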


\subsection{Sheaf description of tropical intersection chains}\label{subsec:sheaf}

In this subsection, we prove Proposition \ref{prop:introsheaf} in the introduction. 
Let $(X,\XX)$ be a filtered rational polyhedral space with an open face structure $\CC$.  

We define a presheaf ${}_{(\GM)}\Idelta'^{p,-\bullet}_X$ as follows:
\begin{align}\label{eq:sheafdef}
    U \mapsto {}_{(\GM)}\Idelta'^{p,-\bullet}_X(U)\colonequals {}_{(\GM)}\IC_{p,\bullet}(X, Q)/{}_{(\GM)}\IC_{p,\bullet}(X\smallsetminus  \bar{U}, Q)=
    {}_{(\GM)}\IC_{p,\bullet}(X,X\smallsetminus  \bar{U}, Q), 
\end{align}
and define a sheaf ${}_{(\GM)}\Idelta^{p,-\bullet}_X$ to be the sheafification of ${}_{(\GM)}\Idelta'^{p,-\bullet}_X$. 
If the base space $X$ is clear from the context, we may omit $X$ and simply write ${}_{(\GM)}\Idelta'^{p,-\bullet}$ and ${}_{(\GM)}\Idelta^{p,-\bullet}$. 

Let $x\in X$ and $N$ be a distinguished neighborhood of $x$ for $\XX^{\CC}_X$. 
Then by the argument as in Proposition \ref{prop:conebm}, we have $${}_{(\GM)}\IH^{\BM}_{p,q}(N,Q)\cong {}_{(\GM)}\IH_{p,q}(N,N\smallsetminus \{v\},Q).$$ 
Therefore, the cohomology of the stalk of ${}_{(\GM)}\Idelta^{p,\bullet}$ is computed as follows: 
\begin{equation*}\label{eq:stalkofsheaf}
    \HH^{-q}(({}_{(\GM)}\Idelta^{p,\bullet})_x)\cong {}_{(\GM)}\IH_{p,q}(X,X\smallsetminus \{v\},Q)\cong {}_{(\GM)}\IH_{p,q}^{\BM}(N,Q).
\end{equation*}

Recall that we define ${}_{(\GM)}\IC^{lf}_{p,q}$ in the paragraph above Lemma \ref{lem:lf}.  
By replacing ${}_{(\GM)}\IC_{p,\bullet}$ with ${}_{(\GM)}\IC_{p,\bullet}^{lf}$, we construct the sheaf ${}_{(\GM)}\Idelta^{p,\bullet}_{lf}$. 

\begin{prop}\label{prop:sheaf}
    $\Gamma_{c}({}_{(\GM)}\Idelta^{p,-\bullet}) \cong {}_{(\GM)}\IC_{p, \bullet}(X, Q)$.
\end{prop}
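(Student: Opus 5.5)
This follows the classical argument for singular intersection chains \cite[\S 8.1]{Friedman_2020} (cf. Bredon's treatment of singular chain sheaves), adapted to the tropical coefficients ${\bf F}^Q_p(\tau)$. The plan is to compare the presheaf ${}_{(\GM)}\Idelta'^{p,-\bullet}$ with its sheafification and show that the natural map
\[
{}_{(\GM)}\IC_{p,\bullet}(X,Q)\to \Gamma_c({}_{(\GM)}\Idelta^{p,-\bullet}),
\]
which sends a chain $\xi$ to the section $U\mapsto [\xi]\in {}_{(\GM)}\IC_{p,\bullet}(X,X\smallsetminus\bar U,Q)$, is an isomorphism in each degree. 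Since a finite chain has compact image, this section has compact support, so the map lands in $\Gamma_c$. The boundary maps $\hat\partial$ (resp. $\partial$) commute with restriction, so the map is a chain map.

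The key steps run in the following order.

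\emph{Step 1.} I will show that ${}_{(\GM)}\Idelta'^{p,-\bullet}$ satisfies the gluing axiom for open covers. Given compatible elements $\xi_\alpha$ on the $U_\alpha$, I represent them by allowable chains and then discard every simplex not meeting $\bar U_\alpha$. A simplex $\sigma$ appearing in these representatives has coefficient $f_\sigma\in{\bf F}^Q_p(\tau)$, where $\tau$ is determined by $\sigma$. Compatibility forces the coefficients of a given simplex to agree wherever it meets overlaps, so they glue to a locally finite sum. The allowability of the glued chain and of its boundary is a local condition on simplices, so it is inherited.

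\emph{Step 2.} I will identify the presheaf with the "locally finite chain" description. The global sections of ${}_{(\GM)}\Idelta^{p,-\bullet}$ correspond to elements of ${}_{(\GM)}\IC^{lf}_{p,\bullet}(X,Q)$ modulo the subcomplex of chains that are locally zero, i.e.\ that vanish in every ${}_{(\GM)}\IC_{p,\bullet}(X,X\smallsetminus\bar U)$ for sufficiently small $U$. This uses that $X$ is paracompact and locally compact.

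\emph{Step 3.} I will restrict to compact supports. A locally finite chain whose support is compact involves only finitely many simplices, hence lies in ${}_{(\GM)}\IC_{p,\bullet}(X,Q)$. This gives surjectivity.

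\emph{Step 4.} For injectivity, suppose a finite chain $\xi$ restricts to zero on every small open set around every point. Then each simplex of $\xi$, which meets some point $x$, has coefficient $0$, so $\xi=0$.

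I expect the main obstacle to be Step 2 together with injectivity, namely showing that the sheafification introduces no extra sections and kills no finite chains. The delicate point is that a simplex $\sigma$ with $x\notin\Im(\sigma)$ but $x\in\overline{\Im(\sigma)}$ cannot occur, because images of simplices are compact. It is also worth noting that the presheaf is not a sheaf on the nose: only its global sections with compact support agree with finite chains. Here I would follow \cite[Proposition 8.1.7 and Lemma 8.1.8]{Friedman_2020}, which prove that the singular intersection chain presheaf is conjunctive and that the presheaf of locally-zero chains has sheafification zero. The tropical coefficients enter only through the direct-sum decomposition over $\tau\in\CC$, so these arguments carry over simplex by simplex.
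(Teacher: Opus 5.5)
Your overall strategy is the paper's: pass to the locally finite chain presheaf, and use its conjunctivity \cite[Lemma 3.3]{MR2276609} together with \cite[I Theorem 6.2]{Bredon_1997}. This identifies compactly supported sections of the sheafification with compactly supported locally finite allowable chains, which are exactly the finite ones. Your Steps 3 and 4 are fine. The gap is in how you handle allowability.

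In Step 1 you discard from each representative $\xi_\alpha$ the simplices missing $\bar U_\alpha$, and assert that allowability, being ``a local condition on simplices'', is inherited. It is not. Allowability of a chain also constrains its boundary, and truncation creates new boundary faces. Take the $\GM$ theory on the tripod $X$ of \textsection\ref{subsec:counter}, with $\perv(\{v\})=0$ and $p=0$, and let $\sigma_i$ run along the $i$-th ray from a point $a_i$ to $v$. Then $\sigma_1-\sigma_2$ is allowable with boundary $a_2-a_1$. But if $\bar U_\alpha$ is a small closed neighbourhood of $a_1$, the truncation is $\sigma_1$, whose boundary $v-a_1$ contains the non-allowable $0$-simplex $v$. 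In the non-$\GM$ theory the same happens for a fan of $2$-simplices around a codimension-two point stratum with $\perv=\perz$: the edges through the point are not allowable and no longer cancel after truncation. Moreover, gluing infinitely many pieces yields a locally finite chain. So Step 1 cannot be a statement about ${}_{(\GM)}\Idelta'^{p,-\bullet}$ itself, and indeed that presheaf is not conjunctive.

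The gluing can be repaired without truncating, for an open cover $\{U_\alpha\}$ of $X$:
\begin{enumerate}
\item Give each simplex $\sigma$ the coefficient it has in any $\xi_\alpha$ with $U_\alpha\cap\Im(\sigma)\neq\emptyset$. This is independent of $\alpha$ by compatibility and connectedness of $\Im(\sigma)$.
\item Check a boundary face $\tau$ against the untruncated boundary of $\xi_\alpha$, for some $U_\alpha$ meeting $\Im(\tau)$. Every simplex having $\tau$ as a face also meets that $U_\alpha$, so $\tau$ gets the same coefficient as in the boundary of $\xi_\alpha$.
\end{enumerate}

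What is still missing is the first step of the paper's proof, which your Step 2 uses tacitly. You need the sheafifications of the finite and locally finite presheaves to agree, i.e.\ ${}_{(\GM)}\Idelta^{p,\bullet}\to{}_{(\GM)}\Idelta^{p,\bullet}_{lf}$ must be an isomorphism on stalks. Surjectivity at $x$ requires that a locally finite allowable chain agree near some $\bar V\ni x$ with a \emph{finite} allowable chain. By the example above, the simplices meeting $\bar V$ need not form one. This is exactly what a result like \cite[Lemma 2.12]{MR2276609} is needed for; the paper invokes it in the proof of Lemma \ref{lem:lf}.
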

\begin{proof}
    There exists the natural morphism ${}_{(\GM)}\Idelta^{p,\bullet}\to {}_{(\GM)}\Idelta^{p,\bullet}_{lf}$ induced by the natural map ${}_{(\GM)}\IC_{p,\bullet}(X,X\smallsetminus  \bar{U}, Q)\to {}_{(\GM)}\IC_{p,\bullet}^{lf}(X,X\smallsetminus  \bar{U}, Q)$. 
    By considering each stalk, we see that this morphism is an isomorphism. 
    Then we obtain $\Gamma_{c}({}_{(\GM)}\Idelta^{p,-\bullet}) \cong \Gamma_{c}({}_{(\GM)}\Idelta^{p,-\bullet}_{lf})$. 
    By \cite[I Theorem 6.2]{Bredon_1997} and \cite[Lemma 3.3]{MR2276609}, we have $\Gamma_{c}({}_{(\GM)}\Idelta'^{p,-\bullet}_{lf})\cong \Gamma_{c}({}_{(\GM)}\Idelta^{p,-\bullet}_{lf})$. 
    The Q-module $\Gamma_{c}({}_{(\GM)}\Idelta'^{p,-\bullet}_{lf})$ is the submodule of ${}_{(\GM)}\IC_{p,\bullet}^{lf}(X,Q)$ consisting of chains $\xi$ of ${}_{(\GM)}\IC_{p,\bullet}^{lf}(X,Q)$ such that the union of the simplices constituting $\xi$ is a compact subset of $X$. 
    It follows that ${}_{(\GM)}\IC_{p, \bullet}(X, Q)\cong \Gamma_{c}({}_{(\GM)}\Idelta'^{p,-\bullet}_{lf})$. 
    Combining the above isomorphisms, we obtain an isomorphism:
    $$\Gamma_{c}({}_{(\GM)}\Idelta^{p,-\bullet}) \cong \Gamma_{c}({}_{(\GM)}\Idelta^{p,-\bullet}_{lf})\cong \Gamma_{c}({}_{(\GM)}\Idelta'^{p,-\bullet}_{lf})\cong {}_{(\GM)}\IC_{p, \bullet}(X, Q),$$
    as desired. 
\end{proof}

For a family of supports $\Phi$, we denote by $\IC_{p,\bullet}^\Phi(X, Q)$ the chain complex $\Gamma_\Phi({}_{(\GM)}\Idelta^{p,-\bullet})$, 
and we denote by $\IH_{p,q}^\Phi(X,Q)$ the homology of $\IC_{p,\bullet}^\Phi(X, Q)$. 
By Proposition \ref{prop:sheaf}, if $\Phi$ consists of all the compact subsets of $X$, 
    then ${}_{(\GM)}\IH_{p,q}^\Phi(X,Q) \cong {}_{(\GM)}\IH_{p,q}(X,Q)$. 
\begin{example}
    By the same arguments as in the proof of Proposition \ref{prop:sheaf}, we have the following. 
    If $\Phi$ consists of all the closed subsets of X, then ${}_{(\GM)}\IH_{p,q}^\Phi(X,Q) \cong $${}_{(\GM)}\IH_{p,q}^{\BM}(X,Q)$. 
\end{example}

\begin{definition}[{\cite[II Exercises 32]{Bredon_1997}}]
    Let $X$ be a topological space. 
    Let $\Q^\bullet$ be a cochain complex of $Q_X$-module. 
    We say that $\Q^{\bullet}$ is \emph{homotopically fine} if $\Q^{\bullet}$ satisfies the following property: 
    for any locally finite open cover $\{U_{\alpha}\}$, there exist endomorphism $h_{\alpha}\in \Hom(\Q^\bullet,\Q^\bullet)$ of degree zero with $|h_{\alpha}|\subset U_{\alpha}$ and 
    such that $h=\sum h_{\alpha}$ is chain homotopic to the identity. 
\end{definition}
In the same way as in \cite[Proposition 3.5]{MR2276609}, ${}_{(\GM)}\Idelta^{p,\bullet}$ is homotopically fine by using the subdivision operator $T$ in the proof of Proposition \ref{prop:excision}. 

The above homology ${}_{(\GM)}\IH_{p,q}^\Phi(X,Q)$ can be considered as a hyper cohomology in some case. 
\begin{proposition}\label{prop:homofine}
    Let $X$ be a CS set which is metrizable as a topological space. 
    Let $\Q^\bullet$ be a cochain complex of $Q_X$-module. Suppose that $\Phi$ is a paracompactifying family, and there exists $\Q'{}^{\bullet}\in \DD^+(Q_X)$ such that $\Q^{\bullet}\cong \Q'{}^{\bullet}$ in $\DD(Q_X)$.    
    If $\Q^\bullet$ is homotopically fine, 
    then $\Hyper^\bullet_\Phi(\Q^\bullet)\cong\HH^\bullet(\Gamma_\Phi(\Q^\bullet))$ holds. 
\end{proposition}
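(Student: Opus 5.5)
The plan is to reduce the statement to the standard fact that homotopically fine sheaf complexes are $\Gamma_\Phi$-acyclic, and then compute hypercohomology with an acyclic resolution. The argument follows \cite[II Exercise 32]{Bredon_1997} and \cite[Proposition 3.5 ff.]{MR2276609}.

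First, we check that $X$ is paracompact. A metrizable space is paracompact. Since $\Phi$ is paracompactifying, for every $K\in\Phi$ there is $L\in\Phi$ with $K\subset\mathrm{int}(L)$, and each member of $\Phi$ is paracompact. Hence every locally finite open cover of a neighborhood of a support can be used to cut off sections.

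Second, we show that each term $\Q^k$ of a homotopically fine complex is $\Gamma_\Phi$-acyclic up to homotopy in the following sense: the complex $\Gamma_\Phi(\Q^\bullet)$ computes $\Hyper^\bullet_\Phi$. The route is to take a Cartan–Eilenberg (or Godement) injective resolution $\Q^\bullet\to\mathcal{I}^{\bullet}$, which exists in $\DD^+$ because $\Q^\bullet\cong\Q'^\bullet\in\DD^+(Q_X)$. Since we only need a quasi-isomorphism, we may replace $\Q'^\bullet$ by a bounded-below model and use the Godement double complex $\mathcal{G}^\bullet(\Q^\bullet)$. This gives a natural map $\Gamma_\Phi(\Q^\bullet)\to\Gamma_\Phi(\mathrm{Tot}\,\mathcal{G}^{\bullet}(\Q^\bullet))$, and we must show it is a quasi-isomorphism. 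Equivalently, the mapping cone $\mathcal{C}^\bullet$, which is again homotopically fine (the endomorphisms $h_\alpha$ act functorially on the Godement resolution) and acyclic as a complex of sheaves, must satisfy $\HH^\bullet(\Gamma_\Phi(\mathcal{C}^\bullet))=0$.

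Third, the key step: for an acyclic homotopically fine complex $\mathcal{C}^\bullet$, take a cocycle $s\in\Gamma_\Phi(\mathcal{C}^k)$ with support $K\in\Phi$. Choose $L\in\Phi$ with $K\subset\mathrm{int}(L)$. By acyclicity, there is a locally finite open cover $\{U_\alpha\}$ of $X$, consisting of $X\smallsetminus K$ together with small open sets inside $\mathrm{int}(L)$, such that $s|_{U_\alpha}=d t_\alpha$. Using the $h_\alpha$ subordinate to this cover, set $t=\sum h_\alpha(t_\alpha)$, taking $t_\alpha=0$ on $X\smallsetminus K$. This is a locally finite sum with support in $L$, so $t\in\Gamma_\Phi$. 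Then $dt=\sum h_\alpha(s)=h(s)$, and $h(s)-s=(dH+Hd)s=dH(s)$, so $s$ is a boundary in $\Gamma_\Phi$. The bounded-below hypothesis guarantees the Godement total complex is defined and computes $R\Gamma_\Phi$. The paracompactifying condition and metrizability make the covers locally finite and the supports remain in $\Phi$.

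I expect the main obstacle to be the third step, namely making the support control rigorous. One must show that $h_\alpha(t_\alpha)$, defined only over $U_\alpha$, extends by zero because $|h_\alpha|\subset U_\alpha$. One must also show that the homotopy $H$ preserves $\Phi$-supports; I would handle this by noting that the homotopy in the definition is a sheaf endomorphism, so it does not enlarge supports.
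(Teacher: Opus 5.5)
Your third step is correct, and it follows a genuinely different, more self-contained route than the paper's. You prove directly that an acyclic homotopically fine complex has acyclic $\Phi$-sections. The paper instead truncates, takes an injective resolution, and invokes Bredon's comparison theorem (IV Theorem 2.2, via II Exercise 32 and Proposition 16.11). The support issues you flag are harmless:
\begin{itemize}
\item $h_\alpha(t_\alpha)$ vanishes on $U_\alpha\smallsetminus|h_\alpha|$, so it extends by zero;
\item $\{|h_\alpha|\}$ is locally finite, so $|t|$ is a closed subset of $L$;
\item $|H(s)|\subset|s|$ because $H$ is a sheaf map.
\end{itemize}

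The gap is in your second step. You apply the Godement construction to $\Q^\bullet$ itself and then appeal to ``the bounded-below hypothesis''. But $\Q^\bullet$ is not assumed bounded below; in the intended application $\Idelta^{p,\bullet}$ is nonzero in every negative degree. Only $\Q'{}^{\bullet}$ is bounded below, and it is related to $\Q^\bullet$ only by an isomorphism in $\DD(Q_X)$, i.e.\ a roof, through which the $h_\alpha$ and the homotopy cannot be transported. So, as written, two claims are unjustified:
\begin{itemize}
\item that $\Gamma_\Phi(\mathrm{Tot}\,\mathcal{G}^\bullet(\Q^\bullet))$ computes $\Hyper^\bullet_\Phi(\Q^\bullet)$ (for unbounded-below complexes of flabby sheaves this needs an extra argument, such as finite cohomological dimension, which you do not give);
\item that a cone built from a bounded-below model is homotopically fine.
\end{itemize}

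The missing idea is exactly the first step of the paper's proof. Since $\mathcal{H}^i(\Q^\bullet)=0$ for $i<m$, the canonical truncation gives an honest chain map $\Q^\bullet\to\tau_{\geq m}\Q^\bullet$ that is a quasi-isomorphism. Truncation is functorial for chain maps and for degree $-1$ homotopies, and it does not enlarge supports. Hence $\tau_{\geq m}\Q^\bullet$ is homotopically fine with the induced $h_\alpha$ and $H$, strictly compatible with the projection. Now take
$f\colon\Q^\bullet\to\tau_{\geq m}\Q^\bullet\to\mathrm{Tot}\,\mathcal{G}^\bullet(\tau_{\geq m}\Q^\bullet)$, whose target is a bounded-below complex of flabby sheaves and so computes $\Hyper^\bullet_\Phi$.

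For $\mathrm{Cone}(f)$ to be homotopically fine, functoriality of the $h_\alpha$ is not enough: you also need the homotopies to commute strictly with $f$. This holds if on the $p$-th Godement column you use $\tilde H=(-1)^p\mathcal{G}^p(H)$, for the convention $D=\delta+(-1)^p d$. That is a homotopy on the total complex and satisfies $\tilde H f=fH$. Then $H$ and $\tilde H$ combine, with the usual cone sign, into a homotopy $\sum_\alpha h_\alpha\simeq\mathrm{id}$ on $\mathrm{Cone}(f)$. With these repairs, your third step applied to $\mathrm{Cone}(f)$ finishes the proof.

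What your route buys over the paper's citation is transparency. It shows that the paracompactifying hypothesis is used only to find $L\in\Phi$ with $K\subset\mathrm{int}(L)$, and metrizability only through paracompactness. The price is that you need a functorial resolution (Godement) rather than an arbitrary injective one.
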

\begin{proof}
    By the same argument as in \cite[p. 2013]{MR2276609}, and taking an appropriate truncation of $\Q^\bullet$, which we denote by $\mathcal{P}^\bullet$, we have the natural quasi-isomorphism $\Q^\bullet\to \mathcal{P}^\bullet$ and $\mathcal{P}^\bullet$ is bounded below. 
    Then an injective resolution $\mathcal{I}^\bullet$ of $\mathcal{P}^\bullet$ is also a (homotopically) injective resolution of $\Q^\bullet$. 
    Since $\Q^\bullet$ is homotopically fine, by \cite[II Exercises 32 and Proposition 16.11]{Bredon_1997}, we can apply \cite[IV Theorem 2.2]{Bredon_1997} to $\Q^\bullet$ and $\mathcal{I}^\bullet$. 
    Then the map $\Q^\bullet\to \mathcal{I}^\bullet$ induces the isomorphism $\HH^\bullet(\Gamma_\Phi(\Q^\bullet))\cong \HH^\bullet(\Gamma_\Phi(\mathcal{I}^\bullet))$. 
    Since  $\HH^\bullet(\Gamma_\Phi(\mathcal{I}^\bullet))\cong \Hyper^\bullet_\Phi(\Q^\bullet)$, we obtain the statement.  
\end{proof}
Since a rational polyhedral space $X$ is paracompact, second countable, and Hausdorff, $X$ is a metrizable. 
Therefore, if ${}_{(\GM)}\Idelta^{p,\bullet}$ is quasi-isomorphic to a sheaf complex in $\DD^+(Q_X)$, 
then there exists an isomorphism ${}_{(\GM)}\IH_{p,q}^\Phi(X,Q)\cong \Hyper^{-q}_{\Phi}({}_{(\GM)}\Idelta^{p,\bullet})$ for any paracompactifying family of supports $\Phi$ by Proposition \ref{prop:homofine}. 
Note that this condition is satisfied in the setting considered in \textsection\ref{sec:non-GMmfd}. 

For later arguments, we introduce a map ${}_{(\GM)}\IH_{p,q}^{\BM}(X,Q)\to {}_{(\GM)}\IH_{p,q}^{\BM}(V,Q)$ induced by the inclusion $V\hookrightarrow X$. 
Let $V\subset X$ be an open subset of a rational polyhedral space $X$. 
As in \cite[Proposition 3.7]{MR2276609}, we can construct a quasi-isomorphism $r\colon {}_{(\GM)}\Idelta^{p,\bullet}_X|_V \to {}_{(\GM)}\Idelta^{p,\bullet}_V$  
such that the map ${}_{(\GM)}\Idelta^{p,\bullet}_V\to {}_{(\GM)}\Idelta^{p,\bullet}_X|_V$ induced by the natural map ${}_{(\GM)}\IC_{p,\bullet}(V,V\smallsetminus \bar{U},Q)\to {}_{(\GM)}\IC_{p,\bullet}(X,X\smallsetminus \bar{U},Q)$ is quasi-inverse of $r$. 
Then $r$ induces $\Gamma({}_{(\GM)}\Idelta^{p,\bullet}_X)\to \Gamma({}_{(\GM)}\Idelta^{p,\bullet}_V)$, and we obtain a map ${}_{(\GM)}\IH_{p,q}^{\BM}(X,Q)\to {}_{(\GM)}\IH_{p,q}^{\BM}(V,Q)$.

This map ${}_{(\GM)}\IH_{p,q}^{\BM}(X,Q)\to {}_{(\GM)}\IH_{p,q}^{\BM}(V,Q)$  coincides with the composition of the map of homology groups induced by $\varprojlim_{K\subset X}{}_{(\GM)}\IC_{p,\bullet}(X,X\smallsetminus K,Q) \to \varprojlim_{K\subset V}{}_{(\GM)}\IC_{p,\bullet}(X,X\smallsetminus K,Q)$ and the inverse map of the map of homology groups induced by the quasi-isomorphism $ \varprojlim_{K\subset V}{}_{(\GM)}\IC_{p,\bullet}(V,V\smallsetminus K,Q)\to  \varprojlim_{K\subset V}{}_{(\GM)}\IC_{p,\bullet}(X,X\smallsetminus K,Q)$, where $K$ is a compact subset of $V$. 

In particular, if there exists a compact subset $L\subset V$ such that the canonical projections of the projective limits of chain complexes $\varprojlim_{K\subset X}{}_{(\GM)}\IC_{p,\bullet}(X,X\smallsetminus K,Q)\to {}_{(\GM)}\IC_{p,\bullet}(X,X\smallsetminus L,Q)$ and $\varprojlim_{K\subset V}{}_{(\GM)}\IC_{p,\bullet}(V,V\smallsetminus K,Q) \to {}_{(\GM)}\IC_{p,\bullet}(V,V\smallsetminus L,Q)$ are quasi-isomorphisms, 
then the inclusion $V\subset X$ induces an isomorphism ${}_{(\GM)}\IH_{p,q}^{\BM}(X,Q)\cong {}_{(\GM)}\IH_{p,q}^{\BM}(V,Q)$. 

In order to apply the results of \cite{Gross_2023} to sheaves in \textsection\ref{sec:non-GMmfd}, we introduce the sheaf of tropical $p$-form. 
\begin{defn}[{\cite[Definition 2.7]{Gross_2023}}]\label{def:pformsheaf}
    We consider the filtration $\XX_X^{\text{trop}}$ on a rational polyhedral space $X$. 
    Let $\iota\colon X_{\dim((X,\XX_X^{\text{trop}}))} \hookrightarrow X$  denote the inclusion. 
    We define the sheaf of graded rings $\Omega^{\bullet}_{X,\ZZ}$ as the image of $\bigwedge^{\bullet} 
    \Omega_{X,\ZZ}^1$ in $\iota_*\left(\bigwedge^{\bullet}\Omega^1_{X,\ZZ}|_{X_{\dim((X,\XX_X^{\text{trop}}))}}\right)$. 
    We define $\Omega_{X}^{\bullet}\colonequals \Omega_{X,\ZZ}^{\bullet}\otimes_{\ZZ_X} Q_X$. 
    (Recall that $Q_X$ is the constant sheaf associated to $Q$.)
    Sections of $\Omega^p_X$ are called \emph{tropical $p$-forms}.
\end{defn}
For a $Q$-module $A$, we denote by $A_X$ the constant sheaf associated with $A$ on $X$. 
For a compact subset $K\subset X$ and a $Q$-module $A$, 
we define $Q_X$-module $A_K$ to be $i_*i^*A_X$, where $i\colon K \to X$ is the inclusion. 
For $\CC$-stratified simplex $\sigma$, we denote by ${\bf F}_p^{\sigma}$ the multi-tangent at the smallest element in  $\CC$ including $\sigma$. 
Let $\Delta^{p,\bullet}$ be the sheafification of the presheaf constructed by replacing ${}_{(\GM)}\IC_{p,\bullet}$ with ${}_{\GM}\HC_{p,\bullet}$ in \eqref{eq:sheafdef}. 
By the same argument as in   \cite[Theorem 4.20]{Gross_2023}, we obtain $\Delta^{p,\bullet}\cong \bigoplus_{\sigma}({\bf F}_p^{\sigma})_{\Im(\sigma)}$. 
The sheaf $\bigoplus_{\sigma}({\bf F}_p^{\sigma})_{\Im(\sigma)}$ is isomorphic to $\Homs(\Omega^p_X, \Delta^{0,\bullet})$ by \cite[Theorem 4.20]{Gross_2023}. 
Then $\Delta^{p,\bullet}\cong \Homs(\Omega^p_X, \Delta^{0,\bullet})$ holds. 

\subsection{Other properties}\label{subsec:otherprop}
In this subsection, we prove Proposition \ref{prop:introindep} and Proposition \ref{prop:introtropstr} in the introduction. 
Let $(X,\XX)$ be a filtered rational polyhedral space with an open face structure $\CC$. 
Let $\perv$ be a perversity on $(X,\XX)$. 
Since $(X,\XX_X^{\CC})$ is a CS set (see Proposition \ref{prop:tropfaceCS}), any point has a neighborhood $N$ that is stratified-homeomorphic to $\R^i\times cL$ by the definition \ref{def:CS}. 
In what follows, we introduce homology on $cL$. 

By restricting $\CC$ to $N$, $N$ has the open face structure $\CC|_N$. 
The open face structure $\CC|_N$ induces a complex structure on $cL$. 
We denote by $\CC_{cL}$ the induced open face structure on $cL$. 
Let $\pi\colon \R^i\times cL\to cL$ be the projection.  
Then the complex $\CC_{cL}$ consists of $\pi(\sigma)$ for $\sigma \in \CC_N$. 
The topological space $cL$ has the natural filtration $\XX_{cL}$ induced by $\XX$ such that $\R^i\times (cL)^k=(\R^i\times cL)^k$ for any $k\in \ZZ_{\geq 0}$. 
The perversity $\perv$ induces the perversity on $(cL,\XX_{cL})$ given by $S\mapsto \perv(\pi^{-1}(S))$.  
Using the filtration $\XX_{cL}$, the complex $\CC_{cL}$, the multi-tangent $\mathbf{F}^Q_{p}(\pi^{-1}(\sigma))$ at $\sigma\in \CC_{cL}$, and the perversity induced by $\perv$, we define a chain group ${}_{(\GM)}\IC'_{p,\bullet}(cL,Q)$ as in \textsection\ref{subsec:tropinthom}. 

We write a $q$-simplex $\sigma\colon \Delta_q\to N$ of $N\cong \R^i\times cL$ as $\sigma(x)=(\sigma_{\R^i}(x),\sigma_{cL}(x))\in \R^i\times cL$. 
Let $\zeta\in {}_{(\GM)}\IC'_{p,q}(cL,Q)$ be a $q$-chain of $cL$. 
We denote by $0$ the $q$-simplex $\sigma\colon \Delta_q\to \R^i$ such that $\sigma(x)\equiv 0$. 
Then we have the the natural map $$\imath\colon {}_{(\GM)}\IC'_{p,q}(cL,Q)\to {}_{(\GM)}\IC_{p,q}(N,Q),\quad \zeta=\sum f_{\tau}\tau \mapsto \sum f_{\tau}(0,\tau).$$ 

\begin{lemma}\label{lem:projiso}
    The natural map $$\imath\colon {}_{(\GM)}\IC'_{p,\bullet}(cL,Q)\to {}_{(\GM)}\IC_{p,\bullet}(N,Q)$$ is a quasi-isomorphism. 
\end{lemma}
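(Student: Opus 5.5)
We prove that $\imath$ is a chain homotopy equivalence, with homotopy inverse induced by the projection $\pi\colon \R^i\times cL\to cL$. This is the tropical analogue of the stratified homotopy invariance $\R^i\times cL\simeq cL$ in \cite[Propositions 4.1.10 and 6.3.7]{Friedman_2020}.

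\emph{Step 1: the two maps are chain maps.} Define
\[\pi_\#\colon {}_{(\GM)}\IC_{p,\bullet}(N,Q)\to {}_{(\GM)}\IC'_{p,\bullet}(cL,Q),\qquad \sum f_\sigma\sigma\mapsto \sum f_\sigma\,(\pi\circ\sigma).\]
The coefficients match: by definition, the multi-tangent attached to $\pi(\sigma')\in\CC_{cL}$ is $\mathbf F^Q_p(\pi^{-1}(\pi(\sigma')))=\mathbf F^Q_p(\sigma')$.

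Simplices are carried to simplices of the right kind. The faces of $\CC|_N$ are of the form $\R^i\times\tau$, so $\pi$ sends $\CC$-stratified simplices to $\CC_{cL}$-stratified simplices. The filtration is $\R^i\times(cL)^k=(\R^i\times cL)^k$ and the perversity on $cL$ is pulled back from $\perv$, so both conditions are preserved:
\begin{itemize}
\item $\sigma^{-1}(S)=(\pi\circ\sigma)^{-1}(\pi(S))$ for each stratum $S$ of $N$, and $\codim(S)=\codim(\pi(S))$, so allowability is preserved;
\item a simplex lies in $\S_N$ if and only if its projection lies in $\S_{cL}$, so $\hat\partial$ (in the $\GM$ case) commutes with $\pi_\#$.
\end{itemize}
The same reasoning applies to $\imath$, since $(0,\tau)^{-1}(\R^i\times T)=\tau^{-1}(T)$. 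Hence $\pi_\#$ and $\imath$ are chain maps of intersection chain complexes, and $\pi_\#\circ\imath=\id$ directly.

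\emph{Step 2: a homotopy from $\imath\circ\pi_\#$ to the identity.} Use the straight-line homotopy
\[H\colon N\times[0,1]\to N,\qquad H((y,z),t)=(ty,z),\]
which is stratum-preserving in the $cL$-coordinate. Apply the standard prism operator: subdivide $\Delta_q\times[0,1]$ into $(q+1)$-simplices and set $P(\sigma)=\sum \pm H\circ(\sigma\times\id)|_{\text{simplex}}$, carrying the coefficient $f_\sigma\in\mathbf F^Q_p(\sigma')$ along. This is allowed because each prism simplex stays in $\R^i\times\pi(\relint\sigma')$, i.e.\ in the same face of $\CC|_N$, so the coefficient module is unchanged. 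The identity
\[\partial P+P\partial=\id-\imath\pi_\#\]
(and its $\hat\partial$ analogue) is then formal, exactly as in the classical case.

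\emph{Step 3: allowability and stratification of the prism simplices, the main obstacle.} Two issues must be handled.
\begin{itemize}
\item \textbf{Allowability.} Since $H$ preserves the $cL$-coordinate, the preimage of a stratum $\R^i\times T$ under a prism simplex is the prism over $\sigma^{-1}(\R^i\times T)$. The classical estimate \cite[Proposition 4.1.10]{Friedman_2020} shows that this prism lies in the correct skeleton. Moreover, the boundary of $P\xi$ consists of allowable pieces whenever $\xi$ and $\partial\xi$ are allowable, as in \cite[Proposition 6.3.7]{Friedman_2020}; in the $\GM$ case one additionally uses that a prism simplex lies in $\S_N$ iff its base simplex does.
\item \textbf{$\CC$-stratification.} Each open face of a prism simplex must map into the relative interior of a single face. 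This can fail at $t=0$ for faces whose image is not a nice cell. I would handle it as in the proof of Proposition \ref{prop:excision}: first replace chains by sufficiently fine subdivisions $T$, which are quasi-isomorphic by Proposition \ref{prop:excision}'s argument. Then choose the simplicial subdivision of $\Delta_q\times[0,1]$ compatible with the face stratification, which is possible because the stratification of each $\sigma$ is by relatively open faces of $\Delta_q$ and $H$ respects the product decomposition.
\end{itemize}

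Granting Step 3, Steps 1 and 2 give that $\imath$ is a chain homotopy equivalence, hence a quasi-isomorphism.
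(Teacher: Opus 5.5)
This is essentially the paper's proof: the paper's $\jmat$ is your $\pi_\#$, it uses $\jmat\circ\imath=\id$, and it obtains the homotopy $\id\simeq\imath\circ\jmat$ from the prism operator of \cite[Propositions 4.1.10 and 6.3.7]{Friedman_2020}, just as you do.

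One simplification: the $\CC$-stratification worry in your Step 3 does not arise, so the subdivision detour is unnecessary. The cells of $\CC|_N$ are products $\R^i\times\tau$, your $H$ preserves each $\R^i\times\relint(\tau)$, and the relative interior of every face of a prism simplex in $\Delta_q\times[0,1]$ projects into the relative interior of a face of $\Delta_q$. Hence every prism simplex is automatically $\CC$-stratified.
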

\begin{proof}
    We define $$\jmat\colon {}_{(\GM)}\IC_{p,q}(N,Q)\to {}_{(\GM)}\IC'_{p,q}(cL,Q),\quad \sigma=(\sigma_{\R^i},\sigma_{cL})\mapsto \sigma_{cL}.$$ 
    Then $\jmat\circ \imath$ is the identity and $\imath\circ \jmat$ is the map induced by $(\sigma_{\R^i},\sigma_{cL})\mapsto (0,\sigma_{cL})$.   
    By the argument in \cite[Propositions 4.1.10 and 6.3.7]{Friedman_2020}, there exists a homomorphism $P\colon {}_{(\GM)}\IC_{p,q}(N,Q)$$\to {}_{(\GM)}\IC_{p,q+1}(N,Q)$, called the prism operator  
    such that $\xi -\imath\circ \jmat(\xi)= \partial P(\xi)+P(\partial \xi)$ for any chain $\xi\in  {}_{(\GM)}\IC_{p,q}(N,Q)$ and the boundary map $\partial$. 
    Therefore, the map $\imath$ is a chain homotopy equivalence. 
    In particular, $\imath$ is a quasi-isomorphism. 
\end{proof}
We define ${}_{(\GM)}\IH'_{p,q}(cL,\CC_{cL}, Q)\colonequals \HH_{q}({}_{(\GM)}\IC'_{p,\bullet}(cL,Q))$. 
By Lemma \ref{lem:projiso}, there exists the isomorphism ${}_{(\GM)}\IH'_{p,q}(cL,\CC_{cL},Q)$$\cong {}_{(\GM)}\IH_{p,q}(N,\CC|_N,Q)$. 
\begin{rem}\label{rem:BM'}
    For any open subset of $U\subset cL$, we can define ${}_{(\GM)}\IH'_{p,q}(U,\CC_{cL}|_U,Q)$. 
    For any open subset $U\subset cL$, 
    we can also define ${}_{(\GM)}{\IC^{\BM}_{p,q}}'(U,Q)$ and ${}_{(\GM)}{\IH^{\BM}_{p,q}}'(U,Q)$ similarly. 
    We use these in the proof of Lemma \ref{lem:prodgenconeBM}. 
\end{rem}

Let $\CC'$ be a $\CC$-stratified open face structure on $X$.  
As in the above discussion, we choose a neighborhood $N'\cong \R^{i'}\times cL'$ with respect to $\XX_X^{\CC'}$ and consider ${}_{(\GM)}\IH'_{p,q}(cL',\CC'_{cL'},Q)$ for the filtration $\XX_{cL'}$ on $cL'$ induced by $\XX$. 
The open face structure $\CC$ induces a complex $\CC_{cL'}$ on $cL'$. Using the complex $\CC_{cL'}$, we also define ${}_{(\GM)}\IH'_{p,q}(cL',\CC_{cL'},Q)$. 
In this setting, we obtain the following statement as in the proofs of the cone formulas. 
\begin{lemma}\label{lem:prodgencone}
    Let $S$ be the stratum of $\XX_{cL'}$ including the smallest cell $v$ of $\CC'_{cL'}$. 
    Let $U=cL'\smallsetminus\{v\}$. In the non-$\GM$ case, we assume that the singular locus of $(cL',\XX_{cL'})$ is non-empty.
    Under the above setting, we have the following: 
    \begin{align*}
        &{}_{\GM}\IH'_{p,q}(cL',\CC_{cL'},Q)\cong
        \begin{cases}
            0 &\text{if } q \geq \codim(S)-\perv(S)-1,q \neq 0,\\
            {\bf F}^{Q}_p(\{v\}) &\text{if } q \geq \codim(S)-\perv(S),q=0,\\
            \sum \limits_{ \substack{\sigma' \in \CC'_{cL'}: \{v\} \prec \sigma',\\ \perv(\sigma')\geq \codim(\sigma')}}
                {\bf F}^{Q}_p(\sigma') &\text{if } q =\codim(S)-\perv(S)-1,q=0,\\
            _{\GM}\IH'_{p,q}(U,\CC_{cL'}|_{U},Q) &\text{if } q < \codim(S)-\perv(S)-1.
        \end{cases}\\
        &\IH'_{p,q}(cL',\CC_{cL'},Q)\cong
        \begin{cases}
            0 &\text{if } q \geq \codim(S)-\perv(S)-1,\\
            \IH'_{p,q}(U,\CC_{cL'}|_{U},Q) &\text{if } q < \codim(S)-\perv(S)-1.
        \end{cases}
    \end{align*}
\end{lemma}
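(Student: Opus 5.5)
The plan is to repeat the proofs of the cone formulas (Propositions \ref{prop:coneGM} and \ref{prop:conenon}), now for the complex ${}_{(\GM)}\IC'_{p,\bullet}(cL',Q)$ built from the finer cell structure $\CC_{cL'}$. The coning itself is done with respect to the conical structure coming from $\CC'_{cL'}$. Fix the stratified homeomorphism $cL'\cong[0,1)\times L'/\{0\}\times L'$ with vertex $v$. For a $\CC_{cL'}$-stratified $q$-simplex $\sigma$, define $\bar c\sigma(s,x)=(s\sigma_I(x),\sigma_L(x))$ exactly as in the proof of Proposition \ref{prop:coneGM}.

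The first step is to check that $\bar c\sigma$ is again a $\CC_{cL'}$-stratified simplex, and that multi-tangent coefficients push forward along it. Since $\CC'$ is $\CC$-stratified, each cell of $\CC_{cL'}$ is a union of pieces that are radial in the cone structure of $\CC'_{cL'}$. So the open ray segments from $v$ through $\relint$ of a $\CC$-cell stay in one $\CC$-cell $\tau'$ having that cell as a face. The coefficient $f\in{\bf F}^Q_p(\tau)$ is then sent to ${\bf F}^Q_p$ of the cell containing $\relint\bar c\sigma$ via the appropriate $\iota$ map, and at the cone point it goes to ${\bf F}^Q_p(\{v\})$. I would state this as a small sublemma, because it is the only point where the hypothesis relating $\CC'$ and $\CC$ is used.

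Next comes allowability. The stratum of $\XX_{cL'}$ containing $v$ is $S$, of formal codimension $\codim(S)$ and perversity $\perv(S)$. The skeleton computation of \cite[p.~131]{Friedman_2020} shows that $\bar c\sigma$ is allowable whenever $\sigma$ is and $q\ge \codim(S)-\perv(S)-1$. Strata other than $S$ are also fine, because $\bar c\sigma^{-1}(T)$ is the cone on $\sigma^{-1}(T)$ minus the vertex. With allowability in hand, the identity $\partial\bar c\xi=\xi-\bar c\partial\xi$ kills cycles in degrees $q\ge\codim(S)-\perv(S)-1$ with $q\ne0$. In the non-$\GM$ case it also kills degree $0$, since $\bar c v'$ has boundary $v'$ and this is allowable. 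Here the nonempty singular-locus hypothesis ensures that $\hat\partial$ does not discard the vertex term.

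The $q=0$ $\GM$ cases repeat the surjectivity and injectivity argument for the map $f\mapsto fv$ in Proposition \ref{prop:coneGM}. This gives ${\bf F}^Q_p(\{v\})$ when $v$ is allowable, and the image $\sum{\bf F}^Q_p(\sigma')$ of allowable points when it is not. The sum runs over cells $\sigma'\succ\{v\}$ of $\CC'_{cL'}$ whose stratum satisfies $\perv\ge\codim$, since these are exactly the cells containing allowable $0$-simplices. For $q<\codim(S)-\perv(S)-1$, no allowable $q$- or $(q+1)$-simplex meets $S$, so the chains live in a neighborhood of $U$. Because $S$ is itself conelike near $v$, I would use Lemma \ref{lem:inclusion}-type radial deformation to push chains that meet $S\smallsetminus\{v\}$ but avoid $v$ into $U$. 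The main obstacle I expect is the stratification sublemma in the first step, namely that coning preserves the $\CC_{cL'}$-stratified condition and is compatible with the maps $\imap^Q$.
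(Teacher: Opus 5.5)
Your plan is the paper's own. Lemma \ref{lem:prodgencone} is stated there with no argument beyond ``as in the proofs of the cone formulas'' (Propositions \ref{prop:coneGM} and \ref{prop:conenon}), which amounts to three steps: coning along the cone structure of $\CC'_{cL'}$, the skeleton count for allowability, and the degree-$0$ argument for $f\mapsto fv$. The strategy works, but three things you wrote are wrong as stated, and one identification is missing.

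First, in your sublemma the radial segments never leave the cell. Since $\CC'$ is $\CC$-stratified, every stratum of $\XX^{\CC'_{cL'}}$ other than $\{v\}$ has the form $(0,1)\times T$ in cone coordinates. Each $\relint(\tau)$, $\tau\in\CC_{cL'}$, is a union of such strata, possibly together with $v$. Hence $\bar c\sigma(\relint\Delta_{q+1})\subset\relint(\tau)$ for the \emph{same} $\tau$ that contains $\sigma(\relint\Delta_q)$, and the coefficient $f\in{\bf F}^Q_p(\tau)$ is kept unchanged. This matters for two reasons. The maps $\imap^Q$ go from a cell to its faces, so there is no map ${\bf F}^Q_p(\tau)\to{\bf F}^Q_p(\tau')$ for $\tau\prec\tau'$. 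And the identity $\partial\bar c\xi=\xi-\bar c\partial\xi$ needs the base face of $\bar c\sigma$ to carry exactly $f$. The same conicity of $S$, as a union of such strata, is what makes $(\bar c\sigma)^{-1}(S)$ the whole cone on $\sigma^{-1}(S)$ in the skeleton count. You need this because $S$ may be strictly larger than $\{v\}$.

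Second, you have the role of the non-empty singular locus backwards. The singular locus is closed and a union of cone strata, so if it is non-empty it contains $v$. That is exactly what makes $\hat\partial$ \emph{discard} the vertex, giving $\hat\partial\bar c v'=v'$. If $v$ were regular, degree $0$ would give ${\bf F}^Q_p(\{v\})$, as in ordinary tropical homology of a fan.

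Third, for $q<\codim(S)-\perv(S)-1$ no radial deformation is needed. Allowable $q$- and $(q+1)$-simplices miss $S\ni v$, so they already lie in $U$. The two complexes therefore coincide in degrees $q$ and $q+1$, with the same boundary map, since the singular locus of $U$ is the trace of that of $cL'$. There are no chains meeting $S$ to push off.

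Finally, to match the formula in the third $\GM$ case, note that the $0$-simplex coefficients are taken for $\CC_{cL'}$. So you need ${\bf F}^Q_p(\sigma')={\bf F}^Q_p(\tau)$ whenever $\sigma'\in\CC'_{cL'}$ and $\relint(\sigma')\subset\relint(\tau)$. This holds because both are computed from the linear spans of the cells through a point of $\relint(\sigma')$, and these spans agree for the two structures.
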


For open face structures $\CC$ and $\CC'$ on $X$, we consider $\CC\cap \CC'$ in the paragraph below Definition \ref{def:facestructure}. 
By replacing $\CC$ in the construction of ${}_{(\GM)}\IH_{p,q}(X,\CC,Q)$ and ${}_{(\GM)}\IH^{p,q}(X,\CC,Q)$ with $\CC\cap \CC'$, 
we define ${}_{(\GM)}\IH_{p,q}(X,\CC\cap \CC',Q)$ and ${}_{(\GM)}\IH^{p,q}(X,\CC\cap \CC',Q)$, respectively. 
Note that there exists natural maps ${}_{(\GM)}\IH_{p,q}(X,\CC\cap \CC',Q)\to {}_{(\GM)}\IH_{p,q}(X,\CC,Q)$ and ${}_{(\GM)}\IH_{p,q}(X,\CC\cap \CC',Q)\to {}_{(\GM)}\IH_{p,q}(X,\CC',Q)$ induced by the inclusions of the chain complexes. 
Then the statement in \textsection\ref{sec:property} remains valid if the open face structures are replaced by $\CC\cap \CC'$. 
In particular, Proposition \ref{prop:Mayer} and Lemmas \ref{lem:projiso} and \ref{lem:prodgencone} is valid for $\CC\cap\CC'$.  
Using these results, we prove the following. 
\begin{proposition}\label{prop:indep}
    The homology $_{(\GM)}\IH_{p,q}(X,\XX,Q)$ is independent of the choice of an open face structure on $(X,\XX)$. 
\end{proposition}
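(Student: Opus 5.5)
Given two $\XX$-stratified open face structures $\CC$ and $\CC'$ on $(X,\XX)$, I will compare both with the common refinement $\CC\cap\CC'$. The natural maps
\[
{}_{(\GM)}\IH_{p,q}(X,\CC\cap\CC',Q)\to{}_{(\GM)}\IH_{p,q}(X,\CC,Q),\qquad {}_{(\GM)}\IH_{p,q}(X,\CC\cap\CC',Q)\to{}_{(\GM)}\IH_{p,q}(X,\CC',Q)
\]
come from inclusions of chain complexes. It suffices to show that each is an isomorphism. By symmetry it is enough to treat the first map.

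I will prove this with the Mayer--Vietoris argument of Proposition \ref{prop:argument}. I apply it to the CS set $(X,\XX_X^{\CC\cap\CC'})$, which is a CS set by Proposition \ref{prop:tropfacesCS}; it is paracompact and separable because $X$ is second countable and metrizable. Take $F_\bullet(U)={}_{(\GM)}\IH_{p,\bullet}(U,\CC\cap\CC'|_U)$, $G_\bullet(U)={}_{(\GM)}\IH_{p,\bullet}(U,\CC|_U)$, and let $\Phi$ be the inclusion-induced map. The four conditions are checked as follows.
\begin{itemize}
\item[(MV1)] Naturality of the Mayer--Vietoris sequences of Proposition \ref{prop:Mayer} for both face structures, since the short exact sequence \eqref{eq:Mayer} is compatible with the inclusion of chain complexes.
\item[(MV2)] Chains have compact support, so the complexes split as direct sums over disjoint open sets.
\item[(MV4)] An open $U$ inside a single stratum of $\XX_X^{\CC\cap\CC'}$ lies in a relative interior of a cell of both structures, so the multi-tangent coefficients agree. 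Both homologies are then those of a Euclidean space: ${\bf F}^Q_p$ in degree $0$ and zero otherwise, by a straight-line contraction and the prism operator as in Lemma \ref{lem:projiso}. One must check that the multi-tangent ${\bf F}^Q_p$ attached to a point does not depend on which face structure is used. Since ${\bf F}^Q_p$ is defined from tangent spaces of the cells containing the point, a local chart computation shows that the sum of $\bigwedge^p\Linear(\sigma')$ over the cells adjacent to a point is determined by the local germ of $X$.
\end{itemize}

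The key step is (MV3). Take $N\cong\R^i\times cL$ with vertex $v$, and assume $\Phi$ is an isomorphism on $N\smallsetminus(\R^i\times\{v\})$, i.e.\ on $\R^i\times(cL\smallsetminus\{v\})$. By Lemma \ref{lem:projiso}, applied to both structures, which is allowed since the statements remain valid for $\CC\cap\CC'$, both sides reduce to the primed homologies ${}_{(\GM)}\IH'_{p,\bullet}(cL,\cdot)$. Lemma \ref{lem:prodgencone} then computes them by the same cone formula. The formula depends only on $\codim(S)$ and $\perv(S)$ for the stratum $S$ of $\XX$ containing $v$, which is independent of the face structure because both are $\XX$-stratified. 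In the low range it gives the homology of $U=cL\smallsetminus\{v\}$, which is isomorphic by hypothesis; applying Lemma \ref{lem:projiso} again identifies this with $\R^i\times U$. In the high range both sides vanish, except for the $\GM$ $q=0$ cases, which give ${\bf F}^Q_p(\{v\})$ or a sum of multi-tangents inside ${\bf F}^Q_p(\{v\})$. In those cases I will check directly that the images agree: the allowable cells adjacent to $v$ in either structure span the same subspace. One also has to verify compatibility, namely that $\Phi$ commutes with the cone-formula isomorphisms. This holds because those isomorphisms are induced by inclusion $U\hookrightarrow cL$ or by $f\mapsto fv$.

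I expect the main obstacle to be (MV3) in the $\GM$ degree-$0$ boundary case, together with the hypothesis in Lemma \ref{lem:prodgencone} that the singular locus is nonempty in the non-$\GM$ case. If the singular locus is empty, $N$ lies in the regular part. Then non-$\GM$ allowability is the usual condition and both complexes compute ordinary tropical homology, so the claim follows from the known independence of tropical homology from the face structure, cf.\ \cite{MR3894860}. Cohomology follows by the universal coefficient theorem applied to the resulting quasi-isomorphism, since the complexes consist of projective modules.
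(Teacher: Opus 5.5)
Your proposal is correct and follows the paper's proof. Both compare $\CC$ and $\CC'$ through $\CC\cap\CC'$ using the Mayer--Vietoris argument of Proposition \ref{prop:argument} on the CS set $(X,\XX_X^{\CC\cap\CC'})$, with (MV3) reduced by Lemmas \ref{lem:projiso} and \ref{lem:prodgencone} to the cone formula; your additional checks (the $\GM$ degree-zero cases, the empty-singular-locus case handled via tropical homology, and the independence of ${\bf F}^Q_p$ from the face structure) fill in details the paper leaves implicit. One minor imprecision: in (MV4) the homology of $U$ need not be that of a Euclidean space, because $U$ may lie in a singular stratum of $\XX$, where the $\GM$ complex can vanish for perversity reasons and the non-$\GM$ complex always vanishes; since allowability depends only on $\XX$ and $\perv$, both sides still agree and the conclusion is unaffected.
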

\begin{proof}
    We prove the non-$\GM$ case. 
    We consider the filtration $\XX^{\CC\cap \CC'}_X$ induced by $\CC\cap \CC'$. 
    We set $F_{q}(U)\colonequals\IH_{p,q}(U,\XX|_U,\CC\cap\CC'|_U,Q)$ and $G_{q}(U)\colonequals\IH_{p,q}(U,\XX|_U,\CC|_U,Q)$ and verify conditions \ref{itm:mv1}--\ref{itm:mv4} in Pro\-position \ref{prop:argument} for $(X,\XX_X^{\CC\cap \CC'})$.  
    The conditions \ref{itm:mv2} and \ref{itm:mv4} are apparent. 
    By Proposition \ref{prop:Mayer}, the condition \ref{itm:mv1} is satisfied. 
    In the following, we prove the condition \ref{itm:mv3} for each $i$ in $\R^i\times cL$.  
    We need to prove that if $F_q(\R^i\times cL\smallsetminus \{v\})\to G_q(\R^i\times cL\smallsetminus\{v\})$ is an isomorphism, 
    then $F_q(\R^i\times cL)\to G_q(\R^i\times cL)$ is also an isomorphism. 
    This statement follows by Lemmas \ref{lem:projiso} and \ref{lem:prodgencone}. 
    Then the condition \ref{itm:mv3} is satisfied. 
    By Proposition \ref{prop:argument}, the natural map $\IH_{p,q}(X,\XX,\CC\cap\CC',Q)\to \IH_{p,q}(X,\XX,\CC,Q)$ is an isomorphism. 
    In the same way, the natural map $\IH_{p,q}(X,\XX,\CC\cap\CC',Q)\to \IH_{p,q}(X,\XX,\CC',Q)$ is an isomorphism. 
    Then we obtain an  isomorphism between $\IH_{p,q}(X,\XX,\CC',Q)$ and $\IH_{p,q}(X,\XX,\CC',Q)$. 
    Therefore, we obtain the statement in the non-$\GM$ case. 
    The proof for the $\GM$ is shown similarly. 
\end{proof}
\begin{corollary}
    The cohomology $_{(\GM)}\IH^{p,q}(X,\XX,Q)$ is independent of an open face structure on $(X,\XX)$.
\end{corollary}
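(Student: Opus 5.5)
The plan is to deduce the corollary from Proposition \ref{prop:indep} by the universal coefficient theorem, using naturality of the comparison maps. Let $\CC$ and $\CC'$ be two $\XX$-stratified open face structures on $(X,\XX)$. As in the proof of Proposition \ref{prop:indep}, we compare each of them with $\CC\cap\CC'$. The inclusion of chain complexes ${}_{(\GM)}\IC_{p,\bullet}(X,\XX,\CC\cap\CC',Q)\hookrightarrow {}_{(\GM)}\IC_{p,\bullet}(X,\XX,\CC,Q)$ induces an isomorphism on homology in every degree $q$, by the proof of Proposition \ref{prop:indep}. Dualizing this inclusion gives a cochain map
\[
{}_{(\GM)}\IC^{p,\bullet}(X,\XX,\CC,Q)\to {}_{(\GM)}\IC^{p,\bullet}(X,\XX,\CC\cap\CC',Q).
\]
It suffices to show that this map is a quasi-isomorphism, and likewise for $\CC'$ in place of $\CC$; composing the two isomorphisms then identifies the cohomologies for $\CC$ and $\CC'$.

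Both chain complexes are complexes of free $Q$-modules: each chain group is a direct sum over simplices of the modules ${\bf F}^Q_p(\tau)$, which are finitely generated torsion-free over the Dedekind domain $Q$ and hence projective. Moreover, allowability is a condition on the individual simplices of a chain and of its boundary, so the submodule of allowable chains is again a direct sum of such pieces. I will double-check this point, because it is exactly what is needed to apply \cite[Theorem 3.6.5]{Weibel_1994}. That theorem gives, naturally in the chain map, the short exact sequences
\[
0\to \Ext^1({}_{(\GM)}\IH_{p,q-1},Q)\to {}_{(\GM)}\IH^{p,q}\to \Hom({}_{(\GM)}\IH_{p,q},Q)\to 0
\]
for both structures, together with the vertical maps between them, exactly as in diagram \eqref{eq:univcoef}.

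The two outer vertical maps are induced by homology isomorphisms, so they are isomorphisms. By the five lemma, the middle map is an isomorphism as well.

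The only delicate step is the projectivity of the allowable chain modules. If one prefers to avoid it, there is an alternative: a quasi-isomorphism between bounded-below complexes of projectives is a chain homotopy equivalence, and applying $\Hom_Q(-,Q)$ preserves chain homotopy equivalences. Either route gives the corollary.
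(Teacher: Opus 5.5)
Your overall argument is correct, and the fallback you give at the end is exactly the paper's proof. The paper reduces to $\CC\cap\CC'$ and notes that the quasi-isomorphism ${}_{(\GM)}\IC_{p,\bullet}(X,\XX,\CC\cap\CC',Q)\to{}_{(\GM)}\IC_{p,\bullet}(X,\XX,\CC,Q)$ is a map between bounded-below complexes of projectives. It is therefore a chain homotopy equivalence by \cite[Lemma A.4.3]{Friedman_2020}, and dualizing preserves chain homotopy equivalences. Your primary route, through the natural universal coefficient sequences and the five lemma (the same diagram the paper uses in Proposition \ref{prop:conecohom}), is an equally valid variant. It needs one extra hypothesis of \cite[Theorem 3.6.5]{Weibel_1994}: the images of the boundary maps must be projective. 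That hypothesis holds here for the reason given below.

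What needs fixing is your justification of projectivity. Allowability of a chain is \emph{not} a simplex-by-simplex condition. The requirement concerns the simplices that survive in $\partial\xi$ (resp.\ $\hat\partial\xi$) after cancellation, so a chain can be allowable even though the boundaries of its individual simplices contain non-allowable faces. Hence ${}_{(\GM)}\IC_{p,q}(X,Q)$ is in general not the direct sum of the pieces ${\bf F}^Q_p(\tau)\cdot\sigma$, and there is no reason for it to be free. The correct argument is that $Q$ is a Dedekind domain, hence hereditary. So every submodule of the free module ${}_{(\GM)}\HC_{p,q}(X,Q)$ is projective; this covers both the allowable chains and their image under the boundary. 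Also, your ``alternative'' does not avoid this point: it uses the projectivity of the allowable chain modules in exactly the same way as the universal coefficient route.
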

\begin{proof}
    We prove the non-$\GM$ case. 
    Let $\CC$ and $\CC'$ be an open face structure on $X$. 
    By the same argument as in Proposition \ref{prop:indep}, it suffices to show an isomorphism between the cohomology groups associated with $\CC$ and $\CC\cap \CC'$. 
    By Proposition \ref{prop:indep}, the natural map $\IC_{p,q}(X,\XX,\CC\cap \CC',Q)\to \IC_{p,q}(X,\XX,\CC,Q)$ is a quasi-isomorphism. Since our complexes are bounded below and consist of projective modules, this morphism is a chain homotopy equivalence by \cite[Lemma A.4.3]{Friedman_2020}. 
    Therefore, the dual map $\IC^{p,q}(X,\XX,\CC,Q) \to \IC^{p,q}(X,\XX,\CC\cap \CC',Q)$ is a chain homotopy equivalence, and we obtain an isomorphism between cohomology groups. 
    The proof for the $\GM$ case is shown similarly. 
\end{proof}
The following claim follows from the preceding discussion. 
\begin{prop}\label{prop:tropstr}
    Let $\mathcal{F}$ be a tropical filtration (see Definition \ref{def:tropfilter}) and let $X$ and $Y$ be rational polyhedral spaces. 
    Let $\mathcal{F}(X)$ and $\mathcal{F}(Y)$ be filtrations on $X$ and $Y$ determined by $\mathcal{F}$, respectively.  
    Let perversities $\perv$ and $\perq$ be perversities on $(X,\mathcal{F}(X))$ and $(Y,\mathcal{F}(Y))$, respectively.
    We assume that $X$ has an $\mathcal{F}(X)$-stratified open face structure and that $Y$ has an $\mathcal{F}(Y)$-stratified open face structure. 
    If there exists an isomorphism $f:X\to Y$ between rational polyhedral spaces such that $\perv(S)=\perq(f(S))$ for all strata $S$ of $(X,\mathcal{F}(X))$, then the following hold. 
    \begin{align*}
        _{(\GM)}\IH_{p,q}(X,\mathcal{F}(X),Q)\cong {}_{(\GM)}\IHq_{p,q}(Y,\mathcal{F}(Y),Q),\\ {}_{(\GM)}\IH^{p,q}(X,\mathcal{F}(X),Q)\cong {}_{(\GM)}\IHq^{p,q}(Y,\mathcal{F}(Y),Q).
    \end{align*}
\end{prop}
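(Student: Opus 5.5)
The plan is to transport the whole chain-level construction along $f$, and then use Proposition \ref{prop:indep} (and its corollary for cohomology) to remove the dependence on open face structures.

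\emph{Filtrations.} Since $\mathcal{F}$ is a functor on the core $\mathcal{A}$ and satisfies $p=q\circ\mathcal{F}$, the isomorphism $f$ is sent to an isomorphism in $\mathcal{B}$ whose underlying continuous map is $f$. Its inverse is sent to $\mathcal{F}(f^{-1})$, with underlying map $f^{-1}$. Both are stratified maps preserving formal codimension, so $f\colon(X,\mathcal{F}(X))\to(Y,\mathcal{F}(Y))$ is a stratified homeomorphism, and $f$ induces a bijection $\SS_X\to\SS_Y$ with $\codim(f(S))=\codim(S)$. This also gives $f(\S_X)=\S_Y$ for the singular loci. Together with the hypothesis $\perv(S)=\perq(f(S))$, the allowability condition \eqref{eq:allowablecond} for a simplex $\sigma$ in $X$ is then equivalent to the condition for $f\circ\sigma$ in $Y$.

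\emph{Face structures.} Let $\CC_Y$ be the given $\mathcal{F}(Y)$-stratified open face structure on $Y$, and set $f^*\CC_Y:=\{f^{-1}(\tau)\mid\tau\in\CC_Y\}$. Because $f$ is an isomorphism of rational polyhedral spaces, composing the charts $\varphi_\tau$ with $f$ gives charts of $X$ in which $f^*\CC_Y$ is an open rational polyhedral complex, so $f^*\CC_Y$ is an open face structure on $X$. It is $\mathcal{F}(X)$-stratified, because stratification is a condition about relative interiors, $d(\cdot)$ and formal dimensions, all of which $f$ preserves by the first step.

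\emph{Chain-level isomorphism.} The charts are identified by an extended affine $\Z$-linear map that is invertible, with its inverse of the same type. Hence its linear part identifies the integral lattices $\Z^r_I$, and therefore $\Linear_\Z$ and the multi-tangents ${\bf F}^Q_p(f^{-1}(\tau))\cong{\bf F}^Q_p(\tau)$, compatibly with the maps $\imap^Q$. Consequently $\sigma\mapsto f\circ\sigma$, tensored with these isomorphisms, gives an isomorphism of chain complexes
\[{}_{(\GM)}\IC_{p,\bullet}(X,f^*\CC_Y,Q)\cong{}_{(\GM)}\IHq\text{-chains }{}_{(\GM)}\HC_{p,\bullet}(Y,\CC_Y,Q)\cap(\perq\text{-allowable}).\]
It commutes with $\partial$ and also with $\hat\partial$, since $f(\S_X)=\S_Y$, and it preserves allowability by the first step. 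Taking homology gives the homology isomorphism for the structure $f^*\CC_Y$. Dualizing with $\Hom_Q(-,Q)$ gives the cohomology isomorphism.

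Finally, Proposition \ref{prop:indep} and its corollary identify the groups for $f^*\CC_Y$ with those for the given open face structure on $X$. The main obstacle is the face-structure step: checking that $f^*\CC_Y$ is genuinely an $\mathcal{F}(X)$-stratified open face structure, and that the multi-tangent identification is canonical and chart-independent. For the latter, I would argue directly that the differential of an extended affine $\Z$-linear isomorphism is a lattice isomorphism on each sedentarity stratum $\R^r_I$.
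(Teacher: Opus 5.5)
Your proposal is correct and follows essentially the paper's argument. You transport an open face structure along $f$, obtain a chain-level isomorphism, and then use Proposition \ref{prop:indep} and its corollary for cohomology. The only difference is direction: the paper pushes an $\mathcal{F}(X)$-stratified structure $\CC_X$ forward to $Y$ and applies independence on the $Y$ side, while you pull $\CC_Y$ back to $X$ and apply it on the $X$ side. You also spell out details the paper leaves implicit, such as the bijection of strata, $f(\S_X)=\S_Y$, and the identification of multi-tangents.
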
 
\begin{proof}
    We prove the isomorphism for the homology. 
    Let $\CC_X$ be an $\mathcal{F}(X)$-stratified open face structure on $X$. 
    Then the isomorphism between $X$ and $Y$ induces the open face structure $\CC_Y'$ of $Y$ corresponding to $\CC_X$. 
    We have an isomorphism between ${}_{(\GM)}\IH_{p,q}(X,\mathcal{F}(X),\CC_X,Q)$ and ${}_{(\GM)}\IHq_{p,q}(Y,\mathcal{F}(Y),\CC_Y',Q)$. 
    We also have an isomorphism between ${}_{(\GM)}\IHq_{p,q}(Y,\mathcal{F}(Y),\CC_Y',Q)$ and ${}_{(\GM)}\IHq_{p,q}(Y,\mathcal{F}(Y),\CC_Y,Q)$ by Proposition \ref{prop:indep}. 
    Therefore,  we obtain an isomorphism $${}_{(\GM)}\IH_{p,q}(X,\mathcal{F}(X),\CC_X,Q)\cong {}_{(\GM)}\IHq_{p,q}(Y,\mathcal{F}(Y),\CC_Y,Q).$$ 
    The proof for the cohomology is similar. 
\end{proof}

\section{Non-GM chain sheaf for a tropical manifold with singularities}\label{sec:non-GMmfd} 
\subsection{Construction of tropical Deligne sheaf}\label{subsec:deligne}
In this section, following the ideas of the generalized Deligne sheaf in \cite{Friedman_2010}, we develop the sheaf theory of 
the tropical intersection homology for filtrations using the notion of tropical manifold.  
Since our main approach follows \cite{Friedman_2010}, the details are omitted in this section. 
We assume that the coefficient ring $Q$ is $\ZZ$ or a field with $\QQ\subset Q\subset\RR$. 
This assumption is to use results in \cite{Gross_2023}. 

Let $(X,\XX)$ be an $n$-dimensional filtered rational polyhedral space: 
\begin{equation*}
    \XX \colon X = X^{n} \supset X^{n-1} \supset \cdots \supset X^0 \supset X^{-1} = \emptyset. 
\end{equation*}
In the following, unless otherwise specified, we use the notation $X^i$ and $X_i=X^i\smallsetminus X^{i-1}$ for $\XX$. 
We use the notation $X^i_{\CC}$ and $X^{\CC}_i$ for $\XX^{\CC}_X$ if $\CC$ is an open face structure on $X$.  
For another filtration $\XX'$, we denote them by $X^i_{\XX'}$ and $X^{\XX'}_i$, respectively. 
We define $U_k\colonequals X\smallsetminus X^{n-k}$. 
Since $X^{n-k-1}\supset X^{n-k}$, we have the natural inclusion $i_k \colon U_k \hookrightarrow U_{k+1}$.

We define a tropical manifold with singularities as follows. 
\begin{defn}[Tropical manifold with singularities]\label{def:tms}
    Let $X$ be a rational polyhedral space. 
    If $U\subset X$ is an open dense subset and $U$ is an $n$-dimensional tropical manifold (see Definition \ref{def:polyhedralspace}), 
    then we say that $(X,U)$ is an \emph{$n$-dimensional tropical manifold with singularities}. 
\end{defn}
If $X$ is a pure $n$-dimensional rational polyhedral space, then $(X,X_n^{\XX^{\text{trop}}_X})$ is a tropical manifold with singularities. 
Therefore, under such an assumption, we can choose an $n$-dimensional tropical manifold $U$ such that $(X,U)$ is an $n$-dimensional tropical manifold with singularities. 

Let $(X,U)$ be an $n$-dimensional tropical manifold with singularities. 
For an open subset $V$ of $X$, let $\CC_V$ be an open face structure on $V$ such that $U\cap V$ is the union of some strata of $\XX^{\CC_V}_V$.   
We set $V^n=V$ and $V^{-1}=\emptyset$, and for any $0\leq k \leq n-1$, we set 
$V^k\colonequals (X\smallsetminus U)\cap  V^k_{\XX^{\CC_V}_V}$. 
These make a filtration on $V$, which we denote by $\mathfrak{V}^{U,\CC_V}_V$: 
\begin{equation}\label{eq:newfilter}
    \mathfrak{V}^{U,\CC_V}_V \colon V=V^n \supset V^{n-1} \supset \cdots \supset V^0 \supset V^{-1}=\emptyset, 
\end{equation}
If $V=X$ and $\CC_V=\CC$, then we denote the above filtration by $\XX^{U,\CC}_X$. 

\begin{defn}[Condition $(C)$ and $(C)'$]\label{def:semilocgoodratpolyspace}
    Let $(X,U)$ be an $n$-dimensional tropical manifold with singularities. 
    Let $\XX$ be a filtration of formal dimension $n$ on $X$ such that $X^{n-1}=X\smallsetminus U$ (see Definition \ref{def:filteredspace}). 
    \begin{enumerate}
        \item We say that $\XX$ satisfies Condition $(C)$ if there exists a family of pairs $(V,\CC_{V})$ of an open subset $V$ of $X$ and an open face structure $\CC_V$ on $V$ 
            such that $X=\bigcup V$, $U\cap V$ is the union of some strata of $\XX^{\CC_V}_V$, and $\XX|_V=\mathfrak{V}^{U,\CC_V}_V$. 
        \item We say that $\XX$ satisfies Condition $(C)'$ if there exists an open face structure $\CC$ on $X$ 
            such that $U$ is the union of some strata of $\XX^{\CC}_X$, and $\XX=\XX^{U,\CC}_X$. 
    \end{enumerate}
\end{defn}
\begin{example}\label{ex:CX}
    Let $(X,U)$ be an $n$-dimensional tropical manifold with singularities. 
    Let $\CC$ be a face structure on $X$ such that $U$ is the union of some strata of $\XX^{\CC}_X$. 
    By the definition, $\XX^{U,\CC}_X$ satisfies $(C)'$. 
    In particular, $\XX^{U,\CC}_X$ satisfies $(C)$. 
\end{example}

To construct a tropical Deligne sheaf, we introduce the generalized truncation in \cite{Friedman_2010}. 
\begin{defn}[{Generalized truncation {\cite[Definition 3.2]{Friedman_2010}}}]\label{def:gentrun}
    Let $\mathcal{A}^{\bullet}$ be a sheaf complex  on $X$, $\mathcal{F}$ be a locally-finite collection  of subsets of $X$, and $\perv$ be a function $\colon \mathcal{F}\to \ZZ$.  
    Set $|\mathcal{F}| =\bigcup_{S\in \mathcal{F}}S$. 
    We define a presheaf $T^{\mathcal{F}}_{\leq \perv}\mathcal{A}^{\bullet}$ on $X$ as follows:
    \begin{align*}
        T^{\mathcal{F}}_{\leq \perv}\mathcal{A}^{\bullet}(U)\colonequals
        \begin{cases}
            \Gamma(U,\mathcal{A}^{\bullet}), & U\cap |\mathcal{F}|=\emptyset,\\
            \Gamma(U,\tau_{\leq \inf\{\perv(S)\mid S\in \mathcal{F},U\cap S\neq \emptyset\}}\mathcal{A}^{\bullet}),& U\cap |\mathcal{F}|\neq \emptyset,
        \end{cases}
    \end{align*}
    where $\tau$ is the usual truncation. 
    We define the \emph{generalized truncation sheaf} $\tau^{\mathcal{F}}_{\leq \perv}\mathcal{A}^{\bullet}$ to be the sheafification of $T^{\mathcal{F}}_{\leq \perv}\mathcal{A}^{\bullet}$.  
\end{defn}
\begin{defn}(Shifted complex)
Let $\mathcal A^{\bullet}[n]$ denote the shifted complex defined by $(\mathcal A^{\bullet}[n])^i=\mathcal A^{i+n}$. 
\end{defn}

Let $\XX$ be a filtration 
such that $(X,X_n)$ is a tropical manifold with singularities. Let $\perv$ be a perversity on $(X,\XX)$. 
Recall that $\Omega^p_{X_n}$ is the sheaf of tropical $p$-form on $X_n$ (see Definition \ref{def:pformsheaf}). 
Allowing $X_k$ to stand for the strata of $\XX$ in $X_k$ and using the generalized truncation,
we define a tropical Deligne sheaf  $\Q^{\bullet}_{\perv,p}$ as follows: 
\begin{equation}\label{eq:Delignesheaf}
    \Q^{\bullet}_{\perv,p}\colonequals \tau^{X_0}_{\leq \perv}R{i_n}_*\ldots \tau^{X_{n-1}}_{\leq \perv}R{i_1}_*\Omega^{n-p}_{X_n}.
\end{equation}
Here, $R{i_k}_*$ is a functor $\DD(Q_{U_k})\to \DD(Q_{U_{k+1}})$ and $\tau^{X_{n-k}}_{\leq \perv}$ is a functor $\DD(Q_{U_{k+1}})\to \DD(Q_{U_{k+1}})$.

For $i_k$ and a sheaf complex $\mathcal{A}^{\bullet}$ on $X$, we have the natural map $\mathcal{A}^{\bullet}|_{U_{k+1}}\to {i_k}_*{i_k}^* \mathcal{A}^{\bullet}|_{U_{k+1}}$  induced by the adjunction ${i_k}^* \dashv {i_k}_*$. 
We define the \emph{attachment map} $\alpha_k\colon  \mathcal{A}^{\bullet}|_{U_{k+1}}\to R{i_k}_*\mathcal{A}^{\bullet}|_{U_{k}}$ to be 
the composition of the  natural maps $\mathcal{A}^{\bullet}|_{U_{k+1}}\to {i_k}_*{i_k}^* \mathcal{A}^{\bullet}|_{U_{k+1}}={i_k}_*\mathcal{A}^{\bullet}|_{U_{k}}$ and ${i_k}_*\mathcal{A}^{\bullet}|_{U_{k}}\to R{i_k}_*\mathcal{A}^{\bullet}|_{U_{k}}$. 

\begin{defn}\label{def:ax2}
    Let $(X,\XX)$ be an $n$-dimensional filtered rational polyhedral space. 
    Suppose that $(X,X_n)$ is an $n$-dimensional tropical manifold with singularities. 
    Let $\perv$ be a perversity  on $(X,\XX)$ and let $p\in\ZZ$. 
    For a sheaf $\mathcal{A}^\bullet$ on $X$, we consider the condition $(AX)^{p}_{\perv,\XX}$:
    \begin{enumerate}[label=(AX\arabic*)]
        \item\label{itm:ax1} $\mathcal{A}^\bullet$ is isomorphic to some bounded sheaf in $\DD(Q_X)$, $\HH^i(\mathcal{A}^{\bullet})=0$ for $i<0$, and $\mathcal{A}^{\bullet}|_{X_n}\cong \Omega^{n-p}_{X_n}$ in $\DD(Q_{X_n})$. 
        \item\label{itm:ax2} For any stratum $S$ of $\XX$ and $x\in S$, $\HH^i(\mathcal{A}^{\bullet}_x)=0$ if $i>\perv(S)$.  
        \item\label{itm:ax3} For any stratum $S\subset X_k$ of $\XX$ and $x\in S$, the attachment map $\alpha_k\colon \mathcal{A}^{\bullet}|_{U_{k+1}}\to R{i_k}_*\mathcal{A}^{\bullet}|_{U_k}$ 
        induces an isomorphism $\HH^i((\mathcal{A}^{\bullet}|_{U_{k+1}})_x)\cong \HH^i((R{i_k}_*\mathcal{A}^{\bullet}|_{U_k})_x)$ if $\perv(S)\geq i$. 
    \end{enumerate}
\end{defn}
\begin{rem}\label{rem:delignetheory}
    Here, AX stands for axiom. 
    In the following, we will verify that $\Idelta^{p,\bullet}_X$ satisfies the above axiom up to shift. 
    Moreover, we will verify that a sheaf satisfying the above axiom is unique up to isomorphism in the derived category. 
    Therefore, we can say that a sheaf $\mathcal A^{\bullet}$ satisfies the above axiom if and only if $\mathcal A^{\bullet}\cong \Idelta^{p,\bullet}_X$ in the derived category. 
    This overall line of argument is the same as in classical Deligne sheaf theory. 
\end{rem}

Let $(X,\XX)$ be an $n$-dimensional filtered rational polyhedral space with an open face structure $\CC$. 
Set $B^i_{\epsilon}=(-\epsilon,\epsilon)^i\subset \R^i$ for $1>\epsilon>0$. 
Let $x\in X$ and $N$ be a distinguished neighborhood of $x$ for $\XX^{\CC}_X$ such that $N\cong \R^i\times cL$ and $x$ corresponds to $0\times v\in \R^i\times cL$ by this identification. 
Then we define $N_{\epsilon}\colonequals B^i_{\epsilon}\times L_{[0,\epsilon)}\subset N$ and $N'_{\epsilon}\colonequals  B^i_{\epsilon}\times L_{[0,\epsilon)}\smallsetminus \{v\}\subset N$. 
Let $S$ be the stratum containing $x$, and let $k$ be the dimension of $S$. 
In this setting, by \cite[V.1.7 (2)]{Borel_1984}, the induced map $\HH^i((\mathcal{A}^{\bullet}|_{U_{k+1}})_x)\to \HH^i((R{i_k}_*\mathcal{A}^{\bullet}|_{U_k})_x)$ in the condition \ref{itm:ax3}  is an isomorphism  if and only if
\begin{equation}\label{eq:3cond1}
    \HH^{\bullet}(\mathcal{A}_x)\to \varinjlim_{\epsilon>0} \Hyper^{\bullet}(\mathcal{A}|_{N'_{\epsilon}})
\end{equation} 
induced by the attachment map $\alpha_k$ is an isomorphism (for the definition of $\Hyper^{\bullet}$, see the paragraph above Remark \ref{rem:Verdier}). 
This condition is also equivalent to that
\begin{equation}\label{eq:3cond2}
    \varinjlim_{\epsilon>0}\Hyper^{\bullet}(\mathcal{A}|_{N_{\epsilon}})\to \varinjlim_{\epsilon>0} \Hyper^{\bullet}(\mathcal{A}|_{N'_{\epsilon}})
\end{equation}
induced by the attachment map $\alpha_k$ is an isomorphism. 

To prove that $\Idelta^{p,\bullet}_X[-n]$ for the filtration in this section satisfies the axiom $(AX)^{p}_{\perv,\XX}$, we use the following claims. 

\begin{lemma}\label{lem:prodgenconeBM}
    Let $(N,\XX_N)$ be an $n$-dimensional filtered rational polyhedral space with an open face structure $\CC_N$. 
    Let $k\in\ZZ$ be $0\leq k \leq n$. 
    Suppose that $\XX_N$ has singular strata, $\XX_N$ satisfies $(C)'$, and $(N,\XX^{\CC_N}_N)$ is stratified-homeomorphic to $\R^{n-k}\times cL$ for some compact $(k-1)$-dimensional filtered space $L$.  
    Then the following holds:
    \begin{align*}
        \IH_{p,q}^{\BM}(N,Q)\cong
        \begin{cases}
            \IH_{p,q}^{\BM}(\R^{n-k}\times(cL\smallsetminus \{v\}),Q)  &\text{if } q \geq n-\perv(\R^{n-k}\times\{v\}),\\
            0 &\text{if } q < n-\perv(\R^{n-k}\times\{v\}), 
        \end{cases}
    \end{align*}
    where for simplicity, we identify the subset of $N$ with the image of the stratified isomorphism. 
    Furthermore, the isomorphism in the case $q \geq n-\perv(\R^{n-k}\times\{v\})$ is induced by 
    the inclusion $\R^{n-k}\times (cL\smallsetminus \{v\})\hookrightarrow\R^{n-k}\times cL$.  
\end{lemma}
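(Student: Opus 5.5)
We reduce to the cone case (Corollary \ref{cor:anotherconebm}) by stripping off the Euclidean factor. The argument runs parallel to the proofs of Proposition \ref{prop:conebm} and Corollary \ref{cor:anotherconebm}, with $F=cL$ replaced by $N\cong\R^{n-k}\times cL$.

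First, we show $\IH^{\BM}_{p,q}(N,Q)\cong\IH_{p,q}(N,N\smallsetminus(\R^{n-k}\times\{v\}),Q)$ is false in general, since $\R^{n-k}$ is noncompact. Instead we use the exhausting compacts $K_r=\bar B^{n-k}_{r}\times L_{[0,a_r]}$ with $a_r\to 1$ and $r\to\infty$. By Lemma \ref{lem:projiso} and its relative and Borel--Moore variants (Remark \ref{rem:BM'}), we reduce to a Künneth-type statement. The plan is to prove
\[\IH^{\BM}_{p,q}(\R^{n-k}\times cL)\cong {\IH^{\BM}_{p,q-(n-k)}}'(cL,\CC_{cL})\]
together with the analogous statement for $cL\smallsetminus\{v\}$, naturally with respect to the inclusion. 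For a single $\R$-factor this follows from the Mayer--Vietoris sequence for BM homology (Proposition \ref{prop:Mayer}(3)) applied to $\R\times Y=(-\infty,1)\times Y\cup(-1,\infty)\times Y$. Each half-line piece has vanishing BM homology by a push-to-infinity chain homotopy, the analogue of Lemma \ref{lem:inclusion}. The overlap $(-1,1)\times Y\cong\R\times Y$ we handle with the prism operator of Lemma \ref{lem:projiso}, applied relative to complements of compacts. The connecting map then gives a shift by one, and we iterate $n-k$ times. Multi-tangents are unaffected, since ${\bf F}^Q_p$ of a cell $\sigma$ in $N$ is by definition ${\bf F}^Q_p(\pi^{-1}(\pi\sigma))$, which is how $\IC'$ was set up.

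Second, in $cL$ with the induced filtration, the vertex stratum has formal codimension $k$, and condition $(C)'$ together with the presence of singular strata lets us apply the cone computation of Lemma \ref{lem:prodgencone} (non-$\GM$ part) in place of Proposition \ref{prop:conenon}. Rerunning the proofs of Proposition \ref{prop:conebm} and Corollary \ref{cor:anotherconebm} with Lemma \ref{lem:prodgencone} as input gives the following. For $q'\ge k-\perv(\{v\})$ the inclusion induces ${\IH^{\BM}_{p,q'}}'(cL)\cong{\IH^{\BM}_{p,q'}}'(cL\smallsetminus\{v\})$, and for $q'<k-\perv(\{v\})$ the group vanishes. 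In particular the isomorphism comes from the relative long exact sequence, whose map is induced by restriction to the open subset. Substituting $q'=q-(n-k)$, and using that the induced perversity satisfies $\perv(\{v\})=\perv(\R^{n-k}\times\{v\})$, turns the threshold into $q\ge n-\perv(\R^{n-k}\times\{v\})$. By naturality of the suspension isomorphism in the first step, this yields both the stated groups and the claim that the isomorphism is induced by the inclusion.

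The main obstacle is the Euclidean-factor suspension isomorphism in the BM setting with allowability constraints. The prism and push-off homotopies must preserve $\perv$-allowability and stratified simplices. They do, because they act only in the $\R^{n-k}$ or radial cone coordinate, which preserves strata as in \cite[Propositions 4.1.10 and 6.3.7]{Friedman_2020}. Passing to the inverse limit also requires care. We would handle the limit as in Proposition \ref{prop:conebm}, via Mittag-Leffler: the transition maps are isomorphisms on homology by the stretching argument of Lemma \ref{lem:inclusion} applied in both factors.
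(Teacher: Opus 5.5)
Your overall architecture matches the paper's proof. Your second step is exactly what the paper does: it reruns Proposition \ref{prop:conebm} and Corollary \ref{cor:anotherconebm} on $cL$ with Lemma \ref{lem:prodgencone} as input. It then shifts degrees by $n-k$ using a suspension isomorphism natural in $cL\smallsetminus\{v\}\hookrightarrow cL$. The gap is your proof of that suspension isomorphism, the step you yourself flag as the main obstacle. As written it does not work.

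\textbf{Why the Mayer--Vietoris argument fails.}
\begin{itemize}
\item The pieces $(-\infty,1)\times Y$ and $(-1,\infty)\times Y$ are \emph{open} half-lines times $Y$. Each is stratified-homeomorphic to $\R\times Y$ itself, so its BM intersection homology is not zero; already for $Y$ a point, $\HH^{\BM}_{1}((-1,\infty))\cong Q$.
\item A push-to-infinity homotopy of the type in Lemma \ref{lem:inclusion} kills BM homology only of a \emph{closed} half-line factor. A locally finite chain on $(-1,\infty)\times Y$ may accumulate at the open end $-1$. Translating each of its simplices then produces infinitely many prisms meeting a fixed compact set, so the homotopy is not locally finite.
\item The overlap $(-1,1)\times Y$ is again homeomorphic to $\R\times Y$. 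The Mayer--Vietoris sequence for this cover therefore relates groups all isomorphic to $\IH^{\BM}(\R\times Y)$ and gives no degree shift. For $Y$ a point it is just $0\to Q\to Q\oplus Q\to Q\to 0$ in degree one.
\item The prism operator of Lemma \ref{lem:projiso} cannot handle the overlap either. It concerns compactly supported chains and gives $\IH_{p,q}(\R^i\times cL)\cong \IH'_{p,q}(cL)$ with \emph{no} shift. In BM homology, $(-1,1)\times Y$ and $Y$ differ by a shift of one.
\end{itemize}

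\textbf{What the paper does instead.} It constructs the suspension map directly. A $q$-simplex $\sigma$ of $cL$ is sent to the locally finite $(q+n-k)$-chain obtained by triangulating $\R^{n-k}\times\Delta_q$ and composing with $\id_{\R^{n-k}}\times\sigma$; this is the cross product with the locally finite fundamental class of $\R^{n-k}$.
\begin{itemize}
\item It preserves allowability: $(\id\times\sigma)^{-1}(\R^{n-k}\times S)=\R^{n-k}\times\sigma^{-1}(S)$, and the codimension of $\R^{n-k}\times S$ in $N$ equals that of $S$ in $cL$, while both $q$ and the skeleton index shift by $n-k$.
\item It is visibly natural with respect to $cL\smallsetminus\{v\}\hookrightarrow cL$.
\item By \cite[Propositions 2.20 and 2.22]{MR2276609} it induces ${\IH^{\BM}_{p,q}}'(cL,Q)\cong\IH^{\BM}_{p,q+n-k}(\R^{n-k}\times cL,Q)$, and likewise for $cL\smallsetminus\{v\}$.
\end{itemize}

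If you want a one-factor inductive argument instead, the correct decomposition uses the closed half-line $[0,\infty)\times Y$, whose BM homology does vanish. You would combine it with the closed/open long exact sequence for $\{0\}\times Y\subset[0,\infty)\times Y$. That requires intersection homology of non-open subspaces, which lies outside the paper's setting of open subsets with open face structures. With the suspension isomorphism supplied by the cross product, the rest of your argument goes through.
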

\begin{proof}
    Recall that by Remark \ref{rem:BM'}, we can define ${\IC^{\BM}_{p,\bullet}}'$ for $cL$ and an open subset of this.  
    By the same way as in the proof of Proposition \ref{prop:conebm} and Corollary \ref{cor:anotherconebm}, 
    we have the following isomorphism:
    \begin{align}\label{eq:coneBM'}
        {\IH_{p,q}^{\BM}}'(cL,Q)\cong
        \begin{cases}
            {\IH_{p,q}^{\BM}}'(cL\smallsetminus \{v\},Q)  &\text{if } q \geq k-\perv(\R^{n-k}\times\{v\}),\\
            0 &\text{if } q < k-\perv(\R^{n-k}\times\{v\}).   
        \end{cases}
    \end{align}
    The isomorphism in the case $q \geq k-\perv(\R^{n-k}\times\{v\})$ is induced by the inclusion $cL\smallsetminus\{v\}\to cL$ by the proof of Corollary \ref{cor:anotherconebm}. 

    Using the arguments in \cite[Propositions 2.20 and 2.22]{MR2276609}, there exists two isomorphisms ${\IH_{p,q}^{\BM}}'(cL,Q)\cong {\IH_{p,q+(n-k)}^{\BM}}(\R^{n-k}\times cL,Q)$ and 
    ${\IH_{p,q}^{\BM}}'(cL\smallsetminus\{v\},Q)\cong {\IH_{p,q+n-k}^{\BM}}(\R^{n-k}\times (cL\smallsetminus\{v\}),Q)$. 
    Roughly speaking, these are obtained as follows:
    For each $q$-simplex $\sigma\colon \Delta_q\to cL $, we consider the map $\id_{\R^{n-k}}\times \sigma \colon \R^{n-k}\times \Delta_q \to \R^{n-k}\times cL$. 
    By subdividing $\R^{n-k}\times \Delta_q$ into a collection of $\Delta_{q+(n-k)}$, we obtain the set consisting of $q+(n-k)$-simplices of $\R^{n-k}\times cL$. 
    By sending $\sigma$ to the sum of such $q+(n-k)$-simplices, we obtain the map inducing the above isomorphism for $cL$. 
    For $cL\smallsetminus \{v\}$, we can construct the map in the same way (for details, see \cite[Remarks 2.16 and 2.21]{MR2276609}). 
    Combining \eqref{eq:coneBM'} and the above two isomorphisms, we obtain the statement.
\end{proof}

\begin{lemma}\label{lem:boundsheaf}
    Let $(X,\XX)$ be an  $n$-dimensional  filtered rational polyhedral space with an open face structure. 
    Suppose that $(X,X_n)$ is an $n$-dimensional tropical manifold with singularities and $\XX$ satisfies $(C)$.  
    Then $\Idelta^{p,\bullet}_X[-n]$ (for the definition, see  \eqref{eq:sheafdef}) is isomorphic to some bounded sheaf in $\DD(Q_X)$ and $\HH^q(\Idelta^{p,\bullet}_X[-n])=0$ for $q<0$. 
\end{lemma}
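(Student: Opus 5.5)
The claim is local, so I will check it on stalks and then argue that the sheaf is bounded. By the stalk computation after \eqref{eq:sheafdef}, for $x\in X$ and a small distinguished neighborhood $N$ of $x$ (for $\XX^{\CC}_X$), we have
\[
\HH^{q}(\Idelta^{p,\bullet}_X[-n])_x\cong \HH^{q-n}((\Idelta^{p,\bullet})_x)\cong \IH^{\BM}_{p,n-q}(N,Q).
\]
Since $\Idelta^{p,-\bullet}$ is built from chains of nonnegative degree, $\Idelta^{p,\bullet}$ lives in nonpositive degrees. Hence $\Idelta^{p,\bullet}[-n]$ vanishes in degrees below $-n$ termwise, and it is bounded below. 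What remains is to show that its cohomology sheaves vanish in negative degrees (equivalently, $\IH^{\BM}_{p,j}(N,Q)=0$ for $j>n$) and in sufficiently large degrees. Boundedness in the derived category then follows by applying the truncation $\tau_{\le m}$ for a suitable $m$: since the cohomology sheaves are zero above $m$, this gives a quasi-isomorphic bounded complex. So everything reduces to uniform vanishing of local BM homology outside a finite range, namely $0\le j\le n$.

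Using condition $(C)$, I will pass to an open $V\ni x$ on which $\XX|_V=\mathfrak V^{U,\CC_V}_V$, that is, $(C)'$ holds on $V$. Proposition \ref{prop:indep} lets me take $\CC_V$ as the face structure, and local BM homology only depends on a neighborhood. I then choose $N\subset V$ distinguished for $\XX^{\CC_V}_V$, so $N\cong\R^{n-k}\times cL$ with $L$ compact of formal dimension $k-1$, and I argue by induction on $k$, the depth.

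If $x\in U$, then $N$ may be taken inside the regular stratum of $\XX$. Here the allowability condition is vacuous, so $\IH^{\BM}_{p,j}(N)$ is the tropical BM homology of a neighborhood in a tropical manifold. By the cited results of \cite{Gross_2023} (tropical Poincaré duality, or directly $\Delta^{p,\bullet}\cong\Homs(\Omega^p,\Delta^{0,\bullet})$ together with local contractibility of fan neighborhoods), this is concentrated in degree $j=n$ and equals $\Omega^{n-p}_x$. In particular it vanishes for $j\neq n$.

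If $x$ lies in a singular stratum, Lemma \ref{lem:prodgenconeBM} applies. It gives $\IH^{\BM}_{p,j}(N)=0$ for $j<n-\perv(S)$, and $\IH^{\BM}_{p,j}(N)\cong\IH^{\BM}_{p,j}(\R^{n-k}\times(cL\smallsetminus\{v\}))$ otherwise. The space $\R^{n-k}\times(cL\smallsetminus\{v\})\cong\R^{n-k+1}\times L$ has strictly smaller depth. I cover it by distinguished neighborhoods of lower depth and run the Mayer--Vietoris sequence for BM homology (Proposition \ref{prop:Mayer}(3)); a Mayer--Vietoris argument in the style of Proposition \ref{prop:argument}, applied to the statement that the groups vanish for $j>n$, then propagates the bound. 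For the range $j\ge n-\perv(S)$, the groups therefore vanish for $j>n$, and the case $j<n-\perv(S)$ is zero in any case. The lower bound in $j$ is automatic because $j\ge 0$ for chains. This gives $\HH^q=0$ for $q<0$ and $q>n$.

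The main obstacle is this propagation step. BM homology satisfies Mayer--Vietoris with a degree shift in the wrong direction for naive induction, and on an infinite cover one needs a $\varprojlim^1$ control. I would handle it by working with the sheaf $\Idelta^{p,\bullet}$ itself: on the lower-depth open set, the inductive hypothesis says the cohomology sheaves vanish in degrees $q<0$. Since $\Hyper^{q}(\R^{n-k+1}\times L;\cdot)$ of a sheaf concentrated in degrees $\ge 0$ vanishes for $q<0$, and hypercohomology of the sheaf computes BM homology by Proposition \ref{prop:homofine}, the required vanishing follows without any $\varprojlim^1$ issue.
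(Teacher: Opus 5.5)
Your proposal is correct and is essentially the paper's proof: you reduce via $(C)$ to opens $V$ with $\XX|_V=\mathfrak{V}^{U,\CC_V}_V$, identify stalks with local Borel--Moore groups, treat regular points by tropical Poincar\'e duality, and in the inductive step combine Lemma \ref{lem:prodgenconeBM} with the inductive vanishing of cohomology sheaves on $\R^{n-k}\times(cL\smallsetminus\{v\})$ and Proposition \ref{prop:homofine}; the Mayer--Vietoris detour is unnecessary, as you conclude yourself. One indexing slip needs correcting: since $\Idelta^{p,\bullet}$ lives in nonpositive degrees, $\Idelta^{p,\bullet}[-n]$ vanishes termwise above degree $n$ (it is bounded above, not below), so the vanishing for $q>n$ is automatic and the truncation that yields a bounded complex is $\tau_{\geq 0}$, which your $q<0$ vanishing justifies.
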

\begin{proof}
    Since $\Idelta^{p,q}_X=0$ for $q>0$, $\HH^q(\Idelta^{p,\bullet}_X[-n])=\HH^{(-n+q)}(\Idelta^{p,\bullet}_X)=0$ for $q>n$. 
    Thus it suffices to show that $\HH^q((\Idelta^{p,\bullet}_X[-n])_x)=0$ for $q<0$ and any point $x\in X$. 
    Recall that for open subset $U\subset X$, $\Idelta^{p,\bullet}_X[-n]|_U\cong \Idelta^{p,\bullet}_U[-n]$ in $\DD(Q_X)$ (see the paragraph above Definition \ref{def:pformsheaf}). 
    Let $V$ be an open subset $V\subset X$ as in Definition \ref{def:semilocgoodratpolyspace}. 
    Since we can cover $X$ by such an open subset $V$, it suffices to show that $\HH^q((\Idelta^{p,\bullet}_V[-n])_x)=0$ for $q<0$ and any point $x\in V$. 
    This is equivalent to $\HH^q((\Idelta^{p,\bullet}_V[-n]))=0$ for $q<0$. 
    By considering such an open subset, it suffices to show that $\HH^q(\Idelta^{p,\bullet}_V[-n])=0$ for $q<0$ and $V\subset X$ such that $\XX|_V=\mathfrak{V}^{U,\CC_V}_V$ for some $\XX|_V$-stratified open face structure $\CC_V$.  

    Let $l((V,\XX|_V))$ denote the maximum codimension among strata of a filtered space $(V,\XX|_V)$. 
    We show that $\HH^q(\Idelta^{p,\bullet}_V[-n])=0$ for $q<0$ by the induction on $l=l((V,\XX|_V))$ as follows. 
    If $l=0$, $V$ is an $n$-dimensional tropical manifold. 
    Recall that for $x\in V$ and a distinguished neighborhood $N$ of $x$ for $\XX^{\CC_V}_V$, we have  $\HH^q((\Idelta^{p,\bullet}_V[-n])_x)\cong \IH^{\BM}_{p,n-q}(N,Q)$ (see \eqref{eq:stalkofsheaf}). 
    Since $N$ has no singular locus, we have $\IH^{\BM}_{p,n-q}(N,Q)\cong \HH^{\BM}_{p,n-q}(N,Q)$. 
    Furthermore, $\HH^{\BM}_{p,n-q}(N,Q)\cong \HH^{p,q}(N,Q)$ by the Poincar\'e duality for tropical manifolds. 
    Thus $\HH^q((\Idelta^{p,\bullet}_V[-n])_x)\cong \HH^{p,q}(N,Q)=0$ for $q<0$. 
    Then $\HH^q(\Idelta^{p,\bullet}_V[-n])=0$ for $q<0$ in the case $l=0$. 

    Assume that $\HH^q(\Idelta^{p,\bullet}_V[-n])=0$ for $q<0$ if $l\leq k$. 
    Let $l=k+1$. 
    For $x\in V^{n-1}$ and a distinguished neighborhood $N\cong \R^{i}\times cL$ of $x$ for $\XX^{\CC_V}_V$, let $N'$ denote $\R^{i}\times(cL\smallsetminus \{v\})$. 
    (Here we identify $\R^{i}\times(cL\smallsetminus \{v\})$ with the image of the stratified homeomorphism as in Lemma \ref{lem:prodgenconeBM}.) 
    By Lemma \ref{lem:prodgenconeBM}, $\HH^q(\Idelta^{p,\bullet}_V[-n]_x)\cong \IH^{\BM}_{p,n-q}(N,Q)$ is isomorphic to zero or $\IH_{p,n-q}^{\BM}(N',Q)$. 
    By the assumption of the induction, $\HH^q(\Idelta^{p,\bullet}_{N'}[-n])=0$ for $q<0$. 
    Then $\Hyper^{q}(\Idelta^{p,\bullet}_{N'}[-n])=0$ for $q<0$. 
    Applying Proposition \ref{prop:homofine} to $\Idelta^{p,\bullet}_{N'}[-n]$,  
    we obtain $\HH^{q}(\Gamma(\Idelta^{p,\bullet}_{N'}[-n]))\cong \Hyper^{q}(\Idelta^{p,\bullet}_{N'}[-n])$. 
    Therefore,  $\IH_{p,n-q}^{\BM}(N',Q)\cong\HH^{q}(\Idelta^{p,\bullet}_{N'}[-n])=0$ for $q<0$. 
    Thus $\HH^q(\Idelta^{p,\bullet}_V[-n]_x)=0$ for $q<0$. 
    As a result, we obtain that $\HH^q(\Idelta^{p,\bullet}_V[-n])=0$ for $q<0$. 
    This completes the inductive proof of $\HH^q(\Idelta^{p,\bullet}_V[-n])=0$ for $q<0$. 
\end{proof}
Let $\Phi$ be a paracompactifying family of supports. 
By Lemma \ref{lem:boundsheaf}, $\Idelta^{p,\bullet}_X$ is quasi-isomorphic to a sheaf complex in $\DD^+(Q_X)$. 
By Proposition \ref{prop:homofine}, 
there exists an isomorphism $\IH_{p,q}^\Phi(X,Q)\cong \Hyper^{-q}_{\Phi}(\Idelta^{p,\bullet}_X)$. 
As in \textsection\ref{subsec:sheaf}, we have a quasi-isomorphism $\Idelta^{p,\bullet}_X|_U\to \Idelta^{p,\bullet}_U$ for any open subset $U\subset X$. 
Thus we obtain $\Hyper^{-q}(\Idelta^{p,\bullet}_X|_U)\cong \Hyper^{-q}(\Idelta^{p,\bullet}_U)\cong \IH_{p,q}(U,Q)$.

\begin{prop}\label{prop:axidelta}
    Let $(X,\XX)$ be an $n$-dimensional filtered rational polyhedral space with a face structure. 
    Suppose that $(X,X_n)$ is an $n$-dimensional tropical manifold with singularities and $\XX$ satisfies $(C)$.  
    Let $\perv$ be a perversity  on $(X,\XX)$. 
    Then the sheaf $\Idelta^{p,\bullet}_X[-n]$ satisfies the axiom $(AX)^{p}_{\perv,\XX}$. 
\end{prop}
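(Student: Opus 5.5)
We verify the three conditions of Definition \ref{def:ax2} for $\mathcal{A}^\bullet=\Idelta^{p,\bullet}_X[-n]$. All three are local, and $\Idelta^{p,\bullet}_X|_V\cong\Idelta^{p,\bullet}_V$ in $\DD(Q_V)$ for open $V$. By Condition $(C)$ we may therefore replace $X$ by an open set $V$ with $\XX|_V=\mathfrak{V}^{U,\CC_V}_V$, i.e.\ assume $(C)'$ holds. For $x\in S$ with $S\subset X_k$, choose a distinguished neighborhood $N\cong\R^{i}\times cL$ for $\XX^{\CC}_X$. The stalk computation \eqref{eq:stalkofsheaf} gives
\[\HH^q(\mathcal{A}^\bullet_x)\cong \IH^{\BM}_{p,n-q}(N,Q),\]
and this is the basic tool for every step.

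\ref{itm:ax1}: Boundedness and vanishing in negative degrees are exactly Lemma \ref{lem:boundsheaf}. For the restriction to $X_n$, note that on $X_n$ there are no singular strata, so $\Idelta^{p,\bullet}_{X_n}=\Delta^{p,\bullet}$. This is because every simplex is allowable (the perversity is $0$ on regular strata) and $\hat\partial=\partial$ since $\S_{X_n}=\emptyset$. By the discussion at the end of \textsection\ref{subsec:sheaf}, $\Delta^{p,\bullet}\cong\Homs(\Omega^p_{X_n},\Delta^{0,\bullet})$. Since $X_n$ is a tropical manifold and $\Delta^{0,\bullet}[-n]$ is the dualizing complex shifted to degree $0$ (orientation is trivialized there), the tropical Poincaré–Verdier duality of \cite{Gross_2023} identifies $\Homs(\Omega^p_{X_n},\Delta^{0,\bullet})[-n]$ with $\Omega^{n-p}_{X_n}$. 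This is where the restriction that $Q$ be $\ZZ$ or a field is used. I expect this identification, and making it compatible with our singular-chain model, to be the main obstacle; I would import it directly from \cite[Theorem 4.20 and the duality theorem]{Gross_2023}.

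\ref{itm:ax2}: If $S$ is regular there is nothing beyond (AX1), since $\Omega^{n-p}$ is a sheaf in degree $0$ and $\perv(S)=0$. If $S$ is singular, Lemma \ref{lem:prodgenconeBM} (with the appropriate codimension) gives $\IH^{\BM}_{p,n-q}(N,Q)=0$ whenever $n-q<n-\perv(S)$, that is, whenever $q>\perv(S)$. Hence $\HH^q(\mathcal{A}^\bullet_x)=0$ for $q>\perv(S)$.

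\ref{itm:ax3}: I use the reformulation \eqref{eq:3cond1}. We must show that the map $\HH^q(\mathcal{A}^\bullet_x)\to\varinjlim_\epsilon\Hyper^q(\mathcal{A}^\bullet|_{N'_\epsilon})$ is an isomorphism for $q\le\perv(S)$. Since $N_\epsilon$ and $N'_\epsilon$ are again of the form $\R^{i}\times cL$ and $\R^{i}\times(cL\smallsetminus\{v\})$, the source is $\IH^{\BM}_{p,n-q}(N_\epsilon)$. By Proposition \ref{prop:homofine}, which applies because of Lemma \ref{lem:boundsheaf}, together with the identification $\Hyper^{-q}(\Idelta_X|_{U})\cong$ Borel--Moore homology for closed supports, the target is $\IH^{\BM}_{p,n-q}(N'_\epsilon)$. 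One then checks, as in \textsection\ref{subsec:sheaf}, that the map induced by the attachment map agrees with the restriction map on Borel--Moore homology. For $q\le\perv(S)$ we have $n-q\ge n-\perv(S)$, so the last clause of Lemma \ref{lem:prodgenconeBM} says precisely that this restriction is an isomorphism. These isomorphisms are compatible as $\epsilon$ shrinks, so the direct limit is an isomorphism, and \ref{itm:ax3} follows.
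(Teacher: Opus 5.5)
Your proposal is correct and follows the paper's proof essentially step for step. You reduce to $(C)'$ by locality, get (AX1) from Lemma \ref{lem:boundsheaf} together with the Gross--Shokrieh identification $\Idelta^{p,\bullet}_{X_n}[-n]\cong\Omega^{n-p}_{X_n}$, and get (AX2) and (AX3) from the stalk formula $\HH^q(\mathcal{A}^\bullet_x)\cong\IH^{\BM}_{p,n-q}(N,Q)$ combined with Proposition \ref{prop:homofine} and Lemma \ref{lem:prodgenconeBM}. Two small remarks: treating regular strata in (AX2) via (AX1) is a welcome detail, since Lemma \ref{lem:prodgenconeBM} assumes singular strata; and your parenthetical about a ``trivialized orientation'' is inaccurate, because on a tropical manifold $\Delta^{0,\bullet}[-n]$ corresponds to $\Omega^n_{X_n}$, which need not have rank one, though this is harmless since you import the duality from \cite{Gross_2023}.
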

\begin{proof}
    We omit details (see \cite[Proposition 3.7]{Friedman_2010}). 
    By \cite[Theorem 6.7 and Proposition A.9]{Gross_2023} (which also holds over a field), there exists an isomorphism $\Omega^{n-p}_{X_n}\cong \Idelta^{p,\bullet}_{X_n}[-n]$ in $\DD(Q_{X_n})$. 
    Moreover, by composing this with the quasi-isomorphism $\Idelta^{p,\bullet}_X|_{X_n}\cong \Idelta^{p,\bullet}_{X_n}$ (see the paragraph above Definition \ref{def:pformsheaf}), there exists an isomorphism $\Idelta^{p,\bullet}_X|_{X_n}[-n]\cong \Omega^{n-p}_{X_n}$ in $\DD(Q_{X_n})$. 
    Combining this with Lemma \ref{lem:boundsheaf}, we see that $\Idelta^{p,\bullet}_X[-n]$ satisfies the condition \ref{itm:ax1}. 

    As in Lemma \ref{lem:boundsheaf}, we may assume that $\XX=\XX^{X_{n},\CC}_X$ for some $\XX$-stratified open face structure $\CC$. 
    For a point $x\in X$, let $N$ be a distinguished neighborhood of $x$ for $\XX^{\CC}_X$. 
    Then $\HH^q((\Idelta^{p,\bullet}_X[-n])_x)\cong \IH_{p,n-q}^{\BM}(N,Q)$ (see \eqref{eq:stalkofsheaf}). 
    Therefore, the condition \ref{itm:ax2} is verified by Lemma \ref{lem:prodgenconeBM}. 

    For a distinguished neighborhood $N$ of $X$ of $x$ for $\XX^{\CC}_X$, let $N\cong \R^i\times cL$.
    Since the direct system in \eqref{eq:3cond1} for $\Idelta^{p,\bullet}_X[-n]$ is constant, 
    by \eqref{eq:3cond1} and Proposition \ref{prop:homofine}, the condition \ref{itm:ax3} for a point $x\in X$ is equivalent to $\HH^q((\Idelta^{p,\bullet}_X[-n])_x)$$\cong \IH_{p,n-q}^{\BM}(\R^i\times cL\smallsetminus \{v\},Q)$. 
    Since $\HH^q((\Idelta^{p,\bullet}_X[-n])_x)\cong \IH_{p,n-q}^{\BM}(N,Q)$, by Lemma \ref{lem:prodgenconeBM}, $\Idelta^{p,\bullet}_X$ satisfies the condition \ref{itm:ax3}. 
\end{proof}
\begin{rem}
    In Proposition \ref{prop:axidelta}, we assume that the existence of a face structure (not just an open face structure) on $X$ to apply \cite[Proposition A.9]{Gross_2023}. 
\end{rem}
\begin{prop}\label{prop:axdeligne}
    Let $(X,\XX)$ be an $n$-dimensional filtered rational polyhedral space with a face structure. 
    Suppose that $(X,X_n)$ is an $n$-dimensional tropical manifold with singularities. 
    Let $\perv$ be a perversity  on $(X,\XX)$. 
    Let $\Q^{\bullet}_{\perv,p}$ be the tropical Deligne sheaf defined in \eqref{eq:Delignesheaf}. 
    Any sheaf complex on $X$ satisfying the axiom $(AX)^{p}_{\perv,\XX}$ is isomorphic in $\DD(Q_X)$ to the tropical Deligne sheaf $\Q^{\bullet}_{\perv,p}$ in \eqref{eq:Delignesheaf}. 
\end{prop}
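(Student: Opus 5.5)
The plan is to follow the classical uniqueness argument for Deligne sheaves, in the generalized form of \cite[Proposition 3.8]{Friedman_2010}. We argue by induction on $k$ over the open sets $U_k=X\smallsetminus X^{n-k}$. Let $\mathcal{A}^\bullet$ satisfy $(AX)^{p}_{\perv,\XX}$. Write $\Q_k^\bullet$ for the partial Deligne sheaf
\[
\tau^{X_{n-k+1}}_{\leq \perv}R{i_{k-1}}_*\cdots\tau^{X_{n-1}}_{\leq\perv}R{i_1}_*\Omega^{n-p}_{X_n}
\]
on $U_k$, so that $\Q_1^\bullet=\Omega^{n-p}_{X_n}$ and $\Q_{n+1}^\bullet=\Q^\bullet_{\perv,p}$. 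We prove by induction that $\mathcal{A}^\bullet|_{U_k}\cong\Q_k^\bullet$ in $\DD(Q_{U_k})$ for every $k$. The base case $k=1$ is exactly the last part of \ref{itm:ax1}, since $U_1=X_n$.

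For the inductive step, assume $\mathcal{A}^\bullet|_{U_k}\cong\Q_k^\bullet$. Applying $R{i_k}_*$ and composing with the attachment map $\alpha_k$ gives a morphism
\[
\mathcal{A}^\bullet|_{U_{k+1}}\to R{i_k}_*\mathcal{A}^\bullet|_{U_k}\cong R{i_k}_*\Q_k^\bullet .
\]
By \ref{itm:ax2}, the stalk cohomology of $\mathcal{A}^\bullet|_{U_{k+1}}$ at a point $x$ of a stratum $S\subset X_{n-k}$ vanishes above $\perv(S)$. On $U_k$ the truncation $\tau^{X_{n-k}}_{\leq\perv}$ does nothing. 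Together these give that $\mathcal{A}^\bullet|_{U_{k+1}}\to\tau^{X_{n-k}}_{\leq\perv}\mathcal{A}^\bullet|_{U_{k+1}}$ is a quasi-isomorphism. We then use functoriality of the generalized truncation, which is a functor on $\DD(Q_{U_{k+1}})$ with a natural map $\tau^{\mathcal{F}}_{\leq\perv}\mathcal{B}^\bullet\to\mathcal{B}^\bullet$, as in \cite[\S 3]{Friedman_2010}. This produces a morphism
\[
\mathcal{A}^\bullet|_{U_{k+1}}\cong\tau^{X_{n-k}}_{\leq\perv}\mathcal{A}^\bullet|_{U_{k+1}}\to\tau^{X_{n-k}}_{\leq\perv}R{i_k}_*\Q_k^\bullet=\Q_{k+1}^\bullet .
\]

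To show this morphism is a quasi-isomorphism we check stalks. At points of $U_k$ it restricts to the inductive isomorphism. At a point $x$ of a stratum $S\subset X_{n-k}$, the stalk of $\tau^{X_{n-k}}_{\leq\perv}\mathcal{B}^\bullet$ has cohomology $\HH^i(\mathcal{B}^\bullet_x)$ for $i\leq\perv(S)$ and $0$ otherwise. This is the local computation of generalized truncation in \cite[Lemma 3.3]{Friedman_2010}; it uses local finiteness of the strata and the fact that $S$ is the only stratum of $X_{n-k}$ meeting a small neighbourhood of $x$. For $i\leq\perv(S)$, axiom \ref{itm:ax3} says precisely that $\alpha_k$ induces $\HH^i(\mathcal{A}^\bullet_x)\cong\HH^i((R{i_k}_*\mathcal{A}^\bullet|_{U_k})_x)$. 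For $i>\perv(S)$, both sides vanish by \ref{itm:ax2}. Hence the morphism is an isomorphism on all stalks, and so an isomorphism in $\DD(Q_{U_{k+1}})$. Taking $k=n$ gives $\mathcal{A}^\bullet\cong\Q^\bullet_{\perv,p}$.

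The main obstacle is the generalized truncation. Unlike the usual $\tau_{\leq m}$, it is defined by sheafifying a presheaf whose truncation degree depends on the open set. So one must verify that it descends to a functor on $\DD(Q_{U_{k+1}})$ with the stated stalk cohomology and the natural map to the identity. This is also the step where one must check that the morphism above is well defined, since $\mathcal{A}^\bullet|_{U_{k+1}}$ is only known to have truncated stalks and must be compared with its truncation. The plan here is to replace $\mathcal{A}^\bullet$ by a bounded (by \ref{itm:ax1}) injective complex and quote \cite[Lemmas 3.3--3.5]{Friedman_2010}, whose proofs only use local finiteness of the collection of strata. This holds here because $\XX$ is a locally finite filtration of the rational polyhedral space by closed sets, and $X$ is a paracompact CS set by Proposition \ref{prop:tropfaceCS}. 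The tropical input, namely $\Omega^{n-p}_{X_n}$ in place of the constant sheaf, enters only in the base case, so the rest of the argument is formally identical to \cite[Proposition 3.8]{Friedman_2010}.
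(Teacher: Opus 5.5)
Your proposal is correct and follows essentially the same argument as the paper, which also adapts \cite[Proposition 3.8]{Friedman_2010}. The paper shows stalkwise that $\tau_k\mathcal{A}^{\bullet}_{k+1}\to\mathcal{A}^{\bullet}_{k+1}$ is a quasi-isomorphism by \ref{itm:ax2} and $\tau_k\alpha_k$ is one by \ref{itm:ax3}, then deduces $\mathcal{A}^{\bullet}_{k+1}\cong\tau_kR{i_k}_*\mathcal{A}^{\bullet}_k$ and iterates from $\mathcal{A}^{\bullet}_1\cong\Omega^{n-p}_{X_n}$; one small slip in your version is that for $i>\perv(S)$ the stalk cohomology of $\tau^{X_{n-k}}_{\leq\perv}R{i_k}_*\Q^{\bullet}_k$ vanishes because of the truncation itself, not by \ref{itm:ax2}.
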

\begin{proof}
    We apply the  argument  of \cite[Proposition 3.8]{Friedman_2010} to $\XX$.  
    Let $\mathcal{A}^{\bullet}$ be any sheaf complex satisfying the axiom $(AX)^{p}_{\perv,\XX}$. 
    We denote $\mathcal{A}^{\bullet}|_{U_k}$ and $\tau^{X_{n-k}}_{\leq \perv}$ by $\mathcal{A}^{\bullet}_k$ and $\tau_k$, respectively. 
    By the condition \ref{itm:ax1} of the axiom $(AX)^{p}_{\perv,\XX}$, we have $\mathcal{A}^{\bullet}_1\cong\Omega^{n-p}_{X_n}$. 
    We have the following commutative diagram: 
    \begin{equation}\label{eq:comtrun}
        \begin{tikzcd}
        \tau_{k}\mathcal{A}^{\bullet}_{k+1} \arrow[r,"\tau_{k}\alpha_k"] \arrow[d] & \tau_{k}R{i_k}_*\mathcal{A}^{\bullet}_k \arrow[d]  \\
        \mathcal{A}^{\bullet}_{k+1} \arrow[r,"\alpha_k"] & R{i_k}_*\mathcal{A}^{\bullet}_k.
        \end{tikzcd}
    \end{equation}

    For $x\in U_k$, there exists a neighborhood $V$ of $x$ such that $V$ do not intersect any strata in $X_{n-k}$. 
    Thus the left vertical map of \eqref{eq:comtrun} is an isomorphism on the stalk of $x\in U_k$. 
    Let $x\in S$ for a stratum $S\subset X_{n-k}$. 
    By the condition \ref{itm:ax2} of the axiom $(AX)^{p}_{\perv,\XX}$, we have $\HH^i(\mathcal{A}^{\bullet}_x)=0$ for $i>\perv(S)$. 
    Since $\tau_{k}\mathcal{A}^{\bullet}_{k+1}$ agrees with $\tau_{\leq \perv(S)}\mathcal{A}^{\bullet}_{k+1}$ in a neighborhood of $x$,  
    the left vertical map of \eqref{eq:comtrun} is a quasi-isomorphism on the stalk of $x$. 
    As a result, the left vertical map of \eqref{eq:comtrun} is a quasi-isomorphism on the stalk of each point of $U_{k+1}$, and  
    therefore this is a quasi-isomorphism as a sheaf complex on $U_{k+1}$.    

    For $x\in U_k$, $\alpha_k$ is a quasi-isomorphism on the stalk of $x$. 
    Thus the upper horizontal map of \eqref{eq:comtrun} is an isomorphism on the stalk of $x\in U_k$. 
    Let $x\in S$ for a stratum $S\subset X_{n-k}$. 
    By the condition \ref{itm:ax3} of the axiom $(AX)^{p}_{\perv,\XX}$, $\HH^i((\mathcal{A}^{\bullet}|_{U_{k+1}})_x)\cong \HH^i((R{i_k}_*\mathcal{A}^{\bullet}|_{U_k})_x)$ if $\perv(S)\geq i$. 
    Then $\tau_{k}\mathcal{A}^{\bullet}_{k+1}$ and $\tau_{k}R{i_k}_*\mathcal{A}^{\bullet}_k$ agree with $\tau_{\leq \perv(S)}\mathcal{A}^{\bullet}_{k+1}$ and $\tau_{\leq \perv(S)}R{i_k}_*\mathcal{A}^{\bullet}_k$ in a neighborhood of $x$, respectively.  
    It follows that the upper horizontal map $\tau_{k}\alpha_k$ of \eqref{eq:comtrun} is also a quasi-isomorphism on the stalk of each point of $U_{k+1}$. 
    Hence we obtain that $\tau_{k}\alpha_k$ is a quasi-isomorphism. 

    Thus we obtain $\mathcal{A}^{\bullet}_{k+1}\cong \tau_{k}R{i_k}_*\mathcal{A}^{\bullet}_k$. 
    By repeatedly applying this isomorphism, we obtain $\mathcal{A}^{\bullet}=\mathcal{A}^{\bullet}_{n+1}\cong \tau_n R{i_n}_*\ldots\tau_1 R{i_1}_*\Omega^{n-p}_{X_n}\cong \Q^{\bullet}_{\perv,p}$ in $\DD(Q_X)$. 
\end{proof}
\begin{corollary}\label{cor:uniq}
    Let $(X,\XX)$ be an $n$-dimensional filtered rational polyhedral space with a face structure. 
    Suppose that $(X,X_n)$ is an $n$-dimensional tropical manifold with singularities and $\XX$ satisfies $(C)$.  
    Let $\perv$ be a perversity  on $(X,\XX)$. 
    Let $\Q^{\bullet}_{\perv,p}$ be the tropical Deligne sheaf defined in \eqref{eq:Delignesheaf}. 
    There is an isomorphism  $\Idelta^{p,\bullet}_X[-n]\cong \Q^{\bullet}_{\perv,p}$ in $\DD(Q_X)$.
\end{corollary}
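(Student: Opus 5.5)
This is a direct combination of Proposition \ref{prop:axidelta} and Proposition \ref{prop:axdeligne}.

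First, the hypotheses of the corollary are exactly those of Proposition \ref{prop:axidelta}:
\begin{itemize}
\item $(X,\XX)$ is an $n$-dimensional filtered rational polyhedral space with a face structure;
\item $(X,X_n)$ is an $n$-dimensional tropical manifold with singularities;
\item $\XX$ satisfies $(C)$;
\item $\perv$ is a perversity on $(X,\XX)$.
\end{itemize}
So we invoke Proposition \ref{prop:axidelta} to conclude that the shifted chain sheaf $\Idelta^{p,\bullet}_X[-n]$ satisfies the axiom $(AX)^{p}_{\perv,\XX}$. In particular, condition \ref{itm:ax1} makes $\Idelta^{p,\bullet}_X[-n]$ isomorphic in $\DD(Q_X)$ to a bounded complex. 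Its restriction to $X_n$ is identified with $\Omega^{n-p}_{X_n}$ via \cite[Theorem 6.7 and Proposition A.9]{Gross_2023}, and this is where the face structure (rather than merely an open face structure) is used.

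Next, we apply Proposition \ref{prop:axdeligne} with $\mathcal{A}^\bullet=\Idelta^{p,\bullet}_X[-n]$. Its hypotheses are a subset of ours; condition $(C)$ is not needed there. It says that any sheaf complex satisfying $(AX)^{p}_{\perv,\XX}$ is isomorphic in $\DD(Q_X)$ to
\[
\Q^{\bullet}_{\perv,p}=\tau^{X_0}_{\leq \perv}R{i_n}_*\cdots \tau^{X_{n-1}}_{\leq \perv}R{i_1}_*\Omega^{n-p}_{X_n}.
\]
This yields $\Idelta^{p,\bullet}_X[-n]\cong \Q^{\bullet}_{\perv,p}$.

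The inductive identification $\mathcal{A}^\bullet_{k+1}\cong\tau_k R{i_k}_*\mathcal{A}^\bullet_k$ is carried out inside the proof of Proposition \ref{prop:axdeligne}, so there is no real obstacle at the level of this corollary. The only point to watch is consistency of the identification on the regular part. The isomorphism $\mathcal{A}^\bullet_1\cong\Omega^{n-p}_{X_n}$ used to start the induction in Proposition \ref{prop:axdeligne} must be the one supplied by \ref{itm:ax1} for $\Idelta^{p,\bullet}_X[-n]$. Since the induction only requires some isomorphism in $\DD(Q_{X_n})$, fixing the one from the proof of Proposition \ref{prop:axidelta} and propagating it through the commutative diagram \eqref{eq:comtrun} suffices.
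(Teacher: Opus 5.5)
Your proposal is correct and matches the paper's proof exactly: Proposition~\ref{prop:axidelta} shows that $\Idelta^{p,\bullet}_X[-n]$ satisfies $(AX)^{p}_{\perv,\XX}$, and Proposition~\ref{prop:axdeligne} identifies any such complex with $\Q^{\bullet}_{\perv,p}$ in $\DD(Q_X)$. Your side remarks are consistent with the paper: condition $(C)$ is only needed for the first step, the face structure enters through \cite{Gross_2023}, and any isomorphism on $X_n$ suffices to start the induction.
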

\begin{proof}
    By Propositions \ref{prop:axidelta} and \ref{prop:axdeligne}, this statement follows. 
\end{proof}

In this section, we only consider the filtration satisfying $(C)$. 
The tropical intersection homology for the filtration that does not satisfy $(C)$ can be described as the tropical intersection homology for the filtration satisfying $(C)$. 
Although we will not use this fact, we provide a sketch of the proof in the following remark. 
\begin{rem}
    Let $(X,\XX)$ be an $n$-dimensional filtered rational polyhedral space with a face structure $\CC$ such that $(X,X_n)$ is a tropical manifold with singularities.  
    We do not assume that $\XX$ satisfies $(C)$. 
    Then $\XX^{X_n,\CC}_X$ satisfies $(C)$ and $(X,\XX^{X_n,\CC}_X)$ is a tropical manifold with singularities. 
    For a stratum $S$ of $\XX^{X_n,\CC}_X$, we denote by $S_{\XX}$ the stratum of $\XX$ containing $S$. 
    Let $\perv$ be a perversity on $(X,\XX)$. 
    We define a perversity $\perv_{{X_n,\CC}}$ on $(X,\XX^{X_n,\CC}_X)$ by $\perv_{{X_n,\CC}}(S)\colonequals \perv(S_{\XX})+\dim(S_{\XX})-\dim(S)$ for a singular stratum $S$ of $\XX^{X_n,\CC}_X$ and  
    $\perv_{{X_n,\CC}}(S)\colonequals 0$ for a regular stratum $S$ of $\XX^{X_n,\CC}_X$. 
    In this setting, $\Idelta^{p,\bullet}_X$ satisfies $(AX)^{p}_{\perv_{{X_n,\CC}},\XX^{X_n,\CC}_X}$.    
    Therefore, the tropical intersection homology for $\XX$ and $\perv$ is isomorphic to that for $\XX^{X_n,\CC}_X$ and $\perv_{{X_n,\CC}}$ by Corollary \ref{cor:uniq}: 
    $$\IH_{p,q}(X,\XX,Q)\cong  \mathrm{I}^{\perv_{{X_n,\CC}}}\mathrm{H}_{p,q}(X,\XX^{X_n,\CC}_X,Q).$$
\end{rem}

\subsection{Poincar\'e duality}
We introduce the dual perversity for a perversity $\perv$. 
\begin{defn}[dual perversity]
    For a perversity $\perv$, we define the \emph{dual perversity} $\perd$ to be $\perd(S)\colonequals\codim(S)-2-\perv(S)$ for $S\in \SS$.
\end{defn}
Recall that $\DD_X\colon \DD^b(Q_X)\to \DD^b(Q_X)$ is the Verdier dualizing functor (see \textsection\ref{subsec:prelim}). 
Since any tropical Deligne sheaf is isomorphic to a bounded complex in $\DD(Q_X)$, we can consider the Verdier dual of a tropical Deligne sheaf. 

\begin{lemma}\label{lem:prodgencone2}
    Let $(N,\XX_N)$ be an $n$-dimensional filtered rational polyhedral space with an open face structure $\CC_N$ such that $(N,N_n^{\XX_N})$ is a tropical manifold with singularities. 
    Let $k\in\ZZ$ be $0\leq k \leq n$. 
    Suppose that $\XX_N$ has singular strata,  $\XX_N$ satisfies $(C)'$, 
    and $(N,\XX^{\CC_N}_N)$ is stratified-homeomorphic to $\R^{n-k}\times cL$ for some compact $(k-1)$-dimensional filtered space $L$.  
    Then the following holds:
    \begin{align*}
        \IH_{p,q}(N,Q)\cong
        \begin{cases}
            0  &\text{if } q \geq k-\perv(\R^{n-k}\times\{v\})-1,\\
            \IH_{p,q}(\R^{n-k}\times(cL\smallsetminus \{v\}),Q) &\text{if } q < k-\perv(\R^{n-k}\times\{v\})-1, 
        \end{cases}
    \end{align*}
    where for simplicity, we identify the subset of $X$ with the image of the stratified isomorphism. 
    Furthermore, the isomorphism in the case $q \geq n-\perv(\R^{n-k}\times\{v\})$ is induced by 
    the inclusion $\R^{n-k}\times (cL\smallsetminus \{v\})\hookrightarrow\R^{n-k}\times cL$.  
\end{lemma}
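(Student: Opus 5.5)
The proof will be a product version of the non-$\GM$ cone formula (Proposition \ref{prop:conenon}, or its general form Lemma \ref{lem:gencone}), in the same way that Lemma \ref{lem:prodgenconeBM} is a product version of Proposition \ref{prop:conebm}. The idea is to reduce the computation on $N\cong \R^{n-k}\times cL$ to a computation on the cone factor $cL$ alone, using the primed complexes ${}\IC'_{p,\bullet}$ from \textsection\ref{subsec:otherprop}.

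First, apply Lemma \ref{lem:projiso}. It gives a quasi-isomorphism $\imath\colon \IC'_{p,\bullet}(cL,Q)\to \IC_{p,\bullet}(N,Q)$, so $\IH_{p,q}(N,Q)\cong \IH'_{p,q}(cL,\CC_{cL},Q)$.

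The same prism-operator argument works on the open subset $cL\smallsetminus\{v\}$. There $\R^{n-k}\times(cL\smallsetminus\{v\})$ is again a product, and the chains $(0,\tau)$ together with the projection $\jmat$ restrict to it. This yields $\IH_{p,q}(\R^{n-k}\times(cL\smallsetminus\{v\}),Q)\cong \IH'_{p,q}(cL\smallsetminus\{v\},\CC_{cL}|_{cL\smallsetminus\{v\}},Q)$. Both maps are compatible with the inclusions: the squares commute on the nose, since $\imath$ and $\jmat$ commute with restriction to the open subset.

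Next, compute $\IH'_{p,q}(cL,\CC_{cL},Q)$ with the non-$\GM$ part of Lemma \ref{lem:prodgencone}, taking $\CC'=\CC$. Its hypothesis that the singular locus of $cL$ is nonempty holds because $\XX_N$ has singular strata. Condition $(C)'$ enters here: it ensures that the stratum $S$ of $\XX_{cL}$ containing $v$ is $\{v\}$ itself. Indeed, $\XX_N=\XX^{N_n,\CC_N}_N$, so every singular stratum is a union of cells of $\XX^{\CC_N}_N$, and $\R^{n-k}\times\{v\}$ is then a singular stratum of $\XX_N$ of formal dimension $n-k$. Hence its formal codimension in $cL$ is $k$, and the induced perversity is $\perv(\R^{n-k}\times\{v\})$. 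Lemma \ref{lem:prodgencone} then gives vanishing for $q\ge k-\perv(\R^{n-k}\times\{v\})-1$. Below that threshold it gives an isomorphism with $\IH'_{p,q}(cL\smallsetminus\{v\})$.

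To check that the isomorphism is induced by inclusion, recall how it arises in the proof of Proposition \ref{prop:conenon}. Below the threshold no allowable $q$- or $(q+1)$-simplex meets $v$, so the chain complexes agree in degrees $q$ and $q+1$. The isomorphism is therefore literally induced by the inclusion of chain complexes. Transporting it through the commuting squares of the first step gives the statement. (The ``furthermore'' clause as printed reads $q\ge n-\perv$; I would prove it for the nonvanishing range $q<k-\perv-1$, where it is meaningful.)

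The main obstacle is the perversity and codimension bookkeeping for the induced filtration on $cL$. Specifically, one must verify that the allowability conditions for simplices $(0,\tau)$ in $N$ coincide with those for $\tau$ in $cL$: the codimension of $\R^{n-k}\times T$ in $N$ must equal that of $T$ in the $k$-dimensional $cL$. One must also check that the cone operator $\bar c$ preserves allowability with the threshold $k-\perv-1$ rather than $n-\perv-1$. I would handle both by checking the skeleton condition \eqref{eq:allowablecond} directly, using that $\dim$ and $\codim$ shift uniformly by $n-k$ under the product.
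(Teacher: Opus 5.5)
Your proposal is correct and follows the paper's own proof. You reduce both $N$ and $\R^{n-k}\times(cL\smallsetminus\{v\})$ to the cone factor via Lemma~\ref{lem:projiso} and its prism-operator argument, use $(C)'$ to see that the stratum through $\R^{n-k}\times\{v\}$ has formal codimension $k$, and conclude with the non-$\GM$ part of Lemma~\ref{lem:prodgencone}. Your extra points are sound refinements of the paper's brief sketch: the squares commute with the inclusions, and the ``furthermore'' clause should be read for the range $q<k-\perv(\R^{n-k}\times\{v\})-1$, which is exactly the range used later in the proof of Theorem~\ref{thm:gencomppoin}.
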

\begin{proof}
    As in \textsection\ref{subsec:otherprop}, we define $\IH'_{p,q}(cL,Q)$ and $\IH'_{p,q}(cL\smallsetminus \{v\},Q)$. 
    By Lemma \ref{lem:projiso}, we obtain $\IH_{p,q}(N,Q)\cong \IH'_{p,q}(cL,Q)$. 
    Since the argument in the proof of Lemma \ref{lem:projiso} also applies to $ \IH_{p,q}(\R^{n-k}\times(cL\smallsetminus \{v\}),Q)$, 
    we have $ \IH'_{p,q}(cL\smallsetminus \{v\},Q)\cong \IH_{p,q}(\R^{n-k}\times(cL\smallsetminus \{v\}),Q)$. 
    Since we assume that $\XX_N$ satisfies $(C)'$, the codimension of the stratum containing $\R^{n-k}\times \{v\}$ is $k$. 
    Putting these together, we obtain the statement by Proposition \ref{lem:prodgencone}. 
\end{proof}
\begin{lemma}\label{lem:constsystem}
    Let $(N,\XX_N)$ be an $n$-dimensional filtered rational polyhedral space with an open face structure $\CC_N$. 
    Suppose that $\XX_N$ has singular strata, $\XX_N$ satisfies $(C)'$, and $(N,\XX^{\CC_N}_N)$ is stratified-homeomorphic to $\R^{n-k}\times cL$ for some compact $(k-1)$-dimensional filtered space $L$.  
    For $r>0$, we consider $N_{r},N'_{r}\subset X$ (for the definition, see the paragraph below Remark \ref{rem:delignetheory}). 
    Let $r>r'>0$. Then the natural maps $\IH_{p,q}(N_{r'},Q)\to \IH_{p,q}(N_{r},Q)$ and $\IH_{p,q}(N'_{r'},Q)\to \IH_{p,q}(N'_{r},Q)$ are isomorphisms. 
\end{lemma}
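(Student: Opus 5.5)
We treat the two families separately and show that each inclusion is a chain homotopy equivalence, using a radial rescaling in the spirit of Lemma \ref{lem:inclusion}.

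Identify $N$ with $\R^{n-k}\times cL$ and put $B^{n-k}_r=(-r,r)^{n-k}$. Then
\[
N_r=B^{n-k}_r\times L_{[0,r)}, \qquad N'_r=B^{n-k}_r\times L_{(0,r)}.
\]
Fix an increasing homeomorphism $\lambda\colon(-r,r)\to(-r',r')$ that is the identity near $0$, and an increasing homeomorphism $\mu\colon[0,r)\to[0,r')$ fixing $0$. Define
\[
h=(\lambda^{n-k},\mu)\colon N_r\to N_{r'},\qquad (x,(t,y))\mapsto(\lambda(x_1),\dots,\lambda(x_{n-k}),(\mu(t),y)).
\]
This map preserves each product piece $\R^{n-k}\times(\text{stratum of }cL)$ and sends $v$ to $v$. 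It therefore preserves the strata of $\XX^{\CC_N}_N$, and hence those of $\XX_N$, because under $(C)'$ the filtration $\XX_N$ is a coarsening of $\XX^{\CC_N}_N$. It also restricts to a homeomorphism $N'_r\to N'_{r'}$.

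As in the proof of Lemma \ref{lem:inclusion}, we define $j$ by composing simplices with $h$. The key point is to check that $j$ takes $\perv$-allowable chains to $\perv$-allowable chains. Since $h$ is a stratified homeomorphism, $(h\circ\sigma)^{-1}(S)=\sigma^{-1}(S)$ for every stratum $S$, so the allowability condition \eqref{eq:allowablecond} is unchanged.

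One difficulty is that $h$ need not be piecewise affine, so $h\circ\sigma$ may fail to be $\CC_N$-stratified in the sense required. To handle this, we choose $\lambda$ and $\mu$ piecewise linear, with breakpoints at levels compatible with the conical structure. By Proposition \ref{prop:indep}, we may replace $\CC_N$ by a refinement in which $h$ maps cells into cells and respects the multi-tangents ${\bf F}^Q_p$. Indeed, ${\bf F}^Q_p$ depends only on the cell of $\CC$ containing the stratified simplex, and that cell is preserved when we use the product structure. As an alternative, we can work on the cone factor alone: the computation passes through ${}_{(\GM)}\IC'$ on $cL$ via Lemma \ref{lem:projiso}, and on the $\R^{n-k}$ factor we use the contraction to the single simplex $0$.

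Next, the compositions $i\circ j$ and $j\circ i$ are given by $h$ and by its restriction, viewed inside the larger set. Each is stratified-homotopic to the identity through the straight-line homotopy $(1-s)\,\mathrm{id}+s\,h$, taken coordinatewise in the product and cone coordinates. This homotopy stays inside $N_r$ (respectively $N'_r$) and stays inside strata. The prism operator of \cite[Propositions 4.1.10 and 6.3.7]{Friedman_2020} then gives chain homotopies between allowable chains, exactly as in Lemma \ref{lem:projiso}. Consequently $i$ is a chain homotopy equivalence, and hence an isomorphism on $\IH_{p,q}$, for both $N$ and $N'$.

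As a cross-check, for $N_r$ Lemma \ref{lem:prodgencone2} computes both sides. For $N'_r=B^{n-k}_r\times L_{(0,r)}\cong\R^{n-k}\times\R\times L$, Lemma \ref{lem:projiso} together with its argument reduces both sides to the homology of $L$ with the induced structure. The only genuine obstacle is the compatibility of $h$ with the open face structure and the multi-tangents, and we handle it by the refinement and independence argument of Proposition \ref{prop:indep}.
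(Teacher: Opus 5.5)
Your proposal is correct and is essentially the paper's proof: the paper takes $j$ to be the linear rescaling $(\sigma_{B^i_r},\sigma_{[0,r)},\sigma_L)\mapsto(\frac{r'}{r}\sigma_{B^i_r},\frac{r'}{r}\sigma_{[0,r)},\sigma_L)$, which is essentially your $h$ with $\lambda$ and $\mu$ linear. It then cites the prism-operator argument of \cite[Propositions 4.1.10 and 6.3.7]{Friedman_2020} to get $i\circ j\simeq\id$ and $j\circ i\simeq\id$.

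The ``obstacle'' you isolate is not real, so you can drop the piecewise-linear/refinement detour via Proposition \ref{prop:indep}. The strata of $\XX^{\CC_N}_N$ are exactly the relative interiors of the cells of $\CC_N$. Since $h$ and the straight-line homotopy map each of these into itself, $h\circ\sigma$ and the prism simplices are automatically $\CC_N$-stratified, with the same multi-tangent coefficients.
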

\begin{proof}
    Since the proofs of the two isomorphisms are identical, we only show the first isomorphism. 
    Let $i$ be the natural map $\IC_{p,q}(N_{r'},Q)\to \IC_{p,q}(N_{r},Q)$. 
    We write a simplex $\sigma$ of $N_{r}\cong B^i_r\times L_{[0,r)}$ as $\sigma(x)=(\sigma_{B^i_r}(x),\sigma_{L_{[0,r)}}(x))$ (for details, see the first paragraph of the proof of Proposition \ref{prop:coneGM}). 
    Moreover, we write $\sigma_{L_{[0,r)}}(x)=(\sigma_{[0,r)}(x),\sigma_L(x))\in [0,r)\times L/ \{0\}\times L=L_{[0,r)}$. 
    Then we can write $\sigma=(\sigma_{B^i_r},\sigma_{[0,r)},\sigma_L)$. 
    We define $j\colon  \IC_{p,q}(N_{r},Q)\to \IC_{p,q}(N_{r'},Q)$ by $(\sigma_{B^i_r},\sigma_{[0,r)},\sigma_L)\mapsto (\frac{r'}{r}\sigma_{B^i_r},\frac{r'}{r}\sigma_{[0,r)},\sigma_L)$. 
    By the argument in \cite[Propositions 4.1.10 and 6.3.7]{Friedman_2020}, $i\circ j$ and $j\circ i$ are chain homotopic to identities. 
    Therefore, $i$ is a chain homotopy equivalence and we obtain the claim.   
\end{proof}
\begin{thm}\label{thm:gencomppoin}
    Assume that $Q$ is a field. Let $(X,\XX)$ be an  $n$-dimensional  filtered rational polyhedral space with a face structure. 
    Suppose that $(X,X_n)$ is an $n$-dimensional tropical manifold with singularities and $\XX$ satisfies $(C)$.  
    Let $\perv$ be a perversity  on $(X,\XX)$. 
    Let $\Q^{\bullet}_{\perv,p}$ be the tropical Deligne sheaf defined in \eqref{eq:Delignesheaf}. 
    Then $(\DD_X\Q^{\bullet}_{\perv,p})[-n]\cong \Q^{\bullet}_{\perd,n-p}$ holds in $\DD(Q_X)$. 
\end{thm}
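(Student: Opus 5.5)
The plan follows the classical strategy of \cite[Theorem 4.1]{Friedman_2010} (itself modeled on Goresky--MacPherson/Borel). By Proposition \ref{prop:axdeligne} it suffices to show that $(\DD_X\Q^{\bullet}_{\perv,p})[-n]$ satisfies the axiom $(AX)^{n-p}_{\perd,\XX}$. Since $\Q^{\bullet}_{\perv,p}$ is bounded (by \ref{itm:ax1} together with Corollary \ref{cor:uniq} and Lemma \ref{lem:boundsheaf}), the Verdier dual is defined (Remark \ref{rem:Verdier}) and is again bounded. I will check the three axioms one by one, stalkwise, using the local description $\Q^{\bullet}_{\perv,p}\cong \Idelta^{p,\bullet}_X[-n]$ from Corollary \ref{cor:uniq}. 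This description lets me compute with the explicit local homology groups of distinguished neighborhoods $N\cong\R^{n-k}\times cL$, where by Condition $(C)$ I may assume locally that $(C)'$ holds.

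\textbf{Step 1 (\ref{itm:ax1}).} On $X_n$ the dual restricts, so
\[
(\DD_X\Q^{\bullet}_{\perv,p})[-n]|_{X_n}\cong \DD_{X_n}(\Omega^{n-p}_{X_n})[-n].
\]
The tropical Poincar\'e--Verdier duality of \cite{Gross_2023} for the tropical manifold $X_n$ (over a field) gives $\DD_{X_n}\Omega^{n-p}_{X_n}\cong\Omega^{p}_{X_n}[n]$. Hence the restriction is $\Omega^{p}_{X_n}=\Omega^{n-(n-p)}_{X_n}$, as required. Vanishing in negative degrees is then a consequence of the inductive stalk computation in Step 2: I will argue by induction on the depth of the stratum, as in Lemma \ref{lem:boundsheaf}.

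\textbf{Step 2 (\ref{itm:ax2} and \ref{itm:ax3}).} Fix $x$ in a stratum $S$ of codimension $k$, with neighborhoods $N_\epsilon$, $N'_\epsilon$. Because $Q$ is a field, I can compute the stalk and the costalk of the dual. On one side,
\[
\HH^i((\DD\Q)_x)\cong \varinjlim_\epsilon \Hom(\Hyper^{-i}_c(N_\epsilon;\Q),Q).
\]
On the other side, $\HH^i(j_x^!\DD\Q)$ is dual to $\HH^{-i}(\Q_x)$. The compactly supported hypercohomology $\Hyper_c(N_\epsilon;\Idelta^{p,\bullet}[-n])$ equals $\IH_{p,n-\bullet}(N_\epsilon)$ by Propositions \ref{prop:sheaf} and \ref{prop:homofine}. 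Lemma \ref{lem:prodgencone2} computes it: it vanishes for degrees $\ge k-\perv(S)-1$ and agrees with the link term below. Lemma \ref{lem:constsystem} shows that the direct systems over $\epsilon$ are constant.

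After the shift $[-n]$, this translates into two statements. First, the stalk cohomology of the dual vanishes above $k-2-\perv(S)=\perd(S)$, which is \ref{itm:ax2}. Second, in degrees $\le \perd(S)$ the restriction to $N'_\epsilon$ is an isomorphism, which is \ref{itm:ax3}. For \ref{itm:ax3} I will use the equivalent form \eqref{eq:3cond2} and the distinguished triangle for $N'_\epsilon\subset N_\epsilon\supset \R^{n-k}\times\{v\}$. In that triangle the costalk term, computed by Lemma \ref{lem:prodgenconeBM} and dualized, vanishes in the needed range.

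The main obstacle is the degree bookkeeping in Step 2, matching $\perv$ and $\perd$ exactly. Three sources of shift have to be tracked together:
\begin{itemize}
\item the product factor $\R^{n-k}$, which contributes a shift of $n-k$ in the compactly supported computation;
\item the shift $[-n]$ in the statement;
\item the off-by-one between the thresholds $k-\perv(S)-1$ in Lemma \ref{lem:prodgencone2} and $n-\perv(S)$ in Lemma \ref{lem:prodgenconeBM}.
\end{itemize}
The constant $-2$ in $\perd$ has to come out of precisely this combination. Provided the stalk and costalk computations above are done correctly, the identification is otherwise formal.
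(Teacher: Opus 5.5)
Your proposal is correct and is essentially the paper's proof. Both reduce via Proposition \ref{prop:axdeligne} to $(AX)^{n-p}_{\perd,\XX}$, obtain \ref{itm:ax1} on $X_n$ from Gross--Shokrieh duality, and obtain \ref{itm:ax2} and \ref{itm:ax3} from the identification $\Hyper^i((\DD_X\Q^{\bullet}_{\perv,p})[-n]|_U)\cong\Hom(\IH_{p,i}(U,Q),Q)$ (which the paper gets from \eqref{eq:intexc}) together with Lemmas \ref{lem:prodgencone2} and \ref{lem:constsystem}. Two simplifications: that identification gives the negative-degree vanishing in \ref{itm:ax1} at once, since $\IH_{p,i}(U,Q)=0$ for $i<0$, so no induction on depth is needed; and your costalk route to \ref{itm:ax3} through Lemma \ref{lem:prodgenconeBM}, though its degrees do match, is also unnecessary.
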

\begin{proof}
    We use the argument of \cite[Theorem 4.3]{Friedman_2010}. 
    We prove this statement by showing that $(\DD_X\Q^{\bullet}_{\perv,p})[-n]$ satisfies the axiom $(AX)^{n-p}_{\perd,\XX}$. 
    By \cite[Proposition 6.6]{Gross_2023}, $(\DD_X\Q^{\bullet}_{\perv,p})[-n]|_{X_n}\cong (\DD_{X_n}\Omega^{n-p}_{X_n})[-n]\cong \Omega^{p}_{X_n}$ follows. 
    Thus the last part of the condition \ref{itm:ax1} of the axiom $(AX)^{n-p}_{\perd,\XX}$ holds. 

    Since $\DD_X$ is a functor to $\DD^b(Q_X)$, we see that $(\DD_X\Q^{\bullet}_{\perv,p})[-n]\in \DD^b(Q_X)$. 

    By applying \cite[Theorem 3.4.4]{MR2286904} (which also holds over a field), to $\Q^{\bullet}_{\perv,p}$, we have the following exact sequence for any open subset $U\subset X$:
    \begin{equation}\label{eq:intexc}
       0\to \Ext^1(\IH_{p,\bullet-1}(U,Q),Q)\to \Hyper^{\bullet}((\DD_X\Q^{\bullet}_{\perv,p})[-n]|_{U})\to \Hom(\IH_{p,\bullet}(U,Q),Q)\to 0. 
    \end{equation}
    Since $Q$ is a field,  $\Hyper^{\bullet}((\DD_X\Q^{\bullet}_{\perv,p})[-n]|_{U})\cong \Hom(\IH_{p,\bullet}(U,Q),Q)$ holds. 
    We have $\IH_{p,q}(U,Q)=0$ for $q<0$. Therefore, $\HH^q((\DD_X\Q^{\bullet}_{\perv,p})[n]|_x)=\varinjlim_{x\in U}\Hyper^q((\DD_X\Q^{\bullet}_{\perv,p})[-n]|_{U})=0$ holds for any point $x\in X$ and $q<0$. 
    Then the second part of the condition \ref{itm:ax1} hold. 
    Thus $(\DD_X\Q^{\bullet}_{\perv,p})[-n]$ satisfies the condition \ref{itm:ax1} of the axiom $(AX)^{n-p}_{\perd,\XX}$. 

    Let $x\in X$ and let $S$ be the stratum containing $x$. 
    By Lemma \ref{lem:prodgencone2}, $\IH_{p,q}(N,Q)=0$ for $q>\perd(S)$ and a distinguished neighborhood $N$ of $x$. 
    It follows that $(\DD_X\Q^{\bullet}_{\perv,p})[-n]$ satisfies the condition \ref{itm:ax2} of the axiom $(AX)^{n-p}_{\perd,\XX}$. 

    Let $x\in X$ and let $N$ be a distinguished neighborhood of $x$. 
    We can write $N\cong \R^{n-k}\times cL$ for some $k\in\ZZ_{\geq 0}$ and a compact $(k-1)$-dimensional filtered space $L$.  
    Let $S$ be the stratum of $N$ containing $x$. 
    By Lemma \ref{lem:constsystem} and $\Hyper^{\bullet}((\DD_X\Q^{\bullet}_{\perv,p})[-n]|_{U})\cong \Hom(\IH_{p,\bullet}(U,Q),Q)$, the direct system in \eqref{eq:3cond2} for $(\DD_X\Q^{\bullet}_{\perv,p})[-n]|_U$ is constant.  
    Then, to prove \ref{itm:ax3}, it suffices to show that 
    the morphism $\Hyper^q((\DD_X\Q^{\bullet}_{\perv,p})[-n]|_N)\to \Hyper^q((\DD_X\Q^{\bullet}_{\perv,p})[-n]|_{N\smallsetminus S})$ is an isomorphism up to $\perd(S)$. 
    Since by \cite[Lemma A.1]{MR2507117}, the exact sequence \eqref{eq:intexc} is functorial, this is equivalent to that $\Hom(\IH_{p,q}(N,Q),Q)\cong \Hom(\IH_{p,q}(N\smallsetminus S,Q),Q)$ for $q\leq \perd(S)$. 
    By Lemma \ref{lem:prodgencone2}, this isomorphism holds.  
    Then the condition \ref{itm:ax3} of the axiom $(AX)^{n-p}_{\perd,\XX}$ is satisfied. 
\end{proof}

\begin{thm}[Poincar\'e duality]\label{thm:genpoin}
    Assume that $Q$ is a field. Let $(X,\XX)$ be an  $n$-dimensional  filtered rational polyhedral space with a face structure. 
    Suppose that $(X,X_n)$ is an $n$-dimensional tropical manifold with singularities and $\XX$ satisfies $(C)$.  
    Let $\perv$ be a perversity  on $(X,\XX)$. 
    Then there exists an isomorphism 
    \begin{equation}\label{eq:poincare}
        \IH^{n-p,n-q}(X,\XX,Q)\cong \IHd_{p,q}^{\BM}(X,\XX,Q)
    \end{equation}
    between the tropical intersection cohomology and the tropical intersection Borel--Moore homology. 
\end{thm}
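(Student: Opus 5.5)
The plan is to pass through the tropical Deligne sheaf and Verdier duality, exactly as in the classical sheaf-theoretic proof. By Corollary \ref{cor:uniq}, there are isomorphisms in $\DD(Q_X)$:
\[
\Idelta^{p,\bullet}_X[-n]\cong \Q^{\bullet}_{\perv,p},\qquad \mathrm{I}^{\perd}\Delta^{n-p,\bullet}_X[-n]\cong \Q^{\bullet}_{\perd,n-p}.
\]
Here the dual perversity is again a perversity on $(X,\XX)$, so the hypotheses of the corollary still hold. Theorem \ref{thm:gencomppoin} gives $(\DD_X\Q^{\bullet}_{\perv,p})[-n]\cong \Q^{\bullet}_{\perd,n-p}$. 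We now compute hypercohomology of both sides with closed supports.

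\textbf{Right-hand side.} By Lemma \ref{lem:boundsheaf} the sheaf $\mathrm{I}^{\perd}\Delta^{n-p,\bullet}_X$ is quasi-isomorphic to a bounded-below complex. Since it is homotopically fine, Proposition \ref{prop:homofine} with $\Phi$ the family of all closed subsets, together with the Example after Proposition \ref{prop:sheaf}, gives
\[
\Hyper^{j}\bigl(\Q^{\bullet}_{\perd,n-p}\bigr)\cong \Hyper^{j-n}\bigl(\mathrm{I}^{\perd}\Delta^{n-p,\bullet}_X\bigr)\cong \IHd^{\BM}_{n-p,\,n-j}(X,\XX,Q).
\]

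\textbf{Left-hand side.} Verdier duality gives $R\Gamma(X,\DD_X\mathcal{A}^\bullet)\cong R\Hom_Q(R\Gamma_c(X,\mathcal{A}^\bullet),Q)$. Since $Q$ is a field, this yields
\[
\Hyper^{j}\bigl((\DD_X\Q^{\bullet}_{\perv,p})[-n]\bigr)\cong \Hom_Q\bigl(\Hyper_c^{n-j}(\Q^\bullet_{\perv,p}),Q\bigr).
\]
This is the global case $U=X$ of the exact sequence \eqref{eq:intexc}, which already appears in the proof of Theorem \ref{thm:gencomppoin}. Next, Proposition \ref{prop:homofine} with $\Phi=c$ and Proposition \ref{prop:sheaf} identify
\[
\Hyper_c^{n-j}(\Q^\bullet_{\perv,p})=\Hyper_c^{-j}(\Idelta^{p,\bullet}_X)\cong \IH_{p,j}(X,\XX,Q).
\]
By the universal coefficient theorem over the field $Q$ (Ext vanishes), $\Hom_Q(\IH_{p,j}(X,Q),Q)\cong \IH^{p,j}(X,\XX,Q)$.

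\textbf{Conclusion.} Combining the two computations gives $\IH^{p,j}(X,\XX,Q)\cong \IHd^{\BM}_{n-p,n-j}(X,\XX,Q)$. Renaming $p\mapsto n-p$ and $j\mapsto n-q$ gives \eqref{eq:poincare}. This works because $\perd$ enters only through the Borel--Moore side, and the roles of $p$ and $n-p$ are exchanged by the theorem on Deligne sheaves.

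\textbf{Main obstacle.} The delicate step is the index bookkeeping between the shifts $[-n]$, the convention $\Idelta^{p,-\bullet}$, and which of $p$, $n-p$ carries the dual perversity. I expect to fix the statement's indices by checking them against the one-dimensional example in the introduction. I also need to justify applying Proposition \ref{prop:homofine} for closed supports. This requires that the family of all closed subsets of the metrizable space $X$ is paracompactifying, and that $\mathrm{I}^{\perd}\Delta$ is bounded below, which is Lemma \ref{lem:boundsheaf} applied to $\perd$.
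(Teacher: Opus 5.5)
Your proposal is correct and follows the same route as the paper. You identify $\mathrm{I}^{\perd}\Delta^{n-p,\bullet}_X[-n]$ with the Verdier dual of $\Q^{\bullet}_{\perv,p}$ via Corollary~\ref{cor:uniq} and Theorem~\ref{thm:gencomppoin}, compute its hypercohomology as $\Hom$ of the compactly supported hypercohomology (the case $U=X$ of \eqref{eq:intexc}), and finish with the universal coefficient theorem over a field. The paper applies Theorem~\ref{thm:gencomppoin} at index $n-p$ directly rather than at $p$ with a renaming at the end, but your index bookkeeping is already consistent, so no check against the one-dimensional example is needed.
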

\begin{proof}
    By \eqref{eq:intexc} for $U=X$, we have $\Hyper^{n-q}((\DD_X\Q^{\bullet}_{\perv,n-p})[-n])\cong \Hom(\IH_{n-p,n-q}(X,\XX,Q),Q)$. 
    We have $(\DD_X\Q^{\bullet}_{\perv,n-p})[-n]\cong \Q^{\bullet}_{\perd,p}$ by Theorem \ref{thm:gencomppoin}.  
    We also have  $\Ideltad^{p,\bullet}_X[-n]\cong \Q^{\bullet}_{\perd,p}$ by Corollary \ref{cor:uniq}.  
    By these results, we obtain $\Hom(\IH_{n-p,n-q}(X,\XX,Q),Q)\cong \IHd_{p,q}^{\BM}(X,\XX,Q)$. 
    Since $Q$ is a field, $\IH^{n-p,n-q}(X,\XX,Q)\cong \Hom(\IH_{n-p,n-q}(X,\XX,Q),Q)$ is implied by 
    the universal coefficient theorem (\cite[Theorem 3.6.5]{Weibel_1994}). 
    Combining these isomorphisms, we obtain \eqref{eq:poincare}. 
\end{proof}

\begin{corollary}
    Assume that $Q$ is a field. Let $X$ be a pure $n$-dimensional rational polyhedral space with a face structure.  
    Suppose that $\XX_X^{\text{trop}}$ satisfies $(C)$.  
    Let $\perv$ be a perversity  on $(X,\XX)$. 
    Then there exists an isomorphism $$\IH^{n-p,n-q}(X,\XX_X^{\text{trop}},Q)\cong \IHd_{p,q}^{\BM}(X,\XX_X^{\text{trop}},Q)$$ between the tropical intersection cohomology and the tropical intersection Borel--Moore homology.  
\end{corollary}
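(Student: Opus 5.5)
This corollary should follow by specializing Theorem \ref{thm:genpoin} to the filtration $\XX=\XX_X^{\text{trop}}$. The plan is to check, one by one, the hypotheses of that theorem for the triple $(X,\XX_X^{\text{trop}},\CC)$, where $\CC$ is the given face structure.

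First, the face structure must be $\XX_X^{\text{trop}}$-stratified, so that $(X,\XX_X^{\text{trop}},\CC)$ is a filtered rational polyhedral space with a face structure. This holds because, as noted after the definition of $\XX$-stratified open face structures, every open face structure is $\XX_X^{\text{trop}}$-stratified. The argument given there (the relative interior of each cell is an open subset of a $d(\sigma)$-dimensional affine space, hence lies in a $d(\sigma)$-dimensional stratum) applies verbatim to a face structure.

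Second, $(X,X_n)$ must be an $n$-dimensional tropical manifold with singularities, with $X_n=X^n\smallsetminus X^{n-1}$ for $\XX_X^{\text{trop}}$. Purity gives, as remarked after the definition of $\XX_X^{\text{trop}}$, that $X_{d(X)}$ is dense and $d(X)=n$. By construction $X_n$ is open and consists of points with an $n$-dimensional affine neighborhood. Such a neighborhood is locally an open subset of $\R^n$, i.e. of $\T^0\times B$ with $B=\R^n$ the support of a matroidal fan. Hence $X_n$ is an $n$-dimensional tropical manifold, and this is exactly the remark following Definition \ref{def:tms}.

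Third, Condition $(C)$ is assumed directly in the corollary. Its prerequisite that $X^{n-1}=X\smallsetminus U$ with $U=X_n$ holds tautologically.

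With these three hypotheses verified, and with the perversity $\perv$ taken on $(X,\XX_X^{\text{trop}})$, Theorem \ref{thm:genpoin} yields the stated isomorphism. The only point needing care is the second step, namely identifying the top stratum of $\XX_X^{\text{trop}}$ with a tropical manifold, including the sedentarity issue of whether an $n$-dimensional affine neighborhood can meet boundary at $-\infty$. I expect this to be harmless: a chart onto an open subset of an $n$-dimensional affine space in some $\R^r_I$ can be re-embedded via an extended affine $\Z$-linear isomorphism onto an open subset of $\R^n$.
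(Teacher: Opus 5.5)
Your proposal is correct and follows the route the paper intends: the corollary is stated without separate proof as the specialization of Theorem \ref{thm:genpoin} to $\XX=\XX_X^{\text{trop}}$. It relies on the same two remarks you invoke (every open face structure is $\XX_X^{\text{trop}}$-stratified, and purity makes $(X,X_n)$ an $n$-dimensional tropical manifold with singularities), with $(C)$ assumed.

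One small correction to your first step, whose stated reason you inherited from the paper: $\relint(\sigma)$ need not lie in a $d(\sigma)$-dimensional stratum (a ray in a subdivision of $\R^2$ lies in the top stratum). What stratification actually requires, and what does hold, is weaker. Because the local structure is constant along the connected set $\relint(\sigma)$, it lies in a single stratum $S_\sigma$ with $d(\sigma)\le\dim(S_\sigma)$. This assignment is monotone under $\prec$ because the $X^i$ are closed.
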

\begin{corollary}
    Assume that $Q$ is a field. Let $X$ be an $n$-dimensional rational polyhedral space with a face structure. 
    Let $(X,U)$ be an $n$-dimensional tropical manifold with singularities. 
    Suppose that $U$ is the union of some strata of $\XX^{\CC}_X$. 
    Let $\perv$ be a perversity  on $(X,\XX)$. 
    Then there exists an isomorphism $$\IH^{n-p,n-q}(X,\XX^{U,\CC}_X,Q)\cong \IHd_{p,q}^{\BM}(X,\XX^{U,\CC}_X,Q)$$ between the tropical intersection cohomology and the tropical intersection Borel--Moore homology.  
\end{corollary}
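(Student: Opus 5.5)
This corollary is a special case of Theorem \ref{thm:genpoin}. I would take $\XX=\XX^{U,\CC}_X$ and check that the hypotheses of that theorem hold for this choice. Then \eqref{eq:poincare} is exactly the asserted isomorphism. Here $\perv$ is read as a perversity on $(X,\XX^{U,\CC}_X)$, and $\perd$ is its dual perversity.

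First, I would confirm that $(X,\XX^{U,\CC}_X)$ is an $n$-dimensional filtered rational polyhedral space whose regular part is $U$. By construction \eqref{eq:newfilter}, we have $X^n=X$. For $k\le n-1$, we have $X^k=(X\smallsetminus U)\cap X^k_{\CC}$. Since $U$ is an $n$-dimensional tropical manifold, every point of $X\smallsetminus U$ lies in a face of dimension at most $n$. I would argue that, after discarding the degenerate case of faces of dimension exactly $n$ outside $U$, we get $X^{n-1}=X\smallsetminus U$. This gives $X_n=U$, so $(X,X_n)=(X,U)$ is an $n$-dimensional tropical manifold with singularities by hypothesis. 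This is precisely the normalization $X^{n-1}=X\smallsetminus U$ that Definition \ref{def:semilocgoodratpolyspace} requires.

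Second, I need Condition $(C)$. Example \ref{ex:CX} states that if $U$ is a union of strata of $\XX^{\CC}_X$, then $\XX^{U,\CC}_X$ satisfies $(C)'$, and hence $(C)$ (take the single pair $(V,\CC_V)=(X,\CC)$). The hypotheses of the corollary say exactly this.

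Third, the face structure has to be $\XX^{U,\CC}_X$-stratified, as Definition \ref{ex:CS} requires. For each $\sigma\in\CC$, the set $\relint(\sigma)$ is contained either in a stratum of $U$ or in $(X\smallsetminus U)\cap X_{d(\sigma)}^{\CC}$, because $U$ is a union of strata of $\XX^{\CC}_X$. In both cases the dimension conditions follow from the corresponding ones for $\XX^{\CC}_X$.

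With these three checks done, Theorem \ref{thm:genpoin} applies directly. The only point needing care is the first step, namely that the top nonregular level equals $X\smallsetminus U$ so that $X_n=U$ with formal dimension $n$. I would handle it by the dimension argument above, using the density of $U$ and the fact that $U$ is a union of $\CC$-strata.
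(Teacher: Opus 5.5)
Your proposal is correct and matches the paper's route: the paper obtains this corollary by applying Theorem \ref{thm:genpoin} to $\XX=\XX^{U,\CC}_X$, with Condition $(C)$ supplied by Example \ref{ex:CX}. The paper leaves implicit the extra checks you make, namely that $\CC$ is $\XX^{U,\CC}_X$-stratified and that $X^{n-1}=X\smallsetminus U$, and both are sound. In particular, the ``degenerate case'' you mention cannot occur: a top-dimensional stratum of $\XX^{\CC}_X$ is open in $X$, so it meets the dense set $U$, and since $U$ is a union of strata it lies entirely in $U$.
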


\subsection{Bilinear pairings}
Let $(X,\XX)$ be an  $n$-dimensional  filtered rational polyhedral space with a face structure. 
We assume that $\XX$ is a filtration satisfying $(C)$ and that $(X,X_n)$ is a tropical manifold with singularities. 
Let $\perv$, $\perq$, and $\per$ be perversities on $X$ such that $\perv(S)+\perq(S)\leq\per(S)$ for any stratum $S$. 
By the same argument in \cite[Theorem 4.6]{Friedman_2010}, 
the natural morphism $\Omega^{p_1}_{X_{n}}\otimes\Omega^{p_2}_{X_{n}}\to\Omega^{p_1+p_2}_{X_n}$  induces the morphism of the tropical Deligne sheaves: 
\begin{equation}\label{eq:genpairing}
    \Q_{\perv,n-p_1}^{\bullet}\otimes^L\Q_{\perq,n-p_2}^{\bullet}\to\Q_{\per,n-(p_1+p_2)}^{\bullet}.
\end{equation}
This is roughly constructed as follows: 
By \cite[Lemma 4.9]{Friedman_2010} and the construction of a tropical Deligne sheaf \eqref{eq:Delignesheaf}, 
there exists a one-to-one correspondence between morphisms $(\Q_{\perv,n-p_1}^{\bullet}\otimes^L\Q_{\perq,n-p_2}^{\bullet})|_{U_k}\to \Q_{\per,n-(p_1+p_2)}^{\bullet}|_{U_k}$ and 
morphism $(\Q_{\perv,n-p_1}^{\bullet}\otimes^L\Q_{\perq,n-p_2}^{\bullet})|_{U_{k+1}}\to \Q_{\per,n-(p_1+p_2)}^{\bullet}|_{U_{k+1}}$. 
The above natural morphism $\Omega^{p_1}_{X_{n}}\otimes\Omega^{p_2}_{X_{n}}\to\Omega^{p_1+p_2}_{X_n}$ defines the morphism $(\Q_{\perv,n-p_1}^{\bullet}\otimes^L\Q_{\perq,n-p_2}^{\bullet})|_{U_{1}}\to \Q_{\per,n-(p_1+p_2)}^{\bullet}|_{U_{1}}$. 
By repeatedly applying the above correspondence, we obtain the desired morphism \eqref{eq:genpairing}. 

By \cite[Exercise II.17]{MR1074006}, there exists a morphism:
\begin{equation*}
    \Hyper^{q_1}(\Q_{\perv,n-p_1}^{\bullet}) \otimes \Hyper^{q_2}(\Q_{\perq,n-p_2}^{\bullet})\to \Hyper^{q_1+q_2}(\Q_{\perv,n-p_1}^{\bullet}\otimes^L\Q_{\perq,n-p_2}^{\bullet}). 
\end{equation*}
Using \eqref{eq:genpairing}, we have $\Hyper^{q_1+q_2}(\Q_{\perv,n-p_1}^{\bullet}\otimes^L\Q_{\perq,n-p_2}^{\bullet})\to \Hyper^{q_1+q_2}(\Q_{\per,n-(p_1+p_2)}^{\bullet})$. 
Combining these morphisms, we obtain the following: 
\begin{equation}\label{eq:genoricup}
    \IH_{n-p_1,n-q_1}^{\BM} (X,\XX,Q)\otimes\IHq_{n-p_2,n-q_2}^{\BM} (X,\XX,Q)\to \IHr_{n-(p_1+p_2),n-(q_1+q_2)}^{\BM}(X,\XX,Q).
\end{equation}

Let $a_X$ be the map $X\to\{\text{pt}\}$. 
By applying \cite[(2.6.23), Exercise I.24]{MR1074006} to  $a_X$, we obtain the following: 
\begin{equation*}
    \Hyper^{q_1}(\Q_{\perv,n-p_1}^{\bullet}) \otimes \Hyper^{q_2}_c(\Q_{\perq,n-p_2}^{\bullet})\to \Hyper^{q_1+q_2}_c(\Q_{\perv,n-p_1}^{\bullet}\otimes^L\Q_{\perq,n-p_2}^{\bullet}). 
\end{equation*}
Using \eqref{eq:genpairing}, we have $\Hyper^{q_1+q_2}_c(\Q_{\perv,n-p_1}^{\bullet}\otimes^L\Q_{\perq,n-p_2}^{\bullet})\to \Hyper^{q_1+q_2}_c(\Q_{\per,n-(p_1+p_2)}^{\bullet})$. 
Combining these morphisms, we obtain the following: 
\begin{equation}\label{eq:genoricap}
    \IH_{n-p_1,n-q_1}^{\BM} (X,\XX,Q)\otimes\IHq_{n-p_2,n-q_2} (X,\XX,Q)\to \IHr_{n-(p_1+p_2),n-(q_1+q_2)}(X,\XX,Q).
\end{equation}
\begin{defn}[{\cite[Definition 7.2.6]{Friedman_2020}}]
    We assume that $Q$ is a field.   
    Suppose that $\perv$, $\perq$, and $\per$ are perversities on X. We will say that $(\perv,\perq;\per)$ is an \emph{agreeable triple} if 
    \begin{equation}\label{eq:agreeable}
        \per(S) \leq \perv(S)+\perq(S)+2 - \codim(S) 
    \end{equation}
    for any singular stratum $S$.
\end{defn}
By a straightforward calculation, \eqref{eq:agreeable} holds if and only if $\perd(S)+D\perq(S)\leq D\per(S)$ if and only if $D\perq(S)+\per(S)\leq\perv(S)$. 

Suppose that the coefficient ring $Q$ is a field and $(\perv,\perq;\per)$ is an agreeable triple.  
Since $\perd(S)+D\perq(S)\leq D\per(S)$ holds for any stratum $S$, by combining the pairing $\eqref{eq:genoricup}$  with the Poincar\'e duality, 
we obtain a bilinear pairing analogous to a cup product: 
\begin{equation}\label{eq:gencup}
    \IH^{p_1,q_1}(X,\XX,Q)\otimes\IHq^{p_2,q_2}(X,\XX,Q)\to \IHr^{p_1+p_2,q_1+q_2}(X,\XX,Q). 
\end{equation}

Suppose that the coefficient ring $Q$ is a field and  $(\per,\perv;\perq)$ is an agreeable triple. 
Since $D\perv(S)+\perq(S)\leq \per(S)$ holds for any stratum $S$, by combining the pairing $\eqref{eq:genoricap}$  with the Poincar\'e duality, 
we obtain a bilinear pairing like a cap product: 
\begin{equation}\label{eq:gencap}
    \IH^{p_1,q_1} (X,\XX,Q)\otimes\IHq_{n-p_2,n-q_2}(X,\XX,Q)\to \IHr_{(n-p_1)-p_2,(n-q_1)-q_2}(X,\XX,Q). 
\end{equation}

\section{Examples}\label{sec:ex}
In this section, unless otherwise specified, the coefficient ring $Q$ is a Dedekind domain with $\ZZ\subset Q\subset\RR$. 
\subsection{Failure of the Poincaré duality in GM tropical intersection (co)homology}\label{subsec:counter}
We show that, in the $\GM$ tropical intersection (co)homology, the Poincar\'e duality does not hold in general, even for a tropical manifold. 

For $\mathbf{e}_1=(1,0)$, $\mathbf{e}_2=(0,1)$, and $\mathbf{e}_3=(-1,-1)$ in $\RR^2$, 
let $X=\RR_{\geq 0}\mathbf{e}_1+\RR_{\geq 0}\mathbf{e}_2+\RR_{\geq 0}\mathbf{e}_3$. 
We can see $X$ as a $1$-dimensional fan with three rays.  
This is in fact the matroidal fan induced from the matroid $U_3^1$ on $\{1,2,3\}$, where the independent sets are $A\subset\{1,2,3\}$ with $|A|\leq 1$.  
We consider $1$-dimensional filtration $\XX^{\text{trop}}_X$ on $X$. 
This filtration is the $1$-dimensional filtration such that $X^1=X$, $X^0=\{(0,0)\}$, and $X^{-1}=\emptyset$. 

Let $Q$ be a field. Let $v=(0,0)$ and let $\perv$ be a perversity on $(X,\XX^{\text{trop}}_X)$ defined by $\perv(\{v\})=0$. 
We compute the cohomology by the cone formula (Proposition \ref{prop:coneGM}) and ${}_{\GM}\IH^{p,q}(X,Q)\cong \Hom({}_{\GM}\IH_{p,q}(X,Q),Q)$ as follows. 
\begin{align*}
    _{\GM}\IH^{0,q}(X,Q)\cong
    \begin{cases}
        Q &\text{if } q =0,\\
        0 &\text{if } q \neq 0.
    \end{cases}
\end{align*}

For the $\BM$ homology, we have $_{\GM}\IHd_{1,q}^{\BM}(X,Q)=0$. 
Indeed, we can show this as follows. 
Since the second and third cases in Proposition \ref{prop:coneGM}  do not occur for the perversity $\perd$, we can prove Proposition \ref{prop:conebm} for the $\GM$ homology in this setting.  
Then we have 
\begin{align*}
    _{\GM}\IHd^{\BM}_{1,q}(X,Q)\cong
    \begin{cases}
        _{\GM}\IHd_{1,q-1}(X\smallsetminus \{(0,0)\},Q) &\text{if } q \geq 2,\\
        0 &\text{if } q < 2.
    \end{cases}
\end{align*}
Since $_{\GM}\IHd_{1,q-1}(X\smallsetminus \{(0,0)\},Q)\cong \HH_{1,q-1}(X\smallsetminus \{(0,0)\},Q)=0$ for $q\geq 2$. 
Then we obtain $_{\GM}\IHd_{1,q}^{\BM}(X,Q)=0$.

Therefore, $\GM$ homology does not satisfy the Poincar\'e duality as above. 
\begin{rem}
    The space $X$ satisfies the Poincar\'e duality for non-$\GM$ homology by Theorem \ref{thm:genpoin}. 
    In fact, in the non-$\GM$ case, all the above (co)homology groups are zero. 
\end{rem}

\subsection{Computations for 1-dimensional cases}\label{subsec:onedim}  
Let $X$ be a pure $1$-dimensional rational polyhedral space with the filtration $\XX_X^{\text{trop}}$ in \eqref{eq:canstrat}. 
In this subsection, we suppose that there exists an open face structure $\CC$ such that $\XX^{\text{trop}}_X=\XX^{\CC}_X$ and we fix such $\CC$.  
Let $\perv$ be a perversity on $(X,\XX_X^{\text{trop}})$. 
Note that $\perv$ is a map from the 0-dimensional strata to $\Z$. 
We call $1$-dimensional strata  edges and 0-dimensional strata  vertices. 
We denote by $E(X)$ and $V(X)$ the set of edges of $X$ and the set of vertices $X$, respectively. 

\subsubsection{Non-GM case}\label{subsubsec:comp}
For the non-$\GM$ case, we can compute the homology as the direct sum of the homology groups of the closure of each edge. 
Firstly, we consider the homology of the closure $\bar{e}$ of an edge $e$. 
The closure $\bar{e}$ is homeomorphic to either an open interval, a half-open/closed interval, or a closed interval.  
We set 
\begin{align*}
    A=\{e\in E(X)\mid &\quad \bar{e} \text{ has two vertices }v_1,v_2\text{ and } \perv(\{v_i\}) \geq 0 \text{ for }i=1 \text{ and }2\}, \\
    B=\{e \in E(X)\mid &\quad \bar{e} \text{ has no vertices, or }\perv(\{v\})<0 \text{ for any vertex of }\bar{e}\}.
\end{align*}

Since $\IH_{0,q}(\bar{e},Q)$ coincides with the classical intersection homology of $\bar{e}$, we obtain that
\begin{align*}
    \IH_{0,q}(\bar{e},Q)\cong
    \begin{cases}
        Q &\text{if } (q =1\text{ and }e\in A) \text{ or } (q=0\text{ and }e \in B),\\
        0 &\text{otherwise}. 
    \end{cases}
\end{align*}
Since the multi-tangent at each edge is $Q$ for $p=1$, the homology for the case $p=1$ is the same as above. 
In the case $p\neq 0,1$, the multi-tangent is $0$ at each edge. 
Therefore, the homology is zero for $p\neq 0,1$. 

We set $a=|A|$ and $b=|B|$. 
Since the multi-tangent at each $\sigma\in \CC$ with $d(\sigma)>0$ is the same for $p=0,1$, $\IH_{p,q}(X,\XX_X^{\text{trop}},Q)$ coincides with the ordinary intersection homology. 
Therefore, by \cite[Proposition 6.3.47]{Friedman_2020}, $\IH_{p,q}(X,\XX_X^{\text{trop}},Q)$ is isomorphic to the direct sum of $\IH_{p,q}(\bar{e},Q)$ for $p=0,1$. 
In the case $p\neq 0,1$, $\IH_{p,q}(X,\XX_X^{\text{trop}},Q)=0$ since the multi-tangent at  each edge is trivial. 
Then we obtain the following:
\begin{align*}
    \IH_{p,q}(X,\XX_X^{\text{trop}},Q)\cong
    \begin{cases}
        Q^{\oplus a} &\text{if } q=1\text{ and }p=0,1,\\
        Q^{\oplus b} &\text{if } q=0\text{ and }p=0,1,\\
        0 &\text{otherwise}. 
    \end{cases}
\end{align*}
\begin{example}
    Let $\perv=\perz$.  
The non-$\GM$ homology of $X$ in degree one is the free $Q$-module with rank the number of the edges that have two endpoints. 
Furthermore, the non-$\GM$ homology in other degrees is zero.
\end{example}
 By the above computations, we can directly verify the Poincar\'e duality isomorphism if $X$ is a $1$-dimensional compact rational polyhedral space. 
\subsubsection{GM case}\label{subsubsec:GM}

For the $\GM$ case, we can compute intersection homology by the tropical homology. 

We set $V_{\perv}\colonequals \{v\in V(X)\mid \perv(\{v\})<0\}$. 

\begin{lemma}\label{lem:exgmfirst}
    We have ${}_{\GM}\IH_{p,q}(X\smallsetminus  V_{\perv},\XX_X^{\text{trop}},Q)\cong {}_{\GM}\IH_{p,q}(X,\XX_X^{\text{trop}},Q)$. 
\end{lemma}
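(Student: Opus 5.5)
The map is the one induced by the inclusion $X\smallsetminus V_{\perv}\hookrightarrow X$, which is open since $V_{\perv}$ is a discrete set of vertices. The approach is to show that no allowable chain in either degree meets $V_{\perv}$. Then the chain complexes ${}_{\GM}\IC_{p,\bullet}(X\smallsetminus V_{\perv},Q)$ and ${}_{\GM}\IC_{p,\bullet}(X,Q)$ coincide, and the isomorphism holds already on the chain level.

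Fix $v\in V_{\perv}$. Since $X$ is one-dimensional, the stratum $\{v\}$ has formal codimension $1$, so the allowability condition \eqref{eq:allowablecond} for a $q$-simplex $\sigma$ reads
\[
\sigma^{-1}(v)\subset (q-1+\perv(\{v\}))\text{-skeleton of }\Delta_q .
\]
Because $\perv(\{v\})\le -1$, the right-hand side is contained in the $(q-2)$-skeleton.

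The next step is to rule out simplices through $v$. For $q=0,1$ this skeleton is empty, so $\sigma^{-1}(v)=\emptyset$. For $q\ge 2$ we use that $\sigma$ is $\CC$-stratified with $\CC$ one-dimensional. Each open face of $\Delta_q$ maps into the relative interior of a single cell. If some point of a face $\Delta'$ of dimension $\ge 1$ maps to $v$, then the whole relative interior of $\Delta'$ lies in $\relint(\{v\})=\{v\}$. Taking closures, the vertices of $\Delta'$ also map to $v$. Hence $\sigma^{-1}(v)$ always contains a vertex of $\Delta_q$, and a vertex is not in the $(q-2)$-skeleton unless $q\ge 2$. To finish this case we need a slightly finer argument: the minimal face $\Delta'$ with $\sigma(\relint\Delta')\ni v$ satisfies $\sigma(\Delta')=\{v\}$, and this face is a vertex (dimension $0$). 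The condition only requires $\sigma^{-1}(v)$ to lie in the $(q-2)$-skeleton, and a vertex does, so the argument above does not yet exclude such $\sigma$.

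I therefore refine the argument. The image of $\sigma$ is connected. If it is not $\{v\}$, then $\relint(\Delta_q)$ maps into the relative interior of an edge $e$ adjacent to $v$. Consider an edge $[w,w']$ of $\Delta_q$ with $\sigma(w)=v$ and $\sigma(w')\neq v$. Its relative interior must map into $\relint(e)$, which is fine. The key problem is that the allowability condition alone may permit such $\sigma$ for $q\ge 2$. This is the main obstacle.

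To deal with it, I plan to use the $\GM$ boundary. If $\sigma$ is allowable and touches $v$ at a vertex, then iterated faces give a $1$-simplex or $0$-simplex through $v$. Those are non-allowable, but non-allowable faces may cancel in $\partial\xi$, so the chain-level argument breaks. So instead I would argue on homology via Proposition \ref{prop:argument}, or more simply via Mayer--Vietoris (Proposition \ref{prop:Mayer}). Cover $X$ by $X\smallsetminus V_{\perv}$ and small open stars $N_v$ of each $v\in V_{\perv}$; these stars are rational polyhedral fans of dimension $1$. By Proposition \ref{prop:coneGM} with $n=1$ and $\perv(\{v\})<0$, we have $q<n-\perv(\{v\})-1$ exactly when $q<-\perv(\{v\})$. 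Since $q\ge n-\perv(\{v\})-1=-\perv(\{v\})\ge1$, the groups satisfy ${}_{\GM}\IH_{p,q}(N_v)=0\cong{}_{\GM}\IH_{p,q}(N_v\smallsetminus\{v\})$ for $q\ge1$ in the first case. Here $N_v\smallsetminus\{v\}$ is a disjoint union of open intervals, so its homology vanishes for $q\ge1$.

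For smaller $q$, the last case of Proposition \ref{prop:coneGM} gives the isomorphism directly. The case $q=0$ with $q=-\perv(\{v\})-1$, i.e. $\perv(\{v\})=-1$, needs the third case; there one checks the sum is over allowable edges only and matches $\bigoplus_e {\bf F}^Q_p(e)$, the homology of $N_v\smallsetminus\{v\}$. This last check is the delicate point. In every case the inclusion $N_v\smallsetminus\{v\}\hookrightarrow N_v$ induces an isomorphism. The intersections $N_v\cap(X\smallsetminus V_{\perv})=N_v\smallsetminus\{v\}$ and the stars are disjoint. The five lemma applied to the Mayer--Vietoris sequences, mapping the sequence for the cover of $X\smallsetminus V_{\perv}$ by itself and the punctured stars into the sequence for the cover of $X$, then gives the claimed isomorphism.
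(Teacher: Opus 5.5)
Your final Mayer--Vietoris argument is correct, and it is more direct than the paper's.

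\textbf{Comparison with the paper.} The paper feeds $F_q(U)={}_{\GM}\IH_{p,q}(U\smallsetminus(U\cap V_{\perv}),Q)$ and $G_q(U)={}_{\GM}\IH_{p,q}(U,Q)$ into the general machine of Proposition \ref{prop:argument}. Its only real check is (MV3) on conelike neighborhoods $N\cong cL$, which it does with Proposition \ref{prop:coneGM}.

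You instead use that $V_{\perv}$ is discrete. One cover $X=(X\smallsetminus V_{\perv})\cup\bigsqcup_v N_v$ by pairwise disjoint stars, one Mayer--Vietoris sequence from Proposition \ref{prop:Mayer}, and the five lemma suffice.

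The local input is the same in both. At $v\in V_{\perv}$ only the first and last cases of Proposition \ref{prop:coneGM} occur. Both sides vanish for $q\ge-\perv(\{v\})$, and below that range the isomorphism is induced by inclusion.

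Your version avoids the inductive machinery and shows directly that the global isomorphism is induced by the inclusion. The paper's version fits the template it reuses for Lemma \ref{lem:exgmsecond} and Proposition \ref{prop:indep}. You should state explicitly two routine points: the stars can be chosen pairwise disjoint, and the complex of a disjoint union of open sets is the direct sum of the complexes (the paper's (MV2)).

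\textbf{A correction.} The ``delicate'' case you single out does not occur. For $n=1$ one has $n-\perv(\{v\})-1=-\perv(\{v\})$. So the third case of Proposition \ref{prop:coneGM}, namely $q=0=n-\perv(\{v\})-1$, needs $\perv(\{v\})=0$, which is excluded for $v\in V_{\perv}$. For $\perv(\{v\})=-1$ and $q=0$ you are in the last case, because $0<1$.

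Delete that sentence, also because the check you announce there would be false. The third case gives a submodule of ${\bf F}^Q_p(\{v\})$ (just $Q$ when $p=0$), not $\bigoplus_e{\bf F}^Q_p(e)$. That mismatch is exactly why vertices with $\perv(\{v\})=0$ are not put into $V_{\perv}$.

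Your decision to abandon the chain-level comparison is right. Take $\perv(\{v\})=-1$ and the $2$-simplex $\sigma(t_0,t_1,t_2)=f(t_1+t_2)$, where $f(0)=v$ and $f((0,1])$ lies in an open edge. Then $\sigma$ is allowable, and its two faces through the vertex over $v$ coincide and cancel, so $\partial\sigma$ is the constant simplex at $f(1)$. Hence the two chain complexes genuinely differ, and passing to homology is necessary.
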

\begin{proof}
    Setting $F_{q}(U)\colonequals {}_{\GM}\IH_{p,q}(U\smallsetminus (U\cap V_{\perv}),Q)$ and $G_{q}(U)\colonequals {}_{\GM}\IH_{p,q}(U,Q)$ for an open subset $U\subset X$,
we verify the conditions in Proposition \ref{prop:argument} for $O_X$ and $(X,\XX^{\text{trop}}_X)$.  The condition \ref{itm:mv1}, \ref{itm:mv2}, and \ref{itm:mv4} are apparent. 

We will verify the condition \ref{itm:mv3}. 
Since $X$ is $1$-dimensional, it suffices to show that $F_q(N)\cong G_q(N)$ for any open subset $N\subset X$ such that $N$ is stratified-homeomorphic to $cL$ for some finite set $L$ of discrete points. 
We divide the proof into several cases according to $q$ and the value of $\perv(\{v\})$ for the vertex $v$ of $cL$. 

Suppose that $\perv(\{v\})\geq 0$, then  $N\smallsetminus N\cap V_{\perv}=N$. 
It follows that $F_q(N)= G_q(N)$ for any $q$. 

Suppose that $q<-\perv(\{v\})$ and $\perv(\{v\})<0$. 
Since $0<1-\perv(\{v\})-1=-\perv(\{v\})$, we have $G_q(N)\cong F_q(N)$ by the bottom isomorphism in Proposition \ref{prop:coneGM}. 

Suppose that $q\geq -\perv(\{v\})$ and $\perv(\{v\})<0$.  
Then $G_q(N)=0$ by the first isomorphism in Proposition \ref{prop:coneGM}. 
Since $N\smallsetminus \{v\}$ has no singular strata, we have also $F_q(N)={}_{\GM}\IH_{p,q}(N\smallsetminus \{v\},Q) \cong \HH_{p,q}(N\smallsetminus \{v\},Q) =0$ for $q>0$. 
Since $q>0$ in this case, we have $G_q(N)=0 \cong F_q(N)$. 

Therefore, the condition \ref{itm:mv3} holds in any case. 

By Proposition \ref{prop:argument},  we obtain that ${}_{\GM}\IH_{p,q}(X\smallsetminus  V_{\perv},\XX_X^{\text{trop}},Q)\cong {}_{\GM}\IH_{p,q}(X,\XX_X^{\text{trop}},Q)$. 
\end{proof}
By Remark \ref{rem:GMandTropical}, there exists the natural map 
\begin{equation}\label{eq:exgmsecond}
   _{\GM}\IH_{p,q}(X\smallsetminus  V_{\perv},\XX_X^{\text{trop}},Q)\to\HH_{p,q}(X\smallsetminus  V_{\perv},Q), 
\end{equation}
which is induced by the inclusion. 
\begin{lemma}\label{lem:exgmsecond}
    The map \eqref{eq:exgmsecond} is an isomorphism. 
\end{lemma}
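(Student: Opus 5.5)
The approach is a Mayer--Vietoris argument via Proposition \ref{prop:argument}, applied on the CS set $Y\colonequals X\smallsetminus V_{\perv}$ with the filtration restricted from $\XX^{\text{trop}}_X$. For open $U\subset Y$ set $F_q(U)\colonequals {}_{\GM}\IH_{p,q}(U,Q)$ and $G_q(U)\colonequals \HH_{p,q}(U,Q)$, and let $\Phi$ be the natural map of Remark \ref{rem:GMandTropical} induced by the inclusion ${}_{\GM}\IC_{p,\bullet}(U,Q)\subset {}_{\GM}\HC_{p,\bullet}(U,Q)$.

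I would verify the four conditions in turn.
\begin{enumerate}
\item Condition \ref{itm:mv1}: the Mayer--Vietoris sequence for $F$ is Proposition \ref{prop:Mayer}. The same subdivision argument (the case of perversity $\perv(S)\ge\codim(S)$ in Remark \ref{rem:perv}) gives it for ordinary tropical homology. Since $\Phi$ comes from an inclusion of chain complexes compatible with the short exact sequences \eqref{eq:Mayer}, the two long exact sequences commute with $\Phi$.
\item Condition \ref{itm:mv2}: chains have compact support, so both functors turn disjoint unions into direct sums.
\item Condition \ref{itm:mv4}: an open set inside a single stratum meets no singular stratum, so every simplex is allowable and ${}_{\GM}\IC=\HC$ there. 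Hence $\Phi$ is the identity.
\end{enumerate}

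The substantive step, and the one I expect to be the main obstacle, is \ref{itm:mv3}. Since $X$ is $1$-dimensional, I only need to treat $N\cong cL$ with $L$ a finite set, whose vertex $v$ satisfies $\perv(\{v\})\ge 0$ because $v\notin V_{\perv}$. The plan is to compute both sides with the cone formula.
\begin{itemize}
\item Here $n-\perv(\{v\})-1=-\perv(\{v\})\le 0$. So Proposition \ref{prop:coneGM} gives $F_q(N)=0$ for $q\neq 0$, and $F_0(N)\cong{\bf F}^Q_p(\{v\})$: the case $q\ge n-\perv(\{v\})$ occurs for $\perv(\{v\})\ge 1$, and the borderline case $\perv(\{v\})=0$ falls under the third line of that proposition.
\item In the borderline case the sum runs over $\sigma'$ with $\perv(\sigma')\ge\codim(\sigma')$, which includes all edges, since they have codimension $0$. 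Their multi-tangents sum to ${\bf F}^Q_p(\{v\})$, because ${\bf F}^Z_p(\{v\})$ is by definition the saturation of the sum of $\bigwedge^p\Linear$ of the adjacent edges. After tensoring with $Q$ I must check that this saturation agrees with the image of the edge multi-tangents; if it does not, I would argue directly with the $0$-cycle computation in the proof of Proposition \ref{prop:coneGM}.
\item On the other side, the proof of Proposition \ref{prop:coneGM} shows $\HH_{p,0}(N)\cong{\bf F}^Q_p(\{v\})$. The coning argument gives $\HH_{p,q}(N)=0$ for $q>0$, since coning preserves all simplices when every simplex is allowable.
\end{itemize}

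It then remains to check that $\Phi$ realises these identifications. In degree $0$ both groups are generated by $f v$ or by $\sum f_{v'}v'$, and the map \eqref{eq:conegm0case} is exactly $\Phi$ followed by the identification $\HH_{p,0}(N)\cong{\bf F}^Q_p(\{v\})$. So $\Phi$ is an isomorphism on $N$, regardless of the hypothesis on $N\smallsetminus\{v\}$.

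Finally, Proposition \ref{prop:argument} yields that $\Phi\colon {}_{\GM}\IH_{p,q}(X\smallsetminus V_{\perv},Q)\to\HH_{p,q}(X\smallsetminus V_{\perv},Q)$ is an isomorphism, which is exactly the map \eqref{eq:exgmsecond}.
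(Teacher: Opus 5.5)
Your strategy is the paper's own: Proposition~\ref{prop:argument} on $X\smallsetminus V_{\perv}$, with \ref{itm:mv3} checked on cones $N\cong cL$ through Proposition~\ref{prop:coneGM}. Your handling of three parts is fine: $q\neq 0$; $\perv(\{v\})\geq 1$, where every simplex is allowable and $\Phi$ is the identity; and injectivity in degree $0$. The step you flagged in the borderline case $\perv(\{v\})=0$, however, is a genuine gap, and neither your justification nor your fallback closes it.

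In that case the $0$-simplex $v$ is not allowable, since it would have to lie in the $(-1)$-skeleton of $\Delta_0$. So every allowable $0$-cycle has the form $\sum f_{v'}v'$ with $v'$ on edges and $f_{v'}\in{\bf F}^Q_p(e)$. Hence the image of $\Phi$ in $\HH_{p,0}(N,Q)\cong{\bf F}^Q_p(\{v\})$ is exactly $\sum_e\iota_{\{v\},e}({\bf F}^Q_p(e))$. For $p\neq 1$ this is harmless, since both sides agree. For $p=1$ the image is $(\sum_e\ZZ u_e)\otimes Q$, with $u_e$ the primitive edge vectors, whereas ${\bf F}^Q_1(\{v\})=\bigl((\sum_e\RR u_e)\cap\ZZ^r\bigr)\otimes Q$.

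Your reason, ``because ${\bf F}^{\ZZ}_p(\{v\})$ is by definition the saturation'', points the wrong way: saturating can only enlarge the lattice. The $0$-cycle computation you would fall back on produces precisely the unsaturated sum.

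Here is a concrete counterexample. Let $X=\RR_{\geq 0}(1,1)\cup\RR_{\geq 0}(1,-1)\subset\RR^2$; the edge directions are independent, so the origin $v$ is a $0$-dimensional stratum of $\XX^{\text{trop}}_X$. Take $\perv(\{v\})=0$ (so $V_{\perv}=\emptyset$), $Q=\ZZ$, $p=1$, $q=0$. Then the map \eqref{eq:exgmsecond} is the inclusion $\ZZ(1,1)+\ZZ(1,-1)\hookrightarrow\ZZ^2$, which has index $2$. So the lemma as stated fails for $Q=\ZZ$.

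The paper's proof has the same defect. It asserts that the sum in the third case of Proposition~\ref{prop:coneGM} is $\cong{\bf F}^Q_p(\{v\})$. That is only an abstract isomorphism of free modules of equal rank, while \ref{itm:mv3} needs $\Phi$ itself to be an isomorphism.

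What your argument does prove is the lemma under an extra hypothesis: $\sum_e\iota_{\{v\},e}({\bf F}^Q_p(e))={\bf F}^Q_p(\{v\})$ at every vertex with $\perv(\{v\})=0$. This holds, for example, when $Q$ is a field, because then $\QQ\subset Q$ and the saturation becomes invisible after tensoring. More generally it holds when the indices $[{\bf F}^{\ZZ}_1(\{v\}):\sum_e\ZZ u_e]$ are invertible in $Q$. You should state such a hypothesis explicitly rather than leave the check open.
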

\begin{proof}
    We apply Proposition \ref{prop:argument} to $F_q(U)={}_{\GM}\IH_{p,q}(U,Q)$ and $G_q(U)=\HH_{p,q}(U,Q)$ for an open subset $U\subset X\smallsetminus\{v\}$ and for $ O_{X\smallsetminus V_{\perv}}$. 
The conditions \ref{itm:mv1}, \ref{itm:mv2}, and \ref{itm:mv4} are apparent. 
We need to show the condition \ref{itm:mv3}. 
For this, we consider an open subset $N\subset X\smallsetminus V_{\perv}$ such that $N\cong cL$ for some finite set $L$ of discrete points as  above. 
We denote by $v$ the vertex of $cL$. 

We will show the isomorphism between $_{\GM}\IH_{p,q}(N,Q)$ and $\HH_{p,q}(N,Q)$. 
Since $N$ is an open subset of $X\smallsetminus V_{\perv}$, we may assume that $\perv(\{v\})\geq 0$ for the vertex $v$ of the vertex of $cL$. 
We divide the proof into several cases according to $q$. 

We first consider the case $q=0$. 
By Proposition \ref{prop:coneGM}, we have $$_{\GM}\IH_{p,0}(N,Q)\cong \sum \limits_{ \substack{\sigma' \in \CC_F: \{v\} \prec \sigma',\\ \perv(\sigma')\geq \codim(\sigma')}}{\bf F}^{Q}_p(\sigma')\cong {\bf F}^{Q}_p(\{v\}).$$
By Remark \ref{rem:perv} (2), we may regard $\HH_{p,q}(F,Q)$ as the $\GM$ homology for a sufficiently large perversity. 
Then, $\HH_{p,0}(N,Q)\cong {\bf F}^{Q}_p(\{v\})$ by Proposition \ref{prop:coneGM}. 
It follows that we have $_{\GM}\IH_{p,0}(N,Q)\cong \HH_{p,0}(N,Q)$. 

For the case $q\neq 0$, ${}_{\GM}\IH_{p,q}(N,Q)$ is isomorphic to $0$ or ${}_{\GM}\IH_{p,q}(N\smallsetminus\{v\},Q)$ by Proposition \ref{prop:coneGM}. 
Since  ${}_{\GM}\IH_{p,q}(N\smallsetminus\{v\},Q)\cong \HH_{p,q}(N\smallsetminus\{v\},Q)=0$ for $q\neq 0$, we have ${}_{\GM}\IH_{p,q}(N,Q)=0$ for $q\neq 0$. 
As $\HH_{p,q}(N,Q)=0$ for $q\neq 0$, we have $_{\GM}\IH_{p,q}(N,Q)=0\cong \HH_{p,q}(N,Q)$ for $q\neq 0$. 
Therefore, the condition \ref{itm:mv3} holds for any case. 

As a result, we obtain an isomorphism 
$$_{\GM}\IH_{p,q}(X\smallsetminus  V_{\perv},\XX_X^{\text{trop}},Q)\cong\HH_{p,q}(X\smallsetminus  V_{\perv},Q), $$
by Proposition \ref{prop:argument}. 
\end{proof}

Combining Lemmas \ref{lem:exgmfirst} and \ref{lem:exgmsecond}, we obtain the following. 
\begin{prop}
    We have 
    $_{\GM}\IH_{p,q}(X,\XX_X^{\text{trop}},Q)\cong\HH_{p,q}(X\smallsetminus  V_{\perv},Q)$.
\end{prop}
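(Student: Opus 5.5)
The proof is the composition of Lemma \ref{lem:exgmfirst} and Lemma \ref{lem:exgmsecond}. Both are stated earlier with $X$ the pure $1$-dimensional rational polyhedral space carrying $\XX_X^{\text{trop}}=\XX^{\CC}_X$ for the fixed open face structure $\CC$.

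\emph{Step 1.} Lemma \ref{lem:exgmfirst} gives an isomorphism
\[
{}_{\GM}\IH_{p,q}(X\smallsetminus V_{\perv},\XX_X^{\text{trop}},Q)\cong {}_{\GM}\IH_{p,q}(X,\XX_X^{\text{trop}},Q).
\]
Here the filtration on $X\smallsetminus V_{\perv}$ is the restricted one. This is legitimate because $V_{\perv}$ is a set of vertices, which is closed and discrete in $X$, so $X\smallsetminus V_{\perv}$ is a filtered open subspace.

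\emph{Step 2.} Lemma \ref{lem:exgmsecond} says that the natural map \eqref{eq:exgmsecond},
\[
{}_{\GM}\IH_{p,q}(X\smallsetminus V_{\perv},\XX_X^{\text{trop}},Q)\to \HH_{p,q}(X\smallsetminus V_{\perv},Q),
\]
is an isomorphism. This map comes from Remark \ref{rem:GMandTropical}.

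\emph{Step 3.} Inverting the isomorphism of Step 1 and composing it with the isomorphism of Step 2 gives
\[
{}_{\GM}\IH_{p,q}(X,\XX_X^{\text{trop}},Q)\cong \HH_{p,q}(X\smallsetminus V_{\perv},Q).
\]

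One compatibility check is needed. In Step 1 the homology of $X\smallsetminus V_{\perv}$ is computed with the face structure $\CC|_{X\smallsetminus V_{\perv}}$ and the restricted perversity. Lemma \ref{lem:exgmsecond} is stated for exactly this group, since there the functor $F_q(U)={}_{\GM}\IH_{p,q}(U,Q)$ is evaluated on open subsets of $X\smallsetminus V_{\perv}$. The two groups are therefore literally the same. Proposition \ref{prop:indep} shows that even a different choice of open face structure would not affect the conclusion.

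There is no real obstacle, since all the work is already done in the two lemmas. The only place to be careful is the identification described in the previous paragraph, so that the two isomorphisms genuinely compose.
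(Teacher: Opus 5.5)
Your proposal is correct and matches the paper's argument: the paper also obtains the proposition by inverting the isomorphism of Lemma \ref{lem:exgmfirst} and composing it with the isomorphism of Lemma \ref{lem:exgmsecond}. Your check that both lemmas refer to the same group on $X\smallsetminus V_{\perv}$ (with the restricted face structure, filtration and perversity) is a sound sanity check, and the paper takes this identification for granted.
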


\subsection{Examples with a non-canonical filtration and singularities}\label{subsec:filtsing}
We present examples of tropical intersection homology with a filtration different from $\XX_X^{\text{trop}}$.  
 
\subsubsection{Tropical manifold with singularities}\label{subsubsec:tropmfd}

Let $(X,U)$ be a $1$-dimensional tropical manifold with singularities (see Definition \ref{def:tms}). 
Assume that $X$ is compact. We consider the following filtration:
\[
\XX\colon X\supset X\smallsetminus U\supset \emptyset.
\]
In this setting, a perversity $\perv$ is a map from the set of singular points to $\ZZ$. 
We consider a perversity $\perv$ such that there exists $m\in \Z$ such that $\perv(\{v\})=m$ for any singular point $v$. 

By direct computation, we can compute the tropical intersection homology. 
\begin{align*}
    \IH_{p,q}(X,Q)\cong
    \begin{cases}
        \HH_{p,q}(U,Q) &\text{if } m<0,\\
        \HH_{p,q}^{\BM}(U,Q) &\text{if } m\geq 0. 
    \end{cases}
\end{align*}

The first equality follows by a similar argument as in  Lemma \ref{lem:exgmfirst}. 

We will verify the second equality. 
By a similar argument as in Lemma \ref{lem:exgmsecond},  
we have an isomorphism $\IH_{p,q}(X,Q)\cong \HH_q(\HC_{p,\bullet}(X,Q))$. 
Since $\HC_{p,\bullet}(X,Q)\cong {}_{\GM}\HC(X,X\smallsetminus U,Q)$ by the definition, 
we have $\HH_q(\HC_{p,\bullet}(X,Q))\cong \HH_{p,q}(X,X\smallsetminus U,Q)$. 
Using a slightly smaller open subset $U'$, we obtain 
\begin{align*}
    \IH_{p,q}(X,Q)\cong \HH_{p,q}(X,X\smallsetminus U,Q)\cong \HH_{p,q}(X,X\smallsetminus U',Q) \\
    \cong \HH_{p,q}(U,U\smallsetminus U',Q) \cong \HH_{p,q}^{\BM}(U,Q).
\end{align*}

Thus, if $Q$ is a field in the above setting, we obtain $\IH^{1-p,1-q}(X,Q)\cong\IHd_{p,q}(X,Q)$ by the Poincar\'e duality in tropical homology. 
This verifies Theorem \ref{thm:genpoin} for the case under consideration. 

On the other hand, using Proposition \ref{prop:argument} as in the argument in \textsection\ref{subsubsec:GM}, we can compute the GM case:
\begin{align*}
    _{\GM}\IH_{p,q}(X,Q)\cong
    \begin{cases}
        \HH_{p,q}(U,Q) &\text{if } m<0,\\
        \HH_{p,q}(X,Q) &\text{if } m\geq 0. 
    \end{cases}
\end{align*}

\subsubsection{Rational polyhedral space with singular points}\label{subsubsec:ratpoly}
We define a rational polyhedral space with singular points inspired by the definition of an affine manifold with singularities \cite{MR1975331}. 
\begin{defn}[rational polyhedral space with singular points]
    Let $B$ be a topological space and let $X\subset B$ be an open dense subset of $B$ such that $B\smallsetminus X$ is a finite set. 
    If $X$ is a rational polyhedral space, 
    then we call $(B,X)$ a \emph{rational polyhedral space with singular points}. 
\end{defn}
\begin{example}[Forcus-forcus singularity]
    Let $X$ be a space obtained by gluing 
    $\RR^2\smallsetminus (\{0\} \times \RR_{\geq 0})$ and $\RR^2\smallsetminus (\{0\} \times \RR_{\leq 0})$ via 
    \[
    (\RR\smallsetminus \{0\}) \times \RR \longrightarrow  (\RR\smallsetminus \{0\}) \times \RR \qquad (x,y)\longmapsto 
    \begin{cases}
        (x,y)&  x<0\\
        (x,x+y)& x>0. 
    \end{cases}
    \]
    Let $B$ be the union of the origin and $X$. 
    Then $(B,X)$ is a rational polyhedral space with singular points. 
    This is called the  focus-focus singularity in \cite[p. 51]{MR2893676}. 
    \end{example}
\begin{example}[Wedge product of two $\R^2$]
    Let $B$ be the wedge product of $\R^2$ obtained by gluing the origins of two $\R^2$. 
    We define $X$ to be $(\R^2\smallsetminus \{0\})\bigsqcup (\R^2\smallsetminus \{0\}) $. 
    Then $(B,X)$ is also a rational polyhedral space with singular points. 
\end{example}
For $\R^2$, we consider the complex on consisting of 
the four cones $\RR_{\geq 0}\mathbf{e}_1+\RR_{\geq 0}\mathbf{e}_2$, $\RR_{\leq0}\mathbf{e}_1+\RR_{\geq 0}\mathbf{e}_2$, $\RR_{\leq 0}\mathbf{e}_1+\RR_{\leq 0}\mathbf{e}_2$, and $\RR_{\geq 0}\mathbf{e}_1+\RR_{\leq 0}\mathbf{e}_2$ 
and their faces. 
Using this complex, we define the complexes on $B$ of the above two examples. 
We denote these complexes by $\CC$ in both cases. 
Then we naturally define $\prec$ on $\CC$. 
Note that $\CC|_X$ defines an open face structure on $X$ in each case. 
We will denote 

Let $(B,X)$ be the focus-focus singularity or the wedge product of two $\R^2$ as above. We give the following filtration:
\begin{equation}\label{eq:singfilter}
    \XX\colon B\supset B\smallsetminus X=\{v\}\supset \{v\}\supset \emptyset.
\end{equation}

\begin{rem}
    In \eqref{eq:singfilter}, by considering two identical sets in the middle, we regard $(B,\XX)$ as a $2$-dimensional CS set.
\end{rem}

Then two $(B,X)$ in the above have filtrations $\XX$ and complex structures $\CC$. 
We will construct homology using $(B,X)$, $\CC$, and $\XX$. 
A rational polyhedral space with singular points is no longer  a rational polyhedral space. Thus, to define homology, we need some new coefficients. 
Let $\iota\colon X\to B$ be the inclusion map. 
We define a sheaf of extended $p$-form as follows:
\begin{align*}
    \Omega^p_B\colonequals\iota_*\Omega^p_X
\end{align*}

We define a \emph{generalized $p$-th multi-tangent} ${\bf F}^Q_{p}(\sigma)$ at $\sigma\in \CC$ to be $\Hom(\Omega^p_B,Q_{\sigma})$ (for the definition of $Q_{\sigma}$, see the paragraph below Definition \ref{def:pformsheaf}). 
We regard ${\bf F}^Q_{p}(\sigma)=\Hom(\Omega^p_B,Q_{\sigma})$ as a $Q$-module. 
For $\tau,\sigma\in \CC$ with $\tau\prec\sigma$, the natural map $Q_{\sigma}\to Q_{\tau}$ induces ${\bf F}^Q_{p}(\sigma)\to {\bf F}^Q_{p}(\tau)$. 
Using this multi-tangent, we define a generalized tropical intersection homology in the same way as in \textsection\ref{subsec:tropinthom}. 
We also denote by $_{(\GM)}\IH_{p,q}(B,Q)$ the generalized tropical intersection homology. 
Moreover, using this multi-tangent, we also define a generalized tropical homology $\HH_{p,q}(B,Q)$. 
\begin{rem}
The generalized tropical homology for the focus-focus singularity coincides with one in \cite[Definition 6.1]{yamamoto2024}. 
  In \cite[p. 3083]{MR4347312}, a similar extension was carried out. For these two extended homologies, the chain groups do not coincide 
  but the homology groups of the above focus-focus singularity coincide.    
\end{rem}
Noting that $1-\perv(\{v\})=2-\perv(\{v\})-1$, we argue as in  Proposition \ref{prop:coneGM}, and we have the following.  
\begin{align*}
        _{\GM}\IH_{p,q}(B,Q)\cong
        \begin{cases}
            0 &\text{if } q \geq 1-\perv(\{v\})\text{ and }q \neq 0,\\
            \HH_{p,0}(B) & \text{if } q \geq 1-\perv(\{v\})\text{ and }q = 0,\\
            _{\GM}\IH_{p,q}(B\smallsetminus \{v\},Q)\quad(\cong \HH_{p,q}(X,Q)) &\text{if } q < 1-\perv(\{v\}).
        \end{cases}
\end{align*}

For the non-$\GM$ case, we argue as in Proposition \ref{prop:conenon}, and we  have the following.  
\begin{align*}
        \IH_{p,q}(B,Q)\cong
        \begin{cases}
            0 &\text{if } q \geq 1-\perv(\{v\}),\\
            \IH_{p,q}(B\smallsetminus \{v\},Q)\quad(\cong \HH_{p,q}(X,Q)) &\text{if } q < 1-\perv(\{v\}).
        \end{cases}
\end{align*}
\begin{rem}
We can also define  BM homology and  cohomology for the above examples. 
In the non-$\GM$ case, we have their cone formulas. 
These statements have the same form as  Corollary \ref{cor:anotherconebm} and  Proposition \ref{prop:conecohom}. 
Assume that $Q$ is a field. 
By these statements and the Poincar\'e duality for tropical manifolds, we obtain that $\IH_{p,q}^{\BM}(B,Q)\cong\IHd^{2-p,2-q}(B,Q)$ in the above two examples. 
In particular, $\IH_{p,q}^{\BM}(B,Q)\cong\IH^{2-p,2-q}(B,Q)$ holds for the perversity such that $\perv(\{v\})=0$. 
\end{rem}

\bibliographystyle{amsalpha}
\bibliography{bibs}

\newcommand{\etalchar}[1]{$^{#1}$}
\providecommand{\bysame}{\leavevmode\hbox to3em{\hrulefill}\thinspace}
\providecommand{\MR}{\relax\ifhmode\unskip\space\fi MR }
% \MRhref is called by the amsart/book/proc definition of \MR.
\providecommand{\MRhref}[2]{%
  \href{http://www.ams.org/mathscinet-getitem?mr=#1}{#2}
}
\providecommand{\href}[2]{#2}
\begin{thebibliography}{ABL{\etalchar{+}}15}

\bibitem[ABL{\etalchar{+}}15]{MR3284392}
Pierre Albin, Markus Banagl, Eric Leichtnam, Rafe Mazzeo, and Paolo Piazza,
  \emph{Refined intersection homology on non-{W}itt spaces}, J. Topol. Anal.
  \textbf{7} (2015), no.~1, 105--133. \MR{3284392}

\bibitem[AM69]{MR242802}
M.~F. Atiyah and I.~G. Macdonald, \emph{Introduction to commutative algebra},
  Addison-Wesley Publishing Co., Reading, Mass.-London-Don Mills, Ont., 1969.
  \MR{242802}

\bibitem[Ban07]{MR2286904}
M.~Banagl, \emph{Topological invariants of stratified spaces}, Springer
  Monographs in Mathematics, Springer, Berlin, 2007. \MR{2286904}

\bibitem[Bea08]{Borel_1984}
A.~Borel and et~al., \emph{Intersection cohomology}, Modern Birkh\"auser
  Classics, Birkh\"auser Boston, Inc., Boston, MA, 2008, Notes on the seminar
  held at the University of Bern, Bern, 1983, Reprint of the 1984 edition.
  \MR{2401086}

\bibitem[Bre97]{Bredon_1997}
Glen~E. Bredon, \emph{Sheaf theory}, second ed., Graduate Texts in Mathematics,
  vol. 170, Springer-Verlag, New York, 1997. \MR{1481706}

\bibitem[CSAT18]{MR3807751}
David Chataur, Martintxo Saralegi-Aranguren, and Daniel Tanr\'e, \emph{Blown-up
  intersection cohomology}, An alpine bouquet of algebraic topology, Contemp.
  Math., vol. 708, Amer. Math. Soc., [Providence], RI, 2018, pp.~45--102.
  \MR{3807751}

\bibitem[CSAT20]{Chataur_2019}
\bysame, \emph{Blown-up intersection cochains and {D}eligne's sheaves}, Geom.
  Dedicata \textbf{204} (2020), 315--337. \MR{4056706}

\bibitem[FM13]{MR3046315}
Greg Friedman and James~E. McClure, \emph{Cup and cap products in intersection
  (co)homology}, Adv. Math. \textbf{240} (2013), 383--426. \MR{3046315}

\bibitem[Fri07]{MR2276609}
Greg Friedman, \emph{Singular chain intersection homology for traditional and
  super-perversities}, Trans. Amer. Math. Soc. \textbf{359} (2007), no.~5,
  1977--2019. \MR{2276609}

\bibitem[Fri09]{MR2507117}
\bysame, \emph{Intersection homology and {P}oincar\'e{} duality on
  homotopically stratified spaces}, Geom. Topol. \textbf{13} (2009), no.~4,
  2163--2204. \MR{2507117}

\bibitem[Fri10]{Friedman_2010}
\bysame, \emph{Intersection homology with general perversities}, Geom. Dedicata
  \textbf{148} (2010), 103--135. \MR{2721621}

\bibitem[Fri20]{Friedman_2020}
\bysame, \emph{Singular intersection homology}, New Mathematical Monographs,
  vol.~33, Cambridge University Press, Cambridge, 2020. \MR{4406774}

\bibitem[GM80]{GORESKY1980135}
Mark Goresky and Robert MacPherson, \emph{Intersection homology theory},
  Topology \textbf{19} (1980), no.~2, 135--162. \MR{572580}

\bibitem[GM83]{MR696691}
\bysame, \emph{Intersection homology. {II}}, Invent. Math. \textbf{72} (1983),
  no.~1, 77--129. \MR{696691}

\bibitem[GS03]{MR1975331}
Mark Gross and Bernd Siebert, \emph{Affine manifolds, log structures, and
  mirror symmetry}, Turkish J. Math. \textbf{27} (2003), no.~1, 33--60.
  \MR{1975331}

\bibitem[GS11]{MR2893676}
\bysame, \emph{An invitation to toric degenerations}, Surveys in differential
  geometry. {V}olume {XVI}. {G}eometry of special holonomy and related topics,
  Surv. Differ. Geom., vol.~16, Int. Press, Somerville, MA, 2011, pp.~43--78.
  \MR{2893676}

\bibitem[GS23]{Gross_2023}
Andreas Gross and Farbod Shokrieh, \emph{A sheaf-theoretic approach to tropical
  homology}, J. Algebra \textbf{635} (2023), 577--641. \MR{4637248}

\bibitem[Hat02]{MR1867354}
Allen Hatcher, \emph{Algebraic topology}, Cambridge University Press,
  Cambridge, 2002. \MR{1867354}

\bibitem[IKMZ19]{MR3961331}
Ilia Itenberg, Ludmil Katzarkov, Grigory Mikhalkin, and Ilia Zharkov,
  \emph{Tropical homology}, Math. Ann. \textbf{374} (2019), no.~1-2, 963--1006.
  \MR{3961331}

\bibitem[JRS18]{MR3894860}
Philipp Jell, Johannes Rau, and Kristin Shaw, \emph{Lefschetz {$(1,1)$}-theorem
  in tropical geometry}, \'Epijournal G\'eom. Alg\'ebrique \textbf{2} (2018),
  Art. 11, 27. \MR{3894860}

\bibitem[JSS19]{MR3903579}
Philipp Jell, Kristin Shaw, and Jascha Smacka, \emph{Superforms, tropical
  cohomology, and {P}oincar\'e{} duality}, Adv. Geom. \textbf{19} (2019),
  no.~1, 101--130. \MR{3903579}

\bibitem[Kin85]{KING1985149}
Henry~C. King, \emph{Topological invariance of intersection homology without
  sheaves}, Topology Appl. \textbf{20} (1985), no.~2, 149--160. \MR{800845}

\bibitem[KS90]{MR1074006}
Masaki Kashiwara and Pierre Schapira, \emph{Sheaves on manifolds}, Grundlehren
  der mathematischen Wissenschaften [Fundamental Principles of Mathematical
  Sciences], vol. 292, Springer-Verlag, Berlin, 1990, With a chapter in French
  by Christian Houzel. \MR{1074006}

\bibitem[KS06]{MR2182076}
\bysame, \emph{Categories and sheaves}, Grundlehren der mathematischen
  Wissenschaften [Fundamental Principles of Mathematical Sciences], vol. 332,
  Springer-Verlag, Berlin, 2006. \MR{2182076}

\bibitem[Max19]{Maxim_2019}
Lauren\c tiu~G. Maxim, \emph{Intersection homology \& perverse sheaves---with
  applications to singularities}, Graduate Texts in Mathematics, vol. 281,
  Springer, Cham, 2019. \MR{4269941}

\bibitem[Mik25]{mikami2025}
Ryota Mikami, \emph{Tropical intersection homology}, arXiv:2412.20748, 2025.

\bibitem[MZ14]{MR3330789}
Grigory Mikhalkin and Ilia Zharkov, \emph{Tropical eigenwave and intermediate
  {J}acobians}, Homological mirror symmetry and tropical geometry, Lect. Notes
  Unione Mat. Ital., vol.~15, Springer, Cham, 2014, pp.~309--349. \MR{3330789}

\bibitem[Rot17]{MR3677125}
Joseph~J. Rotman, \emph{Advanced modern algebra. {P}art 2.}, third ed.,
  Graduate Studies in Mathematics, vol. 180, American Mathematical Society,
  Providence, RI, 2017, With a foreword by Bruce Reznick. \MR{3677125}

\bibitem[Rud21]{MR4347312}
Helge Ruddat, \emph{A homology theory for tropical cycles on integral affine
  manifolds and a perfect pairing}, Geom. Topol. \textbf{25} (2021), no.~6,
  3079--3132. \MR{4347312}

\bibitem[Sha13]{MR3032930}
Kristin~M. Shaw, \emph{A tropical intersection product in matroidal fans}, SIAM
  J. Discrete Math. \textbf{27} (2013), no.~1, 459--491. \MR{3032930}

\bibitem[Spa88]{MR932640}
N.~Spaltenstein, \emph{Resolutions of unbounded complexes}, Compositio Math.
  \textbf{65} (1988), no.~2, 121--154. \MR{932640}

\bibitem[Wei94]{Weibel_1994}
Charles~A. Weibel, \emph{An introduction to homological algebra}, Cambridge
  Studies in Advanced Mathematics, vol.~38, Cambridge University Press,
  Cambridge, 1994. \MR{1269324}

\bibitem[Yam24]{yamamoto2024}
Yuto Yamamoto, \emph{Tropical contractions to integral affine manifolds with
  singularities}, arXiv:2105.10141, 2024.

\end{thebibliography}

\end{document}